\tikzstyle{morphism}=[ellipse, fill=green!20!white, draw=black, inner sep=0pt, minimum height=16pt, minimum width=16pt]
\tikzstyle{block}=[ellipse, fill=blue, opacity=0.3]
\tikzstyle{Xsize}=[minimum size=1.3cm]
\newcommand{\done}[1]{}
\newtheorem{thm}{Theorem}[section]
\newtheorem{conj}[thm]{Conjecture}
\newtheorem{lemma}[thm]{Lemma}
\newtheorem{prop}[thm]{Proposition}
\newtheorem{cor}[thm]{Corollary}
\newtheorem{ass}{Assumption}
\theoremstyle{definition}
\newtheorem{defn}[thm]{Definition}
\newtheorem{ex}[thm]{Example}
\newtheorem{rem}[thm]{Remark}
\newcommand{\Set}[1]{\ensuremath{\mathcal{#1}}} % for set of vertices, edges, faces, ...
\newcommand{\Dfn}[1]{\emph{#1}} % for Definitions
\newcommand{\size}[1]{\left\lvert #1\right\rvert}
\newcommand{\eval}[2][\right]{\relax
  \ifx#1\right\relax \left.\fi#2#1\rvert}
\newcommand{\qbinom}[2]{\genfrac{[}{]}{0pt}{}{#1}{#2}_q}
\newcommand{\Sp}{\mathrm{Sp}}
\newcommand{\GL}{\mathrm{GL}}
\newcommand{\Hom}{\mathrm{Hom}}
\newcommand{\End}{\mathrm{End}}
\newcommand{\bN}{\mathbb{N}}
\newcommand{\bZ}{\mathbb{Z}}
\newcommand{\Sym}{\mathsf{\Lambda}} % symmetric functions
\newcommand{\cB}{\mathsf{B}} % finite sets with bijections
\newcommand{\cV}{\mathsf{Vect}} % finite dimensional vector spaces
\newcommand{\Br}{\mathsf{Br}} % Brauer category
\newcommand{\Par}{\mathsf{P}} % Partition category
\newcommand{\cJ}{\mathsf{J}} % ideal of diagrams with propagating
\newcommand{\T}{\mathsf{T}} % tensor representations
\newcommand{\fS}{{\mathfrak S}}
\newcommand{\pr}{\mathrm{pr}}
\newcommand{\cb}{\mathsf{C}}
\newcommand{\cP}{\mathsf{P}} % Permutation category
\newcommand{\cT}{\mathsf{T}} % Temperley Lieb category
\newcommand{\dc}{\mathsf{D}} % diagram category
\newcommand{\Pf}{\mathrm{Pf}}
\DeclareMathOperator{\ev}{ev}
\DeclareMathOperator{\bev}{\overline{ev}} % the better evaluation functor
\newcommand{\hu}{regular}
\newcommand{\sC}{\mathsf{C}}
\DeclareMathOperator{\maj}{maj}
\DeclareMathOperator{\lds}{lds}
\DeclareMathOperator{\id}{id}
\DeclareMathOperator{\Inf}{Inf}
\DeclareMathOperator{\tr}{\mathbf{tr}}   % traces
\DeclareMathOperator{\fix}{\mathbf{fix}} % fixed points
\DeclareMathOperator{\ch}{\mathbf{ch}}   % Frobenius character
\DeclareMathOperator{\fd}{\mathbf{fd}}   % fake degree polynomial
\title[combinatorial representation theory]{A combinatorial approach to classical representation theory}
\author{Martin Rubey}
\address{Fakult\"at f\"ur Mathematik und Geoinformation, TU Wien,
  Austria}%
\email{martin.rubey@tuwien.ac.at}%
\author{Bruce W. Westbury}%
\address{Department of Mathematics, University of Warwick, Coventry,
  CV4 7AL}%
\email{Bruce.Westbury@warwick.ac.uk}%
\date{May 2015}
\begin{document}

\begin{abstract} 
  A fundamental problem from invariant theory is to describe, given a
  representation $V$ of a group $G$, the algebra $\End_G(\otimes^r
  V)$ of multilinear functions invariant under the action of the
  group.  According to Weyl's classic, a first main (later: \lq
  fundamental\rq) theorem of invariant theory provides a finite
  spanning set for this algebra, whereas a a second main theorem
  describes the linear relations between those basic invariants.

  Here we use diagrammatic methods to carry Weyl's programme a step
  further, providing explicit bases for the subspace of $\otimes^r V$
  invariant under the action of $G$, that are additionally preserved
  by the action of the long cycle of the symmetric group $\fS_r$.

  The representations we study are essentially those that occur in
  Weyl's book, namely the defining representations of the symplectic
  groups, the defining representations of the symmetric groups
  considered as linear representations, and the adjoint
  representations of the linear groups.

  In particular, we present a transparent, combinatorial proof of a
  second fundamental theorem for the defining representation of the
  symplectic groups $\Sp(2n)$.  Our formulation is completely
  explicit and provides a very precise link to $(n+1)$-noncrossing
  perfect matchings, going beyond a dimension count.  Extending our
  argument to the $k$-th symmetric powers of these representations,
  the combinatorial objects involved turn out to be
  $(n+1)$-noncrossing $k$-regular graphs.  As corollaries we obtain
  instances of the cyclic sieving phenomenon for these objects and
  the natural rotation action.

  In general, we are able to derive branching rules for the diagram
  algebras corresponding to the representations in a uniform way.
  Moreover, we compute the Frobenius characteristics of modules of
  the diagram algebras restricted to the action of the symmetric
  group.  Via a general theorem, we also obtain the isotypic
  decomposition of $\otimes^r V$ when $n$ is large enough in
  comparison to $r$.
\end{abstract}
\keywords{invariant tensors, cyclic sieving phenomenon, matchings, classical groups}

\maketitle
\begin{center} Dedicated to Mia and George
\end{center}

\tableofcontents

\section{Introduction}
The objective of this paper is to exhibit examples of the cyclic sieving phenomenon involving
classical groups and to make this accessible to a reader with a background in combinatorics
rather than representation theory. In order to achieve this we have been led to give a contemporary, combinatorial account of the representation theory in \cite{MR1488158}.

The method used to exhibit the cyclic sieving phenomenon was given in \cite{1512}. The method starts
from a representation $V$ of a reductive group $G$. Then, for $r\ge 0$, we have the subspace
of $\otimes^rV$ invariant under the action of $G$. This subspace has a natural action of the
symmetric group, $\fS_r$. In order to exhibit the cyclic sieving phenomenon we then need to
determine the Frobenius character of this representation and to construct a basis invariant
under the action of the cyclic group generated by the long cycle. In this paper we apply this
method to three families of classical groups; the defining representations of the symplectic groups,
the defining permutation representations of the symmetric groups and the adjoint representations
of the general linear groups.

It is now well-established that an effective method of understanding the tensor powers of these
representations is to use the diagram categories. The primary benefit of the use of diagram categories
is that it gives simple conceptual proofs avoiding detailed calculations. A secondary benefit
is that the methods work directly with modules which gives stronger results than character
calculations. Although in this paper we focus on semisimple algebras this approach lends itself
to the general case. For example, the blocks of the centraliser algebras are determined in
\cite{MR2813567}, \cite{MR2550471}, \cite{MR2417984}. This approach is also important in representation
theory as it is characteristic free. However this is left implicit in this paper for simplicity.

The basic problem of decomposing these tensor powers
is equivalent to understanding the structure of the endomorphism algebras $\End_G(\otimes^rV)$. In our examples this is a direct sum of matrix algebras. Then the starting point for this method is that there are homomorphisms from the endomorphism algebras in the diagram category onto these endomorphism algebras.
This is known as the first fundamental theorem. The basic problem can then be solved in two steps. The first step is to determine the structure of the endomorphism algebras in the diagram category and to construct
the simple modules. This achieved following the method in \cite{MR2658143}.
The second step is to determine the kernel. This second step is known as the second fundamental theorem.

The main problem in this paper is a refinement of this basic problem and is solved following the same approach. There are inclusions $K\fS_r\rightarrow \End_G(\otimes^rV)$, where $K$ is the ground field. The problem now is to consider each simple module of the endomorphism algebra as a $K\fS_r$-module
and to determine the Frobenius character (a homogeneous symmetric function of degree $r$). The significance of this is that it describes the decomposition of $\otimes^rV$ under the action of $G\times\fS_r$.
This is equivalent to describing the branching rules for the homomorphism $G\rightarrow \GL(V)$ and to describing the decomposition of the Schur functors evaluated at $V$.

In particular the invariant tensors have an action of the symmetric group and we give
formulae for their Frobenius characters. For the first two families we describe a basis
which is invariant under the long cycle. This then gives examples of the cyclic sieving phenomenon.
In the example arising from the symplectic group $\Sp(2n)$, the set is the set of $(n+1)$ noncrossing
perfect matchings and the generator of the cyclic group acts by rotation. Our proof of the second
fundamental theorem is simpler than existing proofs and avoids the use of algebraic geometry.

The organisation of this paper is as follows. The background needed to make the paper
self-contained is presented in the first four sections. The next two sections give an account of the
features and methods common to the three examples. The final three sections give the results for each
of the three families. These three sections follow the following template:
\begin{itemize}
 \item The first subsection describes the relevant diagram category.
 \item The second subsection constructs the simple modules for the endomorphism algebras
in the diagram category and determines their Frobenius characters.
 \item The third subsection gives branching rules for the inclusion of an endomorphism
algebra in its successor.
 \item The fourth subsection gives the evaluation functors and in the first two families
describes the kernel and a basis for the image.
 \item The fifth subsection constructs the simple modules for the endomorphism algebras
and relates the parametrisation of simple modules arising
from the diagram category with the standard parametrisation.
 \item The sixth subsection discusses the cyclic sieving phenomenon using the second fundamental
theorem and the Frobenius characters of invariant tensors.
\end{itemize}

\section{Combinatorial species}
In this section we briefly recall the theory of Joyal's species, fix
notation and give some examples we will use throughout.

Let $\cB$ be the category of finite sets with bijections.  A
\Dfn{combinatorial species} (in $d$ variables, also called \lq
sorts\rq) is a functor $F: \cB^d\to \cB$.  By convention, the
application of $F$ is written with square brackets: for a $d$-tuple
of sets $U_1,\dots,U_d$
\[
F[U_1,\dots,U_d]
\]
is the set of $F$-structures on the given tuple of sets, and, for a
$d$-tuple of bijections $\sigma_i:U_i\to V_i$ we write
\[
F[\sigma_1,\dots,\sigma_d]: F[U_1,\dots,U_d]\to F[V_1,\dots,V_d]
\]
for the so-called transport of structures.  For ease of notation let
us abbreviate $F[\{1,\dots,r_1\},\dots,\{1,\dots,r_d\}]$ to
$F[r_1,\dots,r_d]$.  Then, $F$ is the same as a family of permutation
representations
\[
(\fS_{r_1}\times\dots\times \fS_{r_d})\times F[r_1,\dots,r_d]\to
F[r_1,\dots,r_d].
\]
In particular, if $d=1$ and $F[k]=\emptyset$ for $k\neq r$, we
identify $F$ with the corresponding permutation representation of
$\fS_r$.

To every combinatorial species $F$ we associate its \Dfn{cycle index
series}.  This is the symmetric function in $d$ alphabets $\mathbf
x_1,\dots,\mathbf x_d$
\begin{equation*}
  \sum_{r_1,\dots,r_d}\frac{1}{r_1!\cdots r_d!}
  \sum_{\sigma_1\in\fS_{r_1},\dots,\sigma_d\in\fS_{r_d}}%
  \fix{F[\sigma_1,\dots,\sigma_d]}\, %
  p_{\lambda(\sigma_1)}(\mathbf x_1) %
  \cdots p_{\lambda(\sigma_d)}(\mathbf x_d),
\end{equation*}
where $\fix$ denotes the number of fixed points of a permutation,
$\lambda(\sigma)$ is the cycle type of the permutation $\sigma$ and
$p$ denotes the power sum symmetric functions.

We will additionally use the following variant of combinatorial
species: a \Dfn{tensorial species} is a functor $F: \cB^d\to \cV$,
where $\cV$ is the category of finite dimensional vector spaces.
Such a functor $F$ is equivalent to a family of linear
representations of products of $d$ symmetric groups.  Note that we
can transform any combinatorial species into a tensorial species by
declaring the set $F[U_1,\dots,U_d]$ to be the basis of a vector
space.  Conversely, if there is a basis of $F[r_1,\dots,r_d]$ that is
permuted by $F[\sigma_1,\dots,\sigma_d]$ then we can regard $F$ as a
combinatorial species.

For any tensorial species $F$, the associated \Dfn{Frobenius character} is
the symmetric function, $\ch F$, given by
\begin{equation*}
  \sum_{r_1,\dots,r_d}\frac{1}{r_1!\cdots r_d!}
  \sum_{\sigma_1\in\fS_{r_1},\dots,\sigma_d\in\fS_{r_d}}%
  \tr{F[\sigma_1,\dots,\sigma_d]}\, %
  p_{\lambda(\sigma_1)}(\mathbf x_1) %
  \cdots p_{\lambda(\sigma_d)}(\mathbf x_d),
\end{equation*}
where $\tr$ denotes the trace of an endomorphism.  Note that the
Frobenius character coincides with the cycle index series when $F$
can be regarded as a combinatorial species.

Since combinatorial and tensorial species are completely analogous
concepts we will henceforth use the same notation for either concept,
drop the adjectives and simply write of \Dfn{species}.

The only point to keep in mind that a tensorial species is determined
up to isomorphism by its Frobenius character, while this is not true
for combinatorial species.  In particular, to show that two
combinatorial species are isomorphic it is not sufficient to compare
their Frobenius characters, see~\cite[Section 2.6, Equations
(30)--(31)]{BLL}.

For computations with species in several variables it is convenient
to introduce a \lq singleton\rq\ species $X_1,X_2,\dots,X_d$ for each
variable by setting
\[
X_i[U_1,\dots,U_d] =
\begin{cases}
  \{U_i\} &\text{if $\size{U_i} = 1$ and $U_j=\emptyset$ for $j\neq
    i$}\\
  \emptyset &\text{otherwise}.
\end{cases}
\]
Thus, $\ch X_i = p_1(\mathbf x_i)$.  In particular, this definition
allows us to make the number of variables of a species $F$ explicit
by writing
\begin{equation}
  \label{eq:multisort-species}
  F(X_1,\dots, X_d).
\end{equation}

A fundamental example for a (univariate) species is $h_r$, the
species of sets with $r$ elements, which corresponds to the trivial
representation of $\fS_r$.  More formally,
\[
h_r[U] =
\begin{cases}
  \{U\} & \text{if $\size{U} = r$}\\
  \emptyset & \text{otherwise}.
\end{cases}
\]
Its cycle index series or Frobenius character, also denoted $h_r$,
is the $r$-th complete homogeneous symmetric function.

The interest in the theory of species lies in the fact that it
provides combinatorial interpretations for many operations on group
actions.  For the most important operations on species this is
illustrated in great detail in~\cite{BLL}, so we will be brief in the
following.  Since for all operations the definitions of the group
actions is always the obvious one we restrict ourselves to the
definition of the underlying sets.

Let $F$ and $G$ be two species in $d$ variables.  Then $F+G$ is
defined on sets as
\[
(F+G)[U_1,\dots,U_d] = F[U_1,\dots,U_d] \amalg G[U_1,\dots,U_d],
\]
and, for tensorial species,
\[
(F+G)[U_1,\dots,U_d] = F[U_1,\dots,U_d] \oplus G[U_1,\dots,U_d].
\]
Clearly, $\ch(F+G) = \ch F + \ch G$.  Informally, an
$(F+G)$-structure is either an $F$-structure or a $G$-structure.

The product $F\cdot G$ of species is defined on sets as
\[
(F\cdot G)[U_1,\dots,U_d] = \coprod_{V_i\amalg W_i = U_i}
F[V_1,\dots,V_d] \times G[W_1,\dots,W_d].
\]
and, for tensorial species,
\[
(F\cdot G)[U_1,\dots,U_d] = \bigoplus_{V_i\amalg W_i = U_i}
F[V_1,\dots,V_d] \otimes G[W_1,\dots,W_d].
\]
Thus, $\ch(F\cdot G) = (\ch F)\cdot(\ch G)$.  Informally, an $(F\cdot
G)$- structure on a set $U$ is a pair consisting of an $F$-structure
on a set $V$ and a $G$-structure on a set $W$, such that $U$ is the
disjoint union of $V$ and $W$.

When $F_r$ and $G_s$ are representations of $\fS_r$ and $\fS_s$,
their product is precisely the induction product (known also as
external, outer or Cauchy product):
\[
F_r\cdot G_s = (F_r\times
G_s)\uparrow_{\fS_r\times\fS_s}^{\fS_{r+s}}.
\]

We define the next operation, partitional composition only for
univariate species $F$, which is sufficient for our calculations and
simpler than the general case.  Let $G$ a $d$-variate species.  Then
$F\circ G$ is the $d$-variate species corresponding to the plethysm
of representations:
\[
(F\circ G)[U_1,\dots,U_d] = \coprod_{\text{$\pi$ set partition of
    $\coprod_i U_i$}} F[\pi] \times \prod_{(B_1,\dots,B_d)\in\pi}
G[B_1,\dots,B_d],
\]
where we regard each block of the set partition $\pi$ as a $d$-tuple
of sets $(B_1,\dots,B_d)$ such that $B_i\subseteq U_i$ for $1\leq
i\leq d$.  For tensorial species, this is
\[
(F\circ G)[U_1,\dots,U_d] = \bigoplus_{\text{$\pi$ set partition of
    $\coprod_i U_i$}} F[\pi] \otimes
\bigotimes_{(B_1,\dots,B_d)\in\pi} G[B_1,\dots,B_d].
\]
Alternatively, we will also write $F\big(G(X_1,\dots,X_d)\big)$.
Note that the notation introduced in
equation~\ref{eq:multisort-species} is consistent with the definition
of partitional composition.  For the Frobenius character we have
$\ch(F\circ G) = \ch F\circ \ch G$, the plethysm of symmetric
functions.

An important special case of the partitional composition arises when
$F$ is the species of sets $H = \sum_{r\geq 0} h_r$.  An $(H\circ
G)$-structure should indeed be thought of a set of $G$-structures.
In particular, we will consider the following species:
\begin{itemize}
\item the species of perfect matchings $M$, which can be expressed as
  $H\circ h_2$, since a perfect matching is a set of two-element
  sets,
\item the species of set partitions $P$, which can be expressed as
  $H\circ H_+$, where $H_+ = \sum_{r\geq 1} h_r$ is the species of
  non-empty sets, since a set partition is a set of non-empty sets,
\item the species of permutations $S$, which can be expressed as
  $H\circ C_+$, where $C_+$ is the species of cycles, since a
  permutation is a set of cycles,
\item the bivariate species of bijections between a set of sort $X$
  and a set of sort $Y$ is $H(X\cdot Y)$.
\end{itemize}

We will employ two further operations, the diagonal of a bivariate
species and the scalar product of two species. These were both
introduced in \cite{MR1506633} and for a textbook exposition
see \cite{MR0357214}.

The diagonal of a bivariate species $F$ is the univariate species
$\nabla F$ with $\nabla F[U] = F[U, U]$.
The Frobenius character of $\nabla F$ is the
diagonal of the Frobenius character of $F$, that is, the linear extension
of
\[
\nabla \frac{p_\lambda(\mathbf x)}{z_\lambda} \frac{p_\mu(\mathbf
  y)}{z_\mu} =
\begin{cases}
  \frac{p_\lambda(\mathbf x)}{z_\lambda}&\text{if $\lambda=\mu$}\\
  0&\text{otherwise.}
\end{cases}
\]

To define the scalar product of two species two auxiliary definitions
are helpful.  Let $F$ be a species which is homogeneous of degree $r$
in its first variable, that is, $F[U_1,\dots,U_d]$ is the empty set
whenever $\size{U_1}\neq r$.  Then $\eval{F}_{X_1=1}$ is the
$(d-1)$-variate species defined by sending $[U_2,\dots,U_d]$ to the
set of orbits of $F[\{1,\dots,r\}, U_2,\dots,U_d]$ under the action
of $\fS_r$ on $\{1,\dots,r\}$.  As long as the obvious finiteness
conditions are met, this definition can be extended by linearity.

The Cartesian product of a $c$-variate species $F$ and a
$(c+d)$-variate species $G$ with respect to the first $c$ variables
is defined on sets as 
\[
F\ast_{X_1,\dots,X_c} G[U_1,\dots, U_{c+d}] = F[U_1,\dots, U_c]\times
G[U_1,\dots, U_{c+d}],
\]
and, for tensorial species
\[
F\ast_{X_1,\dots,X_c} G[U_1,\dots, U_{c+d}] = F[U_1,\dots,
U_c]\otimes G[U_1,\dots, U_{c+d}].
\]
When $d=0$ we will simply write $F\ast G$.  In this case, when $F$
and $G$ are representations of $\fS_r$, their Cartesian product is
precisely the Kronecker product of representations (known also as
internal product).  We remark that several notations for this product
are in use: for species it seems that \lq$\times$\rq\ is predominant
(eg., Joyal~\cite{MR633783} and Bergeron, Labelle,
Leroux~\cite{BLL}), whereas in the context of representation theory,
\lq$\ast$\rq\ is used more frequently.  Curiously,
Weyl~\cite[pg.~20]{MR1488158} used \lq$\times$\rq\ for
representations.

The scalar product of a $c$-variate species $F$ and a $(c+d)$-variate
species $G$ with respect to the first $c$ variables is then defined
as the $d$-variate species
\[
\langle F, G\rangle_{X_1,\dots,X_c} = \eval{(F\ast_{X_1,\dots,X_c}
  G)}_{X_1=\dots=X_c=1}.
\]
The representation theoretic interpretation of this operation is as
follows: let $F$ be an $\fS_r$-module and let $G$ be an
$\fS_r\times\fS_s$-bimodule.  Consider $F$ and $G$ as two bivariate
species, homogeneous of degree $r$ in the first and homogeneous of
degree $s$ in the second variable. Then the species $\langle F,
G\rangle_X$ is isomorphic (as a linear representation) to the
$\fS_s$-module $F\otimes_{K\fS_r} G$.

The following fact concerning the scalar product of species will be
useful:
\begin{lemma}[\protect{\cite[Section~2.2, Proposition~5]{MR927763}}]
  \label{lem:sets-reproducing-kernel}
  For any species $F$, we have
  \[
  \Big\langle F(X), H\big(X\cdot Y\big)\Big\rangle_X = F(Y).
  \]
  Similarly, we have
  \[
  \big\langle F(X_1,X_2), %
  H(X_1\cdot Y_1 + X_2\cdot Y_2)\big\rangle_{X_1, X_2} %
  = F(Y_1,Y_2)
  \]
  for any bivariate species $F$.
\end{lemma}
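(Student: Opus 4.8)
The plan is to prove the first identity directly at the level of combinatorial species, to deduce the tensorial version by the very same construction (or, if one prefers, from the fact that a tensorial species is determined by its Frobenius character), and to reduce the bivariate identity to the univariate one.

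Recall first that $H(X\cdot Y)$ is the species of bijections between a sort-$X$ set and a sort-$Y$ set: the set $H(X\cdot Y)[U,V]$ is empty unless $\size U=\size V$, and is otherwise the set of bijections $U\to V$, the transport of structure along $(\sigma,\tau)\colon(U,V)\to(U',V')$ being $\beta\mapsto\tau\circ\beta\circ\sigma^{-1}$. Since both sides are additive in $F$ and the scalar product is computed degree by degree in $X$, we may assume $F$ is homogeneous of degree $r$. Unwinding the definitions of $\ast_X$ and of setting $X=1$, one finds that for a finite sort-$Y$ set $V$ the set $\langle F(X),H(X\cdot Y)\rangle_X[V]$ is empty unless $\size V=r$, and is otherwise the set of $\fS_r$-orbits of the pairs $(f,\beta)$ with $f\in F[\{1,\dots,r\}]$ and $\beta\colon\{1,\dots,r\}\to V$ a bijection, under $\sigma\cdot(f,\beta)=\bigl(F[\sigma]f,\ \beta\circ\sigma^{-1}\bigr)$.

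The heart of the argument is the map
\[
(f,\beta)\ \longmapsto\ F[\beta]\,f\in F[V].
\]
It is constant on $\fS_r$-orbits because $F[\beta\sigma^{-1}]\bigl(F[\sigma]f\bigr)=F[\beta]f$; it is surjective because $g\in F[V]$ is hit by $\bigl(F[\beta^{-1}]g,\beta\bigr)$ for any choice of $\beta$; and it is injective because $F[\beta_1]f_1=F[\beta_2]f_2$ forces $(f_1,\beta_1)$ and $(f_2,\beta_2)$ into the same orbit, namely that of $\sigma=\beta_2^{-1}\beta_1$. It is natural in $Y$: transport along $\rho\colon V\to V'$ sends $(f,\beta)$ to $(f,\rho\beta)$, with image $F[\rho]F[\beta]f$. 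Hence $\langle F(X),H(X\cdot Y)\rangle_X\cong F(Y)$. In the tensorial case the orbit set becomes the space of $\fS_r$-coinvariants and the same formula is a linear map; the three verifications persist, injectivity now resting on the fact that the bijections $\{1,\dots,r\}\to V$ form a free $\fS_r$-set, so that the coinvariants of $F[\{1,\dots,r\}]\otimes K[\mathrm{Bij}(\{1,\dots,r\},V)]$ are canonically $F[V]$. Alternatively, since $\ch H(X\cdot Y)=\sum_\mu s_\mu(\mathbf x)s_\mu(\mathbf y)$ is the Cauchy kernel, the tensorial statement also follows from the reproducing property of this kernel under the Hall inner product, but that route does not yield the combinatorial statement.

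For the bivariate identity, use that $H$ sends sums to products, so $H(X_1\cdot Y_1+X_2\cdot Y_2)=H(X_1\cdot Y_1)\cdot H(X_2\cdot Y_2)$ and a structure on $(U_1,U_2,V_1,V_2)$ is simply a bijection $U_1\to V_1$ together with a bijection $U_2\to V_2$; running the construction above with $F$ bihomogeneous of bidegree $(r_1,r_2)$ and bijections $\beta_i\colon\{1,\dots,r_i\}\to V_i$, the assignment $(f,\beta_1,\beta_2)\mapsto F[\beta_1,\beta_2]f$ gives the isomorphism onto $F(Y_1,Y_2)$ (one may instead contract the two blocks of variables successively, once it is noted that the scalar product may be performed iteratively in disjoint blocks). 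The step I expect to require the most care is the bookkeeping in the tensorial setting: checking that passage to coinvariants commutes with the naturality squares in the $Y$-variables, and that the reduction to (bi)homogeneous $F$ together with the ensuing extension by linearity is legitimate given the finiteness hypotheses built into the definition of the scalar product. The combinatorial bijection itself is essentially forced; the real content is verifying that these maps assemble into a morphism of species.
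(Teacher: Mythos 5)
Your proof is correct and coincides in substance with the paper's second proof: you identify $\big\langle F(X), H(X\cdot Y)\big\rangle_X[V]$ as the set of $\fS_r$-orbits of pairs $(f,\beta)$ and give the natural bijection $(f,\beta)\mapsto F[\beta]f$ onto $F[V]$, which is precisely what the paper does by first establishing $F(X)\ast_X H(X\cdot Y)\cong F(X\cdot Y)$ and then evaluating at $X=1$. Your aside that the Cauchy-kernel route yields only the tensorial statement likewise matches the paper's first proof, which is phrased purely in terms of Frobenius characters.
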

\begin{proof}[First proof] The Schur functions are an orthonormal basis
and the Cauchy identity is
\begin{equation*}
 H(X\cdot Y) = \sum_\lambda s_\lambda(X)\cdot s_\lambda(Y)
\end{equation*}
Hence $H(X\cdot Y)$ is a reproducing kernel
  \begin{equation*}
  \Big\langle F(X), H\big(X\cdot Y\big)\Big\rangle_X = 
\sum_\lambda \Big\langle F(X), s_\lambda(X)\Big\rangle_X s_\lambda(Y) = F(Y)
  \end{equation*}
\end{proof}
\begin{proof}[Second proof]
  We only prove the first statement, the second follows by the same
  reasoning.  We first note that $F(X)\ast_X H(X\cdot Y)=F(X\cdot Y)$:
  for any pair of sets $U_1$ and $U_2$, we find that $H(X\cdot
  Y)[U_1,U_2]$ is the set of bijections between $U_1$ and $U_2$.
  Therefore $F(X)\ast_X H(X\cdot Y)[U_1, U_2]$ is the set of pairs
  whose first component is an $F$-structure on $U_1$ and whose second
  component is a bijection between $U_1$ and $U_2$.  This coincides
  with the set $F(X\cdot Y)[U_1,U_2]$.

  By the definition of the scalar product we now have
  \begin{multline*}
  \Big\langle F(X), H\big(X\cdot Y\big)\Big\rangle_X = \\
  \eval{(F(X)\ast_X H(X\cdot Y))}_{X=1} = \eval{F(X\cdot Y)}_{X=1} =
  F(Y).
  \end{multline*}
\end{proof}

\section{Cyclic sieving}\label{section:csp}
The cyclic sieving phenomenon was introduced as a combinatorial theory in \cite{MR2087303}.
Here we review this theory from the perspective of representation theory.

Let $C$ be a finite cyclic group of order $r$ together with a
generator $c\in C$.  Put $\omega=\exp(2\pi i/r)$.  Then we identify
the character ring of $C$ with $\bZ[q]/\langle q^r-1\rangle$. For
$0\le k\le r-1$ the character of the representation $c^p\mapsto
\omega^{kp}$ is $q^k\in\bZ[q]/\langle q^r-1\rangle$.  In general, the
character of a linear representation $\rho\colon C\rightarrow
\GL(U)$, $\chi(\rho)\in\bZ[q]/\langle q^r-1\rangle$, is characterised
by
\begin{equation}\label{eq:CSP-ch}
  \chi(\rho)(\omega^k) = \tr \rho(c^k)
\end{equation}
for $0\le k\le r-1$.

Let $X$ be a finite set and $c\colon X\rightarrow X$ a bijection of
order $r$.  Associated to this is $\chi(X)\in \bZ[q]/\langle
q^r-1\rangle$, the character of the linear representation associated
to the permutation representation $X$.  The characterisation of the
character in equation \eqref{eq:CSP-ch} is now written as
\begin{equation*}
  \chi(X,c)(\omega^k) = \fix(c^k)
\end{equation*}
for $0\le k\le r-1$.

For example, if $k|r$ then there is a transitive permutation
representation of degree $r/k$ whose stabiliser is the cyclic group
of order $k$ generated by $c^{r/k}$.  The character of this
permutation representation is $(1-q^r)/(1-q^k)$.

Put $\chi(X)(q)=\sum_{k=0}^{r-1} a_k q^k$.
Then the combinatorial interpretation of the coefficient, $a_k$, is that it
is the number of orbits of $C$ whose stabiliser order divides $k$.

A triple $(X,c,P)$ with $P\in\bZ[q]$ exhibits the \emph{cyclic
  sieving phenomenon} if $P \equiv \chi(X)\mod{q^r-1}$.

\begin{ex}\label{ex:csp} For each partition $\lambda\vdash r$ put $X_\lambda=%
  \fS_\lambda \backslash \fS_r$ where $\fS_\lambda$ is a Young
  subgroup. The set $X_\lambda$ has an action of $\fS_r$ and we take
  $c_\lambda$ to be the action of long cycle. Then
\begin{equation*}
  \left(X_\lambda, c_\lambda, \qbinom{r}{\lambda}\right)
\end{equation*}
exhibits the cyclic sieving phenomenon, where $\qbinom{r}{\lambda}$
is the $q$-analogue of the multinomial coefficient.
\end{ex}

Our approach to the cyclic sieving phenomenon is based on the fake degree polynomial.
For each $r\ge 0$, let $\Sym_r$ be the $\bZ$-module of homogeneous symmetric functions
of degree $r$. Then, for each $r\ge 0$, $\fd_r\colon\Sym_r\rightarrow\bZ[q]$ is a morphism
of $\bZ$-modules. This is determined by the properties
\begin{equation*}
  \fd_{r+s}(f\cdot g) = \qbinom{r+s}{r} \fd_{r}(f)\fd_s(g)\qquad \fd_r(h_r)=1
\end{equation*}
where $f\in\Sym_r$, $g\in\Sym_s$ and $h_r$ is the complete homogeneous symmetric function.
Alternatively, the fake degree polynomial can be defined in terms of the principal 
specialisation.

On the basis of Schur functions the fake degree is given by
\begin{equation*}
 \fd (s_\lambda) = \sum_T q^{\maj(T)}
\end{equation*}
where the sum is over standard tableaux of shape $T$ and $\maj(T)$ is the
major index of the tableau $T$.

\begin{thm}\label{thm:csp} For any representation, $\fS_r\rightarrow \GL(U)$
the restriction to the cyclic subgroup generated by the long cycle
has character 
\[
\fd_r\big(\ch(U)\big) \mod{q^r-1}.
\]
\end{thm}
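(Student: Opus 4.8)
The plan is to reduce the statement to the known cyclic sieving phenomenon for Young subgroup cosets (Example~\ref{ex:csp}) by expanding $\ch(U)$ in a convenient basis and checking that both sides of the claimed identity are $\bZ$-linear in $\ch(U)$. First I would observe that the assignment sending a representation $\fS_r\to\GL(U)$ to the character (in $\bZ[q]/\langle q^r-1\rangle$) of its restriction along the long cycle $c$ is additive on short exact sequences, hence factors through the representation ring and defines a $\bZ$-linear map $\Sym_r\to\bZ[q]/\langle q^r-1\rangle$ via $U\mapsto\ch(U)$. Likewise $f\mapsto \fd_r(f)\bmod q^r-1$ is $\bZ$-linear on $\Sym_r$ by construction. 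So it suffices to verify the identity on any $\bZ$-basis of $\Sym_r$.

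The natural basis to use is $\{h_\lambda\}_{\lambda\vdash r}$, since $h_\lambda = h_{\lambda_1}\cdots h_{\lambda_\ell}$ is the Frobenius character of the permutation module $K[\fS_\lambda\backslash\fS_r]$, i.e.\ of the induction product of trivial modules. For this module, the restriction to $\langle c\rangle$ is exactly the permutation representation $X_\lambda$ of Example~\ref{ex:csp}, whose character modulo $q^r-1$ is $\chi(X_\lambda)$. On the other side, the multiplicativity $\fd_{r+s}(f\cdot g)=\qbinom{r+s}{r}\fd_r(f)\fd_s(g)$ together with $\fd_r(h_r)=1$ gives, by an immediate induction on $\ell$,
\[
\fd_r(h_\lambda) = \qbinom{r}{\lambda},
\]
the $q$-multinomial. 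Thus on the basis element $h_\lambda$ the claimed congruence reads $\qbinom{r}{\lambda}\equiv\chi(X_\lambda)\pmod{q^r-1}$, which is precisely the content of Example~\ref{ex:csp}. Invoking that example for every $\lambda\vdash r$ and extending $\bZ$-linearly finishes the proof.

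I expect the only real subtlety — and the step I would state most carefully — is the claim that $U\mapsto(\text{character of }U\!\downarrow_{\langle c\rangle})$ genuinely descends to a well-defined $\bZ$-linear function of $\ch(U)\in\Sym_r$ rather than of $U$ itself; this needs that $\ch$ induces an isomorphism from the representation ring of $\fS_r$ onto $\Sym_r$ (so that linear relations among symmetric functions come from virtual-module relations, on which restriction is additive), and one must allow virtual representations to make the span argument work, which is harmless since characters of restrictions are additive. Everything else is bookkeeping: the induction computing $\fd_r(h_\lambda)$ is routine, and the identification of the restricted permutation module with $X_\lambda$ is immediate from the definition of the induction product as $K[\fS_\lambda\backslash\fS_r]$ together with the fact that $\fS_r$ acts transitively with $c$ a full cycle.
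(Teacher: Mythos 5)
Your proof is correct and takes essentially the same route as the paper's own elementary proof: both reduce the claim to a comparison of two $\bZ$-linear maps $\Sym_r\to\bZ[q]/\langle q^r-1\rangle$ and verify agreement on the basis $\{h_\lambda\}$ by identifying the restricted module with $X_\lambda$ from Example~\ref{ex:csp} and computing $\fd_r(h_\lambda)=\qbinom{r}{\lambda}$. You spell out the linearity/well-definedness point more explicitly than the paper does, which is a reasonable clarification but not a different argument.
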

This can be proved and generalised using Springer's theory since the symmetric group is a
real reflection group and the long cycle is a regular element. Alternatively, there is an
elementary proof which deduces the result from Example \ref{ex:csp}.
For any partition $\lambda\vdash r$, the characteristic
map of the linear representation of $X_\lambda$ is $h_\lambda$ and
$\fd_r(h_\lambda)=\qbinom{r}{\lambda}$. Hence, by Example \ref{ex:csp}, the two linear
maps $\Sym_r\rightarrow \bZ[q]/\langle q^r-1\rangle$ agree on a basis and so are equal.

This gives a general technique from~\cite{1512} to obtain a cyclic sieving polynomial $P(q)$:
\begin{cor}\label{cor:spr}
Let $U$ be a representation of $\fS_r$ and let $X\subset U$ be a basis which is preserved
by the long cycle $c$. Take $P=\fd_r(\ch(U))$; then $(X,c,P)$ exhibits the cyclic sieving
phenomenon.
\end{cor}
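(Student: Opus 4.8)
The plan is to combine Theorem~\ref{thm:csp} with the defining characterisation of the cyclic sieving phenomenon given just before Example~\ref{ex:csp}. Recall that a triple $(X,c,P)$ with $P\in\bZ[q]$ exhibits cyclic sieving precisely when $P\equiv\chi(X)\bmod{q^r-1}$, where $\chi(X)$ is the character (in $\bZ[q]/\langle q^r-1\rangle$) of the linear representation attached to the permutation action of $c$ on $X$. So the entire task reduces to identifying $\chi(X)$ with $\fd_r\big(\ch(U)\big)\bmod{q^r-1}$.

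First I would observe that, since $X\subseteq U$ is a basis of $U$ that is preserved by the long cycle $c$, the cyclic group $C=\langle c\rangle$ acts on $U$ by permuting this basis; that is, the restriction of $U$ to $C$ is exactly the linearisation of the permutation representation $X$. Hence $\chi(X)$, as an element of $\bZ[q]/\langle q^r-1\rangle$, equals the character of $\operatorname{Res}^{\fS_r}_{C} U$. Next I would invoke Theorem~\ref{thm:csp} directly: for the representation $\fS_r\to\GL(U)$, the character of its restriction to $C=\langle c\rangle$ is $\fd_r\big(\ch(U)\big)\bmod{q^r-1}$. Combining the two identifications gives $\chi(X)\equiv\fd_r\big(\ch(U)\big)\equiv P\bmod{q^r-1}$, which is exactly the condition for $(X,c,P)$ to exhibit the cyclic sieving phenomenon.

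There is essentially no obstacle here — the statement is a formal consequence of Theorem~\ref{thm:csp} once one notes that a $c$-stable basis makes $\operatorname{Res}_C U$ a permutation representation. The only point requiring a word of care is that $\ch(U)$ genuinely refers to the Frobenius character of the $\fS_r$-module structure on $U$ (not merely its restriction to $C$), so that Theorem~\ref{thm:csp} applies; this is guaranteed by the hypothesis that $U$ is a representation of $\fS_r$. I would therefore present the argument in two or three sentences rather than as a multi-step proof.
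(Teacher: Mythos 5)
Your proof is correct and is precisely the deduction the paper intends: the corollary is stated without further argument immediately after Theorem~\ref{thm:csp}, and the only content to supply is the observation that a $c$-stable basis $X$ of $U$ makes $\operatorname{Res}^{\fS_r}_{\langle c\rangle} U$ the linearisation of the permutation representation $X$, so that Theorem~\ref{thm:csp} identifies $\chi(X)$ with $\fd_r\big(\ch(U)\big)$ modulo $q^r-1$, which is the defining condition of the cyclic sieving phenomenon.
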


The most straightforward application of this theorem is to permutation representations
of $\fS_r$. Here the representation and the basis are given so it remains to determine
the characteristic map and its fake degree polynomial.

\section{Monoidal categories}
In this section we give a concise account of symmetric monoidal and
pivotal categories.  Here we only discuss the strict versions of
these concepts; this is for brevity and simplicity and is justified
since all the examples of interest are strict.

Monoidal and symmetric monoidal categories were introduced in \cite{MR0170925}
and a coherence theorem is given in \cite{MR593254}. A more recent reference
is \cite{MR1268782}. In this paper we use the diagram calculus for
monoidal and symmetric monoidal categories described in \cite[Sections 1,2]{MR1113284}.

\subsection{Monoidal categories}
\begin{defn}
A \emph{strict monoidal category} consists of the following data:
\begin{itemize}
 \item a category $\sC$,
 \item a functor $\otimes \colon \sC \times \sC\rightarrow\sC$,
 \item an object $I\in\sC$.
\end{itemize}
The functor $\otimes$ is required to be associative. This is the
condition that
\begin{equation*}
\otimes \circ (\otimes \times\id) = \otimes \circ (\id \times \otimes)
\end{equation*}
as functors $\sC \times \sC \times \sC\rightarrow\sC$. Equivalently,
the following diagram commutes:
\begin{equation*}\begin{CD}
\sC \times \sC \times \sC @>{\otimes \times\id}>> \sC \times \sC \\
@V{\id \times \otimes}VV @VV{\otimes}V \\
\sC \times \sC @>>{\otimes}> \sC
\end{CD}\end{equation*}
The object $I$ is a left and right unit for $\otimes$. This means
that $\id_I \otimes \phi = \phi$ and $\phi \otimes \id_I = \phi$ for
all morphisms $\phi$.
\end{defn}

Let $\sC$ and $\sC'$ be strict monoidal categories. A functor
$F\colon\sC\rightarrow\sC'$ is \emph{monoidal}
if $F(I)=I'$ and the following diagram commutes
\begin{equation*}\begin{CD}
\sC \times \sC  @>{F\times F}>> \sC' \times \sC' \\
@V{\otimes}VV @VV{\otimes}V \\
\sC @>>{F}> \sC'
\end{CD}\end{equation*}

In this article we will make extensive use of string diagrams for
morphisms in a monoidal
category. A morphism is represented by a graph embedded in a
rectangle with boundary points
on the top and bottom edges. The edges of the graph are directed and
are labelled by objects.
The vertices are labelled by morphisms. A morphism $f\colon
x\rightarrow y$ is drawn as follows:
\begin{equation*}
\begin{tikzpicture}[line width=2pt]
\fill[color=blue!20] (-1,-1.5) rectangle (1,1.5);
\draw (0,1.5)[->] -- (0,1) node[anchor=west] {$x$}; \draw[->] (0,1) -- 
(0,-1);
\draw (0,-1) node[anchor=west] {$y$} -- (0,-1.5);
\draw (0,0) node[morphism] {$f$};
\end{tikzpicture}
\end{equation*}

The rules for combining diagrams are that composition is given by
stacking diagrams and the tensor product is given by putting diagrams
side by side.  The corresponding string diagrams are shown in
Figure~\ref{fig:comp}.

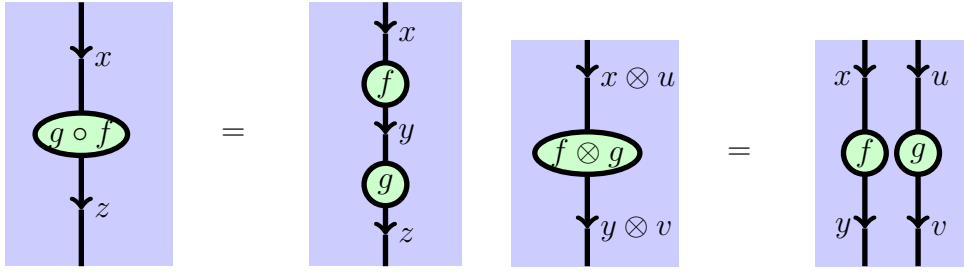
\begin{figure}
\begin{tikzpicture}[line width=2pt]
\fill[color=blue!20] (-1,-1.75) rectangle (1.2,1.75);
\draw (0,1.75)[->] -- (0,1) node[anchor=west] {$x$};
\draw[->] (0,1) -- (0,-1);
\draw (0,-1) node[anchor=west] {$z$} -- (0,-1.75);
\draw (0,0) node[morphism] {$g\circ f$};
\draw (2,0) node {$=$};
\fill[color=blue!20] (3,-1.75) rectangle (5,1.75);
\draw (4,1.75)[->] -- (4,1.333);
\draw (4,1.333)[->] node[anchor=west] {$x$} -- (4,0);
\draw (4,0)[->] node[anchor=west] {$y$} -- (4,-1.333);
\draw (4,-1.333) node[anchor=west] {$z$} -- (4,-1.75);
\draw (4,0.666) node[morphism] {$f$};
\draw (4,-0.666) node[morphism] {$g$};
\end{tikzpicture}
%\end{center}
\hfill
%\begin{center}
\begin{tikzpicture}[line width=2pt]
\fill[color=blue!20] (-1,-1.5) rectangle (1.2,1.5);
\draw (0,1.5)[->] -- (0,1) node[anchor=west] {$x\otimes u$};
\draw[->] (0,1) -- (0,-1);
\draw (0,-1) node[anchor=west] {$y\otimes v$} -- (0,-1.5);
\draw (0,0) node[morphism] {$f\otimes g$};
\draw (2,0) node {$=$};
\fill[color=blue!20] (3,-1.5) rectangle (5,1.5);
\draw (3.65,1.5)[->] -- (3.65,1) node[anchor=east] {$x$};
\draw[->] (3.65,1) -- (3.65,-1);
\draw (3.65,-1) node[anchor=east] {$y$} -- (3.65,-1.5);
\draw (3.65,0) node[morphism] {$f$};
\draw (4.35,1.5)[->] -- (4.35,1) node[anchor=west] {$u$};
\draw[->] (4.35,1) -- (4.35,-1);
\draw (4.35,-1) node[anchor=west] {$v$} -- (4.35,-1.5);
\draw (4.35,0) node[morphism] {$g$};
\end{tikzpicture}
\caption{Composition and tensor product}\label{fig:comp}
\end{figure}
Rules for simplifying diagrams are that vertices labelled by an
identity morphism can be omitted and edges labelled by the trivial
object, $I$, can be omitted.  The string diagram for the rule that an
identity morphism can be omitted is shown in Figure~\ref{fig:id}.
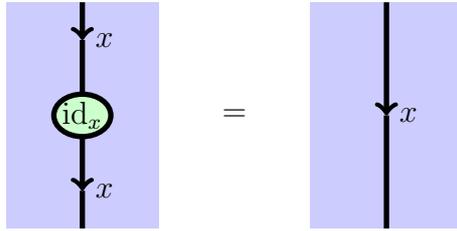
\begin{figure}
\begin{tikzpicture}[line width=2pt]
\fill[color=blue!20] (-1,-1.5) rectangle (1,1.5);
\draw (0,1.5)[->] -- (0,1) node[anchor=west] {$x$}; \draw[->] (0,1) -- 
(0,-1);
\draw (0,-1) node[anchor=west] {$x$} -- (0,-1.5);
\draw (0,0) node[morphism] {$\id_x$};
\draw (2,0) node {$=$};
\fill[color=blue!20] (3,-1.5) rectangle (5,1.5);
\draw (4,1.5)[->] -- (4,0); \draw (4,0) node[anchor=west] {$x$} -- (4,-1.5);
\end{tikzpicture}
\caption{Identity morphism}\label{fig:id}
\end{figure}

\subsection{Symmetric monoidal categories}
\begin{defn} A \emph{strict symmetric monoidal category} is a strict
monoidal category $\sC$ together with
natural isomorphisms $\alpha(x,y)\colon x\otimes y\rightarrow
y\otimes x$ for all objects $x,y$.
These are required to satisfy the following conditions:
\begin{equation}\label{a:square}
 \alpha(x,y)\circ\alpha(y,x) = \id_{y\otimes x}
\end{equation}
for all $x,y$ and
\begin{align}
 \alpha(v,x) \circ (f\otimes \id_x) &= (\id_x\otimes f) \circ
\alpha(u,x) \label{a:slide1} \\
\alpha(x,v) \circ (\id_x\otimes f) &= (f\otimes\id_x) \circ
\alpha(x,u) \label{a:slide2}
\end{align}
for all $f\colon u\rightarrow v$ and all $x$, and
\begin{align}
\alpha(x,y\otimes
z)&=\id_y\otimes\alpha(x,z)\,\circ\,\alpha(x,y)\otimes\id_z
\label{a:tensor1} \\
\alpha(x\otimes
y,z)&=\alpha(x,z)\otimes\id_y\,\circ\,\id_x\otimes\alpha(y, z)
\label{a:tensor2}
\end{align}
for all $x,y,z$.

A \emph{symmetric monoidal functor} is a functor compatible with this
structure.
\end{defn}

The string diagram for $\alpha(x,y)$ is as follows:
\begin{equation*}
\begin{tikzpicture}[line width=2pt]
\fill[color=blue!20] (-1,-1.2) rectangle (1,1.2);
\draw (-0.5,-1) -- (-0.5,-1.2);
\draw (0.5,-1) -- (0.5,-1.2);
\draw (-0.5,1.2)[->] -- (-0.5,1);
\draw (0.5,1.2)[->] -- (0.5,1);
\draw (-0.5,1)[->] node[anchor=north east] {$x$}  .. controls (-0.5,0) and (0.5,0) .. (0.5,-1) node[anchor=south west] {$x$};
\draw (0.5,1)[->] node[anchor=north west] {$y$}  .. controls (0.5,0) and (-0.5,0) .. (-0.5,-1) node[anchor=south east] {$y$};
\end{tikzpicture}
\end{equation*}
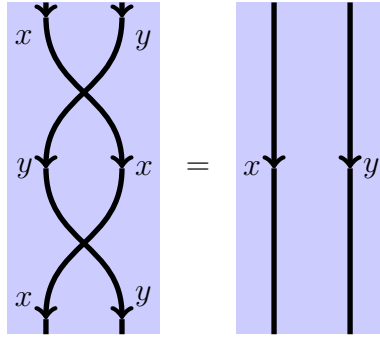
\begin{figure}
\begin{tikzpicture}[line width=2pt]
\fill[color=blue!20] (-1,-1.2) rectangle (1,3.2);
\draw (-0.5,-1) -- (-0.5,-1.2);
\draw (0.5,-1) -- (0.5,-1.2);
\draw (-0.5,3.2)[->] -- (-0.5,3) node[anchor=north east] {$x$};
\draw (0.5,3.2)[->] -- (0.5,3)  node[anchor=north west] {$y$};
\draw (-0.5,1)[->]   .. controls (-0.5,0) and (0.5,0) .. (0.5,-1)
node[anchor=south west] {$y$};
\draw (0.5,1)[->]  .. controls (0.5,0) and (-0.5,0) .. (-0.5,-1)
node[anchor=south east] {$x$};
\draw (-0.5,3)[->]  .. controls (-0.5,2) and (0.5,2) .. (0.5,1)
node[anchor= west] {$x$};
\draw (0.5,3)[->]  .. controls (0.5,2) and (-0.5,2) .. (-0.5,1)
node[anchor= east] {$y$};
\draw (1.5,1) node {$=$};
\fill[color=blue!20] (2,-1.2) rectangle (4,3.2);
\draw (2.5,3.2)[->] -- (2.5,1);
\draw (2.5,1) node[anchor= east] {$x$} -- (2.5,-1.2);
\draw (3.5,3.2)[->] -- (3.5,1);
\draw (3.5,1) node[anchor= west] {$y$} -- (3.5,-1.2);
\end{tikzpicture}
\caption{Symmetry}\label{fig:sym}
\end{figure}
The string diagram for condition~\eqref{a:square} is shown in
Figure~\ref{fig:sym}, the string diagrams for
conditions~\eqref{a:slide1} and~\eqref{a:slide2} are shown in
Figure~\ref{fig:rels-slide}, and those for
conditions~\eqref{a:tensor1} and~\eqref{a:tensor2} are shown in
Figure~\ref{fig:rels-tensor}.
\begin{figure}
\begin{tikzpicture}[line width=2pt]
\fill[color=blue!20] (-1,-1.2) rectangle (1,3.2);
\draw (-0.5,-1) node[anchor=south east] {$x$} -- (-0.5,-1.2);
\draw (0.5,-1) node[anchor=south west] {$v$} -- (0.5,-1.2);
\draw (-0.5,3.2)[->] -- (-0.5,1);
\draw (0.5,3.2)[->] -- (0.5,1);
\draw (-0.5,3.2)[->] -- (-0.5,3) node[anchor=north east] {$u$};
\draw (0.5,3.2)[->] -- (0.5,3)  node[anchor=north west] {$x$};
\draw (-0.5,1)[->] node[anchor=east] {$v$}  .. controls (-0.5,0) and
(0.5,0) .. (0.5,-1) ;
\draw (0.5,1)[->]  node[anchor=west] {$x$} .. controls (0.5,0) and
(-0.5,0) .. (-0.5,-1);
\draw (-0.5,2) node[morphism] {$f$};
\draw (1.5,1) node {$=$};
\fill[color=blue!20] (2,-1.2) rectangle (4,3.2);
\draw (2.5,3.2)[->] -- (2.5,3) node[anchor=north east] {$u$};
\draw (3.5,3.2)[->] -- (3.5,3)  node[anchor=north west] {$x$};
\draw (2.5,1) node[anchor=  east] {$x$} -- (2.5,-1.2);
\draw (2.5,-1) node[anchor= south east] {$x$};
\draw (3.5,1) node[anchor=  west] {$u$} -- (3.5,-1.2);
\draw (3.5,-1) node[anchor= south west] {$v$};
\draw (2.5,3)[->]  .. controls (2.5,2) and (3.5,2) .. (3.5,1) ;
\draw (3.5,3)[->]  .. controls (3.5,2) and (2.5,2) .. (2.5,1) ;
\draw (3.5,0) node[morphism] {$f$};
\end{tikzpicture}
\hfill
\begin{tikzpicture}[line width=2pt]
\fill[color=blue!20] (-1,-1.2) rectangle (1,3.2);
\draw (-0.5,-1) node[anchor=south east] {$v$} -- (-0.5,-1.2);
\draw (0.5,-1) node[anchor=south west] {$x$} -- (0.5,-1.2);
\draw (-0.5,3.2)[->] -- (-0.5,1) node[anchor=  east] {$x$};
\draw (0.5,3.2)[->] -- (0.5,1) node[anchor=  west] {$v$};
\draw (-0.5,3.2)[->] -- (-0.5,3) node[anchor=north east] {$x$};
\draw (0.5,3.2)[->] -- (0.5,3)  node[anchor=north west] {$u$};
\draw (-0.5,1)[->]   .. controls (-0.5,0) and (0.5,0) .. (0.5,-1) ;
\draw (0.5,1)[->]  .. controls (0.5,0) and (-0.5,0) .. (-0.5,-1) ;
\draw (0.5,2) node[morphism] {$f$};
\draw (1.5,1) node {$=$};

\fill[color=blue!20] (2,-1.2) rectangle (4,3.2);
\draw (2.5,3.2)[->] -- (2.5,3) node[anchor=north east] {$x$};
\draw (3.5,3.2)[->] -- (3.5,3)  node[anchor=north west] {$u$};
\draw (2.5,1) -- (2.5,-1.2);
\draw (3.5,1) -- (3.5,-1.2);
\draw (2.5,-1) node[anchor = south east] {$v$};
\draw (3.5,-1) node[anchor = south west] {$x$};
\draw (2.5,3)[->]  .. controls (2.5,2) and (3.5,2) .. (3.5,1)
node[anchor=  west] {$x$};
\draw (3.5,3)[->]  .. controls (3.5,2) and (2.5,2) .. (2.5,1)
node[anchor=  east] {$u$};
\draw (2.5,0) node[morphism] {$f$};
\end{tikzpicture}
\caption{Relations \eqref{a:slide1} and
  \eqref{a:slide2}}\label{fig:rels-slide}
\end{figure}
\begin{figure}
\begin{tikzpicture}[line width=2pt]
\fill[color=blue!20] (-1.75,-1.2) rectangle (1.75,1.2);
\draw (-0.5,-1) -- (-0.5,-1.2);
\draw (0.5,-1) -- (0.5,-1.2);
\draw (-0.5,1.2)[->] -- (-0.5,1);
\draw (0.5,1.2)[->] -- (0.5,1);
\draw (-0.5,1)[->] node[anchor=north east] {$x\otimes y$}
.. controls (-0.5,0) and (0.5,0) .. (0.5,-1) node[anchor=south west]
{$x\otimes y$};
\draw (0.5,1)[->] node[anchor=north west] {$z$}  .. controls (0.5,0)
and (-0.5,0) .. (-0.5,-1) node[anchor=south east] {$z$};
\draw (2.1,0) node {$=$};
\begin{scope}[xshift=3.5cm]
\fill[color=blue!20] (-1,-1.2) rectangle (2,1.2);
\draw (-0.5,-1) -- (-0.5,-1.2);
\draw (0.5,-1) -- (0.5,-1.2);
\draw (1.5,-1) -- (1.5,-1.2);
\draw (-0.5,1.2)[->] -- (-0.5,1);
\draw (0.5,1.2)[->] -- (0.5,1);
\draw (1.5,1.2)[->] -- (1.5,1);
\draw (-0.5,1)[->] node[anchor=north east] {$x$}  .. controls
(-0.5,0) and (0.5,0) .. (0.5,-1) node[anchor=south west] {$x$};
\draw (0.5,1)[->] node[anchor=north west] {$y$}  .. controls (0.5,0)
and (1.5,0) .. (1.5,-1) node[anchor=south west] {$y$};
\draw (1.5,1)[->] node[anchor=north west] {$z$}  .. controls (1.5,0)
and (-0.5,0) .. (-0.5,-1) node[anchor=south east] {$z$};
\end{scope}
\end{tikzpicture}

\vspace{1cm}

\begin{tikzpicture}[line width=2pt]
\fill[color=blue!20] (-1.75,-1.2) rectangle (1.75,1.2);
\draw (-0.5,-1) -- (-0.5,-1.2);
\draw (0.5,-1) -- (0.5,-1.2);
\draw (-0.5,1.2)[->] -- (-0.5,1);
\draw (0.5,1.2)[->] -- (0.5,1);
\draw (-0.5,1)[->] node[anchor=north east] {$x$}  .. controls
(-0.5,0) and (0.5,0) .. (0.5,-1) node[anchor=south west] {$x$};
\draw (0.5,1)[->] node[anchor=north west] {$y\otimes z$}  .. controls
(0.5,0) and (-0.5,0) .. (-0.5,-1) node[anchor=south east] {$y\otimes
z$};
\draw (2.1,0) node {$=$};
\begin{scope}[xshift=3.5cm]
\fill[color=blue!20] (-1,-1.2) rectangle (2,1.2);
\draw (-0.5,-1) -- (-0.5,-1.2);
\draw (0.5,-1) -- (0.5,-1.2);
\draw (1.5,-1) -- (1.5,-1.2);
\draw (-0.5,1.2)[->] -- (-0.5,1);
\draw (0.5,1.2)[->] -- (0.5,1);
\draw (1.5,1.2)[->] -- (1.5,1);
\draw (-0.5,1)[->] node[anchor=north east] {$x$}  .. controls
(-0.5,0) and (1.5,0) .. (1.5,-1) node[anchor=south west] {$x$};
\draw (0.5,1)[->] node[anchor=north east] {$y$}  .. controls (0.5,0)
and (-0.5,0) .. (-0.5,-1) node[anchor=south west] {$y$};
\draw (1.5,1)[->] node[anchor=north west] {$z$}  .. controls (1.5,0)
and (0.5,0) .. (0.5,-1) node[anchor=south east] {$z$};
\end{scope}
\end{tikzpicture}
\caption{Relations \eqref{a:tensor1} and \eqref{a:tensor2}}\label{fig:rels-tensor}
\end{figure}
Note that applying the relations \eqref{a:slide1}, \eqref{a:slide2}
to $f=\alpha$
give the braid relations for the morphisms $\alpha$.

\subsection{Duality}
\begin{defn}
Let $x$ and $y$ be objects in a monoidal category. Then $x$ is a
\emph{left dual}
of $y$ and $y$ is a \emph{right dual} of $x$ means that we are given
evaluation and
coevaluation morphisms
$I\rightarrow x\otimes y$ and $y\otimes x\rightarrow I$ such that
both of the
following composites are identity morphisms
\begin{align*}
 x\rightarrow I\otimes x\rightarrow x\otimes y\otimes x\rightarrow
x\otimes I\rightarrow x \label{s:trl} \\
 y\rightarrow y\otimes I\rightarrow y\otimes x\otimes y\rightarrow
I\otimes y\rightarrow y. %\label{s:trr}
\end{align*}
The object $x$ is \emph{dual} to $y$ if it is both a left dual and a
right dual.
\end{defn}

In the string diagrams we adopt the convention depicted in
Figure~\ref{fig:dual}.
\begin{figure}
 \begin{tikzpicture}[line width=2pt]
\fill[color=blue!20] (-1,-0.5) rectangle (1,0.5);
\draw (0,0.5)[->] -- (0,0) node[anchor=east] {$y$};
\draw (0,0) -- (0,-0.5);
\draw (1.5,0) node {$=$};
\fill[color=blue!20] (2,-0.5) rectangle (4,0.5);
\draw (3,-0.5)[->] -- (3,0) node[anchor=east] {$x$};
\draw (3,0) -- (3,0.5);
 \end{tikzpicture}
\caption{Convention for $x$ being a dual of $y$}\label{fig:dual}
\end{figure}
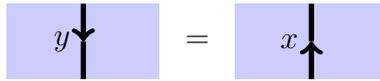
Using this convention and omitting the edge labelled $I$ the string diagrams
for the evaluation and coevaluation morphisms are:
\begin{equation*}
\begin{tikzpicture}[line width=2pt]
\fill[color=blue!20] (-1,-1) rectangle (1,0);
\draw (0.5,-1)[->] arc (0:90:0.5);
\draw (0,-0.5) node[anchor=south] {$x$} arc (90:180:0.5);
\end{tikzpicture}
\qquad
\begin{tikzpicture}[line width=2pt]
\fill[color=blue!20] (-1,-1) rectangle (1,0);
\draw (0.5,0)[->] arc (0:-90:0.5);
\draw (0,-0.5) node[anchor=north] {$x$} arc (-90:-180:0.5);
\end{tikzpicture}
\end{equation*}

The string diagrams for the conditions on these morphisms are shown
in Figure~\ref{fig:cupcap}.
\begin{figure}
\begin{tikzpicture}[line width=2pt]
\fill[color=blue!20] (-1.25,-1) rectangle (1.25,1);
\draw[->] (1,1) -- (1,0) arc(0:-90:0.5);
\draw[->] (0.5,-0.5) node[anchor=north] {$x$} arc(-90:-180:0.5)
arc(0:90:0.5);
\draw (-0.5,0.5) node[anchor=south] {$x$} arc(90:180:0.5) -- (-1,-1);
\draw (1.5,0) node {$=$};
\fill[color=blue!20] (1.75,-1) rectangle (2.5,1);
\draw[->] (2,1) -- (2,0);
\draw (2,0) node[anchor=west] {$x$} -- (2,-1);
\end{tikzpicture}
\qquad\qquad
\begin{tikzpicture}[line width=2pt]
\fill[color=blue!20] (-1.25,-1) rectangle (1.25,1);
\draw[->] (1,-1) -- (1,0) arc(0:90:0.5);
\draw[->] (0.5,0.5) node[anchor=south] {$x$} arc(90:180:0.5) arc(0:-90:0.5);
\draw (-0.5,-0.5) node[anchor=north] {$x$} arc(-90:-180:0.5) -- (-1,1);
\draw (1.5,0) node {$=$};
\fill[color=blue!20] (1.75,-1) rectangle (2.5,1);
\draw[->] (2,-1) -- (2,0);
\draw (2,0) node[anchor=west] {$x$} -- (2,1);
\end{tikzpicture}
\caption{Relations for duals}\label{fig:cupcap}
\end{figure}
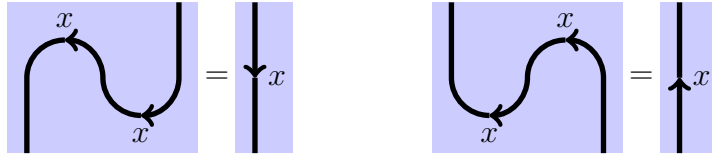

It follows that there are natural homomorphisms
\begin{equation*}
  \Hom(y\otimes u,v)\rightarrow \Hom(u,x\otimes v)
\end{equation*}
sending $\phi\colon y\otimes u\rightarrow v$ to the composite
\begin{equation*}
  u\rightarrow I\otimes u\rightarrow x\otimes y\otimes u\rightarrow
  x\otimes v.
\end{equation*}
Similarly there are natural homomorphisms
\begin{equation*}
  \Hom(u,x\otimes v)\rightarrow \Hom(y\otimes u,v).
\end{equation*}
sending $\phi\colon u\rightarrow x\otimes v$ to the composite
\begin{equation*}
  y\otimes u\rightarrow y\otimes x\otimes v\rightarrow I\otimes
  v\rightarrow  v.
\end{equation*}
It follows from the relations in Figure~\ref{fig:cupcap} that these
are inverse isomorphisms.

\begin{ex} Take a category $\sC$ and consider the category whose objects are
functors $F\colon \sC \rightarrow \sC$ and whose morphisms are natural
transformations. This is a monoidal category with $\otimes$ given by
composition.
Then $F$ is left dual to $G$ in this monoidal category is equivalent to
$F$ is left adjoint to $G$.
\end{ex}

\begin{ex} Let $K$ be a field. The category of vector spaces over $K$ is
a symmetric monoidal category. A vector space has a dual if and only if
it is finite dimensional. More generally, Let $K$ be a commutative ring.
The category of $K$-modules is a symmetric monoidal category.
A $K$-module has a dual if and only if it is finitely generated and
projective.
\end{ex}

\begin{defn}
  A \Dfn{strict pivotal category} is a strict monoidal category with
  an antimonoidal antiinvolution $\ast$ such that $x^\ast$ is a dual of
  $x$ for all objects $x$.
\end{defn}

\subsection{Rotation}
Let $x$ be an object in a monoidal category. Then $\Hom(I,\otimes^r
x)$ is the set of \emph{invariant tensors}.  In this section we
discuss two constructions of an action of the cyclic group on this
set each of which depends on additional structure. One construction,
which we call rotation, requires that $x$ has a dual.  The other
construction assumes that the category is symmetric monoidal. The
main result in this section to compare these two actions in the
situation when both are defined.

Let $x$ be an object of a symmetric monoidal category.  Then
$\otimes^r x$ has a natural action of $\fS_r$ where the action of the
Coxeter generator $s_i$ is given by the isomorphism
$(\otimes^{i-1}\id_x)\otimes\alpha(x,x)\otimes
(\otimes^{r-i-1}\id_x)$.  This induces an action of $\fS_r$ on the
set $\Hom(y,\otimes^r x)$ for any $y$. In particular for $y=I$ this
is an action of $\fS_r$ on the invariant tensors.

Assume $x$ has a dual $x^\ast$. Then, for any $y$, there is a natural
map $\Hom(I,y\otimes x)\rightarrow \Hom(I,x\otimes y)$ sending
$f\colon I\rightarrow y\otimes x$ to the composite
\begin{equation*}
 I\rightarrow x\otimes x^*\rightarrow x\otimes I\otimes x^*\rightarrow
x\otimes y\otimes x\otimes x^\ast\rightarrow x\otimes y\otimes I
\rightarrow x\otimes y
\end{equation*}

Taking $y=\otimes^{r-1}x$ gives the \emph{rotation map}
$\otimes^rx\rightarrow \otimes^rx$. Then, for any $x$, the $r$-th
power of the rotation map is the identity.

\begin{prop}\label{lem:rot} Let $x$ be an object of symmetric monoidal
category with dual $x^\ast$ such that the condition in Figure \ref{fig:con}
is satisfied.
\begin{figure}
\setlength{\tabcolsep}{0.6cm}
\begin{tabular}{cc}
\begin{tikzpicture}[line width=2pt]
\fill[color=blue!20] (-1,-1) rectangle (1,1);
\draw[->] (0.5,-1) .. controls (0.5,-0.5) and (-0.5,-0.5) .. (-0.5,0) arc(180:90:0.5);
\draw (0,0.5) node[anchor=south] {$x$} arc(90:0:0.5) .. controls (0.5,-0.5) and (-0.5,-0.5) .. (-0.5,-1);
\draw (1.5,0) node {$=$};
\fill[color=blue!20] (2,-1) rectangle (4,1);
\draw[->] (2.5,-1) -- (2.5,0) arc(180:90:0.5);
\draw (3,0.5) node[anchor=south] {$x$} arc(90:0:0.5) -- (3.5,-1);
\end{tikzpicture} &
\begin{tikzpicture}[line width=2pt]
\fill[color=blue!20] (-1,-1) rectangle (1,1);
\draw[->] (-0.5,-1) .. controls (-0.5,-0.5) and (0.5,-0.5) .. (0.5,0)
arc(0:90:0.5);
\draw (0,0.5) node[anchor=south] {$x$} arc(90:180:0.5) .. controls
(-0.5,-0.5) and (0.5,-0.5) .. (0.5,-1);
\draw (1.5,0) node {$=$};
\fill[color=blue!20] (2,-1) rectangle (4,1);
\draw[->] (3.5,-1) -- (3.5,0) arc(0:90:0.5);
\draw (3,0.5) node[anchor=south] {$x$} arc(90:180:0.5) -- (2.5,-1);
\end{tikzpicture} \\*[1cm]
\begin{tikzpicture}[line width=2pt]
\fill[color=blue!20] (-1,-1) rectangle (1,1);
\draw[->] (-0.5,1) .. controls (-0.5,0.5) and (0.5,0.5) .. (0.5,0)
arc(0:-90:0.5);
\draw (0,-0.5) node[anchor=north] {$x$} arc(-90:-180:0.5) .. controls
(-0.5,0.5) and (0.5,0.5) .. (0.5,1);
\draw (1.5,0) node {$=$};
\fill[color=blue!20] (2,-1) rectangle (4,1);
\draw[->] (2.5,1) -- (2.5,0) arc(180:270:0.5);
\draw (3,-0.5) node[anchor=north] {$x$} arc(-90:0:0.5) -- (3.5,1);
\end{tikzpicture} &
\begin{tikzpicture}[line width=2pt]
\fill[color=blue!20] (-1,-1) rectangle (1,1);
\draw[->] (0.5,1) .. controls (0.5,0.5) and (-0.5,0.5) .. (-0.5,0)
arc(180:270:0.5);
\draw (0,-0.5) node[anchor=north] {$x$} arc(-90:0:0.5) .. controls
(0.5,0.5) and (-0.5,0.5) .. (-0.5,1);
\draw (1.5,0) node {$=$};
\fill[color=blue!20] (2,-1) rectangle (4,1);
\draw[->] (2.5,1) -- (2.5,0) arc(180:270:0.5);
\draw (3,-0.5) node[anchor=north] {$x$} arc(-90:0:0.5) -- (3.5,1);
\end{tikzpicture}
\end{tabular}
\caption{Conditions on a representation}\label{fig:con}
\end{figure}
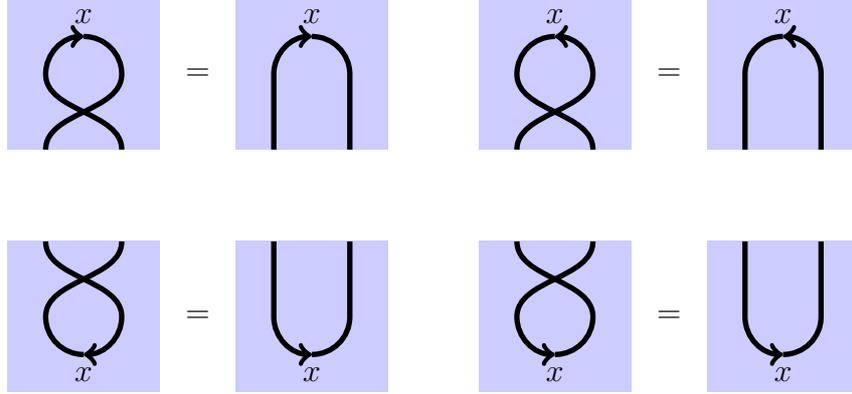
Then the action of the long cycle is given by rotation.
\end{prop}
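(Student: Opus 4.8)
The plan is to compare the two $\fS_r$-actions generator-by-generator is not possible, since rotation corresponds to the long cycle, which is not a Coxeter generator; instead I would verify that the rotation map, composed with the obvious identifications, coincides with the symmetric-group action of the long cycle $c=(1\,2\,\cdots\,r)$, written as a product of Coxeter generators $s_1 s_2\cdots s_{r-1}$. So the first step is to write out the string diagram for $s_1 s_2\cdots s_{r-1}$ acting on $\Hom(I,\otimes^r x)$: this is the diagram that takes the first strand and slides it past all the others, i.e.\ the "cabling" braid that pulls strand $1$ to the right-hand end. The second step is to write the string diagram for the rotation map from the definition given just before the proposition (insert a coevaluation $I\to x\otimes x^\ast$ at the left, run the old invariant tensor $f\colon I\to \otimes^{r-1}x\otimes x$ through, then cancel the resulting $x\otimes x^\ast$ at the right against an evaluation). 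The claim is that these two diagrams are equal as morphisms $I\to\otimes^r x$.

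The heart of the argument is a local move. Rotation pulls the last strand $x$ around the left side using a cup (coevaluation), whereas $s_1\cdots s_{r-1}$ pulls the first strand over the top using symmetry braidings. The conditions collected in Figure~\ref{fig:con} are exactly the compatibility relations between the braiding $\alpha(x,x)$, $\alpha(x,x^\ast)$ and the cup/cap morphisms that let one convert a "strand dragged around a cup" into a "strand dragged over a braiding" — they say that sliding a strand through the bend of a coevaluation or evaluation produces the same morphism whether one records the crossing with a symmetry or simply straightens the diagram. So the plan is: starting from the rotation diagram, repeatedly apply the four relations of Figure~\ref{fig:con} (together with the standard zig-zag identities of Figure~\ref{fig:cupcap} and the naturality/braid relations \eqref{a:slide1}--\eqref{a:tensor2}) to turn the cup-and-cap detour into a sequence of $r-1$ elementary crossings, each of which is one Coxeter generator $s_i$, and collect them into $s_1 s_2\cdots s_{r-1}$. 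Formally this is an induction on $r$: for $r=1$ both maps are the identity, and for the inductive step one peels off a single application of Figure~\ref{fig:con} to reduce rotation on $\otimes^r x$ to $s_1$ (moving strand $1$ past strand $2$) followed by rotation on the remaining $\otimes^{r-1}x$ block.

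The main obstacle — and the only place real care is needed — is bookkeeping the orientations: $x$ has a dual $x^\ast$ which need not be isomorphic to $x$, yet the statement is about $\otimes^r x$ with only $x$-labelled strands, so one must check that every $x^\ast$-labelled arc introduced by a coevaluation is cancelled by a matching evaluation before the diagram is reinterpreted, and that the braidings used are the legitimate $\alpha(x,x)$ rather than an ill-defined $\alpha(x,x^\ast)$ applied to the wrong objects. This is precisely what the hypothesis "the condition in Figure~\ref{fig:con} is satisfied" is buying us: it guarantees the self-duality data is coherent enough that each local replacement stays inside the $x$-only world. Once the diagrammatic identity $\mathrm{rot} = s_1 s_2\cdots s_{r-1}$ is established on $\Hom(I,\otimes^r x)$, restricting to $y=I$ gives the claim that the long cycle acts by rotation, since $(1\,2\,\cdots\,r) = s_1 s_2\cdots s_{r-1}$ in $\fS_r$ and both sides of our identity are the images of these two expressions under the respective actions.
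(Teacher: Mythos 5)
Your plan---translate both the long cycle and the rotation into string diagrams, then prove equality using the conditions of Figure~\ref{fig:con} together with the duality and coherence relations---is the same strategy the paper uses, but your inductive formulation has a genuine gap. You propose to "reduce rotation on $\otimes^r x$ to $s_1$ followed by rotation on the remaining $\otimes^{r-1}x$ block," yet rotation on $\otimes^{r-1}x$ is an endomap of $\Hom(I,\otimes^{r-1}x)$, whereas after applying a crossing the intermediate tensor still lives in $\Hom(I,\otimes^r x)$; the composite as written is not defined. To make the induction work you would have to introduce and prove a statement about a "partial rotation" that drags the distinguished strand past the first $r-1$ factors while leaving one factor fixed, and you have not done so. There is also a bookkeeping slip: the rotation defined just before the proposition pulls the \emph{last} tensor factor around to the front, so the first crossing one could hope to peel off is at position $r-1$, not at position $1$. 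Finally, the closing step "restricting to $y=I$ gives the claim" does not do what you want: at $y=I$ the rotation is a trivial endomap of $\Hom(I,x)$; the relevant specialisation is $y=\otimes^{r-1}x$, where you already are.

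The paper avoids all of this by treating strands $2,\dots,r$ as a single bundled object $y=\otimes^{r-1}x$. It shows in one diagrammatic calculation---applying Figure~\ref{fig:con} exactly once, alongside the zig-zag identities of Figure~\ref{fig:cupcap} and naturality \eqref{a:slide1}---that the rotation equals the braiding $\alpha(x,y)$, and it is the coherence relation \eqref{a:tensor1} that then unbundles $\alpha(x,\otimes^{r-1}x)$ into the product of elementary crossings realising the long cycle. No induction on $r$ is required, and the conditions of Figure~\ref{fig:con} do exactly one job: they say that a coevaluation or evaluation absorbs the adjacent braiding, which is what licences the single slide move. Your gloss of those conditions as "self-duality coherence" is not wrong in spirit, but you assert that the slides go through rather than checking them against the four equations in the figure; since the paper's motivating example is the Brauer category over a \emph{super} vector space, there is a sign to verify and the assertion is not free.
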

It is clear that all the conditions on the string diagrams can be
interpreted as sliding the strings in the diagrams.  The converse is
that if the strings in one diagram can be slid to obtain another
diagram then the two morphisms are equal.

\begin{proof}
  In the following diagrams the half circle represents an invariant
  tensor.  The first diagram shows the map given by the symmetry and
  the final diagram shows the rotation map.
  \begin{center}
    \begin{tikzpicture}[line width = 2pt]
      \fill[color=blue!20] (0,-1) rectangle  (2,3.5);
      \fill[morphism] (0.5,2.5) -- (1.5,2.5) arc (0:180:0.5);
      \draw (0.75,2.5) -- (0.75,1) .. controls (0.75,0.5) and (1.25,0.5) .. (1.25,0);
      \draw (1.25,2.5) -- (1.25,1) .. controls (1.25,0.5) and (0.75,0.5) .. (0.75,0);
      \draw (0.75,0) -- (0.75,-1);
      \draw (1.25,0) -- (1.25,-1);
    \end{tikzpicture}
    \raisebox{1.5cm}{$=$}
    \begin{tikzpicture}[line width = 2pt]
      \fill[color=blue!20] (0,-1) rectangle  (2.5,3.5);
      \fill[morphism] (0.5,2.5) -- (1.5,2.5) arc (0:180:0.5);
      \draw (0.75,2.5) -- (0.75,1) .. controls (0.75,0.5) and  (1.25,0.5) .. (1.25,0);
      \draw (1.25,2.5) -- (1.25,2) .. controls (1.25,1.5) and (1.75,1.5) .. (1.75,1);
      \draw (1.75,2) .. controls (1.75,1.5) and (1.25,1.5) .. (1.25,1);
      \draw (1.25,1) .. controls (1.25,0.5) and (0.75,0.5) .. (0.75,0);
      \draw (1.75,2) arc(180:0:0.25);
      \draw (2.25,2) -- (2.25,1);
      \draw (1.75,1) arc(-180:0:0.25);
      \draw (0.75,0) -- (0.75,-1);
      \draw (1.25,0) -- (1.25,-1);
    \end{tikzpicture}
    \raisebox{1.5cm}{$=$}
    \begin{tikzpicture}[line width = 2pt]
      \fill[color=blue!20] (0,-1) rectangle  (2.5,3.5);
      \fill[morphism] (0.5,2.5) -- (1.5,2.5) arc (0:180:0.5);
      \draw (0.75,2.5) -- (0.75,1) .. controls (0.75,0.5) and  (1.25,0.5) .. (1.25,0);
      \draw (1.25,2.5) -- (1.25,2) .. controls (1.25,1.5) and (1.75,1.5) .. (1.75,1);
      \draw (1.75,2) .. controls (1.75,1.5) and (1.25,1.5) .. (1.25,1);
      \draw (1.25,1) .. controls (1.25,0.5) and (0.75,0.5) .. (0.75,0);
      \draw (1.75,3) arc(180:0:0.25);
      \draw (2.25,3) -- (2.25,0);
      \draw (1.75,3) -- (1.75,2);
      \draw (1.75,1) -- (1.75,0);
      \draw (1.75,0) arc(-180:0:0.25);
      \draw (0.75,0) -- (0.75,-1);
      \draw (1.25,0) -- (1.25,-1);
    \end{tikzpicture}
    \raisebox{1.5cm}{$=$}
    \begin{tikzpicture}[line width = 2pt]
      \fill[color=blue!20] (0,-1) rectangle  (2.5,3.5);
      \fill[morphism] (0.5,2.5) -- (1.5,2.5) arc (0:180:0.5);
      \draw (0.75,2.5) -- (0.75,1) .. controls (0.75,0.5) and  (1.25,0.5) .. (1.25,0);
      \draw (1.25,2.5) -- (1.25,2) .. controls (1.25,1.5) and (1.75,1.5) .. (1.75,1);
      \draw (0.25,3) -- (0.25,1) .. controls (0.25,0.5) and (0.75,0.5) .. (0.75,0);
      \draw (2.25,3) arc(0:90:0.25);
      \draw (2,3.25) -- (0.5,3.25);
      \draw (0.5,3.25) arc(90:180:0.25);
      \draw (2.25,3) -- (2.25,0);
      \draw (1.75,1) -- (1.75,0);
      \draw (1.75,0) arc(-180:0:0.25);
      \draw (0.75,0) -- (0.75,-1);
      \draw (1.25,0) -- (1.25,-1);
    \end{tikzpicture}
\end{center}
The first equation is a consequence of the relations for duals in
Figure~\ref{fig:cupcap}, relation~\eqref{a:slide1} in
Figure~\ref{fig:rels-slide} (with $f$ being the coevaluation) and the
condition in Figure~\ref{fig:con}.  The second equation is the
condition that two pairs of tensors commute.  The third equation is
an application of~\eqref{a:slide1} and~\eqref{a:tensor1}.

For illustration, we demonstrate the first equation in more detail:
\begin{center}
  \begin{tikzpicture}[line width=2pt, scale=0.75]
    \fill[color=blue!20] (-0.5,-3) rectangle (0.5,3);
    \draw (0,-3) -- (0,3);
  \end{tikzpicture}
  \raisebox{1.5cm}{$=$}
  \begin{tikzpicture}[line width=2pt, scale=0.75]
    \fill[color=blue!20] (-1.25,-3) rectangle (1.25,3);
    \draw (0,3) .. controls (0,2) and (-1,2) .. (-1,1);
    \draw (-1, 1) -- (-1, 0) arc(180:360:0.5);
    \draw (1,0) arc(0:180:0.5);
    \draw (1, 0) -- (1, -1) .. controls (1,-2) and (0,-2) .. (0,-3);
  \end{tikzpicture}
  \raisebox{1.5cm}{$=$}
  \begin{tikzpicture}[line width=2pt, scale=0.75]
    \fill[color=blue!20] (-1.25,-3) rectangle (1.75,3);
    \draw (0,3) .. controls (0,2) and (-1,2) .. (-1,1);
    \draw (-1, 1) -- (-1, -0.5) arc(180:225:0.5) -- (1, -2.5)
    arc(225:415:0.35) -- (0.1, -0.75) arc(225:180:0.35) -- (0, 0);
    \draw (1,0) arc(0:180:0.5);
    \draw (1, 0) -- (1, -1) .. controls (1,-2) and (0,-2) .. (0,-3);
  \end{tikzpicture}
  \raisebox{1.5cm}{$=$}
  \begin{tikzpicture}[line width=2pt, scale=0.75]
    \fill[color=blue!20] (-1.25,-3) rectangle (1.75,3);
    \draw (0,3) .. controls (0,2) and (-1,2) .. (-1,1);
    \draw (-1, 1) -- (-1, -0.5) arc(180:225:0.5) -- (1, -2.5)
    arc(225:360:0.31) -- (1.5, 0) arc(0:180:0.25);
    \draw (1, 0) -- (1, -1) .. controls (1,-2) and (0,-2) .. (0,-3);
  \end{tikzpicture}
\end{center}
\end{proof}

The strategy then for constructing examples of the cyclic sieving phenomenon
is the following. Let $x$ be an object in a symmetric monoidal category
with a dual $x^\ast$. The combinatorial structure is a basis of $\Hom(I,\otimes^n x)$
which is preserved by rotation. Then the aim is to apply Theorem \ref{thm:csp}
to obtain an example of the cyclic sieving phenomenon.

\section{Diagram categories}\label{section:dc}

In this section we set up a categorical framework that contains the
Brauer category and the partition category as special cases.  In the
following sections we will employ this framework to compute the
Frobenius character of tensor powers of the defining representations
of the symplectic group and of the symmetric groups when the
dimension of the group is large enough.  A slight variation allows us
to do the same for the adjoint representations of the general linear
groups.

Before we give the definitions, we present two fundamental examples.

\subsection{The permutation category}\label{sec:perm}
The permutation category, $\dc_\cP$, has as objects the natural
numbers $\bN$ and, for two objects $r$ and $s$, morphisms
\begin{equation*}
  \Hom_{\dc_\cP}(r,s) %
  =\begin{cases}
    K\fS_r &\text{if $r=s$} \\
    \emptyset &\text{if $r\ne s$}
  \end{cases}
\end{equation*}

The composition of two morphisms is given by group multiplication.
The permutation category is a strict symmetric monoidal category, the
tensor product is the standard homomorphism
$\fS_r\times\fS_s\rightarrow \fS_{r+s}$.  There is also an
antiinvolution $\ast$ of $\fS_r$ which on the standard generators is
given by $s_i\mapsto s_{n-i}$, but the category is not pivotal, since
$x^\ast$ is not a dual of $x$.

The diagram of $\pi\in \fS_r$ is given by drawing $r$ lines in the
rectangle, connecting $r$ marked points on the top edge with $r$
marked points on the bottom edge.  For each $i\in [r]$ there is a
line with endpoints $i$ and $\pi(i)$.  Thus, an alternative way to
realise the composition of two morphisms $\pi$ and $\sigma$ is by
stacking the diagram of $\pi$ on top of the diagram of $\sigma$ so
that the two sets of $r$ endpoints on the middle edge match.

In general, there are many diagrams of a permutation, we consider two
diagrams to be equivalent if they represent the same permutation.  In
particular, we can require without loss of generality that the lines
in a diagram are drawn so that all intersections are transversal and
any two lines intersect at most once.

The category $\dc_\cP$ is generated as a monoidal category by the
single element
\begin{equation}\label{eq:perm}
\begin{tikzpicture}[line width = 2pt]
 \fill[color=blue!20] (0,0) rectangle  (2,1);
 \draw (0.5,0) .. controls (0.5,0.5) and (1.5,0.5) .. (1.5,1);
 \draw (1.5,0) .. controls (1.5,0.5) and (0.5,0.5) .. (0.5,1);
\end{tikzpicture}
\end{equation}

This example also illustrates the fundamental theorems. For any vector space
$V$ we have a symmetric monoidal functor from the permutation category
to vector spaces which on objects is given by $r\mapsto \otimes^rV$.
We will refer to this functor as an evaluation functor. On morphisms it
is given by the algebra homomorphisms
\begin{equation*}
 \ev_n\colon K\fS_r\rightarrow \End(\otimes^rV)
\end{equation*}

These homomorphisms are natural and so can be regarded as homomorphisms
\begin{equation*}
 \ev_n\colon K\fS_r\rightarrow \End_{\GL(V)}(\otimes^rV)
\end{equation*}
The first fundamental theorem is that these homomorphisms are surjective.

The second fundamental theorem describes the kernel and the image.
For $r\ge 0$, the antisymmetriser $E(r)\in K\fS_r$ is given by
\begin{equation*}
 E(r) = \frac1{r!}\sum_{\pi\in\fS_r} \varepsilon(\pi) \pi
\end{equation*}
This is the rank one central idempotent corresponding to the sign representation.

Put $n=\dim(V)$. Then $\ev_n(E(n+1))=0$ and so the evaluation homomorphisms
factor through the quotient by the ideal generated by $E(n+1)$ to give
\begin{equation*}
 \bev_n\colon K\fS_r/\left\langle E(n+1)\right\rangle\rightarrow \End_{\GL(V)}(\otimes^rV)
\end{equation*}

The second fundamental theorem is that these are isomorphisms.
Furthermore a basis of the image is given by the set
\begin{equation*}
  \{\bev_n(\pi) : \pi\in\fS_r, \lds(\pi) < n+1 \},
\end{equation*}
where $\lds(\pi)$ is the length of the longest decreasing subsequence
of $\pi$.  This can be proved using the Yang-Baxter elements
\begin{equation*}
 R_i(n)=\frac1{(n+1)}\Big( 1 + ns_i \Big)
\end{equation*}
which give a basis of $K\fS_r$ indexed by permutations.

\subsection{The Temperley-Lieb category}
The objects of the Temperley-Lieb category $\dc_\cT$ are again the
natural numbers.  For $r,s\in\bN$, the set of morphisms
$\Hom_{\dc_\cT}(r,s)$ is the free $K$-module whose basis is the set
of noncrossing perfect matchings of $[r]\amalg [s]$.  Thus
$\Hom_{\dc_\cT}(r,s)=\emptyset$ if $r+s$ is odd. If $r+s=2k$ then
$\dim \Hom_{\dc_\cT}(r,s)$ is the $k$-th Catalan number.

The diagram of a basis element $x\in \Hom_{\dc_\cT}(r,s)$, a
noncrossing perfect matching, consists of $(r+s)/2$ nonintersecting
arcs in the rectangle.  The composition of morphisms is defined on
diagrams and then extended bilinearly: let $x\in \Hom_{\dc_\cT}(r,s)$
and $y\in \Hom_{\dc_\cT}(s,t)$ be two diagrams.  Stack the diagram
for $x$ on top of the diagram for $y$ so that the two sets of $s$
endpoints on the middle edge match.  The resulting diagram possibly
has a number of loops, $c(x,y)$.  Removing these loops gives the
diagram $x\circ y$.  The composition of $x$ and $y$ is then defined
as
\begin{align*}
  x\cdot y = \delta^{c(x,y)} x\circ y.
\end{align*}

The Temperley-Lieb category is a strict pivotal category.  The tensor
product $x\otimes y$ of two diagrams is obtained by putting the
diagram of $x$ on the left of the diagram of $y$.  The dual of an
object is the object itself, the required isomorphism between
$\Hom_{\dc_\T}(x\otimes z,w)$ and $\Hom_{\dc_\T}(z,x\otimes w)$ is
given as follows.  Let $n$ be the diagram
\begin{equation}\label{eq:TL-duals}
  \begin{tikzpicture}[line width = 2pt]
    \fill[color=blue!20] (0,0) rectangle  (2,1);
    \draw (0.5,0) node[anchor=north] {$x$} arc (180:0:0.5);
  \end{tikzpicture}
\end{equation}
that consists of $x$ nested arcs.  Then the map
\[
d \mapsto (n\otimes \id_z)\cdot (\id_x\otimes d)
\]
is an isomorphism.  However, $\dc_\T$ is not symmetric unless
$\delta=\pm 2$.  

For $\delta=-2$ the symmetric structure is given by the homomorphisms
$\fS_r\rightarrow \Hom(r,r)$ given on the standard generators by
$\sigma_i\mapsto 1+u_i$. This gives a representation of $\fS_r$ on
$\Hom(0,r)$ such that the long cycle is rotation. For $r=2k$, the
Frobenius character is the Schur function associated to the partition
$(2^k)$. The fake degree of this Schur function is
\begin{equation*}
 \frac{1}{[k+1]_q}\qbinom{2k}{k}
\end{equation*}
This is the example of the cyclic sieving phenomenon that
is described in \cite{MR3156682}.

Moreover, there is an antiinvolution $\ast$, which amounts to
rotating the diagram by a half turn.

The category $\dc_\T$ is generated as a monoidal category by the two
elements
\begin{equation}\label{eq:cupcap}
\begin{tikzpicture}[line width = 2pt]
 \fill[color=blue!20] (0,0) rectangle  (2,1);
 \draw (0.5,1) arc (180:360:0.5);
\end{tikzpicture} \qquad
\begin{tikzpicture}[line width = 2pt]
 \fill[color=blue!20] (0,0) rectangle  (2,1);
 \draw (0.5,0) arc (180:0:0.5);
\end{tikzpicture}
\end{equation}

\subsection{The general definition}
Let us first define an auxiliary family of categories, the
\Dfn{cobordism categories}.
\begin{defn}
  For $r,s\in\bN$ let $D(r,s)$ be a finite set, the set of
  \Dfn{diagrams}.  For $r,s,t\in\bN$, we require two maps:
  \begin{align}\label{eq:compose}
    \circ: D(r,s)\times D(s,t) &\rightarrow D(r,t)\\\notag%
    (x,y) &\mapsto x\circ y,
  \end{align}
  the \Dfn{composition} of two diagrams, and
  \begin{align}\label{eq:delta1}
    c: D(r,s)\times D(s,t) &\rightarrow \bN\\\notag%
    (x,y) &\mapsto c(x,y),
  \end{align}
  the \Dfn{number of loops} that arise when composing two diagrams.

  These maps define the \emph{cobordism category}, $\cb$, as follows:
  the set of objects of $\cb$ is $\bN$ and the morphisms between
  $r,s\in\bN$ are $\Hom_\cb(r,s)=\bN\times D(r,s)$.  The composition
  of two morphisms $(a,x)\in\Hom_\cb(r,s)$ and
  $(b,y)\in\Hom_\cb(s,t)$ is defined by
  \begin{equation}\label{eq:cobordism-compose}
    (a,x)\cdot(b,y) = \big(a+b+c(x,y), x\circ y\big).
  \end{equation}
  To ensure that $\cb$ is a category we have to insist that the maps
  $\circ$ and $c$ are such that the composition is associative, and
  that for each $p\in\bN$ there is a diagram $\id_p\in D(p,p)$ such
  that $(0,\id_p)$ is the identity morphism.

  Furthermore, we require associative maps
  \begin{align}\label{eq:delta2}
    \otimes: D(r_1,s_1)\times D(r_2,s_2) &\rightarrow
    D(r_1+r_2,s_1+s_2)\\\notag%
    (x,y) &\mapsto x\otimes y,
  \end{align}
  for all $r_1,s_1,r_2,s_2\in\bN$, such that the tensor product on
  $\cb$
  \begin{equation*}
    (a,x)\otimes (b,y) = (a+b, x\otimes y)
  \end{equation*}
  makes the cobordism category into a \Dfn{strict monoidal category}.

  Finally, we require a monoidal functor
  $\ast:\cb\to\cb^{\mathrm{op}}$ which is the identity on objects and
  satisfies $\left(x^\ast\right)^\ast = x$ for all morphisms $x$.
\end{defn}

\begin{rem}
  The composition of morphisms of $\cb$ in
  equation~\eqref{eq:cobordism-compose} is associative if and only if
  the composition $\circ$ of diagrams in equation~\eqref{eq:compose}
  is associative and for any three diagrams $x,y,z$ that can be
  composed we have
  \begin{equation*}
    c(x,y)+c(x\circ y, z) = c(x,y\circ z)+c(y, z).
  \end{equation*}
  Moreover, the morphisms $(0,\id_p)\in\Hom_{\cb}(p,p)$ are
  identities in $\cb$ if and only if the diagrams $\id_p$ are
  identities for the composition of diagrams and $c(\id_r,
  x)=c(x,\id_s) = 0$.  In this case, $c(\sigma, x) = c(x, \tau) = 0$
  whenever $(0,\sigma)$ and $(0,\tau)$ are isomorphisms.
\end{rem}

We are now ready to define \Dfn{diagram categories}.

\begin{defn}
  Let $K$ be a commutative ring.  Given a cobordism category and a
  fixed parameter $\delta\in K$, the \Dfn{diagram category} $\dc$ has
  objects $\bN$.  For $r,s\in\bN$, the set of morphisms
  $\Hom_{\dc}(r,s)$ is the free $K$-module with basis $D(r,s)$.  The
  composition in $\dc$, which we will refer to as multiplication
  henceforth, is defined as
  \begin{align}\label{eq:multiply}
    x\cdot y = \delta^{c(x,y)}x\circ y,
  \end{align}
  for basis elements $x\in D(r,s), y\in D(s,t)$, and extended
  $K$-bilinearly.

  Thus, $\dc$ is a $K$-linear (or pre-additive) category.  The tensor
  product from $\cb$ carries over to $\dc$ and makes the diagram
  category into a strict monoidal category. 
\end{defn}

\begin{defn}
  For any $r\ge 0$, the \emph{diagram algebra}, $D_r$ is the
  $K$-algebra $\Hom_{\dc}(r,r)$.
\end{defn}
The diagram algebra is a twisted semigroup algebra in the sense of
\cite{MR2301230}.

We require additionally that $\dc$ is symmetric. This symmetric structure
arises from a monoidal inclusion of the permutation category in $\cb$.
This then gives, for each $r\ge 0$, an inclusion of algebras
$K\fS_r\rightarrow D_r$.

Also, the functor $\ast$ yields an antiinvolution on $\dc$.  We
require additionally that for all $x$, $x^\ast$ is a dual of $x$,
so that $\dc$ is a strict pivotal category. Moreover in all our
examples the nondegenerate inner products are symmetric. This is
required for the cyclic sieving phenomenon.

As already hinted at above, in all our examples the sets $D(r,s)$,
the maps $\circ$ in equation \eqref{eq:compose}, $c$ in equation
\eqref{eq:delta1}, as well as the maps $\otimes$ and $\ast$ have
combinatorial meaning.

\begin{enumerate}
\item We draw a diagram for $x\in D(r,s)$ in a rectangle with two
  vertical sides and two horizontal sides, with $r$ marked points on
  the top edge and $s$ marked points on the bottom edge.  Depending
  on the specific example at hand, some of these points will be
  connected by strands or some other combinatorial structure.

\item To obtain $x\circ y$ for $x\in D(r,s)$ and $y\in D(s,t)$, we
  put the diagram for $x$ on top of the diagram for $y$ with the $s$
  marked points on the bottom edge of $x$ matching the $s$ marked
  points on the top edge of $y$. The resulting diagram will have a
  number of loops (or floating connected components).  Let $c(x,y)$
  be the number of loops. Then removing these loops gives $x\circ y$.

\item The diagram for $x\otimes y$ is obtained by putting the diagram
  for $x$ on the left of the diagram for $y$ identifying the two
  horizontal edges.

\item The diagram for $x^\ast$ is obtained by rotating the diagram
  for $x$ through a half turn.
\end{enumerate}

\begin{ex} The category of partitioned binary relations in \cite{MR3105542}
is a diagram category.
\end{ex}

\begin{ex} The diagram algebras in \cite{MR3210364} are endomorphism algebras
in a diagram category.
\end{ex}

\begin{ex} The walled Brauer-Clifford superalgebras in \cite{0443} are endomorphism algebras
in a diagram category.
\end{ex}

\subsection{Ideals}
We now define a statistic on diagrams, the propagating number, which
allows us to break up diagram categories into more manageable pieces:
\begin{defn}
  For $r,s\ge 0$ and $z\in D(r,s)$, let
  \begin{multline*}
    \pr(z) = \min \{ p\in\bN \colon\\
    z=x\circ y\text{ for some $x\in D(r,p)$ and some $y\in D(p,s)$} \}
  \end{multline*}
  be the \Dfn{propagating number} of $z$.  Let
  \begin{equation*}
    D(r,s;p) = \{ x\in D(r,s) | \pr(x)=p \}
  \end{equation*}
  be the set of diagrams with propagating number equal to $p$.
\end{defn}

For example, in the permutation category $\dc_\cP$ every element of
$\fS_r$ has propagating number $r$.  In the Temperley-Lieb category
$\dc_\cT$ the propagating number of a diagram is the number of through
strands, that is, the number of lines connecting a point on the top edge
of the rectangle to a point on the bottom edge.

\begin{lemma}\label{en:pr}
  The propagating number has the following properties:
  \begin{enumerate}
  \item $\pr(x)\le \min(r,s)$ for all $x\in D(r,s)$.
  \item $\pr(x\circ y) \le \min(\pr(x),\pr(y) )$ whenever $x\circ y$ is defined.
  \item $\pr(x\otimes y) \le \pr(x)+\pr(y)$ for all $x,y$.
  \item $\pr(x^\ast) = \pr(x)$ for all $x$.
  \end{enumerate}
\end{lemma}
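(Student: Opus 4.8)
The plan is to verify each of the four properties directly from the definition of the propagating number, using the fact that $\pr(z)$ is a minimum over all factorizations $z = x\circ y$. For property (1), I would observe that any $z\in D(r,s)$ factors trivially as $z = z\circ \id_s = \id_r\circ z$; since these exhibit factorizations through the objects $s$ and $r$ respectively, we get $\pr(z)\le s$ and $\pr(z)\le r$, hence $\pr(z)\le\min(r,s)$.

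For property (2), suppose $x\in D(r,s)$ and $y\in D(s,t)$. Write $x = a\circ b$ with $a\in D(r,\pr(x))$, $b\in D(\pr(x),s)$, so that $x\circ y = a\circ(b\circ y)$ is a factorization through $\pr(x)$, giving $\pr(x\circ y)\le\pr(x)$. Symmetrically, writing $y = c\circ d$ with $c\in D(s,\pr(y))$ gives $x\circ y = (x\circ c)\circ d$, a factorization through $\pr(y)$, so $\pr(x\circ y)\le\pr(y)$; combining yields $\pr(x\circ y)\le\min(\pr(x),\pr(y))$. For property (3), if $x = a\circ b$ and $y = c\circ d$ are minimal factorizations through $\pr(x)$ and $\pr(y)$, then, using the functoriality (bifunctoriality) of $\otimes$, namely $(a\otimes c)\circ(b\otimes d) = (a\circ b)\otimes(c\circ d)$, we obtain a factorization of $x\otimes y$ through the object $\pr(x)+\pr(y)$, so $\pr(x\otimes y)\le\pr(x)+\pr(y)$.

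For property (4), I would use that $\ast$ is a contravariant monoidal functor that is the identity on objects and squares to the identity. If $z = x\circ y$ with $x\in D(r,p)$, $y\in D(p,s)$, then $z^\ast = y^\ast\circ x^\ast$ with $y^\ast\in D(p,s)$... more carefully, $\ast$ reverses arrows, so $x^\ast\in D(p,r)$ and $y^\ast\in D(s,p)$, and $z^\ast = y^\ast\circ x^\ast$ is a factorization of $z^\ast\in D(s,r)$ through the object $p$. Hence $\pr(z^\ast)\le\pr(z)$; applying this to $z^\ast$ in place of $z$ and using $(z^\ast)^\ast = z$ gives the reverse inequality, so $\pr(z^\ast)=\pr(z)$.

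None of the steps presents a genuine obstacle; the only point requiring a little care is making sure the bookkeeping of which object each factor lives over is correct — in particular that $\otimes$ being a bifunctor (which is part of the strict monoidal structure on $\cb$, hence on $\dc$) legitimately lets one interchange $\circ$ and $\otimes$ in the proof of (3), and that the contravariance of $\ast$ is handled correctly in (4). One subtlety worth flagging is that $\pr$ was defined on diagrams $z\in D(r,s)$ rather than on general morphisms of $\dc$, and the composition $\circ$ of diagrams (as opposed to the multiplication $\cdot$ of $\dc$) does not introduce scalar factors of $\delta$; so all the manipulations above should be carried out at the level of the cobordism category $\cb$ with the loop-free composition $\circ$, where everything is on the nose. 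I expect the write-up to be short.
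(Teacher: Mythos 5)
Your proof is correct and follows essentially the same route as the paper: identities for (1), the factorization of $x$ (and symmetrically of $y$) together with associativity of $\circ$ for (2), the interchange law $(a\circ b)\otimes(c\circ d)=(a\otimes c)\circ(b\otimes d)$ for (3), and contravariance plus involutivity of $\ast$ for (4). You are slightly more explicit than the paper in spelling out both halves of the minimum in (2) and both inequalities in (4), but the underlying argument is the same.
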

\begin{proof}

  Claim (1) follows from the existence of identities, since then $x =
  \id_r\circ x=x\circ\id_s$.

  Claim (2) follows from the associativity of $\circ$: if $\pr(x)=p$
  then there are diagrams $a\in D(r,p)$ and $b\in D(p, s)$ with
  $x=a\circ b$ and therefore $x\circ y= a\circ (b\circ y)$.

  Claim (3) follows from the functoriality of the tensor product: let
  $x\in D(r,s)$ and $y\in D(m,n)$.  Suppose $\pr(x)=p$ and
  $\pr(y)=q$.  Then there exist diagrams $a\in D(r,p)$, $b\in
  D(p,s)$, $c\in D(m,q)$, $d\in D(q,n)$ with
  \begin{equation*}
    x = a\circ b\qquad \text{and}\qquad
    y = c\circ d.
  \end{equation*}
  By the functoriality of the tensor product, we have
  \begin{equation*}
    x\otimes y = (a\circ b)\otimes(c\circ d) = (a\otimes c)\circ
    (b\otimes d).
  \end{equation*}
  Therefore, $\pr(x\otimes y)\leq p+q = \pr(x)+\pr(y)$.

  Claim (4) follows from the functoriality of $\ast$: suppose that $x
  = a\circ b$, then $x^\ast = b^\ast \circ a^\ast$, so their
  propagating numbers are the same.
\end{proof}

\begin{defn}
  A \emph{$K$-ideal} $\cJ$ in a $K$-linear category $\dc$ is a
  $K$-submodule $\Hom_{\cJ}(r,s)\subseteq\Hom_{\dc}(r,s)$
  for all objects $r,s$ of $\dc$, such that
  for any morphism in the ideal the left and right composition with a
  morphism in $\dc$ is again in the ideal, whenever the composition is
  defined in $\dc$.
\end{defn}

\begin{defn}
  Let $\cJ(p)\vartriangleleft\dc$ be the $K$-ideal where
  $\Hom_{\cJ(p)}(r,s)$ is generated by the diagrams with propagating
  number at most $p$.

  Let $\cJ_r(p)$ be the ideal of the diagram algebra $D_r$ generated
  by the diagrams with propagating number at most $p$.
\end{defn}
This is an ideal by Lemma~\ref{en:pr}, Equation~(2).  Moreover, by
Lemma~\ref{en:pr}, Equation~(4), $\cJ(p)$ is closed under $\ast$,
that is, $\cJ(p)^\ast$ is an ideal in $\dc^{\mathrm{op}}$.

Let $\dc(p) = \dc/\cJ(p-1)$.  Thus, the quotient map $\dc \to
\dc/\cJ(p-1)$ sends the diagrams with propagating number strictly
less than $p$ to zero, and for $r,s\in\bN$ the diagrams in $D(r,s)$
with propagating number at least $p$ to a basis of
$\Hom_{\dc(p)}(r,s)$.

This has a natural extension.
\begin{defn}
Let $\dc$ be a monoidal category whose monoid
of objects is $\bN$. For $p\ge 0$, define an ideal $\cJ(p)$ by
\begin{equation*}
 \Hom_{\cJ(p)}(r,s) = \sum_{k\le p} \Hom_{\dc}(r,k)\otimes \Hom_{\dc}(k,s)
\subseteq \Hom_{\dc}(r,s)
\end{equation*}
\end{defn}

The properties in Lemma \ref{en:pr} extend to this setting.

\begin{ex}
Consider the cobordism category of planar trivalent graphs and the free $K$-linear
category on this cobordism category. The propagating number is defined using 
cut paths. A cut path is a path across a rectangular diagram such that each point
of intersection is transverse. The propagating number is the minimum of the number
of intersection points of a cut path.

This category has an evaluation functor to the category of representations of the
exceptional Lie group $G_2$. This maps the object $[r]$ to $\otimes^rV$ where $V$
is the seven dimensional fundamental representation. Alternatively, $G_2$ is the
automorphism group of the octonions and $V$ is the imaginary octonions.
This evaluation functor is full; generators for the kernel and the second fundamental
theorem are given in \cite{MR1403861}. The algebras $A_p$ are the Temperley-Lieb algebras.
\end{ex}

\section{Morita equivalence}
The original paper on Morita equivalence is \cite{MR0096700}. This paper and subsequent
treatments in text books, for example \cite[Chapter II]{MR0249491}, assume that algebras have units
and that the unit acts on a module as the identity. Here we present an extension of this theory.
This extension was originally given in \cite{MR687968} and is also presented in \cite{MR1610222}.
\subsection{Regular algebras}
Let $K$ be a commutative ring with a unit.  We will write $\otimes$
instead of $\otimes_K$ for brevity.

Let $A$ be an associative algebra over $K$, possibly without unit.  Let
$M$ be a left $A$-module with action $\mu_M\colon A\otimes M\rightarrow
M$ and $N$ a right $A$-module with action $\mu_N\colon N\otimes A\rightarrow N$.
Then the $K$-module homomorphism $N\otimes M \rightarrow N\otimes_A M$ is the coequaliser of
\begin{equation*}
\mu_N\otimes \id_M, \id_N\otimes \mu_M\colon N\otimes A\otimes M\rightarrow N\otimes M
\end{equation*}
Equivalently, we have the exact sequence
\begin{equation*}
 N\otimes A\otimes M\xrightarrow{\mu_N\otimes 1-1\otimes\mu_M} N\otimes M \rightarrow N\otimes_A M\rightarrow 0
\end{equation*}

For any left $A$-module $M$ the action map $\psi_M\colon A\otimes M\rightarrow M$
factors through $A\otimes_A M$ to give the action map $\psi_M\colon A\otimes_A M\rightarrow M$.

\begin{defn}
A left $A$-module $M$ is \emph{\hu} if the multiplication map $\mu\colon A\otimes_A M\rightarrow M$
is an isomorphism.  A similar notion applies to right $A$-modules.
An algebra $A$ is \emph{\hu} if it is {\hu} as a left or right $A$-module.
\end{defn}
Equivalently, we have the exact sequence
\begin{equation*}
 A\otimes A\otimes A\xrightarrow{\mu\otimes 1-1\otimes\mu} A\otimes A \xrightarrow{\mu} A\rightarrow 0
\end{equation*}

In particular, if $A$ has a unit then it is \hu.  In this case, an
$A$-module is {\hu} if and only if the unit of $A$ acts as the identity
map, as shown in \cite[Proposition 3.2]{MR1394505}.  It is sufficient
to assume that the action map is surjective. The identity is
idempotent and the range is the image of the action map.

Further examples of {\hu} algebras are $H$-unital (or homologically unital)
algebras defined in \cite{MR997314}.

If $A$ is {\hu} then the category of {\hu} $A$-modules is abelian,
see~\cite[Corollary 1.3]{MR1631300}.  However, there are examples in
\cite{MR2363140} where the category of {\hu} modules is not abelian.

\subsection{Morita contexts}
Let $A$ and $B$ be two {\hu} $K$-algebras.
The data for a Morita context is two {\hu} bimodules, $\prescript{}{A}{P}_B$ and $\prescript{}{B}{Q}_A$
together with two bimodule homomorphisms $f\colon P\otimes_B Q\rightarrow A$
and $g\colon Q\otimes_A P\rightarrow B$. These are required to satisfy the condition
that the following two diagrams commute:
\begin{equation}\label{comm}
\begin{CD}
 P\otimes_B Q\otimes_A P @>{\id_P\otimes g}>> P\otimes_B B \\
@V{f\otimes \id_P}VV @VVV \\
 A\otimes_A P @>>> P
\end{CD}\qquad
\begin{CD}
 Q\otimes_A P\otimes_B Q @>{\id_P\otimes f}>> Q\otimes_A A \\
@V{g\otimes \id_P}VV @VVV \\
 B\otimes_B Q @>>> Q
\end{CD}
\end{equation}
This Morita context is \emph{strict} if $f$ and $g$ are isomorphisms.

\begin{lemma}
Let $A$ and $B$ be two {\hu} $K$-algebras and $\prescript{}{A}P_B$ a {\hu} bimodule.
Then we have a functor $P\otimes_B - \colon M \mapsto P\otimes_B M$ from the
category of {\hu} left $B$-modules to {\hu} left $A$-modules.
\end{lemma}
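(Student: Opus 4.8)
The plan is to define the functor on objects by sending a {\hu} left $B$-module $M$ to $P\otimes_B M$, equipped with the left $A$-action inherited from the left $A$-action on $P$. This is well defined because $P$ is an $(A,B)$-bimodule, so left multiplication by $A$ commutes with the relations cutting out $\otimes_B$; concretely $a(pb\otimes m)=(ap)b\otimes m=(ap)\otimes bm=a(p\otimes bm)$. On morphisms, a $B$-module homomorphism $f\colon M\to M'$ is sent to $\id_P\otimes_B f\colon P\otimes_B M\to P\otimes_B M'$, which is visibly a left $A$-module homomorphism, and functoriality (preservation of identities and of composites) is immediate from the functoriality of $\otimes_B$. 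The one substantive point is that $P\otimes_B M$ is again \hu, i.e.\ that its action map $\mu_{P\otimes_B M}\colon A\otimes_A(P\otimes_B M)\to P\otimes_B M$ is an isomorphism.

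For this I would combine the associativity of the tensor product with the hypothesis that $P$ is \hu\ as a left $A$-module. Associativity of $\otimes$ over (possibly non-unital) $K$-algebras, which follows as usual from the coequaliser description of $\otimes_A$ recalled above, provides a natural isomorphism $A\otimes_A(P\otimes_B M)\cong(A\otimes_A P)\otimes_B M$ compatible with the left $A$-module structures coming from the leftmost factor $A$. Since $P$ is \hu\ as a left $A$-module, the action map $\mu_P\colon A\otimes_A P\to P$ is an isomorphism of $(A,B)$-bimodules, and applying the functor $-\otimes_B M$ yields an isomorphism $\mu_P\otimes_B\id_M\colon(A\otimes_A P)\otimes_B M\to P\otimes_B M$. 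Composing,
\[
A\otimes_A(P\otimes_B M)\;\xrightarrow{\ \sim\ }\;(A\otimes_A P)\otimes_B M\;\xrightarrow{\ \mu_P\otimes_B\id_M\ }\;P\otimes_B M ,
\]
one obtains an isomorphism, and a routine diagram chase on elementary tensors $a\otimes(p\otimes m)\mapsto(a\otimes p)\otimes m\mapsto(ap)\otimes m$ identifies this composite with $\mu_{P\otimes_B M}$. Hence $\mu_{P\otimes_B M}$ is an isomorphism and $P\otimes_B M$ is \hu, so $P\otimes_B -$ indeed takes values in {\hu} left $A$-modules.

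I expect the only real care to be needed in the non-unital bookkeeping: justifying the associativity isomorphism from the coequaliser presentation of $\otimes_A$ and checking that it intertwines the left $A$-actions, and then verifying that the displayed composite is literally the action map rather than a twist of it. It is worth remarking that regularity of $M$ itself is never used in this argument — only regularity of $P$ as a left $A$-module matters — so the restriction to the category of {\hu} left $B$-modules is merely the ambient setting in which the Morita context lives, and the same formula defines $P\otimes_B-$ on all left $B$-modules.
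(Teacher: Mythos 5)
Your argument is correct and is essentially identical to the paper's: the paper treats the functoriality as standard and proves regularity of $P\otimes_B M$ via exactly the commutative square whose left edge is the associativity isomorphism $A\otimes_A(P\otimes_B M)\cong(A\otimes_A P)\otimes_B M$, whose bottom edge is $\mu_P\otimes_B\id_M$, whose right edge is the identity, and whose top edge is the action map, deducing that the top edge is an isomorphism because the other three are. Your chain of isomorphisms together with the check on elementary tensors is just this same square written out linearly, and your closing remark that only regularity of $P$ (not of $M$) is used is also consistent with the paper's treatment.
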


\begin{proof} This is standard except for the statement that the left $A$-module
$P\otimes_B M$ is \hu. This follows from the commutativity of the square
\begin{equation*}
 \begin{CD}
  A\otimes_A (P\otimes_B M) @>>> P\otimes_B M \\
@VVV @| \\
(A\otimes_A P)\otimes_B M @>>> P\otimes_B M \\
 \end{CD}
\end{equation*}
The top edge is an isomorphism since the other three edges are known to be
isomorphisms and the diagram commutes.
\end{proof}

\begin{thm}\label{thm:inflation-equivalence}
  Given a strict Morita context then the functors $P\otimes_B - $
  and $Q\otimes_A - $ are inverse equivalences between the category
  of {\hu} $B$-modules and the category of {\hu} $A$-modules.
\end{thm}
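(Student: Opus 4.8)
The plan is to build the required natural isomorphisms out of the associativity of the relative tensor product together with the structure isomorphisms $f$ and $g$ of the Morita context. Concretely, for a regular left $B$-module $M$ I want to produce a natural isomorphism
\[
\eta_M\colon Q\otimes_A(P\otimes_B M)\xrightarrow{\ \sim\ } M,
\]
and, symmetrically, for a regular left $A$-module $N$ a natural isomorphism
\[
\varepsilon_N\colon P\otimes_B(Q\otimes_A N)\xrightarrow{\ \sim\ } N.
\]
Since the preceding lemma already tells us that $P\otimes_B-$ sends regular $B$-modules to regular $A$-modules and $Q\otimes_A-$ sends regular $A$-modules to regular $B$-modules, exhibiting $\eta$ and $\varepsilon$ is exactly what is needed to conclude that the two functors are inverse equivalences.

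First I would establish the associativity isomorphism
\[
(Q\otimes_A P)\otimes_B M\xrightarrow{\ \sim\ } Q\otimes_A(P\otimes_B M)
\]
of left $B$-modules; this is a routine coequaliser diagram chase using the defining exact sequences of $\otimes_A$ and $\otimes_B$ and the bimodule structures of $P$ and $Q$, and it is visibly natural in $M$. Next I would apply the structure isomorphism $g\colon Q\otimes_A P\to B$ to obtain an isomorphism of left $B$-modules $(Q\otimes_A P)\otimes_B M\cong B\otimes_B M$. Finally, because $M$ is regular, the multiplication map $B\otimes_B M\to M$ is an isomorphism by definition. Composing these three steps gives $\eta_M$, and since each step is natural in $M$, the family $\eta$ is a natural isomorphism $Q\otimes_A(P\otimes_B-)\Rightarrow\id$. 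The construction of $\varepsilon$ is entirely parallel, using $f\colon P\otimes_B Q\to A$ in place of $g$ and regularity of $N$ in place of that of $M$.

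The commuting squares~\eqref{comm} of the Morita context enter when one wants the pair $(\eta,\varepsilon)$ to satisfy the triangle identities of an adjoint equivalence, or simply to see that the two families of isomorphisms are mutually compatible; for the bare assertion that the functors are quasi-inverse to one another the mere existence of $\eta$ and $\varepsilon$ already suffices, but I would record the compatibility since it costs little once the squares are available. The main obstacle I anticipate is bookkeeping rather than anything conceptual: one must check that every tensor product in sight is the relative one, that the cancellation isomorphisms $A\otimes_A(-)\cong(-)$ and $B\otimes_B(-)\cong(-)$ are invoked only for regular modules — this is the single place where the unit-free setting genuinely matters — and that the associativity isomorphism for $\otimes$ over possibly non-unital algebras holds in this generality, which it does, since its proof uses only the coequaliser descriptions and no units. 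Once these points are pinned down the argument is a short diagram chase.
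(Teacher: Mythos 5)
The paper itself does not prove Theorem~\ref{thm:inflation-equivalence}: it states the result and attributes it to the references given at the start of the section (originally \cite{MR687968}, also presented in \cite{MR1610222}). There is therefore no in-paper proof to compare against; your argument has to stand on its own.

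On its own terms, your proof is correct and is the expected Morita-theoretic argument adapted to the unit-free setting. The chain
\[
Q\otimes_A(P\otimes_B M)\;\cong\;(Q\otimes_A P)\otimes_B M
\;\xrightarrow{\,g\otimes\id\,}\;B\otimes_B M\;\xrightarrow{\,\mu\,}\;M
\]
together with its mirror image is the right construction, and you correctly locate the two places where the unit-free hypotheses do real work: associativity of the relative tensor product holds unconditionally because both sides are the same coequaliser of $Q\otimes P\otimes M$ (nothing about units enters), whereas the final cancellation $B\otimes_B M\cong M$ is precisely the definition of $M$ being regular — this is the step that fails for arbitrary modules over a non-unital algebra. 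You also correctly invoke the preceding lemma of the paper (and its symmetric counterpart) for the fact that $P\otimes_B-$ and $Q\otimes_A-$ land back in the category of regular modules, and you are right that natural isomorphisms $GF\cong\id$ and $FG\cong\id$ already yield an equivalence without appealing to the coherence squares~\eqref{comm}; the squares only upgrade the pair to an adjoint equivalence. The one thing worth pinning down if you were to write this out in full is that $g\otimes\id_M\colon(Q\otimes_A P)\otimes_B M\to B\otimes_B M$ really is an isomorphism of left $B$-modules and not merely of $K$-modules, which follows because $g$ is a morphism of $(B,B)$-bimodules; this is implicit in your sketch but deserves a sentence.
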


\subsection{Inflation}\label{sec:inflation}
In this section we exhibit a strict Morita context for modules of
diagram algebras, depending on two assumptions which will be
demonstrated in each of the examples separately.  In particular, this
allows us to apply Theorem~\ref{thm:inflation-equivalence}.

Given a diagram algebra $D_r$, let $E = D_r/\cJ_r(p-1)$ be the
algebra of diagrams with propagating number at least $p$.  Let
$e_p\in E$ be an idempotent and set $A = A_p = e_p E e_p$ and $B =
B_p = E e_p E = \cJ_r(p) / \cJ_r(p-1)$.  Note that $A$ has unit $e_p$
and is therefore \hu, whereas we do not assume that $B$ has a unit.
\begin{ass}\label{ass:idempotent}
  For $0\leq p\leq r$ the idempotent $e_p$ is such that
  \begin{align*}
    \prescript{}{A}P_B &= e_p E = KD(p,r;p), \\
    \prescript{}{B}Q_A &= E e_p = KD(r,p;p),
  \end{align*}
  where $K$ is the ground field.
\end{ass}

\begin{ass}\label{ass:morita}
  For all $r,s\ge p\ge 0$, there are subsets $\bar D(r,p;p)\subseteq
  D(r,p;p)$ and $\bar D(p,r;p)\subseteq D(p,r;p)$ such that the
  composition map
  \[ 
  \circ: \bar D(r,p;p) \times D(p,p;p) \times \bar D(p,s;p)\to D(r,s;p) 
  \] 
  is a bijection
\end{ass}

\begin{thm}\label{thm:strict}
  Given Assumptions~\ref{ass:idempotent} and~\ref{ass:morita} the
  algebra $B$ and the modules $\prescript{}{A}P_B$ and
  $\prescript{}{B}Q_A$ are {\hu}.  Together with the multiplication
  maps $f:P\otimes_B Q\to A$ and $g:Q\otimes_A P\to B$ they form a
  strict Morita context.
\end{thm}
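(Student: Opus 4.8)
The plan is to split the statement into three ingredients and to observe that only one of them genuinely needs Assumption~\ref{ass:morita}. The ingredients are: (a) $f$ and $g$ are well-defined bimodule maps making the two squares of~\eqref{comm} commute; (b) $f$ and $g$ are isomorphisms; (c) $B$, $\prescript{}{A}P_B$ and $\prescript{}{B}Q_A$ are \hu. For (a), note that $B=Ee_pE$ is a (possibly unital-free) subalgebra of $E$ and that $P=e_pE$, $Q=Ee_p$ are the evident sub-bimodules, so both $f$ and $g$ are restrictions of the multiplication of $E$. That multiplication is $K$-bilinear and associative, hence $f$ descends to $P\otimes_B Q$, $g$ descends to $Q\otimes_A P$, each is a bimodule homomorphism, and the two squares in~\eqref{comm} are nothing but the identities $(pq)p'=p(qp')$ and $(qp)q'=q(pq')$ in $E$. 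Surjectivity is also immediate: $e_p=e_pe_p$ lies in both $P$ and $Q$, so $f(e_p\otimes_B e_p)=e_p=1_A$; and $g(xe_p\otimes_A e_py)=xe_py$ shows that the image of $g$ is $Ee_pE=B$.

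For the isomorphism statements, $f$ is the easy half. Since $A$ is unital and $f$ is surjective, the standard Morita argument applies: choosing $u\in P\otimes_B Q$ with $f(u)=e_p$, one rewrites any $p\in P$ as $\sum p_i\,g(q_i\otimes p)$ using the first square of~\eqref{comm}, substitutes this into an arbitrary $\xi\in\ker f$, and then uses the second square to collapse the result to $0$. This argument uses only that $A$ has a unit, not that $B$ does, so $f$ being an isomorphism needs nothing beyond Assumption~\ref{ass:idempotent}.

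The injectivity of $g$ is the crux, and is where Assumption~\ref{ass:morita} enters, precisely because the unit trick above is unavailable for $B$. The plan is to produce matching $K$-bases. Taking $r=s=p$ in the bijection of Assumption~\ref{ass:morita} forces $\bar D(p,p;p)$ to be a single diagram, which one checks acts as $\id_p$; consequently $D(r,p;p)\cong\bar D(r,p;p)\times D(p,p;p)$ and $D(p,s;p)\cong D(p,p;p)\times\bar D(p,s;p)$. Reading these identifications through Assumption~\ref{ass:idempotent} exhibits $A$ as $KD(p,p;p)$ with its twisted-semigroup multiplication, $Q=KD(r,p;p)$ as a free right $A$-module on the basis $\bar D(r,p;p)$, and $P=KD(p,r;p)$ as a free left $A$-module on the basis $\bar D(p,r;p)$. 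Hence $Q\otimes_A P$ is $K$-free with basis $\bar D(r,p;p)\times D(p,p;p)\times\bar D(p,r;p)$, and $g$ sends this basis, via the composition $\circ$ of diagrams, exactly onto the basis $D(r,r;p)$ of $B=\cJ_r(p)/\cJ_r(p-1)$ --- which is the bijection of Assumption~\ref{ass:morita} with $r=s$. One also has to check that the compositions of reduced diagrams involved create no loops, so that no power of $\delta$ intervenes; in any case we are working modulo $\cJ_r(p-1)$. This step is the main obstacle: the delicate points are verifying that the unique element of $\bar D(p,p;p)$ really behaves as an identity, that $P$ and $Q$ are \emph{free} (not merely spanned) over $A\cong KD(p,p;p)$ despite the twisted multiplication, and that composition carries the product basis bijectively onto $D(r,r;p)$ without spurious loop factors.

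Finally, (c) follows formally. $A$ is unital, hence \hu; and $e_pP=P$, $Qe_p=Q$ with $e_p$ acting as the identity, so $P$ and $Q$ are \hu{} as left, respectively right, $A$-modules. For the $B$-side and for $B$ itself, feed the isomorphisms of (b) into the usual chain: $P\otimes_B B\cong P\otimes_B(Q\otimes_A P)\cong(P\otimes_B Q)\otimes_A P\cong A\otimes_A P\cong P$, symmetrically $B\otimes_B Q\cong Q$, and $B\otimes_B B\cong(Q\otimes_A P)\otimes_B(Q\otimes_A P)\cong Q\otimes_A(P\otimes_B Q)\otimes_A P\cong Q\otimes_A A\otimes_A P\cong Q\otimes_A P\cong B$; checking that in each case the composite iso is the action (respectively multiplication) map shows that $P$, $Q$ and $B$ are \hu. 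Assembling (a), (b) and (c) gives the strict Morita context, and Theorem~\ref{thm:inflation-equivalence} then applies.
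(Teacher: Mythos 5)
Your proposal is correct and follows essentially the same approach as the paper: the unital argument for $f$, the freeness-from-Assumption~\ref{ass:morita} argument ($\prescript{}{A}{P}\cong A\otimes K\bar D(p,r;p)$, $Q_A\cong K\bar D(r,p;p)\otimes A$) for $g$, and the formal derivation of regularity of $B$, $P$, $Q$ from the two isomorphisms and the commutativity of the diagrams~\eqref{comm}. The ``delicate points'' you flag --- that the $r=s=p$ case forces $\bar D(p,p;p)$ to be a singleton acting as the identity, and that no spurious $\delta$-factors arise because $D(p,p;p)$ consists of permutations --- are correct observations that the paper leaves implicit.
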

\begin{proof}
  Since the unit $e$ of $A$ acts on $P$ and $Q$ as the identity, $P$
  is a {\hu} left $A$-module and $Q$ is a {\hu} right $A$-module.  It
  is clear that the diagrams~\eqref{comm} commute.  We have to show
  that $B$ is a {\hu} algebra, $P$ is a {\hu} right $B$-module and
  $Q$ is a {\hu} left $B$-module.

  However, we first show that $f:P\otimes_B Q\to A$ is an
  isomorphism, which actually holds unconditionally.  It is clear
  that $f$ is surjective.  To show injectivity, we observe that
  $f(e\otimes e)=e$, which is the identity of $A$ and recall the
  commutativity of the diagrams~\eqref{comm}.  Assuming
  $f\left(\sum_i p_i\otimes q_i\right)=0$ we then obtain
  \begin{align*}
    \sum_i p_i\otimes q_i &= \sum_i p_i\otimes q_i\cdot f(e\otimes e) \\
    &= \sum_i p_i\otimes g(q_i\otimes e)\cdot e \\
    &= \sum_i f(p_i\otimes q_i)\otimes e = 0.
  \end{align*}
  
  Next, we show that $g:Q\otimes_A P\to B$ is an isomorphism.  Since
  the composition map is a bijection we have $\prescript{}{A}P\cong A
  \otimes K\bar D(p,r;p)$ and $Q_A\cong K\bar D(r,p;p)\otimes A$.
  Hence
  \begin{align*}
    Q\otimes_A P
    &\cong (K\bar D(r,p;p)\otimes A) \otimes_A (A \otimes K\bar D(p,r;p)) \\
    &\cong K\bar D(r,p;p)\otimes A \otimes K\bar D(p,r;p)\\
    &\cong B.
  \end{align*}
  
  Finally, using what we have shown so far, we obtain the regularity
  of $B$, $P$ and $Q$.  The map $B\otimes_B B\rightarrow B$ is an
  isomorphism because of the commutativity of the following square
  and the fact that we know that three of the four arrows are
  isomorphisms.
  \begin{equation*}
    \begin{CD}
      Q\otimes_A P\otimes_B Q\otimes_A P @>{\id_Q\otimes f \otimes\id_P} >> Q\otimes_A A\otimes_A P \\
      @V{g\otimes g}VV @VVV \\
      B\otimes_B B @>>> B \\
    \end{CD}
  \end{equation*}

  Similarly, to see that the maps $P\otimes_B B\rightarrow P$ and
  $B\otimes_B Q\rightarrow Q$ are isomorphisms we consider the
  following two commuting squares.
  \begin{equation*}
    \begin{CD}
      P\otimes_B Q\otimes_A P @>{f\otimes\id_P}>> A\otimes_A P \\
      @V{\id_P\otimes g}VV @VVV \\
      P\otimes_B B @>>> P \\
    \end{CD}
    \qquad
    \begin{CD}
      Q\otimes_A P\otimes_B Q @>{\id_Q\otimes f}>> Q\otimes_A A \\
      @V{g\otimes \id_Q}VV @VVV \\
      B\otimes_B Q @>>> Q \\
    \end{CD}
  \end{equation*}

\end{proof}

It will be useful to introduce the following notation for the functor
realising the equivalence from
Theorem~\ref{thm:inflation-equivalence}.
\begin{defn}
  The \Dfn{inflation} functor $\Inf_p^r$ is the equivalence from
  {\hu} $A_p$-modules to {\hu} $\cJ_r(p) / \cJ_r(p-1)$-modules
  \[
  V \mapsto V\otimes_{A_p} KD(p,r;p).
  \]
\end{defn}
\begin{rem}\label{rem:inflation}
  When $A_p = K\fS_p$
  the restriction of inflation to $\fS_r$ can be interpreted as an
  operation on species.  In this case we can consider $V$ as a
  species of sort $X$ and $D(p,r;p)$ as a bivariate species of sorts
  $X$ and $Y$.  Then we have
  \[
  (\Inf_p^r V)\downarrow^{D_r}_{\fS_r} = \langle V, D(p,r;p)\rangle_X.
  \]
\end{rem}

\begin{prop}\label{prop:equivalence-D_r}
  Suppose that, for $0\le p\le r$, the algebras $A_p$ are direct sums of
matrix algebras. Let $\mathcal A_p$ be the set of irreducible representations of $A_p$.
Assume that $D_r$ is semisimple and that Assumption~\ref{ass:morita} holds.  Then
$D_r$ is a direct sum of matrix algebras and
$\mathcal A = \{\Inf_p^r V: V\in \mathcal A_p, 0\leq p\leq r\}$
is a complete set of inequivalent irreducible representations of $D_r$.
\end{prop}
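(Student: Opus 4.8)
The plan is to decompose $D_r$ using the filtration by the ideals $\cJ_r(p)$ and to analyze each subquotient via the Morita equivalence of Theorem~\ref{thm:inflation-equivalence}. First I would observe that the chain of ideals
\[
0 = \cJ_r(-1) \subseteq \cJ_r(0) \subseteq \cJ_r(1) \subseteq \dots \subseteq \cJ_r(r) = D_r
\]
has subquotients $\cJ_r(p)/\cJ_r(p-1) = B_p$. By Theorem~\ref{thm:strict}, Assumptions~\ref{ass:idempotent} and~\ref{ass:morita} give a strict Morita context between $A_p$ and $B_p$, so by Theorem~\ref{thm:inflation-equivalence} the categories of regular $A_p$-modules and regular $B_p$-modules are equivalent via the inflation functor $\Inf_p^r$. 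Since $A_p$ is a direct sum of matrix algebras, each $B_p$ is Morita equivalent to a semisimple algebra; hence its irreducible modules are exactly the $\Inf_p^r V$ for $V\in\mathcal A_p$, and these are pairwise inequivalent within a fixed $p$.

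Next I would upgrade this to a statement about $D_r$-modules. The key point is that because $D_r$ is assumed semisimple, the filtration splits: $D_r$ decomposes (as an algebra, or at least as a $D_r$-bimodule) into a direct sum of pieces, one for each $p$, each of which is isomorphic as an algebra to the corresponding $B_p$. Concretely, semisimplicity of $D_r$ forces each ideal $\cJ_r(p)$ to be a direct summand of $D_r$ as a two-sided ideal, so $B_p \cong \cJ_r(p)/\cJ_r(p-1)$ is itself (isomorphic to) a two-sided ideal direct summand of $D_r$, hence a unital semisimple algebra in its own right and in fact a direct sum of matrix algebras. It follows that $D_r \cong \bigoplus_{p=0}^{r} B_p$ as algebras, so $D_r$ is a direct sum of matrix algebras, and the irreducibles of $D_r$ are the union over $p$ of the irreducibles of the $B_p$, i.e. exactly $\mathcal A = \{\Inf_p^r V : V\in\mathcal A_p,\ 0\le p\le r\}$. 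Inequivalence across different $p$ is automatic since these modules come from distinct block summands: $\Inf_p^r V$ is annihilated by $\cJ_r(p-1)$ but not by $\cJ_r(p)$, so the value of $p$ is an invariant of the isomorphism class.

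The main obstacle is the passage from "each subquotient $B_p$ is semisimple" to "the whole filtration splits and $B_p$ is literally a direct summand of $D_r$." This is where semisimplicity of $D_r$ is essential and must be used carefully: one needs that in a semisimple algebra every two-sided ideal is generated by a central idempotent, so that $\cJ_r(p)$ and a complementary ideal give a block decomposition refining the filtration, and that the functor $\Inf_p^r$ applied to a simple $A_p$-module produces a module that is simple \emph{as a $D_r$-module} and not merely as a $B_p$-module — which holds precisely because $B_p$ is a direct summand, so $B_p$-modules are naturally $D_r$-modules with the complementary ideal acting as zero. Once this block structure is in hand, counting (every simple $D_r$-module is a simple $B_p$-module for a unique $p$, since the $B_p$ exhaust $D_r$) finishes the argument.
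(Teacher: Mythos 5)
Your proof is correct, but it takes a genuinely different route from the paper's. The paper proves the result by a dimension count: it first uses semisimplicity to conclude that $\Hom_{D_r}(\Inf_p^r V, \Inf_q^r U)=0$ for $p\neq q$ (hence the listed modules are inequivalent), then exhibits a surjective algebra homomorphism $D_r \to \bigoplus_{p,V}\End(\Inf_p^r V)$, and finally uses Assumption~\ref{ass:morita} twice to compute $\sum_{p,V}(\dim\Inf_p^r V)^2 = \sum_p \dim A_p\,(\dim K\bar D(p,r;p))^2 = \sum_p \dim KD(r,r;p) = \dim D_r$, forcing equality. Your argument instead exploits the structure theory of semisimple algebras: since every two-sided ideal of a semisimple algebra is generated by a central idempotent, the chain $0 \subseteq \cJ_r(0)\subseteq\dots\subseteq\cJ_r(r)=D_r$ splits into a block decomposition $D_r\cong\bigoplus_p B_p$, after which completeness and inequivalence are immediate from the Morita equivalence and the fact that simples living in distinct blocks cannot be isomorphic. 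Both routes are valid, and neither is strictly easier than the other: the paper's dimension count is more elementary and self-contained but leans heavily on the explicit cardinality formula hidden in Assumption~\ref{ass:morita}, whereas your block-decomposition argument is conceptually cleaner (it makes visible \emph{why} the list is complete, and automatically gives that each $\Inf_p^r V$ is a genuine $D_r$-module and simple as such) at the price of invoking the classification of ideals in semisimple rings and the fact that Morita equivalence over a field preserves the property of being a sum of matrix algebras. One small point worth flagging: you invoke Theorem~\ref{thm:strict}, which is stated under both Assumption~\ref{ass:idempotent} and Assumption~\ref{ass:morita}, while the proposition only names the latter; this is also implicit in the paper's use of $\Inf_p^r$, so it is not a gap specific to your argument, but you might note that the strict Morita context is available by the surrounding hypotheses.
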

\begin{proof}
  The semisimplicity of $D_r$ is equivalent to the condition that for
  any $A_p$-module $U$ and any $A_q$-module $V$
  \[
  \Hom_{D_r}\left( \Inf_p^r V,\Inf_q^r U \right) = 0%
  \qquad\text{ if $p\ne q$}.
  \]
Thus the elements of $\mathcal A$ are all inequivalent.

Then we have a surjective algebra homomorphism
\begin{equation*}
 D_r \rightarrow \bigoplus_{p,V\in \mathcal A_p} \End( \Inf_p^r V )
\end{equation*}
Therefore, it is sufficient to show that these two algebras have the same
dimension.

Using Assumption~\ref{ass:morita},
  \begin{align*}
    \Inf_p^r V &\cong V\otimes_{A_p} KD(p,r;p)\\
    &\cong V\otimes_{A_p} A_p\otimes K\bar D(p,r;p)\\
    &\cong V\otimes K\bar D(p,r;p).
  \end{align*}
and so $\dim \Inf_p^r(V)=\dim V\cdot\dim K\bar D(p,r;p)$.

  Using Assumption~\ref{ass:morita} again, we obtain
  \begin{align*}
    \sum_{p=0}^r \sum_{V\in \mathcal A_p} (\dim \Inf_p^r V)^2 %
&= \sum_{p=0}^r \sum_{V\in \mathcal A_p} (\dim V)^2(\dim K\bar D(p,r;p))^2\\
    &= \sum_{p=0}^r \dim A_p(\dim K\bar D(p,r;p))^2\\
    &= \sum_{p=0}^r \dim K D(r,r;p))\\
    &= \dim D_r.
  \end{align*}
\end{proof}

\begin{rem} Assume that $A_p$ is a cellular algebra for all $p\ge 0$
as defined in \cite{MR1376244} then $D_r$ is a cellular algebra for all $r\ge 0$.
The proof is inductive with inductive step given by \cite{MR1753809}. 
\end{rem}

The class of cellularly stratified algebras is introduced in \cite{MR2658143}.
The diagram algebras are cellularly stratified since Assumption~\ref{ass:morita}
implies that they satisfy \cite[Definition 2.1]{MR2658143}.

\makeatletter
\renewcommand{\thethm}{\thesubsection.\arabic{thm}}% Update counter printing
\@addtoreset{thm}{subsection}% Reset theorem counter with every new subsection
\makeatother

\section{The Brauer category}\label{section:brauer}
In this section we discuss the representation theory of the symplectic groups
using the combinatorics of perfect matchings and the Brauer category.
The Brauer category was introduced for this purpose in~\cite{MR1503378}.
\subsection{Diagram category}
For $r,s\ge 0$, let $D_{\Br}(r,s)$ be the finite set of perfect
matchings on $[r]\amalg [s]$.  In particular, $D_{\Br}(r,s)=\emptyset$
if $r+s$ is odd.  An element of $D_{\Br}(r,s)$ is visualised as a set
of $(r+s)/2$ strands drawn in a rectangle, with $r$ endpoints on the
top edge of the rectangle and $s$ endpoints on the bottom edge.

The \Dfn{Brauer category} $\cb_{\Br}$ is the cobordism category with
\[
\Hom_{\cb_{\Br}}(r,s) = \bN\times D_{\Br}(r,s).
\]

The topological interpretation of this category is that it is
equivalent to the cobordism category whose objects are $0$-manifolds
and whose morphisms are $1$-manifolds.

There are inclusions $\cb_\cP\rightarrow\cb_{\Br}$ and
$\cb_\cT\rightarrow\cb_{\Br}$. Furthermore $\cb_{\Br}$ is generated
as a category by these two subcategories.  It follows that
$\cb_{\Br}$ is generated as a monoidal category by the morphisms
\eqref{eq:perm} and \eqref{eq:cupcap}.

There are three types of strands:
\begin{equation}\label{eq:type}
\begin{tikzpicture}[line width = 2pt]
 \fill[color=blue!20] (0,0) rectangle  (2,1);
 \draw (1,0) -- (1,1);
\end{tikzpicture} \qquad
\begin{tikzpicture}[line width = 2pt]
 \fill[color=blue!20] (0,0) rectangle  (2,1);
 \draw (0.5,1) arc (180:360:0.5);
\end{tikzpicture} \qquad
\begin{tikzpicture}[line width = 2pt]
 \fill[color=blue!20] (0,0) rectangle  (2,1);
 \draw (0.5,0) arc (180:0:0.5);
\end{tikzpicture}
\end{equation}

A strand of the first type is called a \Dfn{through strand}.  The
number of through strands in a diagram $x$ is $\pr(x)$, the
propagating number of $x$.

We denote the diagram category corresponding to the cobordism
category with $\dc_{\Br(\delta)}$.

\subsection{Frobenius characters for diagram
  algebras}\label{sec:Brauer-stable}

In this section we employ the results of Section~\ref{sec:inflation}
and elementary considerations on diagrams to compute the Frobenius
characters of the restriction of modules of the diagram algebras to
modules of the symmetric group.

In order to apply the results of Section~\ref{sec:inflation} we have
to provide a decomposition for the diagrams in $D(r,s;p)$.
\begin{lemma}
  Any Brauer diagram, $x\in D(r,s;p)$, has a unique decomposition as
  \begin{equation*}
    \begin{tikzpicture}[line width = 2pt]
      \fill[color=blue!20] (0,0) rectangle (4,4); %
      \draw (1,4) node[anchor=south] {$p$} -- (1,0)
      node[anchor=north] {$p$}; %
      \draw (3,4) node[anchor=south] {$r-p$} -- (3,2.5); %
      \draw (3,1.5) -- (3,0) node[anchor=north] {$s-p$}; %
      \draw[line width = 1pt] (0.5,1.5) rectangle (1.5,2.5); %
      \fill[color = black!60] (0.5,1.5) rectangle (1.5,2.5); %
      \draw[line width = 1pt] (0.5,3.0) rectangle (3.5,3.5); %
      \fill[color = black!10] (0.5,3.0) rectangle (3.5,3.5); %
      \draw[line width = 1pt] (0.5,0.5) rectangle (3.5,1.0); %
      \fill[color = black!10] (0.5,0.5) rectangle (3.5,1.0); %
      \draw[line width = 1pt] (2.5,2.75) -- (3.5,2.75) arc
      (0:-180:0.5); %
      \fill[color = red!10] (2.5,2.75) -- (3.5,2.75) arc
      (0:-180:0.5); %
      \draw[line width = 1pt] (2.5,1.25) -- (3.5,1.25) arc
      (0:180:0.5); %
      \fill[color = red!10] (2.5,1.25) -- (3.5,1.25) arc (0:180:0.5);
    \end{tikzpicture}
  \end{equation*}
  The two pale grey rectangles are a $(p,r-p)$-shuffle and a
  $(p,s-p)$-shuffle, the dark grey square is a permutation on $p$
  strands and the two pink half circles are perfect matchings on
  $r-p$ and on $s-p$ points.
\end{lemma}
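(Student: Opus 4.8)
The plan is to read off five pieces of combinatorial data directly from a Brauer diagram $x\in D(r,s;p)$, to identify this data with the pieces appearing in the pictured decomposition, and then to observe that $x\mapsto(\text{data})$ is a bijection.

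First I would extract the data. Let $T\subseteq[r]$ be the set of endpoints on the top edge lying on a through strand and $B\subseteq[s]$ the analogous set on the bottom; since $x$ has exactly $p$ through strands, $\size{T}=\size{B}=p$. The remaining $r-p$ top points are matched among themselves, giving a perfect matching $a$ on $[r]\setminus T$, and the bottom gives a perfect matching $b$ on $[s]\setminus B$. The through strands determine a bijection $T\to B$; writing $T=\{t_1<\dots<t_p\}$ and $B=\{b_1<\dots<b_p\}$, this is $t_i\mapsto b_{\pi(i)}$ for a unique $\pi\in\fS_p$. Finally, the $p$-subset $T\subseteq[r]$ is the image of $\{1,\dots,p\}$ under a unique $(p,r-p)$-shuffle $u\in\fS_r$ (order-preserving on $\{1,\dots,p\}$ and on $\{p+1,\dots,r\}$), and similarly $B$ gives a $(p,s-p)$-shuffle $v\in\fS_s$; transporting $a$ and $b$ along $u^{-1}$ and $v^{-1}$ turns them into perfect matchings on $[r-p]$ and on $[s-p]$.

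Next I would match this with the picture, read from top to bottom: the upper pale grey rectangle is $u$, the upper pink half-circle caps off the last $r-p$ strands according to $a$, the dark grey square is $\pi$ acting on the $p$ surviving strands, the lower pink half-circle introduces $s-p$ strands according to $b$, and the lower pale grey rectangle is $v$. Composing these five morphisms of $\cb_{\Br}$ in that order creates no closed loops, so the composite agrees with their $\circ$-product, and one checks it is a Brauer diagram whose through endpoints on the top edge are $u(\{1,\dots,p\})=T$, whose residual top and bottom matchings are $a$ and $b$ pushed forward along $u$ and $v$, and whose through-strand bijection is encoded by $\pi$ --- that is, exactly $x$.

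The two points that merit care are the standard fact that a $(p,r-p)$-shuffle is uniquely determined by the image of $\{1,\dots,p\}$ (equivalently, the bijection between such shuffles and $p$-subsets of $[r]$ as minimal-length coset representatives), and the bookkeeping that the assignment $x\mapsto(u,a,\pi,b,v)$ and the composition just described are mutually inverse. Uniqueness of the decomposition is then immediate, since $T$, $B$, the two residual matchings and the through-strand bijection are intrinsic to $x$, hence so is the whole tuple. I do not expect a genuine obstacle here; the only real effort is staying consistent about orderings, about the direction in which the shuffles act, and about which half-circle is a cap and which a cup.
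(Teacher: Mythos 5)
Your proof is correct and takes essentially the same approach as the paper's: the paper likewise reads off the permutation from the through strands, the two perfect matchings from the remaining top and bottom strands, and the two shuffles that sort the boundary points, then observes the data is intrinsic to $x$. You have simply spelled out the bookkeeping (the sets $T$, $B$, the shuffle-as-minimal-coset-representative fact, and the mutual inverse check) that the paper leaves implicit.
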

\begin{proof}
  Take the strands of each type. The propagating strands give the
  permutation and the other two types give the two perfect
  matchings. Then the two shuffles arrange the boundary points in the
  prescribed order.
\end{proof}

\begin{defn}
  For $r\ge p\ge 0$, let $\bar D(p,r;p)$ be the set of diagrams of the form
  \begin{equation*}
    \begin{tikzpicture}[line width = 2pt]
      \fill[color=blue!20] (0,0) rectangle  (4,2);
      \draw (1,2) node[anchor=south] {$p$} -- (1,0) node[anchor=north] {$p$};
      \draw (3,1.5) -- (3,0) node[anchor=north] {$r-p$};
      \draw[line width = 1pt] (0.5,0.5) rectangle (3.5,1.0);
      \fill[color = black!10] (0.5,0.5) rectangle (3.5,1.0);
      \draw[line width = 1pt] (2.5,1.25) -- (3.5,1.25) arc (0:180:0.5);
      \fill[color = red!10] (2.5,1.25) -- (3.5,1.25) arc (0:180:0.5);
    \end{tikzpicture}
  \end{equation*}
  and define $\bar D(r,p;p)$ in an analogous way.
\end{defn}

\begin{cor}\label{cor:decomposition-Brauer}
  For all $r,s\ge p\ge 0$, the composition map
  \[
  \circ: \bar D(r,p;p) \times D(p,p;p) \times \bar D(p,s;p)\to
  D(r,s;p)
  \]
  is a bijection.
\end{cor}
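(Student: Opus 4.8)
The plan is to read this off from the preceding lemma, which gives for each $x\in D(r,s;p)$ a \emph{unique} expression of $x$ as a vertical stack of five blocks: a $(p,r-p)$-shuffle, a permutation on the $p$ through strands, a perfect matching on the $r-p$ isolated top points, a perfect matching on the $s-p$ isolated bottom points, and a $(p,s-p)$-shuffle. The whole point is that these five blocks regroup into the three factors of the asserted product. Reading from the top, the $(p,r-p)$-shuffle together with the perfect matching on the $r-p$ top points that it isolates is precisely a diagram in $\bar D(r,p;p)$, regarded as a morphism $r\to p$ whose $p$ bottom points are the heads of the $p$ through strands; symmetrically, the perfect matching on $s-p$ points together with the $(p,s-p)$-shuffle below it is a diagram in $\bar D(p,s;p)$, a morphism $p\to s$; and the permutation on the $p$ through strands is an element of $D(p,p;p)$ (which in the Brauer category is just $\fS_p$). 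Thus the preceding lemma produces a map $D(r,s;p)\to\bar D(r,p;p)\times D(p,p;p)\times\bar D(p,s;p)$, and the plan is to verify that it is a two-sided inverse to $\circ$.

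First I would check that $\circ$, applied to a triple $(a,\pi,b)$ from $\bar D(r,p;p)\times D(p,p;p)\times\bar D(p,s;p)$, actually lands in $D(r,s;p)$ and does so without ever creating a loop. No loop can appear: every arc of $a$ that is not a through strand has both endpoints among the $r$ top points of $a$, every non-through arc of $b$ has both endpoints among the $s$ bottom points of $b$, and $\pi$ has no non-through arcs at all, while the only points identified when forming $a\circ\pi$ and then $(a\circ\pi)\circ b$ are the $p$ intermediate points, each of which is an endpoint of a through strand on both sides. Hence the loop count is $0$ at every stage, so $\circ$ here agrees with ordinary diagrammatic composition. Moreover $a$ contributes exactly $p$ strands to the first interface, $\pi$ permutes them, and $b$ contributes exactly $p$ strands to the second interface, so the composite has exactly $p$ through strands; combined with Lemma~\ref{en:pr}(2), which gives $\pr(a\circ\pi\circ b)\le p$, this yields $\pr(a\circ\pi\circ b)=p$.

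The two inverse identities are then essentially formal. That decomposing $x$ and recomposing returns $x$ is the existence half of the preceding lemma. That recomposing a triple $(a,\pi,b)$ and then decomposing returns $(a,\pi,b)$ follows from the uniqueness half of that lemma, once one observes — using the loop-free analysis above — that $a\circ\pi\circ b$ is presented by exactly the five-block stack whose regrouping is $(a,\pi,b)$. Surjectivity and injectivity of $\circ$ onto $D(r,s;p)$ are immediate consequences.

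I expect the only genuine work to be the bookkeeping in the first paragraph: pinning down that the shuffle sits on the correct side of each outer block, so that reading the five-block stack from top to bottom really does factor as an element of $\bar D(r,p;p)$, then one of $D(p,p;p)$, then one of $\bar D(p,s;p)$, and that the perfect matchings occurring in the preceding lemma are exactly the ones permitted in the definitions of $\bar D(r,p;p)$ and $\bar D(p,s;p)$. None of this is deep, but it is where a careless argument could slip; the remainder is a direct transcription of the preceding lemma.
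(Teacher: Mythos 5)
Your proposal is correct and matches the paper's intent: the paper gives no explicit proof of the corollary, treating it as an immediate consequence of the preceding lemma, and your argument is exactly the careful unpacking of that implication — regrouping the five-block stack of the lemma into the three factors, checking no loops arise at the two interfaces because every intermediate point lies on a through strand, and observing that the propagating number of the composite is exactly $p$. The bookkeeping you flag in the last paragraph (which side of each outer block the shuffle sits on, and that the perfect matchings land in the right pieces) is the only content beyond the lemma, and you handle it correctly.
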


Let $e_p\in D_r$ be the idempotent $\delta^{-(r-p)/2} \id_p\otimes u$,
where $\id_p\otimes u$ is the diagram
\begin{equation}
  \begin{tikzpicture}[line width = 2pt]
    \fill[color=blue!20] (0,0) rectangle  (4,1.5);
    \draw (1,0) node[anchor=north] {$p$} -- (1,1.5) {};
    \draw (1.5,0) .. controls (1.5,0.5) and (3.5,0.5) .. (3.5,0);
    \draw (4,0) node[anchor=west] {.};
    %\draw (2.5,1.5) arc (180:360:0.5);
    \draw (1.5,1.5) .. controls (1.5,1) and (3.5,1) .. (3.5,1.5);
    \draw (3.05,0) node[anchor=north] {$(r-p)/2$};
  \end{tikzpicture}
\end{equation}
Then, by Theorem~\ref{thm:strict} we have a strict Morita context and
inflation is an equivalence from $K\fS_p$-modules to {\hu}
$\cJ_r(p)/\cJ_r(p-1)$-modules.

We are now ready to prove the main theorem of this section:
\begin{thm}
  Let $V$ be an $\fS_p$-module.  Then $(\Inf_p^r
  V)\downarrow^{D_r}_{\fS_r}$ is isomorphic to $V\cdot M_{r-p}$,
  where $M_{r-p}$ is the species of perfect matchings of sets of
  cardinality $r-p$.
\end{thm}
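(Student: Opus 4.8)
The plan is to reduce the statement to the species reformulation of inflation. In this section $A_p\cong K\fS_p$, so Remark~\ref{rem:inflation} applies and gives
\[
(\Inf_p^r V)\downarrow^{D_r}_{\fS_r}=\big\langle V(X),\,D(p,r;p)(X,Y)\big\rangle_X,
\]
where $V$ is viewed as a species of sort $X$, homogeneous of degree $p$, the sort $X$ labels the $p$ points on the top edge of a Brauer diagram, the sort $Y$ labels the $r$ points on the bottom edge, and the surviving $\fS_r$-action is the permutation action on the bottom points. Everything then comes down to identifying this bivariate species and evaluating the scalar product.

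First I would describe $D(p,r;p)$ combinatorially. By Lemma~\ref{en:pr}~(1) a Brauer diagram with $p$ top points and propagating number $p$ has all $p$ of its through strands ending on those top points, so no two top points are joined to each other. Such a diagram is therefore the same data as a bijection between the $p$ top points and a $p$-element subset of the $r$ bottom points together with a perfect matching on the remaining $r-p$ bottom points; this is the decomposition of Corollary~\ref{cor:decomposition-Brauer} specialised to source size $p$, where the left shuffle factor is a single point, so $D(p,r;p)\cong D(p,p;p)\circ\bar D(p,r;p)$ with $D(p,p;p)=\fS_p$. In species language this says that, as bivariate species, $D(p,r;p)=h_p(X\cdot Y)\cdot M_{r-p}(Y)$, where $h_p(X\cdot Y)$ is the species of bijections between a $p$-element set of sort $X$ and a $p$-element set of sort $Y$ and $M_{r-p}$ is the species of perfect matchings of an $(r-p)$-element set, here carried by the sort $Y$.

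Next I would evaluate $\big\langle V(X),\,h_p(X\cdot Y)\cdot M_{r-p}(Y)\big\rangle_X$. Since the factor $M_{r-p}(Y)$ carries no structure of sort $X$, in any splitting of the underlying sort-$X$ set the part assigned to that factor is empty, so $V(X)\ast_X\big(h_p(X\cdot Y)\cdot M_{r-p}(Y)\big)=\big(V(X)\ast_X h_p(X\cdot Y)\big)\cdot M_{r-p}(Y)$, and setting $X=1$ commutes with multiplying by the $X$-free factor. Hence the scalar product distributes as $\big\langle V(X),\,h_p(X\cdot Y)\big\rangle_X\cdot M_{r-p}(Y)$. Because $V$ is homogeneous of degree $p$ in $X$, pairing it against $h_p(X\cdot Y)$ is the same as pairing it against the full reproducing kernel $H(X\cdot Y)$, so Lemma~\ref{lem:sets-reproducing-kernel} gives $\big\langle V(X),\,h_p(X\cdot Y)\big\rangle_X=V(Y)$. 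Therefore the whole expression is $V(Y)\cdot M_{r-p}(Y)=V\cdot M_{r-p}$, as claimed; when $r-p$ is odd both sides are zero.

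The only step that really needs care — and where I would expect scrutiny — is the bookkeeping of the two symmetric-group actions: that the $\fS_p$-action on $KD(p,r;p)$ used to form the tensor product $V\otimes_{K\fS_p}KD(p,r;p)$ is exactly the action on the bijection factor $h_p(X\cdot Y)$ that $\langle-,-\rangle_X$ integrates out, while the $\fS_r$ that survives is the sort-$Y$ action on the bottom points. This is already encoded in Remark~\ref{rem:inflation}, so the remainder is formal manipulation of the species operations. As a sanity check one can also argue directly, without species: the tensor product decomposes as $\bigoplus_{S}\big(V\otimes_{K\fS_p}K\,\mathrm{Bij}([p],S)\big)\otimes K\,M([r]\setminus S)$ over the $p$-element subsets $S\subseteq[r]$, each summand being a copy of $V$ carried on $S$, and this sum is by definition the induction product $(V\times M_{r-p})\uparrow_{\fS_p\times\fS_{r-p}}^{\fS_r}=V\cdot M_{r-p}$.
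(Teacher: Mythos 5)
Your argument is essentially the paper's second proof: you identify the bivariate species underlying $KD(p,r;p)$ using Corollary~\ref{cor:decomposition-Brauer} (the only cosmetic difference being that you work with the homogeneous component $h_p(X\cdot Y)\cdot M_{r-p}(Y)$ from the start rather than with $H(X\cdot Y)\cdot M(Y)$ and restricting at the end), and then evaluate the scalar product via the reproducing-kernel Lemma~\ref{lem:sets-reproducing-kernel} and Remark~\ref{rem:inflation}. Your closing ``sanity check'' is precisely the paper's first proof, so both of the paper's arguments are accounted for.
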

\begin{proof}[First proof]
  Starting with the tensor product definition of induction, we have:
  \begin{align*}
    &\big(V \otimes K
    M_{r-p}\big)\downarrow^{\fS_r}_{\fS_p\otimes\fS_{r-p}} \\%
    &= V \otimes KM_{r-p}%
    \otimes_{K\fS_p\otimes K\fS_{r-p}} K\fS_r \\
    &\cong V\otimes_{K\fS_p} \left[ (K{\fS_p} \otimes KM_{r-p})%
      \otimes_{K\fS_p\otimes K\fS_{r-p}} K\fS_r \right] \\
    &\cong V\otimes_{K\fS_p} \left[ (K{\fS_p} \otimes KM_{r-p})%
      \otimes K[\fS_p\times \fS_{r-p} \backslash\fS_r] \right] \\
    &\cong V\otimes_{K\fS_p} KD(p,r;r)
  \end{align*}
\end{proof}
Let us now provide an alternative proof.  This proof also shows that
when $V$ is a permutation representation, the isomorphism is an
isomorphism of permutation representations.
\begin{proof}[Second proof]
  Consider the bivariate combinatorial species
  \[
  H(X\cdot Y)\cdot M(Y),
  \]
  where $H$ is the species of sets and $M$ is the species of perfect
  matchings.  Informally, this is the species that matches the set of
  labels of sort $X$ with a subset of the labels of sort $Y$, and
  puts a perfect matching on the remaining labels of sort $Y$.

  It follows immediately from
  Corollary~\ref{cor:decomposition-Brauer} (upon setting $r=p$ and
  $s=r$) that the restriction of this species to sets of sort $X$ is
  isomorphic to the $\fS_p\times\fS_r$-bimodule $KD(p,r;p)$ regarded
  as a bivariate species.

  As a consequence of the linearity of the scalar product and
  Lemma~\ref{lem:sets-reproducing-kernel} we have
  \[
  \big\langle V, H(X\cdot Y)\cdot M(Y)\big\rangle_X = V\cdot M.
  \]
  Restricting to the homogeneous component of degree $r$ we obtain
  the claim via remark~\ref{rem:inflation}.
\end{proof}

\subsection{Branching rules}\label{sec:Brauer-branching-rules}
In this section we determine branching rules for the diagram
algebras, assuming their semisimplicity.  This explains the relevance
of oscillating tableaux. This result is also proved in
\cite{MR951511}.

For $r\ge 0$ we have an inclusion $D_r\rightarrow D_{r+1}$.
Here we study the decomposition of the irreducible $D_{r+1}$-modules
restricted to a $D_r$-module.

\begin{prop}\label{prop:br} For $r\ge p\ge 0$ and $V$ any $A_p$-module,
 \begin{equation*}
   (\Inf_p^{r+1} V)\downarrow^{D_{r+1}}_{D_r} \cong
   \Inf_{p+1}^r(V\uparrow_{A_p}^{A_{p+1}}) \oplus \Inf_{p-1}^r(V\downarrow_{A_{p-1}}^{A_p})
 \end{equation*}
\end{prop}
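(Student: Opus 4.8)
\section*{Proof proposal}

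The plan is to realise the claimed decomposition as the outcome of an exact sequence of $(A_p,D_r)$-bimodules, and to split that sequence using semisimplicity. Recall that $\Inf_p^{r+1}V=V\otimes_{A_p}KD(p,r+1;p)$, that the inclusion $D_r\hookrightarrow D_{r+1}$ is $b\mapsto b\otimes\id_1$, that $A_p$ acts on $KD(p,r+1;p)$ by permuting the $p$ top boundary points of a diagram, and that $D_{r+1}$ acts on the $r+1$ bottom boundary points. Give $KD(p+1,r;p+1)$ the $(A_p,D_r)$-bimodule structure obtained by restricting the $A_{p+1}$-action along $A_p\hookrightarrow A_{p+1}$. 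Then there are natural identifications
\[
V\otimes_{A_p}\big(A_p\otimes_{A_{p-1}}KD(p-1,r;p-1)\big)\cong\Inf_{p-1}^r\big(V\downarrow_{A_{p-1}}^{A_p}\big),\qquad V\otimes_{A_p}KD(p+1,r;p+1)\cong\Inf_{p+1}^r\big(V\uparrow_{A_p}^{A_{p+1}}\big),
\]
the second because $V\uparrow_{A_p}^{A_{p+1}}\otimes_{A_{p+1}}KD(p+1,r;p+1)=V\otimes_{A_p}A_{p+1}\otimes_{A_{p+1}}KD(p+1,r;p+1)=V\otimes_{A_p}KD(p+1,r;p+1)$. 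Since $A_p$ and $D_r$ are semisimple, $V\otimes_{A_p}-$ is exact and every short exact sequence of $D_r$-modules splits; so it suffices to produce a short exact sequence of $(A_p,D_r)$-bimodules
\[
0\longrightarrow A_p\otimes_{A_{p-1}}KD(p-1,r;p-1)\longrightarrow KD(p,r+1;p)\longrightarrow KD(p+1,r;p+1)\longrightarrow 0 .
\]

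To construct it I would split the basis $D(p,r+1;p)$ according to the strand meeting the rightmost bottom vertex $r+1$: in \emph{Case A} it is a through strand, with top endpoint some $i\in[p]$; in \emph{Case B} it is a cap joining $r+1$ to another bottom vertex $j\in[r]$. The Case A diagrams span a sub-bimodule: right multiplying by $b\otimes\id_1$ merely prolongs the strand at $r+1$ down the new vertical strand, so $r+1$ remains on a through strand to the \emph{same} top vertex $i$ (and the product is zero if the propagating number drops below $p$). Deleting this through strand and order-preservingly relabelling the remaining $p-1$ top vertices sends a Case A diagram to a pair $(i,y)$ with $i\in[p]$ and $y\in D(p-1,r;p-1)$, and this is a bijection onto a basis of $A_p\otimes_{A_{p-1}}KD(p-1,r;p-1)$. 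One then checks it intertwines the bimodule structures: $A_p$ permutes the top vertices, hence acts on the index $i$ while inducing an $A_{p-1}$-correction on $y$, exactly as in the induced module, while $b\otimes\id_1$ acts only on $y$.

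For the quotient, whose basis is the set of images of the Case B diagrams, I would send a Case B diagram with cap $\{j,r+1\}$ to the element of $D(p+1,r;p+1)$ obtained by deleting vertex $r+1$ and attaching a new top vertex, numbered $p+1$, to vertex $j$ by a new through strand; this is a bijection onto $D(p+1,r;p+1)$. It is $A_p$-equivariant because the $A_p$-action fixes the new vertex $p+1$, which is precisely how $A_p$ acts on $KD(p+1,r;p+1)$ through $A_p\hookrightarrow A_{p+1}$. To see it is a map of $D_r$-modules modulo Case A, apply $b\otimes\id_1$: the strand emanating from vertex $r+1$ on one side and the new strand emanating from vertex $p+1$ on the other follow the \emph{same} zig-zag path, alternately through $b$ and through the bottom caps of the diagram, with matching loop counts. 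If this path ends at a bottom vertex, one gets a Case B diagram on the left and a propagating-number-$(p+1)$ diagram on the right, and these correspond; if the path ends at a top vertex of the original diagram, one gets a Case A diagram on the left, which is zero in the quotient, while on the right the new strand is capped off against an old through strand so the propagating number drops below $p+1$ and the product is zero there as well.

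The main obstacle is exactly this last point: the pair (Case A, Case B) is \emph{not} a decomposition of $KD(p,r+1;p)$ as a $D_r$-module --- only Case A is a sub-bimodule --- so the argument must be organised as the short exact sequence above, with the actual direct-sum splitting deferred to the semisimplicity of $D_r$; and one has to verify carefully that "the zig-zag path reaches a top vertex", which forces the outcome into Case A, is matched precisely by the drop in propagating number on the $KD(p+1,r;p+1)$ side (including the bookkeeping of the powers of $\delta$). Once the bimodule sequence is established, applying the exact functor $V\otimes_{A_p}-$ and splitting over $D_r$ yields the stated isomorphism. One could instead pin down the quotient by a dimension count together with the Frobenius characters of Subsection~\ref{sec:Brauer-stable}, but the diagrammatic identification above is cleaner and additionally shows the isomorphism is one of permutation representations whenever $V$ is.
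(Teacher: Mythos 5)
Your proof follows the same basic plan as the paper's --- split the diagrams in $D(p,r+1;p)$ according to whether the last bottom point lies on a through strand or on a cap, identify the through-strand part with the induced module $A_p\otimes_{A_{p-1}}KD(p-1,r;p-1)$ and the cap part with the restricted module $KD(p+1,r;p+1)$, then tensor over $A_p$ and use associativity --- but your organisation as a short exact sequence is a genuine improvement. The paper asserts that the two pieces yield a decomposition ``as an $A_p$-$D_r$ bimodule into two summands'', yet, as you correctly observe, the span of the cap diagrams is \emph{not} a right $D_r$-submodule: composing with $b\otimes\id_1$ can cause the strand leaving the last bottom vertex to zig-zag back up to a top vertex, turning a cap diagram into a through-strand diagram. (For a minimal example take $p=1$, $r=2$, $x$ the diagram with through strand $1\to1$ and cap $\{2,3\}$, and $b$ the cup--cap diagram; then $x\cdot(b\otimes\id_1)$ is the diagram with through strand $1\to3$ and cap $\{1,2\}$, so a cap-at-$r{+}1$ diagram has been sent to a through-strand-at-$r{+}1$ diagram, with propagating number unchanged.) Hence what is directly available is a filtration with the through-strand diagrams spanning the sub-bimodule and the cap diagrams a basis of the quotient; the direct sum is recovered afterwards from the blanket semisimplicity assumption of the subsection, exactly as you do. This also matches how the paper itself handles the analogous branching result for the partition algebra, which is there phrased as a short exact sequence with a separate corollary invoking semisimplicity for the splitting. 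Your bimodule identifications, the associativity step reducing $V\otimes_{A_p}(-)$ to the two inflations, and your remark about tracking the $\delta$-bookkeeping on the zig-zag paths are all correct.
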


\begin{proof}
Let $V$ be a representation of $A_p=K\fS_p$. Then we consider
$V\otimes_{A_p} D(p,r+1;p)$ restricted to $D_r$. This is given by
\begin{equation*}
 V\otimes_{A_p} \left(KD(p,r+1;p)\downarrow^{D_{r+1}}_{D_r}\right)
\end{equation*}

The first observation is that $KD(p,r+1;p)\downarrow^{D_{r+1}}_{D_r}$
has a decomposition as a $A_p$-$D_r$ bimodule into two summands
corresponding to a partition of the diagrams into two disjoint
sets. The diagrams are partitioned according to whether the last
point on the bottom row is matched with a point on the bottom row or
on the top row.

The $A_p$-$D_r$ bimodule generated by the diagrams whose last point
on the bottom row is matched with a point on the bottom row is
isomorphic to $KD(p+1,r;p+1)\downarrow^{\fS_{p+1}}_{\fS_p}$.
Informally, the isomorphism can be described as moving the last point
from the bottom row to the top row:
\begin{center}
  \begin{tikzpicture}[line width = 2pt]
    \fill[color=blue!20] (0,0) rectangle  (2,1.5);
    \draw (1,1.5) node[anchor=south] {$p$} -- (1,1.25) {};
    \draw (1,0) node[anchor=north] {$r+1$} -- (1,0.25) {};
    \fill[color=black!20] (0.5,0.25) rectangle (1.5,1.25);
    \draw[line width = 1pt] (0.5,0.25) rectangle (1.5,1.25);
    \draw (2.5,0.75) node {$\mapsto$};
    \fill[color=blue!20] (3,0) rectangle  (5,1.5);
    \draw (4,1.5) node[anchor=south] {$p$} -- (4,1.25) {};
    \draw (3.75,0) node[anchor=north] {$r$} -- (3.75,0.5) {};
    \fill[color=black!20] (3.5,0.5) rectangle (4.5,1.25);
    \draw[line width = 1pt] (3.5,0.5) rectangle (4.5,1.25);
    \draw (4.75,1.5) -- (4.75,0.5) arc (0:-180:0.25);
  \end{tikzpicture}
  \qquad
  \begin{tikzpicture}[line width = 2pt]
    \fill[color=blue!20] (0,0) rectangle  (2,1.5);
    \draw (1,1.5) node[anchor=south] {$p+1$} -- (1,1.25) {};
    \draw (1,0) node[anchor=north] {$r$} -- (1,0.25) {};
    \fill[color=black!20] (0.5,0.25) rectangle (1.5,1.25);
    \draw[line width = 1pt] (0.5,0.25) rectangle (1.5,1.25);
    \draw (2.5,0.75) node {$\mapsto$};
    \fill[color=blue!20] (3,0) rectangle  (5,1.5);
    \draw (3.75,1.5) node[anchor=south] {$p$} -- (3.75,1) {};
    \draw (4,0) node[anchor=north] {$r$} -- (4,0.5) {};
    \fill[color=black!20] (3.5,0.25) rectangle (4.5,1);
    \draw[line width = 1pt] (3.5,0.25) rectangle (4.5,1);
    \draw (4.75,0) -- (4.75,1) arc (0:180:0.25);
  \end{tikzpicture}
\end{center}

The $A_p$-$D_r$ bimodule generated by the diagrams whose last point
on the bottom row is matched with a point on the top row is
isomorphic to $KD(p-1,r;p-1)\uparrow^{\fS_{p}}_{\fS_{p-1}}$.  This
follows because every such diagram can be written uniquely as
follows, for some $p_1,p_2\ge 0$, $p_1+p_2=p-1$
\begin{center}
  \begin{tikzpicture}[line width = 2pt]
    \fill[color=blue!20] (0,0) rectangle  (2.5,1.5);
    \draw (1,1.5) node[anchor=south] {$p_1$} -- (1,1) {};
    \draw (1.5,1.5) .. controls (1.5,1.25) and (2,1.25) .. (2,1) -- (2,0);
    \draw (2,1.5) node[anchor=south] {$p_2$} .. controls (2,1.25) and (1.5,1.25) .. (1.5,1) {};
    \draw (1.25,0) node[anchor=north] {$r$} -- (1.25,0.5) {};
    \fill[color=black!20] (0.75,0.5) rectangle (1.75,1);
    \draw[line width = 1pt] (0.75,0.5) rectangle (1.75,1);
  \end{tikzpicture}
\end{center}

Now we have
\begin{equation*}
  V\otimes_{A_p} (KD(p+1,r;p+1)\downarrow^{A_{p+1}}_{A_p}) %
  \cong (V\uparrow_{A_p}^{A_{p+1}}) \otimes_{A_{p+1}} KD(p+1,r;p+1)
\end{equation*}
as $D_r$-modules: apply associativity to the tensor product
\begin{equation*}
 V\otimes_{A_p} A_{p+1} \otimes_{A_{p+1}} KD(p+1,r;p+1)
\end{equation*}
and recall that induction is defined as
$V\uparrow_{A_p}^{A_{p+1}} = V\otimes_{A_p} A_{p+1}$.

Similarly we have
\begin{equation*}
  V\otimes_{A_p} (KD(p-1,r;p-1)\uparrow_{A_{p-1}}^{A_p}) %
  \cong (V\downarrow_{A_{p-1}}^{A_p})\otimes_{A_{p-1}} KD(p-1,r;p-1)
\end{equation*}
as $A_{p-1}$-modules by applying associativity to the tensor product
\begin{equation*}
  V\otimes_{A_p} A_{p} \otimes_{A_{p-1}} KD(p-1,r;p-1).
\end{equation*}
\end{proof}

\begin{cor}\label{cor:Brauer-branching-rules}
  Let $\lambda\vdash p$ and put
  $U(r,\lambda)=\Inf_p^r\left(S^\lambda\right)$, where $S^\lambda$ is
  the irreducible representation of $A_p$ corresponding to $\lambda$.
  Then the branching rules are given by
  \begin{equation*}
    U(r+1, \lambda)\downarrow^{D_{r+1}}_{D_r}\cong%
    \bigoplus_{\mu = \lambda\pm\square} U(r,\mu)
  \end{equation*}
  where the sum is over all partitions $\mu$ obtained from $\lambda$
  by adding or removing a cell.
\end{cor}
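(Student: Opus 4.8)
The plan is to specialise Proposition~\ref{prop:br} to the Brauer setting, where $A_p = K\fS_p$, taking $V = S^\lambda$, and then to feed in the classical branching rule for the symmetric groups.

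Concretely, set $p = |\lambda|$ and apply Proposition~\ref{prop:br} with $V = S^\lambda$. Since $U(r+1,\lambda) = \Inf_p^{r+1}(S^\lambda)$, this gives
\begin{equation*}
  U(r+1,\lambda)\downarrow^{D_{r+1}}_{D_r}
  \cong \Inf_{p+1}^r\big(S^\lambda\uparrow_{\fS_p}^{\fS_{p+1}}\big)
  \oplus \Inf_{p-1}^r\big(S^\lambda\downarrow_{\fS_{p-1}}^{\fS_p}\big).
\end{equation*}
Now substitute the classical branching rule, $S^\lambda\uparrow_{\fS_p}^{\fS_{p+1}}\cong\bigoplus_{\mu=\lambda+\square} S^\mu$ and $S^\lambda\downarrow_{\fS_{p-1}}^{\fS_p}\cong\bigoplus_{\nu=\lambda-\square} S^\nu$. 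Each inflation functor $\Inf_q^r$ is an equivalence of module categories by Theorems~\ref{thm:strict} and~\ref{thm:inflation-equivalence}, hence additive and compatible with finite direct sums, so the displayed isomorphism becomes
\begin{equation*}
  U(r+1,\lambda)\downarrow^{D_{r+1}}_{D_r}
  \cong \bigoplus_{\mu=\lambda+\square}\Inf_{p+1}^r(S^\mu)
  \oplus \bigoplus_{\nu=\lambda-\square}\Inf_{p-1}^r(S^\nu).
\end{equation*}
Since $\Inf_{p+1}^r(S^\mu) = U(r,\mu)$ whenever $|\mu| = p+1$ and $\Inf_{p-1}^r(S^\nu) = U(r,\nu)$ whenever $|\nu| = p-1$, the right-hand side is precisely $\bigoplus_{\mu=\lambda\pm\square} U(r,\mu)$, which is the claim.

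The only points that need attention are the extreme cases. When $p = 0$ there is no cell to remove from $\lambda = \emptyset$, and the second summand is simply absent. When $p+1 > r$, a diagram in $D(p+1,r)$ has propagating number at most $\min(p+1,r) = r < p+1$, so $KD(p+1,r;p+1) = 0$ and $\Inf_{p+1}^r$ kills everything; thus an added-cell partition $\mu$ of size exceeding $r$ contributes nothing, consistent with $U(r,\mu)$ being undefined (equivalently zero) for $|\mu| > r$. I do not anticipate a real obstacle: all the content is carried by Proposition~\ref{prop:br} together with the symmetric-group branching rule, and the corollary is their direct combination.
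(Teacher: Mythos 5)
Your proof is correct and takes essentially the same route as the paper: apply Proposition~\ref{prop:br} to $V=S^\lambda$, feed in the classical symmetric-group branching rule, and commute the resulting direct sum past the inflation functor. The paper phrases the last step via Hom-space computations into each irreducible $\Inf_q^r U$ using full faithfulness of inflation, whereas you invoke additivity of $\Inf_q^r$ directly, which is equivalent, slightly more direct, and avoids the explicit appeal to Proposition~\ref{prop:equivalence-D_r}; your treatment of the edge cases $p=0$ and $p+1>r$ is also correct.
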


\begin{proof} 
  By Proposition~\ref{prop:equivalence-D_r} $\{U(r,\lambda):
  \lambda\vdash p, 0\leq p\leq r\}$ is a complete set of inequivalent
  irreducible representations of $D_r$.  From Proposition
  \ref{prop:br} it follows that
  \begin{multline*}
    \Hom_{D_r}\left( (\Inf_p^{r+1} V)\downarrow^{D_{r+1}}_{D_r}, \Inf_q^{r}U \right) \\
    \cong \Hom_{D_r}\left( \Inf_{p+1}^r(V\uparrow_{A_p}^{A_{p+1}}) \oplus%
      \Inf_{p-1}^r(V\downarrow_{A_{p-1}}^{A_p}) , \Inf_q^{r} U \right) \\
    \cong \begin{cases}
      \Hom_{A_{p+1}}\left( V\uparrow_{A_p}^{A_{p+1}},U\right) & \text{if $q=p+1$}\\
      \Hom_{A_{p-1}}\left( V\downarrow^{A_p}_{A_{p-1}},U\right) & \text{if $q=p-1$}\\
      0 & \text{otherwise},
    \end{cases}
  \end{multline*}
  where for the second isomorphism we used that inflation is a fully
  faithful functor.  Now let $U$ and $V$ be irreducible
  representations of $A_q\cong K\fS_q$ and $A_p\cong K\fS_p$
  respectively.  Then the statement follows from the branching rules
  for the symmetric group.
\end{proof}

\subsection{Fundamental theorems}
\label{sec:fundamental-theorems-Brauer}
The theorems of this section connect the representation theory of the
diagram algebras with the representation theory of the tensor powers
of the defining representation of the symplectic group by providing
an explicit isomorphism of categories.

Let $K$ be a field of characteristic zero and let $V$ be a vector
space over $K$ of dimension $2n$, equipped with a non-degenerate
skew-symmetric bilinear form $\langle\;,\;\rangle$.  The symplectic
group $\Sp(2n)$ is the group of linear transformations that preserve
this form.  However the categorical framework that we use requires that
a \emph{symmetric} form is preserved. Therefore we regard $V$ as an
\emph{odd} vector space (as a super vector space).
Let $\{b_1,\dots,b_{2n}\}$ be a basis of
$V$ and let $\{\bar b_1,\dots,\bar b_{2n}\}$ be the dual basis, that is,
$\langle \bar b_i,b_j\rangle = \delta_{i,j}$.

Let $\dc_{\Sp(2n)}$ be the diagram category corresponding to
$\cb_{\Br}$ with $\delta=-2n$ and let $\T_{\Sp(2n)}$ be the category
of invariant tensors of $\Sp(2n)$.

The connection between $\dc_{\Sp(2n)}$ and $\T_{\Sp(2n)}$ is
established by a functor we now define:
\begin{defn}
  Let $\ev_{\Sp(2n)}$ be the symmetric and pivotal monoidal functor
  $\dc_{\Sp(2n)}\rightarrow \T_{\Sp(2n)}$.  Explicitly,
  $\ev_{\Sp(2n)}$ sends the object $r\in\bN$ to $\otimes^rV$ and is
  defined on the generators by
  \begin{align*}
    &\ev_{\Sp(2n)}\left(
      \begin{tikzpicture}[scale=0.6, baseline={(0,0.2)}, line width = 2pt]
        \fill[color=blue!20] (0,0) rectangle  (2,1);
        \draw (0.5,0) .. controls (0.5,0.5) and (1.5,0.5) .. (1.5,1);
        \draw (1.5,0) .. controls (1.5,0.5) and (0.5,0.5) .. (0.5,1);
      \end{tikzpicture}
    \right) = u\otimes v\mapsto -v\otimes u\\
    &\ev_{\Sp(2n)}\left(
      \begin{tikzpicture}[scale=0.6, baseline={(0,0.2)}, line width = 2pt]
        \fill[color=blue!20] (0,0) rectangle (2,1); %
        \draw (0.5,0) arc (180:0:0.5);
      \end{tikzpicture}
    \right) = 1 \mapsto \sum_i b_i\otimes \bar b_i\\
    &\ev_{\Sp(2n)}\left(
      \begin{tikzpicture}[scale=0.6, baseline={(0,0.2)}, line width = 2pt]
        \fill[color=blue!20] (0,0) rectangle (2,1); %
        \draw (0.5,1) arc (180:360:0.5);
      \end{tikzpicture}
    \right) = u\otimes v \mapsto \langle u, v\rangle.
  \end{align*}
\end{defn}

We remark that the first equality is forced by the requirement that
$\ev_{\Sp(2n)}$ is a symmetric functor, given that $V$ is an odd
vector space.  The other two equalities are forced by the requirement
that $\ev_{\Sp(2n)}$ is pivotal.

The first fundamental theorem for the symplectic group
can now be stated as follows:
\begin{thm}\cite{MR1503378},\cite[Theorem (6.1A)]{MR1488158}
  \label{thm:FFT-SFT-Brauer}
  For all $n>0$ the functor
  $\ev_{\Sp(2n)}\colon\dc_{\Sp(2n)}\rightarrow \T_{\Sp(2n)}$ is full.
\end{thm}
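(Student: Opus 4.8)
The plan is to reduce fullness, which a priori concerns every morphism space $\Hom_{\dc_{\Sp(2n)}}(r,s)$, to the single assertion that for every $m\ge 0$ the map
\[
\ev_{\Sp(2n)}\colon \Hom_{\dc_{\Sp(2n)}}(0,m)\longrightarrow (\otimes^m V)^{\Sp(2n)}
\]
is surjective, and then to recognise this surjectivity as the classical first fundamental theorem for the symplectic group.

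For the reduction I would use that both $\dc_{\Sp(2n)}$ and $\T_{\Sp(2n)}$ are pivotal and that $\ev_{\Sp(2n)}$ is a pivotal functor. On the diagram side, bending the $r$ top strands downward with $r$ nested cups gives a natural isomorphism $\Hom_{\dc_{\Sp(2n)}}(r,s)\cong \Hom_{\dc_{\Sp(2n)}}(0,r+s)$, the Brauer counterpart of the isomorphism recorded in \eqref{eq:TL-duals}. On the target side, the symplectic form identifies $V$ with $V^\ast$, hence $\otimes^r V$ with $(\otimes^r V)^\ast$, and the usual adjunction gives $\Hom_{\Sp(2n)}(\otimes^r V,\otimes^s V)\cong (\otimes^{r+s}V)^{\Sp(2n)}$. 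Since $\ev_{\Sp(2n)}$ takes the cup and cap generators to the coevaluation and evaluation of $V$, these two isomorphisms intertwine the two instances of $\ev_{\Sp(2n)}$; thus fullness for the pair $(r,s)$ is equivalent to surjectivity onto invariant tensors in degree $m=r+s$. Both sides vanish when $m$ is odd, since $-\id_V\in\Sp(2n)$ acts on $\otimes^m V$ by $(-1)^m$, so it suffices to treat even $m$.

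It remains to show that $(\otimes^m V)^{\Sp(2n)}$ is spanned by the images of perfect matchings. By the definition of $\ev_{\Sp(2n)}$, a cup is sent to the bivector $c=\sum_i b_i\otimes\bar b_i$; hence a perfect matching in $D_{\Br}(0,m)$, being a product of $m/2$ cups rearranged by crossings, is sent to a contraction tensor, namely a tensor assembled from $m/2$ copies of $c$ with its $m$ factors permuted according to the matching. Consequently the statement to be proved is exactly that every $\Sp(2n)$-invariant element of $\otimes^m V$ is a linear combination of such contraction tensors. Dualising $V$ once more, this is equivalent to the classical fact that the multilinear $\Sp(2n)$-invariant functions of $m$ vectors in $V$ are spanned by the products $\prod_k\langle v_{i_k},v_{j_k}\rangle$ over perfect matchings of $[m]$; this is the first fundamental theorem as proved by Brauer \cite{MR1503378} and Weyl \cite[Theorem (6.1A)]{MR1488158}, obtained from the first fundamental theorem for $\GL_{2n}$ together with the observation that $\Sp(2n)$ is the stabiliser of the form in $\GL(V)$.

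The main obstacle is precisely this last input. The reduction through the pivotal structure and the identification of diagrammatic generators with contraction tensors are formal; the substantive content is producing \emph{enough} invariants --- showing that the contraction tensors span rather than merely that they are invariant --- which is exactly the point at which the cited classical results, or a proof along Weyl's lines, are needed. The complementary problem of the linear relations among these spanning tensors is the second fundamental theorem, which is treated combinatorially later in this section.
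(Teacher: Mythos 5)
The paper states this theorem as a citation to Brauer and Weyl and does not supply a proof of its own, so there is no internal argument to compare against. Your reduction is correct and consistent with the machinery the paper does use: the pivotal identification $\Hom(r,s)\cong\Hom(0,r+s)$ on both sides is exactly Lemma~\ref{lem:Brauer-iso} later in the section, the vanishing for odd $m$ via $-\id_V\in\Sp(2n)$ is right, and you correctly isolate the one non-formal step --- that the contraction tensors span the invariants --- as the content borrowed from the classical first fundamental theorem. In short, your proposal faithfully unfolds what the paper's citation packages, and matches how the authors implicitly intend the theorem to be understood.
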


In the remainder of this section we provide an explicit description
of the kernel of $\ev_{\Sp(2n)}$ as an ideal in the diagram category.
We will denote this ideal with $\Pf^{(n)}$ because of its intimate
connection to the Pfaffian.  Moreover we obtain a simple basis for
the vector space $\Hom_{\dc_{\Sp(2n)}/\Pf^{(n)}}(0, r)$.  This basis
is preserved by rotation, which we will use in
Section~\ref{sec:Brauer-CSP} to exhibit a cyclic sieving phenomenon.

The fundamental object involved is an idempotent $E(n+1)$ of the
diagram algebra $D_{n+1}$ for which we have
$\ev_{\Sp(2n)}\big(E(n+1)\big) = 0$.  This element can be
characterised as follows.

Let $\rho$ be the one dimensional representation of $D_{n+1}$ which on diagrams
is given by
\begin{equation*}
 \rho(x) = \begin{cases}
 1 & \text{if $\pr(x)=n+1$} \\
 0 & \text{if $\pr(x)<n+1$}
\end{cases}
\end{equation*}

The element $E(n+1)$ of $D_{n+1}$ is determined, up to scalar multiple, by the properties
\begin{equation}\label{eq:ch}
 xE(n+1)=\rho(x)E(n+1)=E(n+1)x
\end{equation}
It follows that $E(n+1)$ can be scaled so that it is idempotent and
these properties now determine $E(n+1)$.  It is clear that $E(n+1)$
is a central idempotent and that its rank equals one.

We will give two constructions of $E(n+1)$.  The first is as a simple
linear combination of diagrams.
\begin{defn}\label{defn:id}
 \begin{equation*}
 E(n+1) = \frac{1}{(n+1)!}\sum_{x\in D(n+1,n+1)}x
 \end{equation*}
\end{defn}

\begin{lemma} For $\delta=-2n$, the element $E(n+1)$ in Definition
  \ref{defn:id} satisfies the properties \eqref{eq:ch}.
\end{lemma}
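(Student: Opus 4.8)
The plan is to check the two identities of \eqref{eq:ch} on the basis $D(n+1,n+1)$ of Brauer diagrams, which suffices by $K$-linearity; since the relations are homogeneous in $E(n+1)$ the normalizing scalar $1/(n+1)!$ is immaterial, so I would work with $E=\sum_{x\in D(n+1,n+1)}x$. Write $k=n+1$ throughout.

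I would begin with two reductions. Because $D(k,k)$ is closed under the half-turn antiinvolution $\ast$ and $\ast$ preserves the propagating number (Lemma~\ref{en:pr}(4)), we have $E^\ast=E$ and $\rho(x^\ast)=\rho(x)$; applying $\ast$ to $x\cdot E=\rho(x)E$ gives $E\cdot x^\ast=\rho(x^\ast)E$, so it is enough to establish $x\cdot E=\rho(x)E$ for every diagram $x$. If $x=\pi$ is a permutation then $\rho(\pi)=1$, composing with $\pi$ produces no loops, and $y\mapsto\pi\circ y$ permutes $D(k,k)$, so $\pi\cdot E=\sum_y\pi\circ y=E$. The remaining case is $x$ with $\pr(x)<k$, where one must show $x\cdot E=0$; this is the only point at which $\delta=-2n$ is used.

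The heart of the proof is the single identity $e\cdot E=0$ for the elementary Brauer diagram $e\in D(k,k)$ that joins the first two top points to one another, the first two bottom points to one another, and sends each remaining top point straight down to the corresponding bottom point. I would compute $e\circ y$ for an arbitrary $y\in D(k,k)$: the cup on the top of $e$ survives, the $k-2$ through strands of $e$ pass through unchanged, and gluing the bottom cap of $e$ to $y$ merges the two strands of $y$ incident to its first two top points; hence $e\circ y$ always equals a cup on the first two top points together with a perfect matching $z\in D(k-2,k)$ on the remaining $2k-2$ boundary points. Counting the diagrams $y$ that give rise to a fixed $z$: the unique $y$ carrying a cup on its first two top points produces $z$ together with one closed loop, contributing $\delta$; and for each of the $k-1$ arcs $\{p,q\}$ of $z$ and each of the two ways of attaching the first two top points of $y$ to $p$ and to $q$ one obtains a $y$ that creates no loop and contributes $1$. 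So the coefficient of $z$ in $e\cdot E$ is $\delta+2(k-1)=\delta+2n$, which is $0$ precisely because $\delta=-2n$; therefore $e\cdot E=0$.

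Finally I would pass from this special case to all $x$ with $\pr(x)<k$. Such an $x$ has at least one cap on its bottom row, joining bottom points $i$ and $j$; removing that arc yields a diagram $\hat x\in D(k,k-2)$ with $k-1$ arcs. Choosing any arc $\{p,q\}$ of $\hat x$ and reattaching $p$ to bottom point $i$ and $q$ to bottom point $j$ gives a diagram $y\in D(k,k)$ with $y\circ e'=x$ and no loop created, where $e'$ has a cup on top $\{i,j\}$, a cap on bottom $\{i,j\}$, and straight strands elsewhere. Since $e'$ is obtained from $e$ by pre- and post-composition with permutations, and permutations act as the identity on $E$ by left multiplication, $e'\cdot E=0$, whence $x\cdot E=y\cdot(e'\cdot E)=0$. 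This verifies \eqref{eq:ch}. The only genuinely delicate ingredient is the loop count in the computation of $e\cdot E$; the rest is routine diagram bookkeeping.
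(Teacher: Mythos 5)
Your proof is correct and takes essentially the same approach as the paper's: dispose of the permutation case directly, reduce the case $\pr(x)<n+1$ to the elementary generator $u_1$, and kill it by the count $\delta+2n=0$, with exactly the same enumeration of preimages (one $y$ contributing $\delta$ and $2n$ contributing $1$). Your two minor departures — invoking the antiinvolution $\ast$ to deduce $E\cdot x^\ast=\rho(x^\ast)E$ from $x\cdot E=\rho(x)E$ rather than saying ``the other case is similar,'' and explicitly factoring any $x$ with $\pr(x)<n+1$ as $y\circ e'$ with $e'$ a conjugate of $u_1$ rather than appealing to the claim that the $u_i$ generate $\cJ_{n+1}(n)$ — are stylistic rather than a genuinely different argument.
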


In the proof of this lemma and also in the remainder of this section
we will make use of the following special diagrams.
\begin{defn}
  For $1\le i\le n$ define $u_i\in D(n+1,n+1)$ to consist of the
  pairs $(a,a')$ for $a\notin\{i,i+1\}$ together with the pairs
  $(i,i+1)$ and $(i',(i+1)')$ and define $s_i\in D(n+1,n+1)$ to consist
  of the pairs $(a,a')$ for $a\notin\{i,i+1\}$ together with the
  pairs $(i,(i+1)')$ and $(i',i+1)$:

  \begin{equation*}
    u_i = \raisebox{-1.25cm}{
    \begin{tikzpicture}[line width = 2pt]
      \fill[color=blue!20] (0,0) rectangle  (5,1.5);
      \draw (1,0) node[anchor=north] {$i-1$} -- (1,1.5) {};
      \draw (2,0) arc (180:0:0.5);
      \draw (2,1.5) arc (180:360:0.5);
      \draw (4,0) node[anchor=north] {$n-i-1$} -- (4,1.5) {};
    \end{tikzpicture}}
  \quad
    s_i = \raisebox{-1.25cm}{
    \begin{tikzpicture}[line width = 2pt]
      \fill[color=blue!20] (0,0) rectangle  (5,1.5);
      \draw (1,0) node[anchor=north] {$i-1$} -- (1,1.5) {};
      \draw (2,0) .. controls (2,0.75) and (3,0.75) .. (3,1.5);
      \draw (3,0) .. controls (3,0.75) and (2,0.75) .. (2,1.5);
      \draw (4,0) node[anchor=north] {$n-i-1$} -- (4,1.5) {};
    \end{tikzpicture}}
  \end{equation*}
\end{defn}

\begin{proof} If $\pr(x)=n+1$ then $x$ acts as a permutation on the
  set of diagrams $D(n+1,n+1)$, so this case is clear.

The elements $u_i$, $1\le i\le n$ generate the ideal $\cJ_{n+1}(n)$
so it is sufficient to show that $u_i E(n+1)=0$ and $E(n+1)u_i=0$ for
$1\le i\le n$. We will now show that $u_i E(n+1)=0$. The case
$E(n+1)u_i=0$ is similar.

It is clear that $u_i E(n+1)$ is a linear combination of diagrams
which contain the pair $(i,i+1)$. Hence it is sufficient to show that
the coefficient of each of these diagrams is 0. Let $x$ be a diagram
which contains the pair $(i,i+1)$.  The set $\{y\in D(n+1,n+1): u_i y =
x\}$ has cardinality $2n$ and there is precisely one diagram $z$ with
$u_i z = \delta x$.  Therefore the coefficient of $x$ is $\delta+2n$
and so for $\delta=-2n$ this coefficient is 0.

The set of $2n$ diagrams is constructed as follows. The diagram $x$ contains
$n$ pairs other than $(i,i+1)$. For each of these pairs we construct two elements
of the set. Take the pair $(u,v)$, and replace the two pairs $(i,i+1)$ and $(u,v)$
by $(i,u)$ and $(i+1,v)$ and by $(i+1,u)$ and $(i,v)$ keeping all the remaining pairs.
\end{proof}

However it is not clear from this construction that
$\ev_{\Sp(2n)}\big(E(n+1)\big) = 0$.  We now give an alternative
construction which does make this clear.

\begin{defn} For $k\in \bZ$ and $1\le i\le n$ define $R_i(k)\in D_{n+1}$ by
\begin{equation*}
 R_i(k) = \frac1{k+1}\left( 1 + ks_i - \frac{2k}{\delta+2k-2}u_i \right)
\end{equation*}
\end{defn}

\begin{prop}\label{prop:yb} These elements satisfy
\begin{align*}
 R_i(h)R_{i+1}(h+k)R_i(k)&= R_{i+1}(k)R_i(h+k)R_{i+1}(h) \\
 R_i(h)R_j(k)&=R_j(k)R_i(h)\qquad\text{for $|i-j|>1$}
\end{align*}
\end{prop}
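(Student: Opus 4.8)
\emph{Proof proposal.}
The plan is to reduce both identities to the Brauer algebra on three strands and there verify a rational identity in the two parameters. The commutation relation $R_i(h)R_j(k)=R_j(k)R_i(h)$ for $|i-j|>1$ is immediate, since each $R_i(h)$ is a $K$-linear combination of $1$, $s_i$, $u_i$, and for $|i-j|>1$ the Brauer relations give $s_is_j=s_js_i$, $u_iu_j=u_ju_i$ and $s_iu_j=u_js_i$, so that $R_i(h)$ and $R_j(k)$ commute term by term.

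For the braid-like relation, first observe that both sides lie in the subalgebra of $D_{n+1}$ generated by $s_i,s_{i+1},u_i,u_{i+1}$, and that these four elements satisfy the defining relations of the Brauer algebra $B_3(\delta)$ (for $n=1$ the relation is vacuous). Set $\check R_i(k)=(k+1)R_i(k)=1+ks_i-\tfrac{2k}{\delta+2k-2}u_i$. The scalar prefactors of the two sides of the proposition coincide---both equal $\tfrac1{(h+1)(h+k+1)(k+1)}$---so the proposition is equivalent to
\[
\check R_i(h)\,\check R_{i+1}(h+k)\,\check R_i(k)=\check R_{i+1}(k)\,\check R_i(h+k)\,\check R_{i+1}(h),
\]
and, after clearing the denominators $\delta+2h-2$, $\delta+2(h+k)-2$, $\delta+2k-2$, this is a polynomial identity in $h,k,\delta$ with coefficients in $B_3(\delta)$. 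It can be verified by expanding both products in the basis of fifteen Brauer diagrams of $B_3(\delta)$ and reducing with the relations $s_j^2=1$, $u_j^2=\delta u_j$, $s_ju_j=u_j=u_js_j$, $s_1s_2s_1=s_2s_1s_2$, $u_1u_2u_1=u_1$, $u_2u_1u_2=u_2$, together with the mixed relations tying together $s_1,s_2,u_1,u_2$ at adjacent positions.

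To make this expansion tractable I would first rewrite $\check R_i(k)$ in terms of the complete system of orthogonal idempotents $p_i=\delta^{-1}u_i$, $q_i^-=\tfrac12(1-s_i)$, $q_i^+=\tfrac12(1+s_i)-p_i$, whose orthogonality and the relation $p_i+q_i^++q_i^-=1$ follow at once from $s_iu_i=u_i$ and $u_i^2=\delta u_i$. A direct computation gives
\[
\check R_i(k)=\frac{(k-1)(2k-\delta+2)}{2k+\delta-2}\,p_i+(1+k)\,q_i^++(1-k)\,q_i^-,
\]
so that $R_i(k)$ is literally a baxterization of the Brauer braid generator $s_i$. In this form $p_i,q_i^\pm$ only interact non-trivially with $p_{i+1},q_{i+1}^\pm$ through the handful of mixed relations above, which cuts down the number of cross terms one must track.

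I expect the bookkeeping in this last step to be the only real obstacle; there is no structural surprise, since the identity is precisely the degeneration to the Brauer algebra of the familiar baxterization of the Birman--Wenzl--Murakami algebra. Accordingly, two shortcuts are available if the direct computation proves unwieldy: one may quote the BMW baxterization and match it to the normalization above, or---since the evaluation functor $\ev_{\Sp(2n)}$ is faithful on $D_3$ for $n\ge 3$---deduce the identity from the Yang--Baxter equation for the rational $R$-matrix of $\Sp(2n)$ and then extend to all $\delta$ by polynomiality.
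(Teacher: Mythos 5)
Your proposal is correct and takes essentially the same approach as the paper: the paper likewise declares the second relation clear and settles the first (Yang--Baxter) relation by a direct calculation in the Brauer algebra on three strands, \lq\lq using generators and relations or by using a faithful representation.\rq\rq\  Your rewriting of $\check R_i(k)$ in terms of the orthogonal idempotents $p_i,q_i^\pm$ (which I verified is correct) is a pleasant device for organizing that computation, and your fallbacks --- quoting the BMW baxterization, or evaluating under $\ev_{\Sp(2n)}$ on $D_3$ and extending by polynomiality in $\delta$ --- match the paper's suggested faithful-representation shortcut, so there is no substantive divergence.
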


\begin{proof} The second relation is clear. The first relation is known as the Yang-Baxter
equation and is checked by a direct calculation. This calculation can be carried out using
generators and relations or by using a faithful representation.
\end{proof}

Next we construct an element of $D_{n+1}$ for each reduced word in the standard generators
of $\fS_{n+1}$. Let $s_{i_1}s_{i_2}\dots s_{i_l}$ be a reduced word. Then the associated
element of $D_{n+1}$ is of the form
\begin{equation*}
 R_{i_1}(k_1)R_{i_2}(k_2)\dots R_{i_l}(k_l)
\end{equation*}
The integers $k_1,\dotsc ,k_l$ are determined as follows.  First draw
the string diagram of the reduced word, where each letter of the
reduced word corresponds to a crossing of two adjacent strings.
Since the word is reduced, any two strings can cross at most once.
We number the crossings by their corresponding position in the
reduced word, and we label each string with the number of its
starting point on the top of the diagram.  For the $p$-th crossing
let the numbers of the two strings be $a_p$ and $b_p$.  Then we have
$a_p < b_p$ for a reduced diagram and we put $k_p=b_p-a_p$.

\begin{ex} The reduced word $s_3s_5s_4s_5s_3s_6s_5$ gives the element
 \begin{equation*}
  R_3(1)R_5(1)R_4(3)R_5(2)R_3(2)R_6(4)R_5(2)
 \end{equation*}
\end{ex}

The following is Matsumoto's theorem, \cite{MR0183818}.
\begin{prop}\label{prop:matsumoto}
  Two reduced words represent the same permutation if and only if
  they are related by a finite sequence of moves of the form
  $s_is_{i+1}s_i\rightarrow s_{i+1}s_is_{i+1}$, $
  s_{i+1}s_is_{i+1}\rightarrow s_is_{i+1}s_i$ and $s_is_j\rightarrow
  s_js_i$ for $|i-j|>1$.
\end{prop}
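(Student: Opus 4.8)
The plan is to prove the two implications separately. The forward direction is immediate: each move $s_is_{i+1}s_i\to s_{i+1}s_is_{i+1}$ or $s_is_j\to s_js_i$ with $|i-j|>1$ replaces a word by one that represents the same element of $\fS_{n+1}$ and has the same length, so reducedness and the represented permutation are preserved, and hence also under any finite composite of such moves.

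For the converse I would argue by induction on the common length $\ell$ of two reduced words $s_{i_1}\dotsm s_{i_\ell}$ and $s_{j_1}\dotsm s_{j_\ell}$ for a permutation $w\in\fS_{n+1}$; the cases $\ell\le 1$ are trivial. Set $i=i_1$ and $j=j_1$. If $i=j$, strip the first letter: $s_{i_2}\dotsm s_{i_\ell}$ and $s_{j_2}\dotsm s_{j_\ell}$ are reduced words of length $\ell-1$ for $s_iw$, so by the inductive hypothesis they are linked by the allowed moves, and prepending $s_i$ links the original words. If $i\ne j$, let $m\in\{2,3\}$ be the order of $s_is_j$ in $\fS_{n+1}$ and let $w_{i,j}$ be the longest element of the parabolic subgroup $\langle s_i,s_j\rangle$, which has the two reduced expressions $\underbrace{s_is_js_i\dotsm}_{m}=\underbrace{s_js_is_j\dotsm}_{m}$. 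The key step is the lemma that, since $w$ admits reduced words beginning with $s_i$ and with $s_j$, it admits a reduced word of the form $\underbrace{s_is_js_i\dotsm}_{m}\,\mathbf u$, where $\mathbf u$ is a reduced word for $w_{i,j}w$ of length $\ell-m$. Granting this, $\underbrace{s_is_js_i\dotsm}_{m}\,\mathbf u$ and $\underbrace{s_js_is_j\dotsm}_{m}\,\mathbf u$ are reduced words for $w$ that begin with $s_i$ and with $s_j$ respectively and differ by exactly one move (the braid move when $m=3$, a commuting move when $m=2$). Since $s_{i_1}\dotsm s_{i_\ell}$ also begins with $s_i$, it is linked to the first of these by the same-first-letter argument, and symmetrically $s_{j_1}\dotsm s_{j_\ell}$ is linked to the second; splicing the three chains together completes the induction.

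The main obstacle is this lemma. For an arbitrary Coxeter group it is the content of the strong exchange (or deletion) condition; for $\fS_{n+1}$ I would instead verify it by hand from the inversion description of the length function. The hypotheses that $w$ has reduced words beginning with $s_i$ and with $s_j$ amount to $w^{-1}(i)>w^{-1}(i+1)$ together with $w^{-1}(j)>w^{-1}(j+1)$; when $m=2$ the pairs $\{i,i+1\}$ and $\{j,j+1\}$ are disjoint, and when $m=3$, with $j=i+1$, the two inequalities combine to $w^{-1}(i)>w^{-1}(i+1)>w^{-1}(i+2)$. A short bookkeeping of inversions then shows that left multiplication by $w_{i,j}$ lowers the length by exactly $m$, so that $w=w_{i,j}\cdot(w_{i,j}w)$ is a length-additive factorisation; prefixing either reduced expression of $w_{i,j}$ to a fixed reduced word $\mathbf u$ of $w_{i,j}w$ then yields the required reduced word for $w$. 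This inversion bookkeeping, though elementary, is the step I expect to demand the most care.
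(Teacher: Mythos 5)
Your argument is correct, but the paper does not actually prove this proposition --- it simply cites Matsumoto \cite{MR0183818} --- so you are supplying a proof where the paper defers to the literature. What you have written is the standard proof of Tits' word property for Coxeter groups, specialized to $\fS_{n+1}$: the forward direction is immediate, and the converse is an induction on length that handles the same-first-letter case by stripping and reduces the different-first-letter case to the lemma that $\ell(w_{i,j}w)=\ell(w)-\ell(w_{i,j})$, whence $w$ admits reduced expressions beginning with $s_is_js_i\cdots$ and $s_js_is_j\cdots$ (implicitly using that the longest element $w_{i,j}$ of the rank-two parabolic is an involution, which is true). Your decision to verify the key lemma by inversion bookkeeping in type $A$, rather than invoking the exchange condition of general Coxeter theory, is sound and elementary: when $j=i+1$ the hypotheses combine to $w^{-1}(i)>w^{-1}(i+1)>w^{-1}(i+2)$, so left multiplication by $w_{i,j}=(i,\,i+2)$ removes exactly three inversions and creates none; the commuting case removes exactly two and is even easier. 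The splicing of the three chains at the end is also fine, since a move applied to a suffix remains a valid move after prepending a fixed prefix, so the induction closes. What your route buys is self-containment; what the paper's route buys is brevity, since the result is classical and the rest of the section depends only on its statement.
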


\begin{lemma} If two reduced words represent the same permutation
  then the associated elements of $D_{n+1}$ are the same.
\end{lemma}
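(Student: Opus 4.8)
The plan is to reduce the statement to Matsumoto's theorem (Proposition~\ref{prop:matsumoto}) together with the two relations of Proposition~\ref{prop:yb}. Since any two reduced words for the same permutation are connected by a finite sequence of braid moves $s_is_{i+1}s_i\leftrightarrow s_{i+1}s_is_{i+1}$ and commutation moves $s_is_j\leftrightarrow s_js_i$ with $|i-j|>1$, it suffices to check that applying a single such move to a reduced word does not change the associated element of $D_{n+1}$. In each case the relevant crossings, and hence the labels $k_p$ attached to them, depend only on the local picture formed by the strands taking part in the move, so the verification becomes a purely local one.

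First I would treat the commutation move. The two crossings involved sit at the disjoint pairs of adjacent positions $\{i,i+1\}$ and $\{j,j+1\}$, so the strands passing through one are unaffected by the other, and neither crossing is disturbed by any other crossing of the word. Hence all the integers $k_p$, for these two crossings and for every other crossing, are exactly the same before and after the move, and the associated element changes only by replacing a factor $R_i(a)R_j(b)$ with $R_j(b)R_i(a)$. These agree by the second relation of Proposition~\ref{prop:yb}.

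The braid move is the substantive case, and I expect the main obstacle to be the bookkeeping of the labels $k_p$. I would argue as follows. Consider the three strands crossing in the local triangle and let $x<y<z$ be the numbers of their starting points on the top edge. Because the word is reduced, each of these three pairs of strands crosses exactly once, namely inside the triangle; consequently, just above the triangle the strand with the smaller top label lies to the left of the one with the larger top label, so the three strands enter the triangle in left-to-right order $x,y,z$. Following the positions through $s_is_{i+1}s_i$, read from top to bottom, the three successive crossings join the strand pairs $\{x,y\}$, $\{x,z\}$, $\{y,z\}$, so, setting $h:=y-x$ and $k:=z-y$, the associated element is $R_i(h)\,R_{i+1}(h+k)\,R_i(k)$. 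The same analysis for $s_{i+1}s_is_{i+1}$ yields $R_{i+1}(k)\,R_i(h+k)\,R_{i+1}(h)$. These two expressions coincide by the Yang--Baxter relation, the first relation of Proposition~\ref{prop:yb}.

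Putting the cases together, every elementary move of Matsumoto's theorem preserves the associated element, hence so does any finite sequence of them; therefore two reduced words for the same permutation give the same element of $D_{n+1}$. The one point demanding genuine care, rather than routine checking, is the observation that reducedness forces the three strands of a braid move to enter the local triangle in increasing order of top label — this is precisely what makes the computed labels line up with the two sides of the Yang--Baxter relation.
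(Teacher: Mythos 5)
Your proof is correct and follows the same route as the paper: reduce to single braid and commutation moves via Matsumoto's theorem, then invoke Proposition~\ref{prop:yb}. The paper states this in two sentences without detail; your careful bookkeeping of how the labels $k_p$ transform — in particular the observation that reducedness forces the three strands of a braid move to enter the local triangle in increasing order of top label, so that the labels become $h$, $h+k$, $k$ in exactly the pattern of the Yang--Baxter relation — is precisely the verification the paper leaves to the reader, and it is the genuinely non-routine point.
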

\begin{proof} By Proposition \ref{prop:matsumoto} it is sufficient to
  show that the element is not changed when a reduced word is changed
  by one of these moves. This follows from Proposition \ref{prop:yb}.
\end{proof}

\begin{defn}
  Let $E^\delta(n+1)$ be the element of $D_{n+1}$ associated to the
  longest length permutation.
\end{defn}
\begin{lemma}
  $E^\delta(n+1)$ satisfies the properties 
  \begin{equation}\label{eq:ch2}
    xE^\delta(n+1)=\rho(x)E^\delta(n+1)=E^\delta(n+1)x.
  \end{equation}
\end{lemma}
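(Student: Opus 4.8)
The plan is to lean on two facts already established: that $\rho$ is an algebra homomorphism $D_{n+1}\to K$, and that the element $E^\delta(n+1)$ is independent of the reduced word chosen for the longest permutation $w_0$ of $\fS_{n+1}$. Since $\rho$ is $K$-linear and multiplicative and $E^\delta(n+1)$ is one fixed element, it suffices to verify $xE^\delta(n+1)=\rho(x)E^\delta(n+1)$ and $E^\delta(n+1)x=\rho(x)E^\delta(n+1)$ only for $x$ ranging over a generating set of the algebra: from $yE=\rho(y)E$ and $zE=\rho(z)E$ one obtains $(yz)E=y\bigl(\rho(z)E\bigr)=\rho(z)(yE)=\rho(y)\rho(z)E=\rho(yz)E$, sums are handled by linearity, and the right-hand identity is treated symmetrically. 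I take as generating set the transpositions $s_1,\dots,s_n$ together with the diagrams $u_1,\dots,u_n$, noting that $\rho(s_i)=1$ (a permutation diagram has propagating number $n+1$) and $\rho(u_i)=0$ (we have $\pr(u_i)=n-1<n+1$).

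The crucial observation concerns the recipe that attaches to a reduced word $s_{i_1}\cdots s_{i_l}$ the product $R_{i_1}(k_1)\cdots R_{i_l}(k_l)$. In the string diagram of \emph{any} reduced word the topmost crossing joins the two strands starting at positions $i_1$ and $i_1+1$ on the top edge, so these strands bear the labels $i_1$ and $i_1+1$ and the first factor is $R_{i_1}(1)$. Dually, in a reduced word for $w_0$ whose last letter is $s_i$, the bottommost crossing joins the two strands terminating at bottom positions $i$ and $i+1$; because $w_0$ is order-reversing, these strands start at the \emph{consecutive} top positions $n+1-i$ and $n+2-i$, so the last factor is again $R_i(1)$. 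Moreover $w_0$, being order-reversing, has both a left and a right descent at every $i\in\{1,\dots,n\}$, so there exist a reduced word for $w_0$ beginning with $s_i$ and one ending with $s_i$. Using independence of the reduced word we may therefore write, for each $i$,
\[
E^\delta(n+1)=R_i(1)\,F_i \qquad\text{and}\qquad E^\delta(n+1)=G_i\,R_i(1)
\]
for suitable $F_i,G_i\in D_{n+1}$.

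It then remains to compute with $R_i(1)=\tfrac12\bigl(1+s_i-\tfrac2\delta\,u_i\bigr)$ and the diagram relations $s_i^2=1$, $s_iu_i=u_is_i=u_i$, $u_i^2=\delta u_i$. A one-line calculation gives
\[
s_iR_i(1)=R_i(1)=R_i(1)s_i, \qquad u_iR_i(1)=0=R_i(1)u_i,
\]
and hence $s_iE^\delta(n+1)=s_iR_i(1)F_i=R_i(1)F_i=E^\delta(n+1)$, $u_iE^\delta(n+1)=u_iR_i(1)F_i=0$, and symmetrically $E^\delta(n+1)s_i=G_iR_i(1)s_i=G_iR_i(1)=E^\delta(n+1)$ and $E^\delta(n+1)u_i=G_iR_i(1)u_i=0$. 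Comparing with $\rho(s_i)=1$ and $\rho(u_i)=0$, this is precisely \eqref{eq:ch2} on the generators, so by the bootstrap of the first paragraph it holds for all $x\in D_{n+1}$. (Alternatively, the right-hand identities can be deduced from the left-hand ones by checking that $E^\delta(n+1)$ is fixed by the antiinvolution $\ast$ — apply $\ast$ to a reduced word for $w_0$, observe that the strand labels, and hence the integers $k_p$, are unchanged, and invoke independence of the word — together with $\rho(x^\ast)=\rho(x)$.)

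The only step that is not pure bookkeeping is the identification of the first and last Yang--Baxter factors with $R_i(1)$; once that is in place, everything reduces to Coxeter-group descents and the two-line Brauer-algebra computation above, so I anticipate no real obstacle. The one thing to watch is that $R_i(1)$ is well defined, i.e. $\delta\neq 0$, which is automatic for $\delta=-2n$ with $n>0$.
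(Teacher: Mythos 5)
Your proof is correct and takes essentially the same approach as the paper: factor $E^\delta(n+1)$ as $R_j(1)F_j$ (resp.\ $G_jR_j(1)$) by choosing a reduced word for $w_0$ starting (resp.\ ending) with $s_j$, then use $s_jR_j(1)=R_j(1)$ and $u_jR_j(1)=0$. The only difference is that you spell out details the paper leaves implicit — why the last Yang--Baxter factor is $R_j(1)$ (via $w_0$ being order-reversing), why checking on the generators $s_i,u_i$ suffices, and the alternative route to the right-hand identity via the antiinvolution $\ast$.
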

  It follows that $E^\delta(n+1)$ is idempotent and for $\delta = -2n$
  we have $E^\delta(n+1) = E(n+1)$.
\begin{proof}
  Choose a reduced word for the longest length permutation that
  begins with $s_j$.  Then $E^\delta(n+1)$ regarded as a word in the
  $R_i(k)$ begins with $R_j(1)$, because the strings $i$ and $i+1$
  cross first.  Since $s_j R_j(1) = R_j(1)$ and $u_j R_j(1) = 0$ we
  have $xE^\delta(n+1)=\rho(x)E^\delta(n+1)$.  The second equation is
  proved similarly, noting that a reduced word for the longest length
  permutation that ends with $s_j$ leads to a word in the $R_i(k)$
  that ends with $R_j(1)$.
\end{proof}

\begin{prop}\label{prop:ker}
 \begin{equation*}
  \ev_{\Sp(2n)}(E(n+1))=0
 \end{equation*}
\end{prop}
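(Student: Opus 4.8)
The plan is to read off what the characterising identities \eqref{eq:ch} say after applying the functor $\ev_{\Sp(2n)}$, and then to recognise the image of $\ev_{\Sp(2n)}\big(E(n+1)\big)$ as a space of totally antisymmetric, totally traceless tensors in $\otimes^{n+1}V$ — a space which is forced to be trivial precisely because $\dim V = 2n$.

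Set $T = \ev_{\Sp(2n)}\big(E(n+1)\big)\in\End(\otimes^{n+1}V)$. First I would unwind \eqref{eq:ch}. Since $\pr(s_i) = n+1$ we have $\rho(s_i) = 1$, hence $s_i E(n+1) = E(n+1)$ and therefore $\ev_{\Sp(2n)}(s_i)\,T = T$ for $1\le i\le n$. Because $V$ is an odd vector space, $\ev_{\Sp(2n)}(s_i)$ is by definition the \emph{negative} of the transposition of the $i$-th and $(i+1)$-st tensor factors; so for every $w$ and every adjacent transposition $\tau_i$ of factors we get $\tau_i(Tw) = -Tw$, and thus the image of $T$ is contained in $\bigwedge^{n+1}V$, the totally antisymmetric subspace of $\otimes^{n+1}V$. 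On the other hand $\pr(u_i) = n-1 < n+1$, so $\rho(u_i) = 0$, which gives $u_i E(n+1) = 0$ and hence $\ev_{\Sp(2n)}(u_i)\,T = 0$. Now $\ev_{\Sp(2n)}(u_i)$ is the composite of the contraction of the $i$-th and $(i+1)$-st factors against $\langle\;,\;\rangle$, followed by insertion of the fixed tensor $\sum_j b_j\otimes\bar b_j$ into those two slots; the insertion map $\otimes^{n-1}V\to\otimes^{n+1}V$ is injective, so $\ev_{\Sp(2n)}(u_i)\,T = 0$ forces the contraction of the $i$-th and $(i+1)$-st factors to annihilate the image of $T$. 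Combined with total antisymmetry this shows that \emph{every} pairwise contraction annihilates the image of $T$.

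It remains to note that a totally antisymmetric $(n+1)$-tensor on a $2n$-dimensional symplectic space which is killed by all contractions must vanish, i.e. the primitive part $\bigwedge^{n+1}_0 V$ is zero. I would deduce this from the $\mathfrak{sl}_2$-action on $\bigwedge^\bullet V$ generated by $L$ (wedging with the bivector dual to $\langle\;,\;\rangle$), $\Lambda$ (contracting against $\langle\;,\;\rangle$), and the degree operator $H$: on $\bigwedge^{n+1}V$ the element $H$ acts as the scalar $+1$, and in a finite-dimensional $\mathfrak{sl}_2$-module no vector of positive weight can be a lowest-weight vector, so $\Lambda\colon\bigwedge^{n+1}V\to\bigwedge^{n-1}V$ is injective; since for antisymmetric tensors all pairwise contractions coincide with $\Lambda$ up to sign, the image of $T$ lies in $\ker\Lambda = 0$, whence $T = 0$. (Alternatively one may simply cite the classical fact that $\Sp(2n)$ has no fundamental representation $V(\omega_{n+1})$.)

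I expect the only genuinely non-formal ingredient to be this last point; everything preceding it is bookkeeping with \eqref{eq:ch} and the definition of $\ev_{\Sp(2n)}$ on generators. A second, more computational route — the one really foreshadowed by the Yang--Baxter construction of $E(n+1) = E^\delta(n+1)$ — is to work with the decomposition $V\otimes V = \operatorname{Sym}^2 V\oplus\bigwedge^2_0 V\oplus K\omega^\vee$ of $\Sp(2n)$-modules and check that, for $\delta = -2n$, $\ev_{\Sp(2n)}\big(R_i(k)\big)$ acts in the relevant pair of factors with coefficient $1$ on the projection onto $\bigwedge^2_0 V$ and coefficients $\tfrac{1-k}{k+1}$, $\tfrac{(1-k)(n+k+1)}{(k+1)(n-k+1)}$ on the projections onto $\operatorname{Sym}^2 V$ and $K\omega^\vee$ respectively; since a reduced word for the longest element of $\fS_{n+1}$ begins with a letter $s_j$ for which $k = 1$, and $\ev_{\Sp(2n)}\big(R_j(1)\big)$ is then exactly the projection onto $\bigwedge^2_0 V$, this realises $T$ as a fusion projection onto $\bigwedge^{n+1}_0 V = 0$, recovering the same conclusion.
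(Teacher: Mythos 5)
Your argument is correct, but it runs along a genuinely different track from the paper's.  The paper's proof stays entirely inside the diagrammatic calculus: since $\ev_{\Sp(2n)}$ is pivotal it preserves traces, so the rank of the idempotent $\ev_{\Sp(2n)}(E(n+1))$ equals the categorical trace $\tr_{n+1} E^\delta(n+1)$; using $E^\delta(n+1)=R_1(1)\dotsm R_n(n)\,E^\delta(n)$ together with the trace reduction rules $\tr_{n+1}(\alpha\otimes\id_1)=\delta\,\tr_n\alpha$, $\tr_{n+1}(\alpha s_n\beta)=\tr_{n+1}(\alpha u_n\beta)=\tr_n(\alpha\beta)$, the trace is found to carry a factor $(\delta+2n)$, which vanishes at $\delta=-2n$.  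You instead interpret the characterising relations~\eqref{eq:ch} after evaluation: $\ev(s_i)=-\tau_i$ forces the image of $T=\ev(E(n+1))$ into $\bigwedge^{n+1}V$, injectivity of the insertion $\otimes^{n-1}V\to\otimes^{n+1}V$ forces $\ev(u_i)T=0$ to kill every pairwise contraction of the image, and you then invoke the Lefschetz $\mathfrak{sl}_2$-action to see that the primitive part $\bigwedge^{n+1}_0 V$ vanishes when $\dim V=2n$.  Both proofs are sound.  The paper's buys self-containedness and fits the stated aim of a combinatorial argument that does not lean on classical $\Sp(2n)$ representation theory (or equivalently on the classical second fundamental theorem), since the whole point is to \emph{re-derive} the second fundamental theorem; your route is more conceptual and identifies what $\ev(E(n+1))$ ``wants to be'' (a projector onto the nonexistent $(n+1)$-st fundamental representation), but it imports exactly the kind of classical input the authors are trying to sidestep.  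Your sketched second route via the fusion of the $R_i(k)$'s is also essentially correct and closer in spirit to the paper's use of the Yang--Baxter construction, but it is not the same as the paper's trace argument either.
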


\begin{proof}
  Because $\ev_{\Sp(2n)}$ is a functor, $\ev_{\Sp(2n)}(E(n+1))$ is an
  idempotent in $\T_{\Sp(2n)}$, too.  Moreover, $\ev_{\Sp(2n)}$ is
  pivotal and therefore preserves the trace of morphisms.  Recall
  that the (diagrammatic) trace $\tr_n$ of a diagram $\alpha$ in
  $D_n$ is defined as
  \[
  \eta_{2n} \cdot (\alpha\otimes\id_n) \cdot \eta^\ast_{2n}
  \]
  where $\eta_{2n}\in D(0,2n)$ is the diagram that consists of $n$
  nested arcs.  The following properties are easily verified.
  \begin{align*}
    \tr_{n+1}(\alpha\otimes\id_1) &= \delta \tr_n \alpha \\
    \tr_{n+1} \alpha s_n\beta &=\tr_n \alpha \beta\\
    \tr_{n+1} \alpha u_n \beta &= \tr_n \alpha \beta
  \end{align*}
  for $\alpha, \beta$ diagrams on $n$ strings.

  The rank of an idempotent is equal to its trace so it is sufficient
  to show that $\tr_{n+1} E^\delta(n+1)=0$ for $\delta=-2n$.  

  A particular choice of the reduced word for the longest length
  permutation yields
  \[
  E^\delta(n+1)=R_1(1)\dots R_{n}(n)E^\delta(n).
  \]
  We now compute the trace by expanding $R_{n}(n) = \frac{1}{n+1}(1+n
  s_n+n u_n)$:
  \[
  \tr_{n+1} E^\delta(n+1) %
  = \frac{1}{n+1}\left(\delta + n + n\right) %
  \tr_{n+1} R_1(1)\dots R_{n-1}(n-1) E^\delta(n).
  \]
  Substituting $-2n$ for $\delta$ gives $\tr_{n+1} E(n+1)=0$.
\end{proof}

We will now describe the kernel of $\ev_{\Sp(2n)}$ in terms of
diagrammatic Pfaffians.
\begin{defn}[\protect{\cite[Definition 3.4 (b)]{MR1670662}}]
  Let $\Set S$ be a $2(n+1)$-element subset of $[r]\amalg [s]$ and
  let $f$ be a perfect matching of $[r]\amalg [s]\setminus\Set S$.
  Then the \Dfn{diagrammatic Pfaffian of order $2(n+1)$ corresponding
    to $f$} is
  \[
  \Pf(f) = \sum_{\text{$s$ a perfect matching of $\Set S$}} (s\cup f).
  \]
\end{defn}

Note that for $r,s=n+1$ and $f=\emptyset$ we have, by Definition \ref{defn:id},
$E(n+1) = \frac1{(n+1)!} \Pf(\emptyset)$.

\begin{defn} 
  For $r,s\ge 0$, the subspace $\Pf^{(n)}(r,s)$ in the ideal
  $\Pf^{(n)}$ of $\dc_{\Sp(2n)}$ is spanned by the set
  \begin{multline*}
    \Pf^{(n)}(r,s) = \{ \Pf(f): \text{$f$ a perfect matching of a
      subset of $[r]\amalg [s]$}\\\text{ of cardinality $r+s-2(n+1)$
    } \}.
  \end{multline*}
\end{defn}

\begin{ex} For $n=1$ we have $2$-noncrossing diagrams which means noncrossing
or planar. The basic Pfaffian, $\Pf(\emptyset)$ is
\begin{center}
\begin{tikzpicture}[line width = 2pt]
 \fill[color=blue!20] (0,0) rectangle  (1,1);
 \draw (0.25,0) -- (0.25,1);
 \draw (0.75,0) -- (0.75,1);
\end{tikzpicture}
\raisebox{0.5cm}{$+$}
\begin{tikzpicture}[line width = 2pt]
 \fill[color=blue!20] (0,0) rectangle  (1,1);
 \draw (0.75,0) arc (0:180:0.25);
 \draw (0.75,1) arc (0:-180:0.25);
\end{tikzpicture}
\raisebox{0.5cm}{$+$}
\begin{tikzpicture}[line width = 2pt]
 \fill[color=blue!20] (0,0) rectangle  (1,1);
 \draw (0.25,0) .. controls (0.25,0.5) and (0.75,0.5) .. (0.75,1);
 \draw (0.75,0) .. controls (0.75,0.5) and (0.25,0.5) .. (0.25,1);
\end{tikzpicture}
\end{center}
For $\delta=-2$, the quotient of the Brauer category by this relation is the Temperley-Lieb category.
\end{ex}

The ideal $\Pf^{(n)}$ is also characterised as the pivotal symmetric
ideal generated by $\Pf(\emptyset)$. It then follows from Proposition \ref{prop:ker}
that we have a pivotal symmetric quotient functor
\begin{equation*}
 \dc_{\Sp(2n)}\rightarrow \dc_{\Sp(2n)}/\Pf^{(n)}
\end{equation*}

\begin{defn}
  Let $\bev_{\Sp(2n)}\colon\dc_{\Sp(2n)}/\Pf^{(n)}\rightarrow
  \T_{\Sp(2n)}$ be the functor that factors $\ev_{\Sp(2n)}$ through
  the quotient.
\end{defn}

We can now state the main theorem of this section, also known as the
second fundamental theorem for the symplectic group:
\begin{thm}
  The functor $\bev_{\Sp(2n)}\colon\dc_{\Sp(2n)}/\Pf^{(n)}\rightarrow
  \T_{\Sp(2n)}$ is an isomorphism of categories.
\end{thm}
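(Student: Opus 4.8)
The plan is to prove that $\bev_{\Sp(2n)}$ is bijective on objects and fully faithful. Bijectivity on objects is automatic, since $\ev_{\Sp(2n)}$ sends $r\in\bN$ to $\otimes^rV$. Fullness is immediate from Theorem~\ref{thm:FFT-SFT-Brauer}: $\ev_{\Sp(2n)}$ is full, it annihilates $\Pf(\emptyset)=(n+1)!\,E(n+1)$ by Proposition~\ref{prop:ker}, and its kernel is a pivotal symmetric ideal, hence contains the ideal $\Pf^{(n)}$ generated by $\Pf(\emptyset)$; so the induced functor $\bev_{\Sp(2n)}$ on the quotient is well defined and still full. The whole content is therefore faithfulness, equivalently $\ker\ev_{\Sp(2n)}=\Pf^{(n)}$, of which only the inclusion $\ker\ev_{\Sp(2n)}\subseteq\Pf^{(n)}$ remains to be shown. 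Because $\dc_{\Sp(2n)}/\Pf^{(n)}$ and $\T_{\Sp(2n)}$ are pivotal and $\bev_{\Sp(2n)}$ is a pivotal functor, bending all strands down to one edge furnishes compatible isomorphisms $\Hom(r,s)\cong\Hom(0,r+s)$ on both sides; hence it suffices to prove, for every $m\ge 0$, that $\bev_{\Sp(2n)}\colon \Hom_{\dc_{\Sp(2n)}/\Pf^{(n)}}(0,2m)\to (\otimes^{2m}V)^{\Sp(2n)}$ is injective (for odd $r$ both sides vanish).

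The first half of this reduces to a straightening statement. The space $\Hom_{\dc_{\Sp(2n)}}(0,2m)$ has the perfect matchings of $[2m]$ as a $K$-basis, and the claim is that the $(n+1)$-noncrossing ones (those with no $n+1$ pairwise crossing arcs) already span the quotient by $\Pf^{(n)}$. Fix a total order $\prec$ on perfect matchings of $[2m]$ refining the crossing number $\cs$. Given a matching $M$ with $\cs(M)\ge n+1$, choose greedily (say, leftmost) a set $S\subseteq[2m]$ of $2(n+1)$ points on which $M$ restricts to a totally crossing matching, and set $f=M\setminus(M|_S)$. Then $\Pf(f)\in\Pf^{(n)}(0,2m)$ and $\Pf(f)=\sum_s (s\cup f)$, with $M=(M|_S)\cup f$ among the terms; the combinatorial crux is that for the greedy choice of $S$ every other term satisfies $s\cup f\prec M$, so that $M\equiv -\sum_{s\cup f\prec M}(s\cup f)\pmod{\Pf^{(n)}}$. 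As $\prec$ well-orders a finite set, iterating rewrites every perfect matching of $[2m]$, modulo $\Pf^{(n)}$, as a $K$-linear combination of $(n+1)$-noncrossing perfect matchings.

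It then remains to prove that the images $\bev_{\Sp(2n)}(M)$, for $M$ an $(n+1)$-noncrossing perfect matching of $[2m]$, are linearly independent in $(\otimes^{2m}V)^{\Sp(2n)}$. Combined with the previous paragraph this shows these images form a basis, so the $(n+1)$-noncrossing matchings form a basis of $\Hom_{\dc_{\Sp(2n)}/\Pf^{(n)}}(0,2m)$ and $\bev_{\Sp(2n)}$ is an isomorphism on $\Hom(0,2m)$; by the reduction this finishes the proof. Independence I would obtain from non-degeneracy of the bilinear pairing $\langle M,N\rangle=(-2n)^{c(M,N)}$ (the Gram matrix of the diagrammatic trace form) on the span of the $(n+1)$-noncrossing matchings: listing them in $\prec$-order and using the straightening relations above to eliminate the contributions of the non-$(n+1)$-noncrossing resolutions, the Gram matrix becomes triangular with invertible diagonal entries. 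An alternative is to combine the spanning statement with the classical value of $\dim(\otimes^{2m}V)^{\Sp(2n)}$ and conclude by a dimension count, but the Gram argument is self-contained and pins down the basis explicitly.

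The main obstacle is the combinatorial heart of the straightening step: choosing $\prec$ and the selection rule for $S$ so that \emph{every} partner of $M$ in $\Pf(f)$ is strictly $\prec$-smaller. This is the assertion that the diagrammatic Pfaffians are exactly the straightening relations of a standard-monomial theory on perfect matchings graded by crossing pattern, and it is here that the link with $(n+1)$-noncrossing matchings is genuinely forced rather than merely counted; a secondary obstacle, easy once triangularity is in hand, is the non-degeneracy of the restricted Gram form needed in the last paragraph.
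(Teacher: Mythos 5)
Your overall architecture — reduce to $\Hom(0,2m)$ by pivotality, show the $(n+1)$-noncrossing matchings span the quotient, then show they map to linearly independent vectors — matches the paper's, and your straightening step is essentially the paper's Theorem~\ref{thm:Brauer-basis}: rewrite using $\Pf(f)$ with the totally-crossing resolution of $\Set S$ singled out, terminate by decreasing crossing number. (The paper asserts the decrease without proof; your version with a total order $\prec$ refining $\cs$ and a greedy choice of $S$ is a reasonable attempt to make this precise, and you are right that this is where the combinatorial content lives.)

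Where you genuinely diverge is linear independence. The paper's route is the dimension count you relegate to an aside: a basis of $\Hom_{\T_{\Sp(2n)}}(0,r)$ indexed by $n$-symplectic oscillating tableaux via the classical branching rule (Lemma~\ref{lem:Brauer-basis-oscillating-tableaux}), plus Sundaram's bijection with $(n+1)$-noncrossing matchings (Lemma~\ref{lem:Brauer-Sundaram}), plus fullness. Your primary proposal — non-degeneracy of the Gram form $\langle M,N\rangle=(-2n)^{c(M,N)}$ restricted to $(n+1)$-noncrossing matchings — is a different and more self-contained strategy (it would avoid both the $\Sp(2n)$ branching rule and Sundaram's bijection), but as stated it has a real gap. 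The naive Gram matrix on $(n+1)$-noncrossing matchings is not triangular in any order: every entry is a non-zero power of $-2n$. You gesture at ``using the straightening relations to eliminate the contributions of the non-$(n+1)$-noncrossing resolutions'' to make it triangular, but you don't say what row/column operations this means or why the resulting matrix is triangular with unit diagonal, and I do not see an obvious way to make this literally true. Note also that non-degeneracy of the induced form on $\dc_{\Sp(2n)}/\Pf^{(n)}$ is not a technical side condition — it is essentially equivalent to $\Pf^{(n)}=\ker\ev_{\Sp(2n)}$, i.e., to the theorem — so this step cannot be waved through. You flag the issue yourself, which is fair, but as written the self-contained Gram argument is incomplete; what you actually have in hand is the same proof as the paper's, with the dimension count via oscillating tableaux doing the work.
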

\begin{proof}
  Since $\bev_{\Sp(2n)}$ is obviously bijective on objects and full by
  the first fundamental theorem, it is sufficient to show
  \[
  \dim\Hom_{\dc_{\Sp(2n)}/{\Pf^{(n)}}}(r,s)\leq \dim
  \Hom_{\T_{\Sp(2n)}}(r,s).
  \]
  This is achieved by combining Lemma~\ref{lem:Brauer-iso},
  Lemma~\ref{lem:Brauer-basis-oscillating-tableaux},
  Theorem~\ref{thm:Brauer-basis} and Lemma~\ref{lem:Brauer-Sundaram}
  below.
\end{proof}
We first restrict our attention to the invariant tensors:
\begin{lemma}\label{lem:Brauer-iso} 
  We have isomorphisms of vector spaces
  \begin{align*}
    \Hom_{\dc_{\Sp(2n)}/{\Pf^{(n)}}}(r,s) &\cong \Hom_{\dc_{\Sp(2n)}/{\Pf^{(n)}}}(0,r+s)\\
    \Hom_{\T_{\Sp(2n)}}(r,s) &\cong \Hom_{\T_{\Sp(2n)}}(0,r+s).
  \end{align*}
\end{lemma}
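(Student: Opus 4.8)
The plan is to exploit that both $\dc_{\Sp(2n)}/\Pf^{(n)}$ and $\T_{\Sp(2n)}$ are pivotal categories in which every object is self-dual, and that self-duality produces a ``bending'' isomorphism of hom-spaces. Recall from Section~4 that whenever $x$ is a left dual of $y$ there are mutually inverse natural isomorphisms $\Hom(y\otimes u,v)\cong\Hom(u,x\otimes v)$, obtained by composing with the evaluation and coevaluation morphisms (the relations of Figure~\ref{fig:cupcap} guarantee they are inverse). In the Brauer category $\dc_{\Sp(2n)}$ the object $r$ is its own dual: the evaluation $r\otimes r\to 0$ and coevaluation $0\to r\otimes r$ are the diagrams consisting of $r$ nested arcs, cf.~\eqref{eq:TL-duals}, and the triangle identities hold because the strands can be pulled straight. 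Taking $y=x=r$, $u=0$ and $v=s$, and using that $0$ is the monoidal unit while $\otimes$ is addition on objects, we get
\[
\Hom_{\dc_{\Sp(2n)}}(r,s)\cong\Hom_{\dc_{\Sp(2n)}}(0,r+s).
\]

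To descend to the quotient, I would invoke that, by its very description, $\Pf^{(n)}$ is the pivotal symmetric ideal generated by $\Pf(\emptyset)$, so that the quotient functor $\dc_{\Sp(2n)}\to\dc_{\Sp(2n)}/\Pf^{(n)}$ is pivotal. In particular the nested-arc evaluation and coevaluation descend and still exhibit $r$ as self-dual in the quotient, so the same bending construction yields
\[
\Hom_{\dc_{\Sp(2n)}/\Pf^{(n)}}(r,s)\cong\Hom_{\dc_{\Sp(2n)}/\Pf^{(n)}}(0,r+s),
\]
which is the first assertion.

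For $\T_{\Sp(2n)}$ the argument is the same, once one notes that $\otimes^rV$ is self-dual in $\T_{\Sp(2n)}$: the nondegenerate form on the odd vector space $V$ gives an evaluation $V\otimes V\to K$, $u\otimes v\mapsto\langle u,v\rangle$, and a coevaluation $K\to V\otimes V$, $1\mapsto\sum_i b_i\otimes\bar b_i$, and these are exactly the images of the cup and cap diagrams under the monoidal functor $\ev_{\Sp(2n)}$; hence the triangle identities transport from $\dc_{\Sp(2n)}$. Bending the $r$ top strands of an invariant tensor down to the bottom therefore gives $\Hom_{\T_{\Sp(2n)}}(r,s)\cong\Hom_{\T_{\Sp(2n)}}(0,r+s)$. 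One could also observe that these two bending maps are intertwined by $\bev_{\Sp(2n)}$, but that compatibility is not needed for the statement.

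The argument is essentially formal; the only point meriting attention is that self-duality of $r$ really survives in the quotient $\dc_{\Sp(2n)}/\Pf^{(n)}$, which is exactly the content of the remark that $\Pf^{(n)}$ is a pivotal ideal and the quotient functor is pivotal. So I do not expect a genuine obstacle.
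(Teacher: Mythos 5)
Your proposal is correct and is exactly the argument the paper intends: the lemma is stated without proof precisely because it follows formally from the pivotal structure set up in Section~4 (the inverse isomorphisms between $\Hom(y\otimes u,v)$ and $\Hom(u,x\otimes v)$ via the cup and cap relations), the paper's explicit remark that $\Pf^{(n)}$ is a pivotal symmetric ideal so the quotient functor is pivotal, and the fact that $\ev_{\Sp(2n)}$ is a pivotal monoidal functor so that the triangle identities for the cup and cap transport to $\T_{\Sp(2n)}$. You have simply written out what the paper leaves implicit, and each step you invoke is explicitly available in the paper's framework.
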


Let us recall an indexing set for the basis of
$\Hom_{\T_{\Sp(2n)}}(0,r)$:
\begin{defn}
  An \Dfn{$n$-symplectic oscillating tableau} of length $r$ (and
  final shape $\emptyset$) is a sequence of partitions
  \[
  (\emptyset\!=\!\mu^0,\mu^1,\dots,\mu^r\!=\!\emptyset)
  \]
  such that the Ferrers diagrams of two consecutive partitions differ
  by exactly one cell and every partition $\mu^i$ has at most $n$
  non-zero parts.
\end{defn}

\begin{lemma}\label{lem:Brauer-basis-oscillating-tableaux}
  $\Hom_{\T_{\Sp(2n)}}(0,\otimes^r V)$ has a basis indexed by
  $n$-symplectic oscillating tableaux.
\end{lemma}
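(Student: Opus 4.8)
The plan is to identify $\Hom_{\T_{\Sp(2n)}}(0,\otimes^r V)$ with the space of $\Sp(2n)$-invariants in $\otimes^r V$ and to compute its dimension by iterating the Pieri rule for tensoring with the defining representation. Throughout, for a partition $\lambda$ with at most $n$ parts let $W_\lambda$ denote the irreducible $\Sp(2n)$-module with highest weight $\lambda$; thus $W_\emptyset = K$ is the trivial module and $W_{(1)} = V$.

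First I would record the Pieri rule: for any partition $\lambda$ with at most $n$ parts,
\[
  W_\lambda \otimes V \cong \bigoplus_{\mu} W_\mu,
\]
the sum being over all partitions $\mu$ with at most $n$ parts obtained from $\lambda$ by adding or removing a single cell. This is classical; I would either cite it (it appears in the standard treatments of symplectic branching, and follows, for instance, from the Brauer--Klimyk character formula, or from the branching rule $\Sp(2n)\downarrow\Sp(2n-2)\times\Sp(2)$ together with the observation that every weight of $V$ is of the form $\pm e_i$) or give the short character-theoretic derivation. Two points deserve emphasis: the rule is exact, with no stable-range hypothesis on $n$, precisely because $V$ is the smallest fundamental representation; and the constraint ``at most $n$ parts'' takes care of itself, since adding a cell to a partition with at most $n$ parts can never produce an $(n{+}1)$-st part.

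Next, I would iterate, starting from $\otimes^0 V = W_\emptyset$. By induction on $r$,
\[
  \otimes^r V \cong \bigoplus W_{\mu^r},
\]
where the sum runs over all sequences $(\emptyset = \mu^0,\mu^1,\dots,\mu^r)$ of partitions with at most $n$ parts in which consecutive terms differ by exactly one cell. Taking $\Sp(2n)$-invariants and using that $\dim\Hom_{\Sp(2n)}(K,W_\mu)$ equals $1$ when $\mu$ is empty and vanishes otherwise, we conclude that $\dim\Hom_{\T_{\Sp(2n)}}(0,\otimes^r V)$ equals the number of such sequences with $\mu^r = \emptyset$, that is, the number of $n$-symplectic oscillating tableaux of length $r$. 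A basis indexed by these tableaux is then obtained by choosing, for each occurrence of the trivial summand $W_\emptyset$ in the above decomposition, a nonzero invariant tensor spanning it (concretely, the image of the corresponding idempotent projection of $\otimes^r V$ onto that copy of $W_\emptyset$).

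The main obstacle is the input Pieri rule $W_\lambda\otimes V\cong\bigoplus_\mu W_\mu$: one must argue it holds for \emph{every} $\lambda$ with at most $n$ parts and get the ``$\le n$ parts'' bookkeeping exactly right (in particular that a cell can always be added to any addable corner in rows $1,\dots,n$, and never anywhere else). An alternative, more combinatorial route would bypass characters entirely and build the basis by hand following Sundaram's insertion scheme, which simultaneously establishes the indexing by oscillating tableaux; but the character computation above is the shortest path to the dimension count needed for the second fundamental theorem.
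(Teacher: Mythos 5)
Your proposal matches the paper's proof, which likewise invokes the symplectic Pieri rule (Littlewood's Theorem II in \cite{MR0095209}) and observes that iterating it makes the lemma immediate; you simply spell out the iteration and the passage to invariants. One side-remark is incorrect, though: you claim the bound ``at most $n$ parts'' takes care of itself ``since adding a cell to a partition with at most $n$ parts can never produce an $(n{+}1)$-st part,'' but of course one \emph{can} add a cell as a new $(n{+}1)$-st row; what actually rules this out is the content of the symplectic branching rule itself, which produces no summand $W_\mu$ with $\ell(\mu) > n$. Since you ultimately cite that rule rather than rederive it, this does not affect the argument, but the stated justification should be replaced by an appeal to the rule rather than to a (false) combinatorial triviality.
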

\begin{proof}
  This follows immediately from the branching rule for tensoring the
  defining representation with an irreducible representation of
  $\Sp(2n)$, see \cite[Theorem II]{MR0095209}.
\end{proof}

Next we exhibit a set of diagrams that span
$\Hom_{\dc_{\Sp(2n)}/\Pf^{(n)}}(0, r)$.  In fact, this is the key
observation.
\begin{defn}
  Let $d$ be a diagram in $D(0, r)$.  Then an \Dfn{$n$-crossing} in
  $d$ is a set of $n$ distinct strands such that every pair of
  strands crosses, that is, $d$ contains strands
  $(a_1,b_1),\dotsc,(a_n,b_n)$ with
  $a_1<a_2<\dotsb<a_n<b_1<b_2<\dotsb<b_n$.  The diagram is
  $(n+1)$-noncrossing if it contains no $(n+1)$-crossing.
\end{defn}

A bijection due to Sundaram~\cite[Lemma 8.3]{MR2941115} shows that
the cardinality of the set of $(n+1)$-noncrossing diagrams equals the
dimension of $\Hom_{\dc_{\Sp(2n)}/\Pf^{(n)}}(0, r)$:
\begin{lemma}\label{lem:Brauer-Sundaram} 
  For all $n$ and $r$ there is a bijection between the set of
  $n$-symplectic oscillating tableaux of length $r$ and the set of
  $(n+1)$-noncrossing diagrams in $D(0, r)$.
\end{lemma}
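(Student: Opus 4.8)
The plan is to recall the standard Robinson--Schensted type bijection between perfect matchings and oscillating tableaux --- essentially Sundaram's~\cite[Lemma~8.3]{MR2941115} --- and then to verify that it carries the $(n+1)$-noncrossing condition to the bound on the number of rows that defines an $n$-symplectic oscillating tableau. First note that, since $[0]=\emptyset$, a diagram in $D(0,r)$ is simply a perfect matching $M$ of $[r]$, so both sides are empty unless $r$ is even. Call $i\in[r]$ an \emph{opener} of $M$ if it is matched to some $j>i$ and a \emph{closer} otherwise. Scanning $1,2,\dots,r$ from left to right one builds a sequence of standard Young tableaux starting from the empty one, inserting a cell at each opener and removing one at each closer (at a closer $i$ matched to $j<i$ the cell removed is the one prescribed by the reverse bumping path of $j$); recording the successive shapes $\mu^0,\mu^1,\dots,\mu^r$ gives, because $M$ is a perfect matching, an oscillating tableau of length $r$. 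The same rules run in reverse --- reading off from the shape sequence which steps add and which remove a cell, hence the opener/closer pattern, and then replaying --- reconstruct $M$, so this is a bijection; equivalently it is the output of Fomin's local growth rules applied to the chord diagram of $M$.

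It remains to match up the two restrictions, and here the key input is the Greene--Kleitman type identity
\[
  \max_{0\le i\le r}\ell(\mu^i)=\cs(M),
\]
where $\ell$ denotes the number of non-zero parts and $\cs(M)$ is the largest size of a set of pairwise crossing arcs of $M$ (a $k$-crossing being as in the definition preceding this lemma). Granting it, $M$ is $(n+1)$-noncrossing $\iff\cs(M)\le n\iff$ every $\mu^i$ has at most $n$ non-zero parts $\iff(\mu^0,\dots,\mu^r)$ is an $n$-symplectic oscillating tableau; hence the bijection above restricts to the one asserted in the lemma.

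The identity $\max_i\ell(\mu^i)=\cs(M)$ is the substance of the argument and is where the work lies. Via the growth-diagram description it follows from the invariance of the diagram under transposing the underlying matrix together with Greene's theorem, which expresses the parts of the shapes through maximal unions of increasing and of decreasing subsequences; combinatorially it says that a family of $k$ mutually crossing arcs forces, and is forced by, a chain of $k$ successive bumps down the first column during the insertion phase. This is exactly \cite[Lemma~8.3]{MR2941115} after translation of notation (compare also the analysis of crossings and nestings of matchings by Chen, Deng, Du, Stanley and Yan). One caveat to watch: the mirror conventions --- column insertion, or scanning from the other end --- would instead pair $\cs(M)$ with the number of \emph{columns} of the shapes, and only the choice above makes the part-count bound the relevant one; by contrast, once the conventions are pinned down, checking that insertion and deletion are mutually inverse is routine.
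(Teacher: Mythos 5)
The paper does not prove this lemma: it simply invokes Sundaram's Lemma~8.3 of \cite{MR2941115}.  Your proposal supplies the argument that the citation stands for, and the key point --- a Robinson--Schensted-type bijection between perfect matchings and oscillating tableaux under which $\max_i\ell(\mu^i)$ equals the largest size of a set of pairwise crossing arcs --- is exactly right; this is precisely the Greene-type statement established by Chen--Deng--Du--Stanley--Yan for their bijection, and your plan (prove that identity, then observe $\cs(M)\le n\iff$ every $\mu^i$ has at most $n$ parts) does finish the proof.

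However, the specific left-to-right procedure you describe is not a well-defined algorithm, so as written the middle step fails.  With row insertion, take $M=\{(1,3),(2,4)\}$: after processing the two openers the tableau is $[1\ 2]$, and at step~$3$ (closer matched to $j=1$) the unique corner $(1,2)$ reverse-bumps out $2$, so there is no corner that pops out the required $j=1$; the same obstruction appears with column insertion.  The construction that does work, and for which $\cs(M)=\max_i\ell(\mu^i)$ actually holds, is the Chen--Deng--Du--Stanley--Yan scan from $2r$ down to $1$, row-inserting the opener label of $i$ at each closer $i$ and deleting the (necessarily corner) cell containing $i$ at each opener; equivalently, the Fomin local-growth-rule description you mention in passing, which sidesteps insertion conventions entirely.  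Your caveat about which convention pairs $\cs(M)$ with rows versus columns also has the direction wrong.  With the bijection replaced by CDDSY's (or the growth diagram formulation), the rest of the argument stands and reproduces the cited result.
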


Finally, we can prove the main theorem of this section.
\begin{thm}\label{thm:Brauer-basis} 
  The set of $(n+1)$-noncrossing diagrams form a basis of
  $\Hom_{\dc_{\Sp(2n)}/\Pf^{(n)}}(0, r)$.
\end{thm}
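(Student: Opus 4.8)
The plan is to establish two facts: that the $(n+1)$-noncrossing diagrams span $\Hom_{\dc_{\Sp(2n)}/\Pf^{(n)}}(0,r)$, and that their number equals $\dim\Hom_{\dc_{\Sp(2n)}/\Pf^{(n)}}(0,r)$; together these force the spanning set to be a basis. For the spanning statement I would use the \emph{crossing number} $\cs(d)$ of a diagram $d\in D(0,r)$, defined as the number of unordered pairs of strands of $d$ that cross, and induct on $\cs(d)$ to show that every $d$ is congruent modulo $\Pf^{(n)}$ to a linear combination of $(n+1)$-noncrossing diagrams. If $d$ is already $(n+1)$-noncrossing there is nothing to prove. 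Otherwise I would choose $n+1$ pairwise crossing strands of $d$, let $\Set S\subseteq[r]$ be the set of their $2(n+1)$ endpoints, and let $f$ be the restriction of $d$ to $[r]\setminus\Set S$; this is a perfect matching of $[r]\setminus\Set S$, since $\Set S$ is a union of complete strands of $d$. The restriction $d|_{\Set S}$ is the \emph{unique} perfect matching of $\Set S$ whose $n+1$ strands pairwise cross, so $d=d|_{\Set S}\cup f$ occurs with coefficient $1$ in $\Pf(f)$, and as $\Pf(f)\in\Pf^{(n)}(0,r)$ this gives
\[
  d\equiv-\sum_{s\neq d|_{\Set S}}(s\cup f)\pmod{\Pf^{(n)}},
\]
the sum running over the perfect matchings $s$ of $\Set S$ other than $d|_{\Set S}$.

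The heart of the argument is to verify that $\cs(s\cup f)<\cs(d)$ for each such $s$, so that induction applies to every term on the right. I would write $\cs(s\cup f)=\cs(s)+\cs(f)+b(s,f)$, where $b(s,f)$ is the number of crossings between a strand of $s$ and a strand of $f$. Here $\cs(f)$ is independent of $s$, while $\cs(s)\le\binom{n+1}{2}$ with equality only for $s=d|_{\Set S}$. The remaining point is that $b(s,f)\le b(d|_{\Set S},f)$ for every $s$: for a fixed strand $(c,e)$ of $f$ the points of $\Set S$ strictly between $c$ and $e$ form a contiguous block $B$, the number of strands of a matching of $\Set S$ that straddle $B$ is at most $\min(|B|,\,2(n+1)-|B|)$, and a short case analysis shows that, because in $d|_{\Set S}$ all left endpoints precede all right endpoints, this maximum is attained for every such $B$. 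Summing over the strands of $f$ gives the inequality, so $\cs(s\cup f)\le\cs(d)$ with equality only for $s=d|_{\Set S}$, which completes the induction and proves the spanning statement.

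It remains to count. The spanning statement yields $\dim\Hom_{\dc_{\Sp(2n)}/\Pf^{(n)}}(0,r)\le N$, where $N$ is the number of $(n+1)$-noncrossing diagrams in $D(0,r)$. By Lemma~\ref{lem:Brauer-Sundaram} together with Lemma~\ref{lem:Brauer-basis-oscillating-tableaux}, $N$ equals the number of $n$-symplectic oscillating tableaux of length $r$, which equals $\dim\Hom_{\T_{\Sp(2n)}}(0,r)$. On the other hand $\ev_{\Sp(2n)}$ is full by Theorem~\ref{thm:FFT-SFT-Brauer}, hence so is $\bev_{\Sp(2n)}$, and the resulting surjection $\Hom_{\dc_{\Sp(2n)}/\Pf^{(n)}}(0,r)\twoheadrightarrow\Hom_{\T_{\Sp(2n)}}(0,r)$ gives $\dim\Hom_{\dc_{\Sp(2n)}/\Pf^{(n)}}(0,r)\ge N$. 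Thus all three dimensions equal $N$, and a spanning set of $N$ diagrams in the $N$-dimensional space $\Hom_{\dc_{\Sp(2n)}/\Pf^{(n)}}(0,r)$ is a basis.

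The main obstacle is the strict decrease $\cs(s\cup f)<\cs(d)$: one has to see at once that $d|_{\Set S}$ is the unique maximizer of the internal crossing number among matchings of $\Set S$ and is also (not necessarily uniquely) a maximizer of the number of crossings with $f$. Everything else is either routine bookkeeping with the Pfaffian relations or a direct appeal to the lemmas already established.
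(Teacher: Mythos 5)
Your proof takes essentially the same route as the paper: rewrite modulo $\Pf^{(n)}$ by singling out the fully-crossing matching of $\Set S$, establish termination by a strict decrease of the crossing number, and then count dimensions via the Sundaram bijection together with fullness of $\ev_{\Sp(2n)}$. The paper asserts the termination in a single sentence (``the number of pairs of strands which cross decreases'') without justification; your decomposition $\cs(s\cup f)=\cs(s)+\cs(f)+b(s,f)$, the uniqueness of $d|_{\Set S}$ as the maximizer of $\cs(\cdot)$ on matchings of $\Set S$, and the pointwise maximality of $b(d|_{\Set S},\cdot)$ supply exactly the detail the paper leaves implicit.
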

\begin{proof}
  We only need to show that the set spans.  For each $f$ we write
  $\Pf(f)$ as a rewrite rule. The term that is singled out is the
  perfect matching of $\Set S$ in which every pair of strands
  crosses. The diagrams which cannot be simplified using these
  rewrite rules are the $(n+1)$-noncrossing diagrams.  The procedure
  terminates because the number of pairs of strands which cross
  decreases.
\end{proof}

\begin{cor}
  The diagram algebra $D_r=\Hom_{\dc_{\Sp(2n)}}(r,r)$ is semisimple
  for $n\geq r$.
\end{cor}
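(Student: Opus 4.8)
The plan is to read this off from the second fundamental theorem once we observe that, in the range $n\ge r$, the Pfaffian ideal contributes nothing to $\Hom(r,r)$. So first I would check that $\Pf^{(n)}(r,r)=0$ whenever $n\ge r$. By definition the subspace $\Pf^{(n)}(r,s)$ is spanned by the diagrammatic Pfaffians $\Pf(f)$ of order $2(n+1)$, where $f$ runs over the perfect matchings of subsets of $[r]\amalg[s]$ of cardinality $r+s-2(n+1)$; for such an $f$ to exist at all one needs $r+s\ge 2(n+1)$. Taking $s=r$ this reads $r\ge n+1$, which is incompatible with $n\ge r$, so the spanning set is empty and $\Pf^{(n)}(r,r)=0$. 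Hence the quotient functor $\dc_{\Sp(2n)}\to\dc_{\Sp(2n)}/\Pf^{(n)}$ is the identity on the endomorphisms of the object $r$, that is,
\[
D_r=\Hom_{\dc_{\Sp(2n)}}(r,r)=\Hom_{\dc_{\Sp(2n)}/\Pf^{(n)}}(r,r).
\]

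Next I would invoke the second fundamental theorem of this section, which asserts that $\bev_{\Sp(2n)}\colon\dc_{\Sp(2n)}/\Pf^{(n)}\to\T_{\Sp(2n)}$ is an isomorphism of categories. Restricting this monoidal isomorphism to the endomorphism algebra of the object $r$ gives a $K$-algebra isomorphism $D_r\cong\Hom_{\T_{\Sp(2n)}}(r,r)=\End_{\Sp(2n)}(\otimes^r V)$. Finally, since $K$ has characteristic zero and $\Sp(2n)$ is linearly reductive, its category of finite-dimensional representations is semisimple, so $\End_{\Sp(2n)}(\otimes^r V)$ is a finite direct sum of matrix algebras; the odd parity imposed on $V$ is irrelevant here, as it only affects the symmetric monoidal structure used to define $\ev_{\Sp(2n)}$ and not the underlying $K$-algebra $\End_{\Sp(2n)}(\otimes^r V)$. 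Transporting semisimplicity back along the isomorphism yields the corollary.

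There is essentially no obstacle: the statement is a formal consequence of results already established, the only computational input being the immediate vanishing $\Pf^{(n)}(r,r)=0$ in the stated range. As a complement, once semisimplicity is known one may feed it into Proposition~\ref{prop:equivalence-D_r}, together with Assumption~\ref{ass:morita} (verified here by Corollary~\ref{cor:decomposition-Brauer}), to recover the explicit description of $D_r$ as a direct sum of matrix algebras with irreducibles $\Inf_p^r S^\lambda$ for $\lambda\vdash p$ and $0\le p\le r$.
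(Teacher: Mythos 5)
Your proof is correct and follows essentially the same route as the paper: observe that the spanning set defining $\Pf^{(n)}(r,r)$ is empty in the range $n\ge r$ because any diagrammatic Pfaffian of order $2(n+1)$ would require $2r\ge 2(n+1)$, then invoke the second fundamental theorem to identify $D_r$ with $\End_{\Sp(2n)}(\otimes^r V)$, and finally use reductivity of $\Sp(2n)$ in characteristic zero. You supply slightly more detail than the paper's terse three-sentence proof — in particular, the explicit cardinality count and the remark that the odd super-parity of $V$ is irrelevant to the underlying $K$-algebra — but the argument is the same.
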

\begin{proof}
  When $n\geq r$ the ideal $\Pf^{(n)}(r,r)$ is empty.  Therefore
  $\ev_{\Sp(2n)}$ is an isomorphism of categories.  Since
  $\Hom_{\T_{\Sp(2n)}}(r,r)$ is semisimple, so is
  $\Hom_{\dc_{\Sp(2n)}}(r,r)$.
\end{proof}

\subsection{Frobenius characters for tensor algebras}
\label{sec:Brauer-tensor-algebra}
\begin{thm}\label{thm:Brauer-tensor-algebra}
  Let $\mu$ be a partition of length at most $n$ and let $W(\mu)$ be
  the irreducible representation of $\Sp(2n)$ with highest weight
  $(\mu_1-\mu_2,\dots,\mu_{n-1}-\mu_n,\mu_n)$.  Then for $n\geq r$
  there is a natural isomorphism
  \begin{equation*}
    \Hom_{\Sp(2n)} (W(\mu),\otimes^rV) \cong
    \Inf_p^r(S^\mu)
  \end{equation*}
  of $D_r$-modules.
\end{thm}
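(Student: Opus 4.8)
The plan is to reduce the statement to the results already established for the diagram algebras and the second fundamental theorem. First I would recall that by Theorem~\ref{thm:Brauer-basis} and its corollary, for $n\geq r$ the functor $\ev_{\Sp(2n)}$ is an isomorphism of categories $\dc_{\Sp(2n)}\to\T_{\Sp(2n)}$; in particular $D_r=\Hom_{\dc_{\Sp(2n)}}(r,r)$ is semisimple and is carried isomorphically onto $\End_{\Sp(2n)}(\otimes^r V)$. Hence $\otimes^r V$, viewed as a $\Sp(2n)\times D_r$-bimodule, is a balanced bimodule in the sense of double centraliser, and its isotypic decomposition over $\Sp(2n)$ is governed by the irreducible $D_r$-modules. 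By Proposition~\ref{prop:equivalence-D_r} (applicable since $D_r$ is semisimple and Assumption~\ref{ass:morita} holds by Corollary~\ref{cor:decomposition-Brauer}), the irreducibles of $D_r$ are exactly the $U(r,\lambda)=\Inf_p^r(S^\lambda)$ for $\lambda\vdash p$, $0\leq p\leq r$.

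The key step is to match the $D_r$-label $\lambda$ with the $\Sp(2n)$-highest weight. I would do this inductively on $r$ using the two branching rules already in hand: on the diagram side, Corollary~\ref{cor:Brauer-branching-rules} says $U(r+1,\lambda)\downarrow^{D_{r+1}}_{D_r}\cong\bigoplus_{\mu=\lambda\pm\square}U(r,\mu)$; on the representation side, Lemma~\ref{lem:Brauer-basis-oscillating-tableaux} (via \cite{MR0095209}) gives the same add/remove-a-box rule for $W(\mu)\otimes V$, with partitions restricted to at most $n$ parts. Since for $n\geq r$ every partition of $p\leq r$ automatically has at most $n$ parts, the two branching graphs coincide, both starting from the trivial module at $r=0$. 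Therefore the double-centraliser decomposition
\[
\otimes^r V\cong\bigoplus_{\mu}W(\mu)\otimes\Hom_{\Sp(2n)}(W(\mu),\otimes^rV)
\]
must, after taking $\Hom_{\Sp(2n)}(W(\mu),-)$, identify the multiplicity space $\Hom_{\Sp(2n)}(W(\mu),\otimes^rV)$ with the irreducible $D_r$-module $U(r,\mu)=\Inf_p^r(S^\mu)$, where $p=|\mu|$. The naturality in $r$ (i.e.\ compatibility with the inclusion $D_r\hookrightarrow D_{r+1}$ and the corresponding map $\otimes^rV\to\otimes^{r+1}V$ coming from a coevaluation) follows because both branching rules were matched compatibly with these inclusions.

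The main obstacle I anticipate is making the identification of labels canonical rather than merely up to the abstract parametrisation: one must check that the bijection between $\widehat{D_r}$ and the relevant $\Sp(2n)$-irreducibles produced by comparing branching graphs is the one sending $\Inf_p^r(S^\mu)\mapsto W(\mu)$ with the \emph{stated} highest weight $(\mu_1-\mu_2,\dots,\mu_{n-1}-\mu_n,\mu_n)$. This is exactly the normalisation in which a single box corresponds to the defining representation $V=W(\square)$, so the inductive base case $r=1$ pins it down, and semisimplicity together with the multiplicity-one nature of the branching (each $\mu=\lambda\pm\square$ occurs once) propagates the identification uniquely. I would also remark that the length restriction in the definition of $W(\mu)$ causes no trouble precisely in the stable range $n\geq r$, which is why the hypothesis appears; outside this range the ideal $\Pf^{(n)}$ is nonempty and the clean isomorphism of $D_r$ with the centraliser algebra fails.
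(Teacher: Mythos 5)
Your strategy coincides with the paper's: set up the two decompositions of $\otimes^r V$ via the double centraliser, compare the diagrammatic branching rule (Corollary~\ref{cor:Brauer-branching-rules}) with the classical add-or-remove-a-box rule for $W(\mu)\otimes V$, and argue inductively on $r$. You correctly identify the main obstacle, namely making the bijection of labels canonical rather than abstract, but your proposed resolution --- that the base case $r=1$ together with multiplicity-one branching pins it down --- does not hold. Passing from $r=1$ to $r=2$ yields only the single isomorphism
\begin{equation*}
W(\emptyset)\oplus W((2))\oplus W((1,1))
\cong
\tilde W(2,\emptyset)\oplus\tilde W(2,(2))\oplus\tilde W(2,(1,1)),
\end{equation*}
where $\tilde W(r,\mu)=\Hom_{D_r}\!\big(\Inf_{|\mu|}^r(S^\mu),\otimes^r V\big)$, and since $(2)$ and $(1,1)$ each cover only the partition $(1)$, this relation cannot distinguish whether $W((2))$ is matched with $\tilde W(2,(2))$ or with $\tilde W(2,(1,1))$; the multiplicity-one observation is symmetric under swapping these two labels and hence gives no information here.

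The paper's proof addresses exactly this point: it observes that the set of partitions covered by $\lambda$ determines $\lambda$ only when $|\lambda|>2$, and therefore verifies $U(r,\mu)\cong\tilde U(r,\mu)$ and $W(\mu)\cong\tilde W(r,\mu)$ directly for $r\le 2$ before running the induction from $r\ge 2$ onwards. That direct verification at $r=2$ (distinguishing the symmetric from the antisymmetric square) is also where the sign twist coming from regarding $V$ as an odd super vector space is reconciled with the labelling, so it is not a trivial normalisation. Your argument needs this $r\le 2$ check inserted; as written, the final identification could be globally transposed on all partitions with exactly two cells.
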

\begin{proof}
  The idea is to compare Corollary~\ref{cor:Brauer-branching-rules}
  with the classical branching rule for $V\otimes W(\mu)$, that is,
  \[
  W(\mu) \otimes V \cong 
  \bigoplus_{\lambda = \mu\pm\square} W(\lambda),
  \]
  the sum being over all partitions $\lambda$ obtained from $\mu$ by
  adding or removing a cell, see Littlewood~\cite{MR0095209}.
 
  Let $U(r, \mu) = \Hom_{\Sp(2n)}\big(W(\mu),\otimes^rV\big)$, that is,
  \begin{equation}
    \label{eq:Brauer-decomp1}
    \otimes^rV \cong \bigoplus_{\mu} W(\mu)\otimes U(r,\mu),    
  \end{equation}
  and let $\tilde U(r,\mu) = \Inf_p^r\left(S^\mu\right)$ and $\tilde
  W(r, \mu) = \Hom_{D_r}\big(\tilde U(r,\mu),\otimes^rV\big)$, that
  is,
  \begin{equation}
    \label{eq:Brauer-decomp2}
    \otimes^rV \cong \bigoplus_{\mu} \tilde W(r, \mu)\otimes \tilde
    U(r,\mu).    
  \end{equation}
  We then find, applying the classical branching rule and
  Corollary~\ref{cor:Brauer-branching-rules} respectively to
  $\otimes^{r+1}V \downarrow^{D_{r+1}}_{D_r}$,
  \begin{align*}
    \Hom_{D_r}\big(U(r,\mu),\otimes^{r+1}V
    \downarrow^{D_{r+1}}_{D_r}\big) %
    &\cong \bigoplus_{\lambda = \mu\pm\square} W(\lambda)\qquad\text{and}\\
    \Hom_{D_r}\big(\tilde U(r,\mu),\otimes^{r+1}V
    \downarrow^{D_{r+1}}_{D_r}\big) %
    &\cong \bigoplus_{\lambda = \mu\pm\square} \tilde W(r+1,
    \lambda).
  \end{align*}
  We now use induction on $r$.  For $r\leq 2$ and $|\mu|\leq r$ it
  can be checked directly that $U(r,\mu) \cong \tilde U(r,\mu)$ and
  $W(\mu) \cong \tilde W(r, \mu)$. %

  Suppose now that $r\geq 2$ and $U(r,\mu) \cong \tilde U(r,\mu)$ and
  $W(\mu) \cong \tilde W(r, \mu)$ for $|\mu|\leq r$.  Thus, for all
  partitions $\mu$ of $r$ we have
  \[
  \bigoplus_{\lambda = \mu\pm\square} W(\lambda) \cong %
  \bigoplus_{\lambda = \mu\pm\square} \tilde W(r+1, \lambda).
  \]
  Note that the set of partitions covered by a partition $\lambda$
  determines $\lambda$ when $|\lambda|>2$.  Furthermore, note that
  for $|\lambda| = r+1$, the modules $W(\lambda)$ and $\tilde
  W(r+1,\lambda)$ occur precisely in those equations corresponding to
  partitions $\mu$ that are covered by $\lambda$.  Therefore,
  $W(\lambda)$ and $\tilde W(r+1, \lambda)$ must be isomorphic.

  This in turn implies via Equations~\eqref{eq:Brauer-decomp1} and
  \eqref{eq:Brauer-decomp2} that $U(r+1,\lambda)$ and $\tilde U(r+1,
  \lambda)$ are isomorphic, too.
\end{proof}

\subsection{Cyclic sieving phenomenon}
\label{sec:Brauer-CSP}

We now use the results obtained so far to exhibit instances of the
cyclic sieving phenomenon.  Let $X=X(r,n)$ be the set of
$(n+1)$-noncrossing perfect matchings of $\{1,\dots,2r\}$ and let
$\rho:X\to X$ be the map that rotates a matching, that is, $\rho$ acts
on $\{1,\dots,2r\}$ as $\rho(i)=i\pmod{2r}+1$.

By Theorem~\ref{thm:Brauer-basis} $X$ is a basis for
$\Hom_{\dc_{\Sp(2n)}/\Pf^{(n)}}(0, 2r)$.  Since the functor
$\ev_{\Sp(2n)}$ is pivotal and symmetric, we can apply
Lemma~\ref{lem:rot} and Theorem~\ref{thm:csp}.

For $n\geq r$ the set $X$ coincides with $D(0,2r)$, that is, the set of
all perfect matchings of $\{1,\dots,2r\}$.  In this case the cyclic
sieving phenomenon follows directly from Corollary~\ref{cor:spr}:
\begin{cor}
  Let $X=X(r)$ be the set of perfect matchings on $\{1,\dots,2r\}$
  and let $\rho$ be the rotation map acting on $X$.  Let
  \[
  P(q) = \fd (h_{r}\circ h_2) = \sum_{\substack{\lambda\vdash
      2r\\\text{rows of even length}}} \fd s_\lambda.
  \]
  Then the triple $\big(X,\rho, P(q)\big)$ exhibits the cyclic
  sieving phenomenon.
\end{cor}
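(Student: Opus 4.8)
The plan is to exhibit $(X,\rho,P)$ as a direct application of Corollary~\ref{cor:spr}. First I would observe that, since $n\ge r$, there is no room for an $(n{+}1)$-crossing among $2r$ endpoints, so $X=X(r)$ is exactly the set $D(0,2r)$ of \emph{all} perfect matchings of $\{1,\dots,2r\}$; equivalently, $X$ is the underlying set of the combinatorial species $M_{2r}$ of perfect matchings on a $2r$-element set, viewed as a permutation representation of $\fS_{2r}$. Under this identification the rotation $\rho$ is precisely the action of the long cycle $c=(1\,2\,\cdots\,2r)$ — this is immediate for a combinatorial species, and is in any case the content of Lemma~\ref{lem:rot}, whose hypotheses are exactly those verified for the preceding corollary. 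Taking $U=KM_{2r}$ with basis $X$, Corollary~\ref{cor:spr} then yields that $\bigl(X,\rho,\fd_{2r}(\ch M_{2r})\bigr)$ exhibits the cyclic sieving phenomenon.

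The next step is to compute $\fd_{2r}(\ch M_{2r})$. Because the species of perfect matchings is $M=H\circ h_2$, its homogeneous component of degree $2r$ is $h_r\circ h_2$, so $\ch M_{2r}=h_r\circ h_2=h_r[h_2]$; alternatively this is the $p=0$ case of the main theorem of Section~\ref{sec:Brauer-stable}. I would then invoke Littlewood's classical plethysm identity
\[
h_r[h_2]=\sum_{\substack{\lambda\vdash 2r\\ \text{all parts of }\lambda\text{ even}}} s_\lambda ,
\]
the sum being over partitions of $2r$ with rows of even length. Since $\fd_{2r}\colon\Sym_{2r}\to\bZ[q]$ is a morphism of $\bZ$-modules and $\fd s_\lambda=\sum_T q^{\maj(T)}$ (the sum over standard tableaux of shape $\lambda$), this gives
\[
P(q)=\fd(h_r\circ h_2)=\sum_{\substack{\lambda\vdash 2r\\ \text{rows of even length}}}\fd s_\lambda ,
\]
which is the polynomial in the statement; this would complete the proof.

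There is no serious obstacle here once the machinery of Sections~\ref{section:csp} and~\ref{sec:Brauer-stable} is in place: the two points needing care are (i) that rotating a matching really does realise the long-cycle action on $M_{2r}$, and (ii) the Littlewood identity $h_r[h_2]=\sum_{\lambda\ \mathrm{even}} s_\lambda$, both of which are standard (the latter can be found, for instance, in Macdonald's book, or derived from the character of $\operatorname{Ind}_{\fS_2\wr\fS_r}^{\fS_{2r}}\mathbf 1$). If one preferred to avoid quoting the plethysm identity, one could instead expand $h_r\circ h_2$ in the Schur basis directly, but citing Littlewood is the cleanest route.
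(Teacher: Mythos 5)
Your proof is correct and follows exactly the paper's route: identify the set of all perfect matchings with the species $M_{2r}=h_r\circ h_2$, realise rotation as the long-cycle action (Prop.~\ref{lem:rot}), and apply Corollary~\ref{cor:spr}. The paper's own proof is just a one-line version of this, leaving the Littlewood identity $h_r\circ h_2=\sum_{\lambda\ \mathrm{even}}s_\lambda$ and the linearity of $\fd$ implicit; you have usefully made those steps explicit.
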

\begin{proof}
  The only observation to make is that the species of perfect
  matchings on $\{1,\dots,2r\}$ is the composition of the species of
  sets of cardinality $r$ with the species of sets of cardinality
  $2$.
\end{proof}

For $n<r$ we cannot realise $(n+1)$-noncrossing perfect matchings of
$\{1,\dots,2r\}$ as a combinatorial species, because the module
$\Hom_{\dc_{\Sp(2n)}/\Pf^{(n)}}(0, 2r)$ restricted to $\fS_{2r}$ is
not a permutation representation.  However, combinatorial
descriptions of the Frobenius character of
$\Hom_{\Sp(2n)}\big(W(\mu),\otimes^r V\big)$ were obtained in
\cite{MR2941115} (using combinatorics) and in \cite{MR933441} (using
representation theory).  For the special case of the invariant
tensors, a geometric proof can be found in Procesi~\cite[Equation
11.5.1.6]{MR2265844}.
\begin{lemma}\label{lem:it}
 \begin{equation*}
   \ch \Hom_{\T_{\Sp(2n)}}(0,2r) = \sum_{\substack{\lambda\vdash 2r\\
       \text{columns of even length}\\ \ell(\lambda)\le 2n}}
   s_{\lambda^t}
 \end{equation*}
\end{lemma}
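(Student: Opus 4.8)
The plan is to compute the $\fS_{2r}$-module structure of the space of invariant tensors directly, by Schur--Weyl duality for $\GL(V)$ followed by restriction to $\Sp(2n)$.

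First I would recall that, by definition, $\Hom_{\T_{\Sp(2n)}}(0,2r)$ is the space of $\Sp(2n)$-invariants in $\otimes^{2r}V$, and that the $\fS_{2r}$-action on it is the one induced by the symmetric structure of $\T_{\Sp(2n)}$. The key observation at this stage is that, because $V$ is an \emph{odd} vector space, the braiding on $V\otimes V$ is $u\otimes v\mapsto -v\otimes u$; consequently the action of $\fS_{2r}$ on $\otimes^{2r}V$ is the ordinary permutation-of-tensor-factors action twisted by the sign character $\varepsilon$. It is this twist that will ultimately be responsible for the transposes $\lambda^{t}$ appearing in the statement, as opposed to the ``untwisted'' answer $h_r\circ h_2$ that one gets in the stable range $n\ge r$ (cf.\ Theorem~\ref{thm:Brauer-tensor-algebra} and the preceding corollary).

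Next I would apply classical Schur--Weyl duality: as a $\GL(2n)\times\fS_{2r}$-module,
\[
\otimes^{2r}V\;\cong\;\bigoplus_{\nu}S^{\nu}(V)\otimes S^{\nu},
\]
the sum ranging over partitions $\nu\vdash 2r$ with $\ell(\nu)\le 2n$, where $S^{\nu}(V)$ is the Schur functor applied to $V$ (an irreducible polynomial $\GL(2n)$-module) and $S^{\nu}$ the Specht module. Twisting by $\varepsilon$ and using $S^{\nu}\otimes\varepsilon\cong S^{\nu^{t}}$ turns the right-hand tensor factor into $S^{\nu^{t}}$. Since the $\Sp(2n)$- and $\fS_{2r}$-actions commute, taking $\Sp(2n)$-invariants yields an isomorphism of $\fS_{2r}$-modules
\[
\Hom_{\T_{\Sp(2n)}}(0,2r)\;\cong\;\bigoplus_{\nu}\bigl(S^{\nu}(V)\bigr)^{\Sp(2n)}\otimes S^{\nu^{t}},
\]
and hence $\ch\Hom_{\T_{\Sp(2n)}}(0,2r)=\sum_{\nu}\dim\bigl(S^{\nu}(V)\bigr)^{\Sp(2n)}\,s_{\nu^{t}}$.

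Thus everything reduces to the single input $\dim\bigl(S^{\nu}(V)\bigr)^{\Sp(2n)}=1$ if every column of $\nu$ has even length (and $\ell(\nu)\le 2n$, which is forced since otherwise $S^{\nu}(V)=0$), and $=0$ otherwise. This is Littlewood's restriction rule for $\GL(2n)\downarrow\Sp(2n)$ evaluated at the trivial representation; for the invariant tensors it is the statement established geometrically by Procesi~\cite[Equation 11.5.1.6]{MR2265844}. Feeding it in leaves precisely the $\nu$ with columns of even length, and renaming $\nu$ as $\lambda$ gives the formula. I expect the delicate point to be exactly this last input: Littlewood's rule in its naive form is only valid in the stable range $\ell(\nu)\le n$, whereas here $\nu$ can have up to $2n$ rows, so one must either appeal to Procesi's geometric argument (or to the Koike--Terada modification rules, checking that the trivial constituent is unaffected), or else give an independent argument --- for instance an induction on $r$ comparing the classical branching $W(\mu)\otimes V=\bigoplus_{\lambda=\mu\pm\square}W(\lambda)$ with Corollary~\ref{cor:Brauer-branching-rules}, in the spirit of the proof of Theorem~\ref{thm:Brauer-tensor-algebra}. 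A purely combinatorial consistency check is that the dimension of the left-hand side, namely the number of $(n+1)$-noncrossing perfect matchings of $[2r]$, equals the number of $n$-symplectic oscillating tableaux of length $2r$ by Lemma~\ref{lem:Brauer-Sundaram}, and the latter is readily seen to equal $\sum_{\lambda}$ (number of standard Young tableaux of shape $\lambda^{t}$) over the same index set.
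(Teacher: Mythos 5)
The paper offers no proof of this lemma; it simply cites Sundaram~\cite{MR2941115}, Koike--Terada~\cite{MR933441} and Procesi~\cite[Equation~11.5.1.6]{MR2265844}. Your proposal reconstructs the representation-theoretic route (essentially Koike--Terada's), and the skeleton is sound: $\GL(V)\times\fS_{2r}$ Schur--Weyl duality, then the observation that the odd-ness of $V$ twists the $\fS_{2r}$-action by the sign character so that $S^{\nu}$ becomes $S^{\nu^{t}}$ (this is exactly what the remark after the lemma alludes to), then reduction to the single assertion that $\dim\big(S^{\nu}(V)\big)^{\Sp(2n)}$ equals $1$ if every column of $\nu$ has even length and $0$ otherwise, for $\ell(\nu)\le 2n$. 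You have correctly isolated this as the only nontrivial input. Two cautions. First, Littlewood's restriction rule in its naive form only gives the multiplicity for $\ell(\nu)\le n$; the extension up to $\ell(\nu)\le 2n$ genuinely requires the Koike--Terada modification rules (one must check that no spurious contribution to the trivial isotypic component is introduced or cancelled), or Procesi's geometric argument -- you are right to flag this, and it is precisely why the paper cites those sources rather than proving the lemma. Second, the alternative induction you sketch in the spirit of Theorem~\ref{thm:Brauer-tensor-algebra} is more delicate than it appears: Corollary~\ref{cor:Brauer-branching-rules} is established under the hypothesis that the diagram algebras $D_r$ are semisimple, which fails exactly in the range $n<r$ you would need here, and the classical branching $W(\mu)\otimes V\cong\bigoplus_{\lambda=\mu\pm\square}W(\lambda)$ itself loses terms once $\ell(\mu)=n$. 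Your closing dimension check is consistent with Lemma~\ref{lem:Brauer-Sundaram} and is a reasonable sanity check, but of course does not by itself pin down the $\fS_{2r}$-module structure.
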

\begin{rem}
  The partitions indexing the Schur functions appearing in the
  Frobenius character are all transposed, since we defined $V$ to be
  an \emph{odd} vector space.
\end{rem}

\begin{thm}\label{thm:pmcsp} Let $X=X(r,n)$ be the set of $(n+1)$-noncrossing perfect
  matchings on $\{1,\dots,2r\}$ and let $\rho$ be the rotation map
  acting on $X$.  Let
  \begin{equation*}
    P(q) = \sum_{\substack{\lambda\vdash 2r\\
        \text{columns of even length}\\ \ell(\lambda)\le 2n}}
    \fd s_{\lambda^t}
  \end{equation*}
  Then the triple $\big(X, \rho, P(q)\big)$ exhibits the cyclic
  sieving phenomenon.
\end{thm}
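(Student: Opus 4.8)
The plan is to assemble the statement from three ingredients already in place: the second fundamental theorem (the isomorphism of categories $\bev_{\Sp(2n)}$), the rotation principle of Proposition~\ref{lem:rot}, and the cyclic sieving criterion of Corollary~\ref{cor:spr}. First I would set $U = \Hom_{\dc_{\Sp(2n)}/\Pf^{(n)}}(0, 2r)$ and recall, from Theorem~\ref{thm:Brauer-basis}, that the set $X = X(r,n)$ of $(n+1)$-noncrossing perfect matchings of $\{1,\dots,2r\}$ is a basis of $U$. Since $\bev_{\Sp(2n)}$ is a symmetric monoidal functor and an isomorphism of categories, the $\fS_{2r}$-action on $U$ coming from the symmetric structure is carried isomorphically to the $\fS_{2r}$-action on the invariant tensors $\Hom_{\T_{\Sp(2n)}}(0, 2r)$; in particular $\ch U = \ch \Hom_{\T_{\Sp(2n)}}(0, 2r)$.

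Next I would pin down the action of the long cycle $c\in\fS_{2r}$ on $U$. The object $1$ of $\dc_{\Sp(2n)}$ is self-dual, and the four relations of Figure~\ref{fig:con} hold for it because in the Brauer category they are nothing but isotopies of one-manifolds; they survive passage to the quotient by $\Pf^{(n)}$. Hence Proposition~\ref{lem:rot} applies and identifies $c$ with the rotation map on $\Hom(0, 2r)$. On diagrams this rotation is literally the relabeling $i\mapsto i\bmod 2r+1$ of the $2r$ endpoints, and since ``two chords cross'' is invariant under a cyclic rotation of the boundary, the set of $(n+1)$-noncrossing matchings is closed under it. Therefore $c$ permutes the basis $X$, acting exactly as $\rho$.

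With that in hand I would invoke Corollary~\ref{cor:spr} for the $\fS_{2r}$-module $U$ with its long-cycle-stable basis $X$: the triple $\big(X,\rho,\fd_{2r}(\ch U)\big)$ exhibits the cyclic sieving phenomenon. It remains to evaluate the polynomial. By the module isomorphism of the first paragraph together with Lemma~\ref{lem:it},
\[
\ch U = \sum_{\substack{\lambda\vdash 2r\\\text{columns of even length}\\\ell(\lambda)\le 2n}} s_{\lambda^t},
\]
and since the fake degree map $\fd_{2r}$ is $\bZ$-linear we get $\fd_{2r}(\ch U) = \sum_{\lambda} \fd\, s_{\lambda^t} = P(q)$, which is the asserted polynomial.

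The step I expect to be the main obstacle is the middle one: checking that Proposition~\ref{lem:rot} genuinely applies — i.e. verifying the relations of Figure~\ref{fig:con} for the self-dual object $1$ of $\dc_{\Sp(2n)}/\Pf^{(n)}$ — and, more delicately, confirming that the abstract rotation map of Proposition~\ref{lem:rot} coincides on the noncrossing-matching basis with the combinatorial rotation $\rho$ rather than differing from it by a sign, given that $V$ is treated as an odd vector space. The sign bookkeeping is in fact harmless, because the symmetric braiding and the duality morphisms of $\dc_{\Sp(2n)}$ itself carry no signs (signs enter only through the functor $\ev_{\Sp(2n)}$), so the $\fS_{2r}$-action and the rotation map on the diagram space are both sign-free; but this point deserves to be spelled out. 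Everything else is routine bookkeeping with results already established.
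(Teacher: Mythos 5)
Your proposal is correct and follows exactly the paper's intended route: the paper's discussion before Theorem~\ref{thm:pmcsp} points to Theorem~\ref{thm:Brauer-basis} for the basis, to Proposition~\ref{lem:rot} (valid because $\ev_{\Sp(2n)}$ is pivotal and symmetric) for the identification of the long cycle with rotation, and to Lemma~\ref{lem:it} for the Frobenius character, closing the loop with Corollary~\ref{cor:spr}. Your remark on the sign bookkeeping is a worthwhile elaboration the paper leaves implicit: the braiding and duality in the diagram category $\dc_{\Sp(2n)}$ are sign-free, so the $\fS_{2r}$-action and the rotation map permute diagrams without signs, while the signs coming from $V$ being odd appear only after applying $\ev_{\Sp(2n)}$ and are already encoded in the transposed partitions of Lemma~\ref{lem:it}. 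One further detail worth making explicit (used silently both by you and the paper) is that the $(n+1)$-noncrossing property is a cyclically invariant condition — this follows from viewing the $2r$ boundary points on a circle, where ``pairwise crossing'' becomes ``pairwise intersecting chords'' — which is what guarantees that $\rho$ actually preserves the basis $X$.
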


\begin{rem} Let $T$ be an oscillating tableau. The descent set of $T$ is defined in \cite{MR3226822}
and is denoted by $\mathbf{Des}(T)$. The major index of $T$ is
\begin{equation*}
 \mathbf{maj}(T) = \sum_{i\in \mathbf{Des}(T)}i
\end{equation*}
Then the main result of \cite{MR3226822} is that 
 \begin{equation*}
   P(q) = \sum_T q^{\mathbf{maj}(T)}
 \end{equation*}
where the sum is over $n$-symplectic oscillating tableaux of length $2r$ and weight $0$.
\end{rem}

The two extremes of Theorem~\ref{thm:pmcsp}, $n=1$ and $n\ge r$ are known.
Putting $n=1$, $X(r,1)$ is the set of non-crossing perfect matchings. The
corresponding cyclic sieving phenomenon can be found in \cite{MR2557880} and \cite{MR2519848}.
For $n\ge r$, $X(r,n)$ is the set of all perfect matchings. The Frobenius
character of this permutation representation was expanded by Littlewood
into Schur functions as
\begin{equation*}
h_r\circ h_2=\sum_{\substack{\lambda\vdash 2r\\ \text{rows of even length}}} s_{\lambda}.
\end{equation*}

Theorem~\ref{thm:pmcsp} can be generalised. Informally, we consider
the set of $k$-regular graphs (with loops prohibited but multiple
edges allowed) on $r$ vertices which are also $(n+1)$-noncrossing.
More precisely, consider $\{1,\dots kr\}$ as a cyclically ordered
set, partitioned into $r$ blocks of $k$ consecutive elements each.
Let $X(r,n,k)$ be the set of $(n+1)$-noncrossing perfect matchings
of $\{1,\dots,kr\}$ such that there is no pair contained in a block
and if two pairs cross then the four elements are in four distinct
blocks, see Figure~\ref{fig:symmetric-powers} for an example.
Finally, let $\rho$ be rotation by $k$ points, that is,
$\rho(i)=i+k-1\pmod{kr}+1$.  

\begin{figure}
  \centering
  \begin{tikzpicture}[line width=2pt]
    % create the node
    \node[draw=none,Xsize,regular polygon,regular polygon sides=8] (a) {};
    % draw a black dot in each vertex
    \foreach \x in {1,2,...,8} 
    \fill (a.corner \x) circle[radius=2pt];
    % draw the blocks
    \node[fit=(a.corner 3)(a.corner 4), block] {};
    \node[fit=(a.corner 5)(a.corner 6), block] {};
    \node[fit=(a.corner 7)(a.corner 8), block] {};
    \node[fit=(a.corner 1)(a.corner 2), block] {};
    % draw the edges
    \draw (a.corner 1) to (a.corner 8);
    \draw (a.corner 2) to (a.corner 3);
    \draw (a.corner 4) to (a.corner 5);
    \draw (a.corner 6) to (a.corner 7);
  \end{tikzpicture}
  \quad
  \begin{tikzpicture}[line width=2pt]
    % create the node
    \node[draw=none,Xsize,regular polygon,regular polygon sides=8] (a) {};
    % draw a black dot in each vertex
    \foreach \x in {1,2,...,8}
    \fill (a.corner \x) circle[radius=2pt];
    % draw the blocks
    \node[fit=(a.corner 3)(a.corner 4), block] {};
    \node[fit=(a.corner 5)(a.corner 6), block] {};
    \node[fit=(a.corner 7)(a.corner 8), block] {};
    \node[fit=(a.corner 1)(a.corner 2), block] {};
    % draw the edges
    \draw (a.corner 1) to (a.corner 8);
    \draw (a.corner 2) to (a.corner 7);
    \draw (a.corner 3) to (a.corner 6);
    \draw (a.corner 4) to (a.corner 5);
  \end{tikzpicture}  
  \quad
  \begin{tikzpicture}[line width=2pt]
    % create the node
    \node[draw=none,Xsize,regular polygon,regular polygon sides=8] (a) {};
    % draw a black dot in each vertex
    \foreach \x in {1,2,...,8}
    \fill (a.corner \x) circle[radius=2pt];
    % draw the blocks
    \node[fit=(a.corner 3)(a.corner 4), block] {};
    \node[fit=(a.corner 5)(a.corner 6), block] {};
    \node[fit=(a.corner 7)(a.corner 8), block] {};
    \node[fit=(a.corner 1)(a.corner 2), block] {};
    % draw the edges
    \draw (a.corner 1) to (a.corner 4);
    \draw (a.corner 2) to (a.corner 3);
    \draw (a.corner 5) to (a.corner 8);
    \draw (a.corner 6) to (a.corner 7);
  \end{tikzpicture}  
  \quad
  \begin{tikzpicture}[line width=2pt]
    % create the node
    \node[draw=none,Xsize,regular polygon,regular polygon sides=8] (a) {};
    % draw a black dot in each vertex
    \foreach \x in {1,2,...,8}
    \fill (a.corner \x) circle[radius=2pt];
    % draw the blocks
    \node[fit=(a.corner 3)(a.corner 4), block] {};
    \node[fit=(a.corner 5)(a.corner 6), block] {};
    \node[fit=(a.corner 7)(a.corner 8), block] {};
    \node[fit=(a.corner 1)(a.corner 2), block] {};
    % draw the edges
    \draw (a.corner 1) to (a.corner 6);
    \draw (a.corner 2) to (a.corner 5);
    \draw (a.corner 3) to (a.corner 8);
    \draw (a.corner 4) to (a.corner 7);
  \end{tikzpicture}  
  \quad
  \begin{tikzpicture}[line width=2pt]
    % create the node
    \node[draw=none,Xsize,regular polygon,regular polygon sides=8] (a) {};
    % draw a black dot in each vertex
    \foreach \x in {1,2,...,8}
    \fill (a.corner \x) circle[radius=2pt];
    % draw the blocks
    \node[fit=(a.corner 3)(a.corner 4), block] {};
    \node[fit=(a.corner 5)(a.corner 6), block] {};
    \node[fit=(a.corner 7)(a.corner 8), block] {};
    \node[fit=(a.corner 1)(a.corner 2), block] {};
    % draw the edges
    \draw (a.corner 1) to (a.corner 8);
    \draw (a.corner 2) to (a.corner 6);
    \draw (a.corner 3) to (a.corner 7);
    \draw (a.corner 4) to (a.corner 5);
  \end{tikzpicture}  
  \quad
  \begin{tikzpicture}[line width=2pt]
    % create the node
    \node[draw=none,Xsize,regular polygon,regular polygon sides=8] (a) {};
    % draw a black dot in each vertex
    \foreach \x in {1,2,...,8}
    \fill (a.corner \x) circle[radius=2pt];
    % draw the blocks
    \node[fit=(a.corner 3)(a.corner 4), block] {};
    \node[fit=(a.corner 5)(a.corner 6), block] {};
    \node[fit=(a.corner 7)(a.corner 8), block] {};
    \node[fit=(a.corner 1)(a.corner 2), block] {};
    % draw the edges
    \draw (a.corner 1) to (a.corner 5);
    \draw (a.corner 2) to (a.corner 3);
    \draw (a.corner 4) to (a.corner 8);
    \draw (a.corner 6) to (a.corner 7);
  \end{tikzpicture}  
  \caption{The six elements of $X(4,2,2)$.}
  \label{fig:symmetric-powers}
\end{figure}
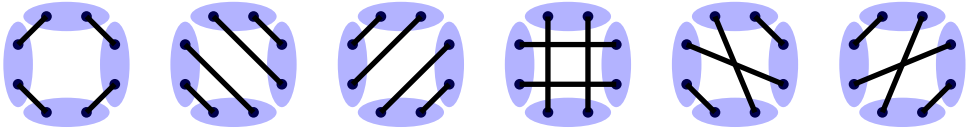

\begin{thm}
  The invariant tensors corresponding to the diagrams $X(r,n,k)$ form
  a basis of the space of $\Sp(2n)$-invariants in the $r$-th tensor
  power of the $k$-th symmetric power of the defining representation
  of $\Sp(2n)$.
\end{thm}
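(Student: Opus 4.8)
The plan is to realise $\mathrm{Sym}^k V$ inside $\otimes^k V$ as the image of an idempotent of the Brauer algebra, and then to transport the statement, via the second fundamental theorem, into the combinatorial model already available for $\Hom_{\dc_{\Sp(2n)}/\Pf^{(n)}}(0,kr)$. Since $V$ is an \emph{odd} vector space, $\ev_{\Sp(2n)}$ sends the transposition diagram $s_i\in D_k$ to $u\otimes v\mapsto -v\otimes u$, so the ordinary symmetriser on $\otimes^k V$ is the image under $\ev_{\Sp(2n)}$ of the \emph{sign} idempotent $E(k)=\tfrac1{k!}\sum_{\pi\in\fS_k}\varepsilon(\pi)\pi\in D_k$. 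Put $\hat e=E(k)^{\otimes r}\in\Hom_{\dc_{\Sp(2n)}}(kr,kr)$, so that $\ev_{\Sp(2n)}(\hat e)$ is the idempotent projecting $\otimes^{kr}V$ onto $\otimes^r\mathrm{Sym}^k V$. As $\bev_{\Sp(2n)}\colon\dc_{\Sp(2n)}/\Pf^{(n)}\to\T_{\Sp(2n)}$ is an isomorphism of categories and carries the class of $\hat e$ to $\ev_{\Sp(2n)}(\hat e)$, we obtain
\[
\Hom_{\Sp(2n)}\bigl(I,\otimes^r\mathrm{Sym}^k V\bigr)\;\cong\;\hat e\cdot\Hom_{\dc_{\Sp(2n)}/\Pf^{(n)}}(0,kr),
\]
and by Theorem~\ref{thm:Brauer-basis} the right-hand side has the $(n+1)$-noncrossing perfect matchings of $\{1,\dots,kr\}$ for a basis. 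It thus remains to show that $\hat e$ carries $X(r,n,k)$ to a basis of its image.

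For the spanning statement I would use two computations in $\dc_{\Sp(2n)}$. First, $E(k)$ annihilates any morphism whose target side contains an arc joining two of its $k$ strands: a permutation (absorbed into $E(k)$ up to the sign $\varepsilon$) brings that arc to adjacent positions $i,i+1$, and then $E(k)s_i=-E(k)$ while $s_i$ fixes the arc. Hence $\hat e\circ m=0$ for every matching $m$ pairing two points inside a common block. Second, if two loop-free matchings $m,m'$ of $\{1,\dots,kr\}$ induce the same $k$-regular multigraph on the $r$ blocks, then $m'=\sigma\circ m$ for a product $\sigma=\sigma_1\otimes\dots\otimes\sigma_r$ of within-block permutations, whence $\hat e\circ m'=\pm\,\hat e\circ m$ because $E(k)\tau=\varepsilon(\tau)E(k)$. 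Running over the $(n+1)$-noncrossing basis, the image of $\hat e$ is therefore spanned by the classes of the crossing-minimal representatives of the block-permutation classes, and one checks that these are exactly the elements of $X(r,n,k)$: minimality forces every surviving crossing to use four pairwise distinct blocks, passing to the minimal representative never introduces a crossing (so the $(n+1)$-noncrossing condition persists), and conversely each element of $X(r,n,k)$ is already crossing-minimal. Moreover $\hat e\circ m\neq 0$ for loop-free $m$: the stabiliser of $m$ in $(\fS_k)^{\times r}$ consists of even permutations --- no within-block permutation can reverse an edge, so each cycle of the induced action on the edge set lifts to an even permutation of the $kr$ half-edges --- and so no cancellation occurs. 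Consequently $\hat e\cdot X(r,n,k)$ spans $\Hom_{\Sp(2n)}(I,\otimes^r\mathrm{Sym}^k V)$.

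To finish it suffices to prove $\lvert X(r,n,k)\rvert=\dim\Hom_{\Sp(2n)}(I,\otimes^r\mathrm{Sym}^k V)$. Iterating the symplectic Pieri rule for $W(\mu)\otimes\mathrm{Sym}^k V$ (in the notation of Theorem~\ref{thm:Brauer-tensor-algebra}; a classical fact which for $k=1$ is the branching rule of Littlewood~\cite{MR0095209} used above), this dimension is the number of sequences $\emptyset=\mu^0,\mu^1,\dots,\mu^r=\emptyset$ of partitions with at most $n$ parts in which every step is an admissible Pieri move. The technical heart --- and what I expect to be the main obstacle --- is a bijection between such sequences and $X(r,n,k)$, generalising Sundaram's bijection from the case $k=1$ used in Lemma~\ref{lem:Brauer-Sundaram}: one must reconcile the bound on the number of parts with the $(n+1)$-noncrossing condition through a symplectic RSK/jeu-de-taquin rectification, and the horizontal-strip shape of the Pieri moves with the block structure and the ``four distinct blocks'' condition. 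A route avoiding this bijection is to show instead that the matrix expressing $\{\hat e\circ m':m'\in X(r,n,k)\}$ in the $(n+1)$-noncrossing basis is triangular for the statistic ``number of crossings'' with invertible diagonal: off-diagonal contributions arise only when a within-block copy $\sigma\circ m'$ fails to be $(n+1)$-noncrossing and must be rewritten via the Pfaffian relations of $\Pf^{(n)}$, which strictly lowers the crossing number. Together with the spanning statement, either approach yields the theorem, and the corresponding cyclic sieving phenomenon then follows exactly as in Section~\ref{sec:Brauer-CSP} from Proposition~\ref{lem:rot} and Theorem~\ref{thm:csp}.
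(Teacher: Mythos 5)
Your spanning argument is sound and closely parallels the first half of the paper's own argument: the sign idempotent $E(k)$ annihilates matchings with a pair inside a block, and absorbing a within-block permutation into $\hat e=E(k)^{\otimes r}$ produces only a sign, so the image of $\hat e$ is spanned by $\hat e$ applied to the crossing-minimal loop-free representatives, and these are the elements of $X(r,n,k)$. (Both you and the paper rely here on the plausible but unargued fact that within-block untangling cannot create an $(n+1)$-crossing, so that the crossing-minimal representative of an $(n+1)$-noncrossing matching remains $(n+1)$-noncrossing.) The genuine gap is the linear independence step, which you yourself flag as ``the technical heart'' and ``the main obstacle''. Neither proposed route is carried out, and the triangularity sketch is not obviously correct as stated: for a nontrivial within-block $\sigma$ the diagram $\sigma\circ m'$ has \emph{strictly more} crossings than the crossing-minimal $m'$, and although each Pfaffian rewrite strictly decreases the crossing number, there is no evident reason the iterated rewriting of $\sigma\circ m'$ cannot produce elements of $X(r,n,k)$ with as few or fewer crossings than $m'$; the claimed monotone structure would have to be established, not asserted.

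The paper dispenses with both the generalised Sundaram bijection and the Pieri-rule dimension count by one further linear-algebra observation. It defines an explicit idempotent $\pi$ on $D=\Hom_{\dc_{\Sp(2n)}/\Pf^{(n)}}(0,kr)$ by $\pi(\alpha)=0$ if $\alpha$ has a pair inside a block, and $\pi(\alpha)=(-1)^{c(\alpha)}\tilde\alpha$ otherwise, where $\tilde\alpha$ is the within-block untangling of $\alpha$; its image is the span of $X(r,n,k)$ and its kernel is described explicitly. It then proves $\ker\pi=\ker(\otimes^r\sigma)$, where $\sigma$ is the $k$-antisymmetriser. The inclusion $\ker\pi\subseteq\ker(\otimes^r\sigma)$ is exactly your spanning computation. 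The reverse inclusion rests on the single extra fact you are missing: the restriction of $\pi\circ(\otimes^r\sigma)$ to $X(r,n,k)$ is the identity --- every crossing that a within-block $\sigma$ introduces between strands leaving a common block is precisely what $\pi$ untangles, and the signs match --- hence $\otimes^r\sigma$ is injective on $\operatorname{im}\pi$, which forces $\ker(\otimes^r\sigma)\subseteq\ker\pi$. This gives spanning and independence simultaneously, with no bijection and no counting. That single observation is what would close the gap in your proposal.
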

We prove this theorem in two lemmas.  First, we exhibit a projection
$\pi$ such that the image of $\pi$ equals $X(r,n,k)$.  Then we show
that the kernel of $\pi$ coincides with the kernel of the $r$-th
tensor power of the antisymmetriser.

\begin{lemma}
  Let $D=\Hom_{\dc_{\Sp(2n)}/\Pf^{(n)}}(0, kr)$ and let $X=X(kr,n)$
  be the set of $(n+1)$-noncrossing perfect matchings of
  $\{1,\dots,kr\}$, regarded as a basis of $D$.

  Define a linear map $\pi:D\to D$ on diagrams $\alpha\in X$ as
  follows:
  \begin{itemize}
  \item if $\alpha$ contains a pair in one block then $\pi(\alpha) =
    0$.
  \item otherwise, suppose that $\alpha$ contains $c(\alpha)$
    crossing pairs with two ends in one block.  Then $\pi(\alpha) =
    (-1)^{c(\alpha)} \tilde\alpha$, where $\tilde\alpha$ is obtained
    from $\alpha$ by untangling these crossings.
  \end{itemize}
  Then $\pi$ is idempotent and the kernel of $\pi$ is spanned by the
  diagrams which contain a pair in one block together with the sums
  $\alpha - (-1)^{c(\alpha)} \tilde\alpha$.

  Thus, the image of $\pi$, $\tilde X = X(r,n,k)$, is a basis of
  $D/\ker\pi$.
\end{lemma}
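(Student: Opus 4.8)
The plan is to analyse $\pi$ via the action of $G:=\fS_k\times\cdots\times\fS_k$ ($r$ factors) on the perfect matchings of $\{1,\dots,kr\}$, where the $j$-th factor permutes the $k$ points of the $j$-th block; this action preserves the property of having no pair inside a block. For a matching $\alpha$ with no inside-block pair, call a crossing pair of strands \emph{bad} if two of its four endpoints lie in one block, so the quantity $c(\alpha)$ in the statement is the number of bad crossings. The combinatorial heart is the claim that every $G$-orbit of matchings with no inside-block pair contains a \emph{unique} ``block-sorted'' matching --- one in which no two strands sharing a block cross, equivalently one in which every crossing has its four endpoints in four distinct blocks --- which we take as the definition of $\tilde\alpha$ for $\alpha$ in that orbit. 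For existence I would run the obvious sorting procedure: while some block carries two crossing strands, pick two \emph{adjacent} points of that block whose strands cross and swap them; one checks that this move turns that one crossing into a non-crossing and changes no other crossing (any third strand has those two adjacent points on the same side of it), so it strictly decreases $\sum_j(\#\text{crossing pairs both incident to block }j)$ and increases no term, hence the procedure terminates block-sorted. For uniqueness I would argue that a block-sorted matching is rigid: inside a block the $k$ strands are pairwise noncrossing and emanate from one arc, so the cyclic order of the blocks forces which points of each block connect to each other block (a contiguous bundle of prescribed size), and inside each such bundle the matching is the unique planar one; since the bundle sizes $m_{j,j'}$ are $G$-invariant, the block-sorted representative is determined.

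The second ingredient is a sign computation. For $\sigma\in G$ I would write $\sigma$ as a product of within-block adjacent transpositions; each swaps two points of one block, which --- since $\alpha$ has no inside-block pair --- are never joined by a strand of the current matching, and swapping two adjacent points not joined by a strand toggles exactly one crossing (the one between the two strands leaving them) and changes no other. Hence $\varepsilon(\sigma)=(-1)^{\,\mathrm{cr}(\sigma\alpha)-\mathrm{cr}(\alpha)}$, with $\mathrm{cr}$ the number of crossings. Moreover $G$ fixes the cyclic order of any four points lying in four distinct blocks, so the number of ``good'' crossings is $G$-invariant; as $\tilde\alpha$ has only good crossings, $\mathrm{cr}(\tilde\alpha)=\mathrm{cr}(\alpha)-c(\alpha)$, and choosing any $\sigma$ with $\sigma(\alpha)=\tilde\alpha$ gives $(-1)^{c(\alpha)}=\varepsilon(\sigma)$ and $\pi(\alpha)=\varepsilon(\sigma)\,\sigma(\alpha)$ (independent of $\sigma$, since the stabiliser of $\alpha$ in $G$ is then forced to be even). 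The same invariance shows $\tilde\alpha$ is still $(n+1)$-noncrossing: an $(n+1)$-crossing in $\tilde\alpha$ consists of pairwise-crossing strands each pair of which lies in four distinct blocks, and $\sigma^{-1}$ preserves such crossings, so it would produce an $(n+1)$-crossing in $\alpha$. Thus $\tilde\alpha\in X(r,n,k)\subseteq X$.

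Assembling these is routine. If $\alpha\in X$ has a pair inside a block then $\pi(\alpha)=0$ and $\pi^2(\alpha)=\pi(\alpha)$; otherwise $\pi^2(\alpha)=\varepsilon(\sigma)\pi(\tilde\alpha)$, and $\tilde\alpha\in X(r,n,k)$ has no inside-block pair and no bad crossing, so $\pi(\tilde\alpha)=\tilde\alpha$ and $\pi^2(\alpha)=\pi(\alpha)$; hence $\pi$ is idempotent. Then $\ker\pi=\operatorname{im}(1-\pi)$ is spanned by the vectors $(1-\pi)(\alpha)$ for $\alpha$ ranging over the basis $X$, which are $\alpha$ itself when $\alpha$ contains a pair inside a block and $\alpha-(-1)^{c(\alpha)}\tilde\alpha$ otherwise: exactly the stated spanning set. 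Finally $\operatorname{im}\pi$ is spanned by $\{\pi(\alpha):\alpha\in X\}=\{0\}\cup\{\pm\tilde\alpha\}$, which spans the same subspace as $\tilde X=X(r,n,k)$ (every $\tilde\alpha$ lies in $X(r,n,k)$, and $\pi$ fixes each element of $X(r,n,k)$); being a subset of the basis $X$ of $D$, the set $X(r,n,k)$ is linearly independent, hence a basis of $\operatorname{im}\pi$; and since $\pi$ is idempotent $D=\operatorname{im}\pi\oplus\ker\pi$, so $\tilde X$ is a basis of $D/\ker\pi$. I expect the one genuinely delicate point to be the uniqueness of the block-sorted representative, i.e. the well-definedness of $\tilde\alpha$; the sign identity is a clean parity argument, and everything else is formal once $\pi$ is known to be a projection.
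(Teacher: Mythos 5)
Your proof is correct and takes a genuinely different route from the paper's.  The paper's proof is a short, direct linear-algebra computation: it writes an arbitrary element of $\ker\pi$ as a sum of the obvious kernel elements, and leaves implicit the two combinatorial facts that make the map $\alpha\mapsto(-1)^{c(\alpha)}\tilde\alpha$ well-defined --- namely that the ``untangled'' matching $\tilde\alpha$ exists, is unique, and stays inside $X=X(kr,n)$.  You instead build these facts explicitly by introducing the within-block permutation group $G=\fS_k\times\dotsb\times\fS_k$, proving that each $G$-orbit of inside-block-free matchings has a unique block-sorted representative, and identifying the sign $(-1)^{c(\alpha)}$ with $\varepsilon(\sigma)$ for any $\sigma\in G$ taking $\alpha$ to $\tilde\alpha$.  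This group-action framing buys you three things the paper does not spell out: a clean proof that $\tilde\alpha$ does not depend on the order of untanglings; a one-line proof that $\tilde\alpha$ remains $(n+1)$-noncrossing (good crossings are $G$-invariant because $G$ fixes the cyclic order of points in four distinct blocks); and the independence of the sign from the chosen $\sigma$ (stabilizers lie in the alternating subgroup).  Once these preliminaries are in place, the formal deduction of idempotence, $\ker\pi=\operatorname{im}(1-\pi)$, and the description of $\operatorname{im}\pi$ coincides with the paper's bookkeeping, so your argument and the paper's are complementary rather than in conflict.  The only part of your sketch I would flesh out further is the uniqueness of the block-sorted representative: the ``contiguous bundle'' claim follows from the observation that pairwise non-crossing strands emanating from a common arc of the circle are nested, forcing the endpoints in the opposite arc to appear in reverse order; this gives both contiguity of bundles and the unique planar matching within each bundle.
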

\begin{proof}
  We have
  \begin{equation*}
    \pi\left(\sum_{\alpha\in X} d_\alpha\;\alpha\right) %
    = \sum_{\alpha\in X} d_\alpha\;\pi(\alpha)
    = \sum_{\tilde\alpha\in \tilde X} %
    \sum_{\substack{\alpha\in X\\|\pi(\alpha)| = \tilde\alpha}}%
    (-1)^{c(\alpha)} d_{\alpha}\;\tilde\alpha. %
  \end{equation*}
  Suppose that $\pi(\sum_\alpha d_\alpha\;\alpha) = 0$.  Since the
  set $\tilde X$ is linearly independent in $D$, it follows that
  \[
  \sum_{\substack{\alpha\in X\\|\pi(\alpha)| = \tilde\alpha}}%
  (-1)^{c(\alpha)} d_{\alpha} = 0
  \]
  for all $\tilde\alpha\in \tilde X$.  This in turn implies that
  $\sum_{\alpha\in X} d_\alpha\;\alpha$ is a linear combination of
  diagrams that contain a pair in a block and sums $\alpha -
  (-1)^{c(\alpha)} \tilde\alpha$:
  \begin{align*}
    \sum_{\alpha\in X} d_\alpha\;\alpha%
    &=\sum_{\tilde\alpha\in\tilde X} %
    \bigg(d_{\tilde\alpha}\;\tilde\alpha%
    + \sum_{\substack{\alpha\in X\\%
        \alpha\neq\tilde\alpha\\%
        |\pi(\alpha)| = \tilde\alpha}}%
    d_{\alpha}\;\alpha\bigg)\\%
    &= \sum_{\tilde\alpha\in\tilde X} %
    \bigg(\Big(-\sum_{\substack{\alpha\in X\\%
        \alpha\neq\tilde\alpha\\%
        |\pi(\alpha)| = \tilde\alpha}}%
    (-1)^{c(\alpha)} d_{\alpha}\Big)\;%
    \tilde\alpha%
    + \sum_{\substack{\alpha\in X\\%
        \alpha\neq\tilde\alpha\\%
        |\pi(\alpha)| = \tilde\alpha}}%
    d_{\alpha}\;\alpha\bigg)\\%
    &= \sum_{\tilde\alpha\in\tilde X} %
    \sum_{\substack{\alpha\in X\\%
        \alpha\neq\tilde\alpha\\%
        |\pi(\alpha)| = \tilde\alpha}}%
    d_\alpha \bigg(\alpha - (-1)^{c(\alpha)} \tilde\alpha\bigg)%
  \end{align*}
\end{proof}

\begin{lemma}
  Let $\otimes^r \sigma:D\to D$ be the $r$-th tensor power of the
  $k$-antisymmetriser.  Then $\otimes^r \sigma$ is idempotent and the
  kernel $\otimes^r \sigma$ coincides with the kernel of $\pi$.

  Thus, $\tilde X = X(r,n,k)$, is a basis of $D/\ker\otimes^r
  \sigma$.
\end{lemma}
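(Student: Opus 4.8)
The plan is to show that the idempotent $\otimes^r\sigma$ has exactly the same kernel as the projection $\pi$ from the previous lemma; the stated conclusion is then immediate, since $D/\ker(\otimes^r\sigma)=D/\ker\pi$, and the preceding lemma already exhibits $\tilde X = X(r,n,k)$ as a basis of the latter. First I would dispose of the easy point. Writing $\sigma=\frac1{k!}\sum_{\rho\in\fS_k}\varepsilon(\rho)\rho\in K\fS_k$ for the $k$-antisymmetriser, one has $\sigma\rho=\varepsilon(\rho)\sigma$, so $\sigma$ is idempotent; hence $\sigma^{\otimes r}\in K\fS_k^{\otimes r}\subseteq K\fS_{kr}$ is idempotent, and $\otimes^r\sigma$ is its action on $D=\Hom_{\dc_{\Sp(2n)}/\Pf^{(n)}}(0,kr)$ through $K\fS_{kr}\hookrightarrow D_{kr}$, so $\otimes^r\sigma$ is an idempotent on $D$.

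For the kernels I would record two observations about how within-block permutations $\rho\in\fS_k^{\times r}$ act on a basis diagram $\alpha$, an $(n+1)$-noncrossing matching of $\{1,\dots,kr\}$. (i) If $\alpha$ contains a pair $\{p,q\}$ inside one block $B$, then $\otimes^r\sigma\cdot\alpha=0$: the transposition $t=(p\,q)$ of $B$ fixes $\alpha$ (interchanging the two ends of a single strand does not change the matching — we are in $\dc_{\Sp(2n)}/\Pf^{(n)}$, not in $\T_{\Sp(2n)}$, so no sign appears), while the block-$B$ factor $\sigma_B$ of $\otimes^r\sigma$ satisfies $\sigma_B t=-\sigma_B$; hence $\sigma_B$ applied to $\alpha$ equals $-1$ times $\sigma_B$ applied to $t\cdot\alpha=\alpha$, so $\sigma_B\cdot\alpha=0$, and as $\sigma_B$ commutes with the other factors, $\otimes^r\sigma\cdot\alpha=0$. (ii) If $\alpha$ has no within-block pair, then neither does any $\rho\cdot\alpha$, and $\rho\cdot\alpha$ induces the same multigraph on the blocks as $\alpha$; consequently $\widetilde{\rho\cdot\alpha}=\tilde\alpha$, since the normal form is determined by this multigraph via the correspondence with $(n+1)$-noncrossing $k$-regular multigraphs underlying the previous lemma. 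Moreover $\tilde\alpha=\tau_\alpha\cdot\alpha$ for a unique $\tau_\alpha\in\fS_k^{\times r}$, and $\varepsilon(\tau_\alpha)=(-1)^{c(\alpha)}$; equivalently $(-1)^{c(\rho\cdot\alpha)}=\varepsilon(\rho)(-1)^{c(\alpha)}$ for every $\rho$ (from $\tau_{\rho\cdot\alpha}=\tau_\alpha\rho^{-1}$).

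Granting (i) and (ii), both inclusions are short. For $\ker\pi\subseteq\ker(\otimes^r\sigma)$: the two kinds of spanning elements of $\ker\pi$ are killed by $\otimes^r\sigma$ — the diagrams with a pair in a block by (i), and $\alpha-(-1)^{c(\alpha)}\tilde\alpha$ because $\otimes^r\sigma\cdot\tilde\alpha=\otimes^r\sigma\cdot(\tau_\alpha\cdot\alpha)=\varepsilon(\tau_\alpha)\,\otimes^r\sigma\cdot\alpha=(-1)^{c(\alpha)}\otimes^r\sigma\cdot\alpha$. For $\ker(\otimes^r\sigma)\subseteq\ker\pi$ I would instead prove $\pi\circ(\otimes^r\sigma)=\pi$: on a diagram with a pair in a block both sides vanish, and for $\alpha$ with no such pair,
\[
\pi\big(\otimes^r\sigma\cdot\alpha\big)=\frac1{(k!)^r}\sum_{\rho\in\fS_k^{\times r}}\varepsilon(\rho)\,\pi(\rho\cdot\alpha)=\frac1{(k!)^r}\sum_{\rho}\varepsilon(\rho)(-1)^{c(\rho\cdot\alpha)}\,\tilde\alpha=(-1)^{c(\alpha)}\tilde\alpha=\pi(\alpha),
\]
using (ii); hence $\otimes^r\sigma\cdot v=0$ forces $\pi(v)=\pi(\otimes^r\sigma\cdot v)=0$. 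Combining the two inclusions gives $\ker(\otimes^r\sigma)=\ker\pi$, and the claim follows.

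The hard part will be the sign identity $\varepsilon(\tau_\alpha)=(-1)^{c(\alpha)}$ in (ii). The subtlety is purely combinatorial bookkeeping: a crossing with two endpoints in each of two distinct blocks contributes only once to $c(\alpha)$ and is to be untangled once, so the sign cannot be read off by sorting the strands at each block separately (that would double-count precisely such crossings). I expect the right route is to untangle the $c(\alpha)$ block-sharing crossings one at a time, each by a single within-block transposition, and to verify by a short case analysis that one such move reverses the parity of $c$; iterating then yields $\varepsilon(\tau_\alpha)=(-1)^{c(\alpha)}$.
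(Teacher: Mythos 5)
Your proof is correct, and while it proves the same two inclusions as the paper, it does so by a genuinely different (and arguably cleaner) argument for the harder inclusion $\ker(\otimes^r\sigma)\subseteq\ker\pi$. The paper restricts attention to $\tilde X$: it verifies $\pi\circ\otimes^r\sigma\big|_{\tilde X}=\id$, deduces that $\otimes^r\sigma$ is injective on $\operatorname{im}\pi$, and then decomposes an arbitrary $\alpha\in\ker(\otimes^r\sigma)$ into its $\ker\pi$- and $\operatorname{im}\pi$-components to conclude $\alpha\in\ker\pi$. You instead establish the stronger identity $\pi\circ(\otimes^r\sigma)=\pi$ on all of $D$, after which the inclusion is a one-line consequence without any decomposition step. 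The identity is proved by the same expansion that the paper uses only on $\tilde X$, so the extra generality costs essentially nothing; you get a shorter and more transparent conclusion. Your explanation of observation~(i) is also a useful addition: you make explicit why a within-block pair is killed by $\otimes^r\sigma$, pointing out that the transposition of the two endpoints of a single strand fixes the diagram in $\dc_{\Sp(2n)}/\Pf^{(n)}$ \emph{without a sign} (the sign would only appear after evaluating into $\T_{\Sp(2n)}$), while it anti-commutes with the antisymmetriser. The paper merely asserts this.

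Two points deserve a caveat. First, your assertion that $\tau_\alpha$ is \emph{unique} is not correct: the $\fS_k^{\times r}$-stabiliser of $\alpha$ can be nontrivial (e.g.\ two parallel strands between blocks $B$ and $C$ are stabilised by the simultaneous transposition at $B$ and at $C$). What you actually need is that $\varepsilon(\tau_\alpha)$ is well-defined, i.e.\ that the stabiliser lies in the alternating subgroup; this holds because such stabiliser elements are products of paired transpositions and so are even. Fortunately the cocycle relation $(-1)^{c(\rho\cdot\alpha)}=\varepsilon(\rho)(-1)^{c(\alpha)}$, which is what your computation actually invokes, is indifferent to this issue, so this is a wording fix rather than a substantive gap. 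Second, be aware that the sign relation itself — which you rightly identify as the hard step and propose to prove by untangling crossings one transposition at a time — is not proved in the paper either; the paper disposes of it with an informal sentence ("every crossing introduced by a permutation \ldots is untangled by $\pi$"). So your proof is at the same level of rigour as the published one, and if you want a watertight argument the case analysis you sketch (a single within-block transposition flips the parity of $c$) must be carried out carefully, since a priori such a transposition could also create or destroy crossings with third strands.
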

\begin{rem}
  We remark that the images of $\otimes^r \sigma$ and $\pi$ do not
  coincide.
\end{rem}
\begin{proof}
  The kernel of $\pi$ is a subset of the kernel of $\otimes^r
  \sigma$, since applying $\otimes^r \sigma$ to a diagram which
  contains a pair in a block gives zero, and moreover, using notation
  from the statement of the preceding lemma,
  \[
  \otimes^r \sigma(\alpha) = \otimes^r \sigma\left((-1)^{c(\alpha)}
    \tilde\alpha\right).
  \]

  Conversely, we have to show that the kernel of $\otimes^r \sigma$
  is a subset of the kernel of $\pi$.  We first note that the
  restriction of $\pi\circ \otimes^r \sigma$ to $\tilde X$ is the
  identity map, since, informally, every crossing introduced by a
  permutation $\sigma$ between two strands emanating from one block
  is untangled by $\pi$.  Thus, $\otimes^r \sigma$ is injective on
  $\tilde X$, the image of $\pi$.  Suppose now that $\otimes^r
  \sigma(\alpha)=0$.  Let $\alpha = \alpha_1 + \alpha_2$, where
  $\alpha_1$ is in the kernel of $\pi$ and $\alpha_2$ is in the image
  of $\pi$.  Then
  \begin{align*}
    0 = \otimes^r \sigma(\alpha) &= %
    \otimes^r \sigma(\alpha_1) + \otimes^r \sigma(\alpha_2)\\
    & = 0 + \otimes^r \sigma(\alpha_2),
  \end{align*}
  and injectivity of $\otimes^r \sigma$ on the image of $\pi$ implies
  $\alpha_2 = 0$.
\end{proof}

The Frobenius character of this representation is given by an
application of \cite[Theorem 1]{6111}.
\begin{lemma}
  The Frobenius character of the space of $\Sp(2n)$-invariants in the
  $r$-th tensor power of the $k$-th symmetric power of the defining
  representation of $\Sp(2n)$ is
  \begin{equation}\label{eqn:invs}
    \sum_{\substack{\lambda\vdash kr\\
        \text{columns of even length}\\ \ell(\lambda)\le 2n}}
    \big\langle  h_r\big(X\cdot e_k(Y)\big), %
    s_{\lambda^t}(Y)  \big\rangle_Y
  \end{equation}
  where $e_k$ is the $k$-th elementary symmetric function and
  $\langle\ ,\ \rangle_Y$ denotes the scalar product of symmetric
  functions with respect to the alphabet $Y$.
\end{lemma}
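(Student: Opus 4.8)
The plan is to realise this space of invariants as the image of an idempotent acting on the invariant tensors of $\otimes^{kr}V$, and then to invoke~\cite[Theorem~1]{6111}. Throughout, $S^kV$ is the $k$-th symmetric power in the super sense, that is, the $k$-th exterior power of the underlying vector space, with $\GL(V)$-character $e_k$; concretely it is the image of the idempotent $\ev_{\Sp(2n)}(\sigma)$, where $\sigma\in K\fS_k\subseteq D_k$ is the $k$-antisymmetriser of the preceding lemma. Thus $\otimes^r(S^kV)$ is the image of $\otimes^r\ev_{\Sp(2n)}(\sigma)$ acting on $\otimes^{kr}V$, and this identification is equivariant for $\GL(V)\times\fS_r$, where $\fS_r$ permutes the $r$ tensor factors of $S^kV$; on the right this $\fS_r$ is the subgroup of $\fS_{kr}$ permuting the $r$ consecutive blocks of $k$ points, with the signs forced by $V$ being odd. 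Taking $\Sp(2n)$-invariants and using that $\otimes^r\sigma$ is idempotent yields an isomorphism of $\fS_r$-modules
\[
\Hom_{\Sp(2n)}\!\big(I,\otimes^r(S^kV)\big)\cong(\otimes^r\sigma)\,\Hom_{\T_{\Sp(2n)}}(0,kr),
\]
which, transported along the category isomorphism $\bev_{\Sp(2n)}$, is exactly the representation $D/\ker(\otimes^r\sigma)$ analysed in the two preceding lemmas.

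It then remains to feed the two inputs required by~\cite[Theorem~1]{6111} into that result. The first is the $\fS_{kr}$-module $W:=\Hom_{\T_{\Sp(2n)}}(0,kr)$ with its Frobenius character: replacing $2r$ by $kr$ in Lemma~\ref{lem:it} gives $\ch W=\sum_\lambda s_{\lambda^t}(Y)$, summed over partitions $\lambda\vdash kr$ with columns of even length and at most $2n$ parts. The second is the idempotent $\sigma$ itself, which cuts out the one-dimensional super-symmetric power of an odd space and whose associated symmetric function in~\cite{6111} is the elementary symmetric function $e_k$. Theorem~1 of~\cite{6111} then evaluates the Frobenius character of the $\fS_r$-module $(\otimes^r\sigma)W$ as $\big\langle\ch W(Y),\,h_r\big(X\cdot e_k(Y)\big)\big\rangle_Y$, and substituting $\ch W$ and using the linearity of the scalar product in the alphabet $Y$ produces precisely~\eqref{eqn:invs}.

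The step demanding the most care is the bookkeeping of super signs. One must verify: that $\sigma$ is the idempotent cutting out the exterior, not the ordinary, symmetric power once $\ev_{\Sp(2n)}(s_i)$ is recognised as the \emph{signed} swap $u\otimes v\mapsto-v\otimes u$, so that $e_k$ and not $h_k$ appears; that the $\fS_r$-action used in~\cite[Theorem~1]{6111} --- permuting the $r$ blocks of $k$ points inside $\fS_{kr}$ --- coincides with the natural action on $\otimes^r(S^kV)$, which holds because transposing two blocks of $k$ odd vectors contributes $(-1)^{k^2}=(-1)^k$, the parity of $S^kV$; and that the transpose forced by the oddness of $V$ in Lemma~\ref{lem:it} is consistent with the transpose built into~\cite[Theorem~1]{6111}. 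Granting these identifications, the formula follows with no further computation.
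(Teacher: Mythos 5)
Your overall strategy matches the paper's (which is simply to cite \cite[Theorem~1]{6111}): identify the invariant space with $D/\ker(\otimes^r\sigma)$ via the two preceding lemmas, and then apply \cite[Theorem~1]{6111} with $\ch W$ from Lemma~\ref{lem:it} and the sign representation of $\fS_k$, whose Frobenius character is $e_k$. Your fleshing out of that one-line citation is useful, and the final formula is correct.

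There is, however, a concrete sign error in the supporting discussion. You assert that $\ev_{\Sp(2n)}(\sigma)$ cuts out the exterior power $\Lambda^k V$ of the underlying space, and that the lemma's \lq\lq $k$-th symmetric power\rq\rq\ should be read in the super sense. Both claims are wrong. Since $\ev_{\Sp(2n)}(s_i)$ is the \emph{signed} swap, a permutation $\pi$ of length $\ell$ maps to $(-1)^\ell$ times its ordinary action, so for the antisymmetriser $\sigma=\frac{1}{k!}\sum_\pi\varepsilon(\pi)\pi$ the two signs cancel and
\[
\ev_{\Sp(2n)}(\sigma)\;=\;\frac{1}{k!}\sum_{\pi\in\fS_k}\rho(\pi),
\]
the \emph{ordinary} symmetriser, whose image is the ordinary $S^kV$. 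That is precisely what \lq\lq the $k$-th symmetric power of the defining representation\rq\rq\ means, and one can sanity-check it: for $k=r=2$, $S^2V\cong\mathfrak{sp}_{2n}$ is irreducible, so $\Hom_{\Sp(2n)}(1,\otimes^2 S^2V)$ is one-dimensional, matching the single $2$-regular multigraph on two vertices; whereas $\Hom_{\Sp(2n)}(1,\otimes^2\Lambda^2V)$ is two-dimensional. So $\ev(\sigma)$ does \emph{not} cut out $\Lambda^kV$, and \lq\lq symmetric power\rq\rq\ in the statement is not to be read super. The reason $e_k$ (and not $h_k$) nevertheless appears is the one you already use implicitly elsewhere: $\sigma$ is the \emph{sign} idempotent of $K\fS_k$ inside the diagram algebra, the $\fS_{kr}$-module $\Hom_{\T_{\Sp(2n)}}(0,kr)$ in Lemma~\ref{lem:it} carries the twisted (super) action with the transposed Schur functions, and \cite[Theorem~1]{6111} then inserts the Frobenius character of that $\fS_k$-representation, namely $e_k$. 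Your conclusion is unaffected because the correct and the incorrect justifications happen to feed the same symbol $e_k$ into the scalar product, but the explanatory remark as written would mislead a reader, and should be corrected.
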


\begin{thm}
  Let $X=X(r,n,k)$ and let $\rho$ be the rotation map.  Define $P(q)$
  to be the fake degree of the symmetric function \eqref{eqn:invs}.
  Then $(X,\rho,P)$ exhibits the cyclic sieving phenomenon.
\end{thm}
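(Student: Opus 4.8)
The plan is to apply Corollary~\ref{cor:spr} to the $\fS_r$-module $U$ of $\Sp(2n)$-invariants in $\otimes^r W$, where $W$ denotes the $k$-th symmetric power of the defining representation and $\fS_r$ acts by permuting the $r$ tensor factors. Applying that corollary needs three things: a basis of $U$ preserved by the long cycle, the identification of that long cycle with the rotation $\rho$, and the Frobenius character of $U$. Two of these are already established. By the theorem above, the invariant tensors indexed by $X(r,n,k)$ form a basis of $U$; more precisely, writing $D=\Hom_{\dc_{\Sp(2n)}/\Pf^{(n)}}(0,kr)$, the preceding two lemmas exhibit $X(r,n,k)$ as a basis of $D/\ker(\otimes^r\sigma)$, which $\bev_{\Sp(2n)}$ identifies with $U$. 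By the preceding lemma, $\ch U$ is the symmetric function~\eqref{eqn:invs}, so the polynomial in the statement is $P(q)=\fd_r(\ch U)$.

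It remains to check that $\rho$ acts on $U$ as the long cycle of $\fS_r$ and permutes the basis $X(r,n,k)$ as a set. The latter is immediate: rotation by $k$ points is a block-preserving permutation of $\{1,\dots,kr\}$, so it preserves the absence of pairs inside a block, the condition that the ends of two crossing pairs lie in four distinct blocks, and the $(n+1)$-noncrossing property; hence it maps $X(r,n,k)$ bijectively onto itself. For the former I would argue on the diagram side, exactly as in the proof of Theorem~\ref{thm:pmcsp}. Since $\ev_{\Sp(2n)}$ is symmetric and pivotal, Proposition~\ref{lem:rot} identifies rotation of a diagram in $D$ by one boundary point with the action of the long cycle of $\fS_{kr}$ on $\otimes^{kr}V$. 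Therefore $\rho$, which is rotation by $k$ points, is the $k$-th power of that long cycle, namely the element of $\fS_{kr}$ that cyclically shifts the $r$ blocks of $k$ consecutive points. This element commutes with $\otimes^r\sigma$, since $\otimes^r\sigma$ acts by the $k$-antisymmetriser inside each block, and hence $\rho$ descends to $U\cong D/\ker(\otimes^r\sigma)\cong\Hom(I,\otimes^r W)$, where the induced transformation is exactly the long cycle of $\fS_r$ permuting the $r$ copies of $W$.

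With these facts in hand, Corollary~\ref{cor:spr} (which rests on Theorem~\ref{thm:csp}) applies verbatim with $X=X(r,n,k)$, $c=\rho$ and $P=\fd_r(\ch U)$, and yields that $\big(X(r,n,k),\rho,P(q)\big)$ exhibits the cyclic sieving phenomenon.

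The one step I expect to require genuine care is the compatibility used in the middle paragraph: that the purely topological operation ``rotate the $kr$-point diagram by $k$ points'' coincides, after passing through the second fundamental theorem and the antisymmetriser projection $\otimes^r\sigma$, with the long-cycle action of $\fS_r$ on $\otimes^r W$. This reduces, via Proposition~\ref{lem:rot} applied to $V$ itself, to the elementary observation that cyclically shifting the blocks commutes with antisymmetrising within each block; the remaining ingredients — the basis $X(r,n,k)$, its Frobenius character~\eqref{eqn:invs}, and Corollary~\ref{cor:spr} — are already available.
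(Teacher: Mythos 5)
Your proof is correct and proceeds exactly as the paper intends: the theorem is stated without a proof precisely because it is meant to follow from the preceding lemmas via Corollary~\ref{cor:spr}, with the identification of the rotation $\rho$ with the long cycle supplied by Proposition~\ref{lem:rot}. You have correctly filled in the two details the paper leaves to the reader — that $\rho=c^k$ where $c$ is the long cycle of $\fS_{kr}$, and that $\otimes^r\sigma$ commutes with $c^k$ (being a product of $r$ copies of the $k$-antisymmetriser supported in disjoint blocks, which $c^k$ merely cycles), so that the quotient $D/\ker(\otimes^r\sigma)\cong\Hom(I,\otimes^r W)$ carries the induced $\fS_r$-action with $c^k$ realising the long cycle of $\fS_r$ permuting the $r$ factors.

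One remark on the step you call ``immediate'': the fact that $\rho$ preserves the $(n+1)$-noncrossing property is not a consequence of $\rho$ being block-preserving, since the definition of an $m$-crossing in the paper is phrased in terms of the linear order on $\{1,\dots,kr\}$. It does hold, but for a different reason: for a perfect matching, two strands $(a,b)$ and $(c,d)$ satisfy the linear crossing condition if and only if the corresponding chords of a disc intersect, and a short sorting argument shows that $m$ strands pairwise cross (linearly) if and only if they form a linear $m$-crossing $a_1<\dots<a_m<b_1<\dots<b_m$. Since chord intersection is manifestly invariant under rotating the boundary points, so is the $(n+1)$-noncrossing property; the same chord picture shows the ``four distinct blocks'' condition is preserved as well. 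The paper uses the identical implicit fact already in Theorem~\ref{thm:pmcsp}.
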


The case $k=1$ is Theorem~\ref{thm:pmcsp} and the case $k=2$ is
related to the invariant tensors of the adjoint representation in
\cite{MR792707}. The case $n=1$ is implicit in \cite{MR1446615} and
the case $n=2$ is related to the $C_2$ webs in \cite{MR1403861}.

Instead of the $k$-th symmetric power of $V$ we could also consider
the $k$-th fundamental representation of $\Sp(2n)$.  Again using
\cite[Theorem 1]{6111} we obtain the following expression for the
Frobenius character of its tensor powers.
\begin{lemma}
  The Frobenius character of the space of $\Sp(2n)$-invariants in the
  $r$-th tensor power of the $k$-th fundamental representation of
  $\Sp(2n)$ is
  \begin{equation}\label{eqn:invf}
    \sum_{\substack{|\lambda|\le kr\\
        \text{columns of even length}\\ \ell(\lambda)\le 2n}}
    \big\langle  h_r\big(X\cdot (h_k-h_{k-2})(Y)\big), %
    s_{\lambda^t}(Y)  \big\rangle_Y
  \end{equation}
\end{lemma}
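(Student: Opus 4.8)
The plan is to follow the proof of the preceding lemma almost verbatim; the only genuinely new point is to feed \cite[Theorem 1]{6111} the symmetric function attached to the $k$-th fundamental representation in place of the $e_k(Y)$ used there for the $k$-th symmetric power.

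First I would identify that symmetric function. Littlewood's branching rule~\cite{MR0095209} gives the decomposition $\Lambda^k V\cong\bigoplus_{j\ge 0}W((1^{k-2j}))$ of the exterior powers of the defining representation of $\Sp(2n)$, where $W((1^k))$ is the $k$-th fundamental representation in the parametrisation fixed above; equivalently, as a virtual representation $W((1^k))$ is the $k$-th exterior power minus the $(k-2)$-th exterior power of $V$. In the bookkeeping of the preceding lemma the $k$-th symmetric power of the odd space $V$ carries the symmetric function $e_k(Y)$, and after the transposition that is forced because $V$ is odd (compare the remark following Lemma~\ref{lem:it}) the exterior power $\Lambda^k V$ is attached to $h_k(Y)$; solving the resulting telescoping system attaches to $W((1^k))$ the symmetric function $h_k(Y)-h_{k-2}(Y)$, with the convention $h_j=0$ for $j<0$.

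Next I would invoke \cite[Theorem 1]{6111} exactly as in the previous proof, but with $(h_k-h_{k-2})(Y)$ substituted for $e_k(Y)$. Since that theorem expresses the Frobenius character of the $\Sp(2n)$-invariants in $\otimes^r W$ as $\sum_\lambda\langle h_r(X\cdot f_W(Y)),s_{\lambda^t}(Y)\rangle_Y$ — linearly in the symmetric function $f_W$ attached to $W$, with $\lambda$ ranging over partitions with columns of even length and $\ell(\lambda)\le 2n$, and using the generating function of Lemma~\ref{lem:it} — the displayed formula drops out at once. The single extra remark needed concerns the index range: because $h_k-h_{k-2}$ is not homogeneous, $h_r\big(X\cdot(h_k-h_{k-2})(Y)\big)$ is supported in $Y$-degrees $kr,kr-2,kr-4,\dots$, so only $\lambda$ with $|\lambda|\le kr$ contribute, which is precisely the stated range; contrast this with the $k$-th symmetric power, where $e_k$ is homogeneous and one gets $\lambda\vdash kr$.

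The step I expect to be the real obstacle is the first one: pinning down, with the correct super/transpose bookkeeping, that the symmetric function attached to the $k$-th fundamental representation is $h_k-h_{k-2}$ (rather than, say, $e_k-e_{k-2}$), and checking that \cite[Theorem 1]{6111} is additive enough to be applied to the virtual combination ``exterior$^k$ minus exterior$^{k-2}$'' term by term. Once that is in hand, the remainder is a formal substitution into the argument already given for the $k$-th symmetric power.
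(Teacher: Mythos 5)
Your proposal is correct and matches what the paper intends: the paper dispatches this lemma with the single sentence ``Again using \cite[Theorem 1]{6111}\dots'', and your work consists precisely in making explicit the one substitution that sentence leaves to the reader, namely that the Littlewood decomposition $\Lambda^k V\cong\bigoplus_{j\ge 0}W((1^{k-2j}))$ telescopes to $W((1^k))=\Lambda^k V-\Lambda^{k-2}V$, which under the odd/transposition bookkeeping that sends the symmetric power to $e_k$ sends the exterior power to $h_k$ and hence $W((1^k))$ to $h_k-h_{k-2}$. Your remark about the index range becoming $|\lambda|\le kr$ because $h_k-h_{k-2}$ is inhomogeneous is likewise exactly the right observation, and the worry about feeding a virtual combination into \cite[Theorem 1]{6111} is harmless since that theorem is a statement about Frobenius characters and so is linear in the input symmetric function.
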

Note that in general, this space cannot have a basis invariant under
cyclic rotation.  For example, for $n=1$, $k=3$ and $r=2$ its
Frobenius character evaluates to $s_{1,1}$ with fake degree
polynomial equal to $q$, which is not a cyclic sieving polynomial.

However, let us compare the two Frobenius characters for $n>kr$.
Setting $H=1+h_1+h_2+\cdots$ we obtain for the symmetric powers
\begin{equation}\label{eqn:regs}
  \big\langle  h_r\big(X\cdot e_k(Y)\big), %
  H(h_2(Y))  \big\rangle_Y%
%  =
%  \begin{cases}
%    \big\langle h_r\big(X\cdot e_k(Y)\big), %
%    h_{rk/2}(h_2((Y))  \big\rangle_Y & \text{if $rk$ is even} \\
%    0 & \text{if $rk$ is odd}
%  \end{cases}
\end{equation}
whereas for the fundamental representation the expression becomes
\begin{equation}\label{eqn:regf}
  \big\langle  h_r\big(X\cdot (h_k-k_{k-2})(Y)\big), %
  H(h_2(Y))  \big\rangle_Y
\end{equation}

For $n>kr$, the set $X(r,n,k)$ is independent of $n$ and can be
identified with the set of $k$-regular graphs with multiple edges
allowed but loops prohibited.  This is a species whose Frobenius
character is given by equation~\eqref{eqn:regf}, see Travis~\cite{MR2698697}.

The action of the symmetric group in the first case is defined using
a sign and is therefore not a permutation representation, as
illustrated by the following example.
\begin{ex} Putting $k=2$ and $r=6$, the formula \eqref{eqn:regs} for
  invariant tensors gives
  \begin{equation}
    \frac{1}{72}(13p_{1^6} + 12p_{21^4} + 63p_{2^21^2} +
    54p_{2^3} + 4p_{31^3} - 12p_{321} + 28p_{3^2} + 18p_{41^2}
    + 36p_{42} + 36p_{6})
  \end{equation}
  and the formula \eqref{eqn:regf} for regular graphs gives
  \begin{equation*}
    \frac{1}{72}(13p_{1^6} + 24p_{21^4} + 63p_{2^21^2} +
    54p_{2^3} + 4p_{31^3} + 12p_{321} + 28p_{3^2} + 18p_{41^2}
    + 36p_{42} + 36p_{6})
 \end{equation*}
\end{ex}

%%%%%%%%%%%%%%%%%%%%%%%%%%%%%%%%%%%%%%%%%%%%%%%%%%%%%%%%%%%%%%%%%%%%%%
% Partitions
%%%%%%%%%%%%%%%%%%%%%%%%%%%%%%%%%%%%%%%%%%%%%%%%%%%%%%%%%%%%%%%%%%%%%%

\section{The partition category}\label{section:parts}
In this section we discuss the representation theory of the symmetric groups
using the combinatorics of set partitions and the partition category.
The partition category was introduced in \cite{MR1265453} and also in
\cite{MR1317365} where the relationship with the representation theory of
the symmetric groups was observed.

\subsection{Diagram category}
For $r,s\ge 0$, let $D_{\Par}(r,s)$ be the finite set of set
partitions of $[r]\amalg [s]$ and let $D_{\overline{\Par}}(r,s)$ be
the subset of set partitions with no singletons.

Let $x$ be a set partition of $[r]\amalg [s]$ and let $y$ be a set
partition of $[s]\amalg [t]$.  Consider these as equivalence
relations.  These generate an equivalence relation on $[r]\amalg
[s]\amalg [t]$.  For each block $\beta$ such that $\beta\cap
([r]\amalg [t])\ne\emptyset$, $\beta\cap ([r]\amalg [t])$ is a block
of $x\circ y$.  Define $c(x,y)$ to be the number of blocks that do
not contain any elements of $[r]\amalg [t]$.

The \Dfn{partition category} $\cb_{\Par}$ is the cobordism category
with
\[
\Hom_{\cb_{\Par}}(r,s) = \bN\times D_{\Par}(r,s).
\]
There is an inclusion $\cb_{\Br}\rightarrow \cb_{\overline{\Par}}$.
The category $\cb_{\Par}$ is generated, as a monoidal category, by
the morphisms
\begin{center}
\begin{tikzpicture}[line width = 2pt]
 \fill[color=blue!20] (0,0) rectangle  (1,1);
 \fill (0.4,1) -- (0.6,1) arc (0:-180:0.1);
\end{tikzpicture}
\qquad
\begin{tikzpicture}[line width = 2pt]
 \fill[color=blue!20] (0,0) rectangle  (1,1);
 \draw (0.25,0) .. controls (0.25,0.5) and (0.5,0.5) .. (0.5,1);
 \draw (0.75,0) .. controls (0.75,0.5) and (0.5,0.5) .. (0.5,1);
 \draw (0.75,0) arc (0:180:0.25);
\end{tikzpicture}
\qquad
\begin{tikzpicture}[line width = 2pt]
 \fill[color=blue!20] (0,0) rectangle  (1,1);
 \draw (0.5,0) .. controls (0.5,0.5) and (0.75,0.5) .. (0.75,1);
 \draw (0.5,0) .. controls (0.5,0.5) and (0.25,0.5) .. (0.25,1);
 \draw (0.75,1) arc (1:-180:0.25);
\end{tikzpicture}
\qquad
\begin{tikzpicture}[line width = 2pt]
 \fill[color=blue!20] (0,0) rectangle  (1,1);
 \fill (0.4,0) -- (0.6,0) arc (0:180:0.1);
\end{tikzpicture}
\qquad
\begin{tikzpicture}[line width = 2pt]
 \fill[color=blue!20] (0,0) rectangle  (1,1);
 \draw (0.25,0) .. controls (0.25,0.5) and (0.75,0.5) .. (0.75,1);
 \draw (0.75,0) .. controls (0.75,0.5) and (0.25,0.5) .. (0.25,1);
\end{tikzpicture}
\end{center}

A \Dfn{through block} is a block $\beta$ such that $\beta\cap [r]\ne
\emptyset$ and $\beta\cap [s]\ne \emptyset$.  The propagating number
of a diagram is the number of through blocks.

We denote the diagram category corresponding to the cobordism
category with $\dc_{\Par(\delta)}$.

Although we do not make use of this in this paper we remark that the
object $[1]$ in the partition category is a commutative Frobenius
algebra.
\subsection{Frobenius characters for diagram
  algebras}\label{sec:Partition-stable}

\begin{lemma}
  Any partition diagram, $x\in D(r,s;p)$, has a unique decomposition
  as
  \begin{equation*}
    \begin{tikzpicture}[line width = 2pt]
      \fill[color=blue!20] (0,0) rectangle (5,6); %
      \draw (2.5,6) node[anchor=south] {$r$} -- (2.5,5.5); %
      \draw[line width = 1pt] (0.5,5.0) rectangle (4.5,5.5); %
      \fill[color = black!10] (0.5,5.0) rectangle (4.5,5.5); %
      \draw (1.5,5) -- (1.5,4.5) node[midway, anchor=west] {$a$}; %
      \draw[line width = 1pt] (0.5,4.5) rectangle (2.5,4); %
      \fill[color = red!10] (0.5,4.5) rectangle (2.5,4); %
      \draw (3.5,5) -- (3.5,4.5) node[midway, anchor=west] {$r-a$}; %
      \draw[line width = 1pt] (3,4.5) -- (4,4.5) arc (0:-180:0.5); %
      \fill[color = red!10] (3,4.5) -- (4,4.5) arc (0:-180:0.5); %
      \draw (1.5,4) -- (1.5,3.5) node[midway, anchor=west] {$p$}; %
      \draw[line width = 1pt] (1,3.5) rectangle (2,2.5); %
      \fill[color = black!60] (1,3.5) rectangle (2,2.5); %
      \draw (1.5,2) -- (1.5,2.5) node[midway, anchor=west] {$p$}; %
      \draw[line width = 1pt] (0.5,2) rectangle (2.5,1.5); %
      \fill[color = red!10] (0.5,2) rectangle (2.5,1.5); %
      \draw[line width = 1pt] (3,1.5) -- (4,1.5) arc (0:180:0.5); %
      \fill[color = red!10] (3,1.5) -- (4,1.5) arc (0:180:0.5); %
      \draw (1.5,1.5) -- (1.5,1) node[midway, anchor=west] {$b$}; %
      \draw (3.5,1.5) -- (3.5,1) node[midway, anchor=west] {$s-b$}; %
      \draw[line width = 1pt] (0.5,1) rectangle (4.5,0.5); %
      \fill[color = black!10] (0.5,1) rectangle (4.5,0.5); %
      \draw (2.5,0.5) -- (2.5,0) node[anchor=north] {$s$}; %
    \end{tikzpicture}
  \end{equation*}
  The two pale grey rectangles are a $(a,r-a)$-shuffle and a
  $(b,s-b)$-shuffle, the dark grey square is a permutation on $p$
  strands, the two pink half circles are set partitions on $r-a$ and
  on $s-b$ points and the two pink rectangles are set partitions on
  $a$ and on $b$ points into $p$ blocks.
\end{lemma}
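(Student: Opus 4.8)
The plan is to argue exactly as in the analogous decomposition lemma for the Brauer category, classifying the blocks of a partition diagram $x\in D(r,s;p)$ by how they meet the two boundaries, the only genuinely new feature being that blocks may now contain more than two points, so that the perfect matchings of the Brauer case are replaced by honest set partitions.

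First I would split the blocks of $x$ into the $p$ \emph{through blocks} (those meeting both $[r]$ and $[s]$), the \emph{top blocks} (those contained in $[r]$), and the \emph{bottom blocks} (those contained in $[s]$); recall that the number of through blocks is $\pr(x)=p$ by the definition of the propagating number. Let $a$ be the number of points of $[r]$ lying in through blocks and $b$ the number of points of $[s]$ lying in through blocks, so that $r-a$ and $s-b$ count the points in top blocks and in bottom blocks respectively. The subset of $[r]$ consisting of the through points, together with $a$, determines the top $(a,r-a)$-shuffle uniquely: it is the order-preserving bijection moving the $a$ through points to the first $a$ positions and the remaining $r-a$ points to the last positions, and symmetrically for the bottom $(b,s-b)$-shuffle.

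Next I would read off the remaining pieces and check that each is forced. The restriction of $x$ to the top points, transported along the top shuffle to $\{1,\dots,r-a\}$, is the top half-circle; the restriction to the bottom points gives the bottom half-circle. Ordering the through blocks $B_1,\dots,B_p$ by the minimum of their top parts, the partition of the first $a$ positions into the sets $B_i\cap[r]$ (suitably transported) is the top pink rectangle, a set partition of $\{1,\dots,a\}$ into exactly $p$ blocks since each through block meets $[r]$; dually, ordering the through blocks by the minimum of their bottom parts gives the bottom pink rectangle. The permutation of $\{1,\dots,p\}$ relating these two orderings is precisely the dark grey square. Conversely, for any choice of the two shuffles, two rectangles, two half-circles and one permutation, composing in the indicated order reconstructs a partition diagram whose through blocks are obtained by joining a block of the top rectangle, one middle strand routed by the permutation, and a block of the bottom rectangle, so the assignment is a bijection.

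The point requiring care, and the main obstacle, is the verification that stacking these layers actually recovers $x$ and introduces no loops: one checks that the equivalence relation generated on $\{1,\dots,r\}\amalg\{1,\dots,s\}$ by the stacked layers has exactly the blocks of $x$, that every middle strand survives (so $\pr$ stays equal to $p$ and does not drop), and that no closed component is created because each of the $p$ middle points lies in a unique block on either side. This is a routine diagram chase, and together with the evident injectivity of the construction on each factor it yields both existence and uniqueness.
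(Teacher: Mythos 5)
Your proof is correct and follows essentially the same route as the paper: classify blocks as through, top, and bottom; use the shuffles to sort through points to the front; read off the half circles as the restrictions to non-through points, the pink rectangles as the set partitions induced on through points, and the dark grey permutation as the bijection matching the two canonical orderings of through blocks. The paper's own proof is only three sentences and leaves the uniqueness, the reconstruction, and the absence of spurious loops implicit; you spell these out, which is sound but not a different argument.
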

\begin{proof}
  The upper shuffle sorts the points on the top edge according to
  whether they belong to a through block or not.  The $a$ points
  which belong to a through block form a set partition into $p$
  blocks.  These blocks are joined via a permutation to the $p$
  blocks of the set partition formed by the $b$ points on the bottom
  edge which belong to a through block.
\end{proof}

\begin{defn}
  For $r\geq p\geq 0$ let $\bar D(p,r;p)$ be the set of diagrams of
  the form
  \begin{equation*}
    \begin{tikzpicture}[line width = 2pt]
      \fill[color=blue!20] (0,0) rectangle  (5,2.5);
      \draw (1.5,2) -- (1.5,2.5) node[midway, anchor=west] {$p$}; %
      \draw[line width = 1pt] (0.5,2) rectangle (2.5,1.5); %
      \fill[color = red!10] (0.5,2) rectangle (2.5,1.5); %
      \draw[line width = 1pt] (3,1.5) -- (4,1.5) arc (0:180:0.5); %
      \fill[color = red!10] (3,1.5) -- (4,1.5) arc (0:180:0.5); %
      \draw (1.5,1.5) -- (1.5,1) node[midway, anchor=west] {$a$}; %
      \draw (3.5,1.5) -- (3.5,1) node[midway, anchor=west] {$r-a$}; %
      \draw[line width = 1pt] (0.5,1) rectangle (4.5,0.5); %
      \fill[color = black!10] (0.5,1) rectangle (4.5,0.5); %
      \draw (2.5,0.5) -- (2.5,0) node[midway, anchor=west] {$r$}; %
    \end{tikzpicture}
  \end{equation*}
  and define $\bar D(r,p;p)$ in an analogous way.
\end{defn}

\begin{cor}\label{cor:decomposition-Partition}
  For all $r,s\geq p\geq 0$, the composition map
  \[
  \circ: \bar D(r,p;p) \times D(p,p;p) \times \bar D(p,s;p)\to
  D(r,s;p)
  \]
  is a bijection.
\end{cor}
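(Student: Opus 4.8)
The plan is to deduce the corollary from the preceding decomposition lemma, of which it is essentially a reformulation. First I would read off the three pieces of the seven-piece decomposition of an arbitrary $x\in D(r,s;p)$ that lie above the dark grey permutation square, namely the $(a,r-a)$-shuffle, the set partition on the $r-a$ non-through points of the top edge, and the set partition of the remaining $a$ top points into $p$ blocks. Stacking these three pieces produces a diagram of exactly the shape defining $\bar D(r,p;p)$, and conversely every element of $\bar D(r,p;p)$ arises in this way from a unique such triple; in other words, the decomposition lemma specialised to diagrams of the shape $\bar D(r,p;p)$ identifies $\bar D(r,p;p)$ with the set of ``upper triples''. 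The mirror-image observation identifies $\bar D(p,s;p)$ with the set of ``lower triples'' formed by the three pieces below the permutation square, and $D(p,p;p)$ is simply the set $\fS_p$ of permutations on $p$ strands, i.e.\ the dark grey square itself.

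Next I would check that the composition map is well defined, that is, that stacking $u\in\bar D(r,p;p)$, $\sigma\in D(p,p;p)$ and $w\in\bar D(p,s;p)$ yields a diagram of propagating number exactly $p$. The inequality $\pr(u\circ\sigma\circ w)\le p$ is immediate from Lemma~\ref{en:pr}. For the reverse inequality, note that $u$ has propagating number $p$ and only $p$ points on its bottom edge, so each of those points lies in a distinct through block of $u$ and none lies in a block confined to the top of $u$; the same holds for the $p$ points on the top edge of $w$. Hence, after gluing the bottom edge of $u$ to the top edge of $\sigma$ and the bottom edge of $\sigma$ to the top edge of $w$, each of the $p$ through strands of the permutation $\sigma$ extends to a block of the composite that meets the top edge through $u$ and the bottom edge through $w$; these $p$ blocks are pairwise distinct because $\sigma$ is a permutation, they do not merge, and no block of the composite is confined to the interior, so no closed loops are created. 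Thus the composition lands in $D(r,s;p)$, and in fact $u\cdot\sigma\cdot w=u\circ\sigma\circ w$.

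Finally I would observe that, under the identifications of the first paragraph, the composition map becomes the evident bijection from the product of the upper triples, the permutations, and the lower triples onto the set of seven-tuples that the decomposition lemma puts in bijection with $D(r,s;p)$; equivalently, $(u,\sigma,w)$ is sent to the unique diagram whose decomposition has top part $u$, permutation $\sigma$ and bottom part $w$, so the composition map is the two-sided inverse of the decomposition map and hence a bijection. The step I expect to require the most care is the bookkeeping in the second paragraph: one must be sure that stacking the three factors reproduces exactly the decomposition of the lemma, with no unexpected merging of the blocks of $u$, $\sigma$, $w$ and no creation of loops; once this is verified, the remainder is a purely formal rearrangement.
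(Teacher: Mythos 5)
Your proposal is correct and takes the route the paper implicitly intends: the corollary is stated without proof because it is regarded as an immediate consequence of the preceding decomposition lemma, and your argument simply makes explicit the bookkeeping — identifying $\bar D(r,p;p)$ and $\bar D(p,s;p)$ with the upper and lower halves of the lemma's unique decomposition, checking that stacking creates no loops and yields propagating number exactly $p$, and observing that composition is then two-sidedly inverse to the decomposition. The care you took in paragraph two is exactly the point one should verify, and your reasoning there is sound.
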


Let $e_p\in D_r$ be the idempotent $\delta^{-(r-p)/2} \id_p\otimes
u$, where $\id_p\otimes u$ is the diagram
\begin{equation}
  \begin{tikzpicture}[line width = 2pt]
    \fill[color=blue!20] (0,0) rectangle  (3,1.5);
    \draw (1,0) node[anchor=north] {$p$} -- (1,1.5) {};
    \fill (1.9,1.5) -- (2.1,1.5) arc (0:-180:0.1);
    %\fill (2,1.45) circle [radius=0.1];
    %\fill (2,0.05) node[anchor=north] {$r-p$} circle [radius=0.1];
    \fill (1.9,0) -- (2.1,0) node[anchor=north] {$r-p$} arc (0:180:0.1);
    \draw (3,0) node[anchor=west] {.};
  \end{tikzpicture}
\end{equation}
Then, by Theorem~\ref{thm:strict} we have a strict Morita context and
inflation is an equivalence from $K\fS_p$-modules to {\hu}
$\cJ_r(p)/\cJ_r(p-1)$-modules.

We can now prove the main theorem of this section:
\begin{thm}
  Let $V$ be an $\fS_p$-module.  Then $(\Inf_p^r
  V)\downarrow^{D_r}_{\fS_r}$ is isomorphic to the restriction of
  \[
  (V\circ H_+)\cdot P
  \]
  to the homogeneous component of degree $r$, where $H_+$ is the
  species of non-empty sets and $P$ is the species of set partitions.
\end{thm}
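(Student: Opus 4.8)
The plan is to follow the \emph{second} proof of the analogous statement for the Brauer category in Section~\ref{sec:Brauer-stable}, replacing perfect matchings by set partitions throughout. The device to use is the bivariate combinatorial species
\[
H\big(X\cdot H_+(Y)\big)\cdot P(Y),
\]
where $H$ is the species of sets, $H_+$ the species of non-empty sets and $P=H\circ H_+$ the species of set partitions. Informally, such a structure selects a subset of the labels of sort $Y$, distributes it into non-empty blocks that are bijectively labelled by the labels of sort $X$, and puts a set partition on the remaining labels of sort $Y$.

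First I would record that, for $r\ge p\ge 0$, the restriction of this species to sets of sort $X$ (its component homogeneous of degree $p$ in $X$) is isomorphic, as an $\fS_p\times\fS_r$-bimodule, to $KD(p,r;p)$ regarded as a bivariate species. This is precisely Corollary~\ref{cor:decomposition-Partition} with its first boundary size set to $p$ and its second to $r$: a diagram with propagating number $p$ between $p$ points on top and $r$ points on the bottom consists of exactly $p$ through blocks, each meeting the top row in one point and the bottom row in a non-empty set (this is the $H_+$-blow-up matched to the $X$-labels), together with a set partition of the remaining bottom points (the free factor $P(Y)$); the bijection of the corollary records this data together with the permutation in $D(p,p;p)\cong\fS_p$ and the two shuffles.

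Next I would apply the scalar product in the sort $X$. Since $P(Y)$ does not involve the sort $X$, the scalar product factors through it, and the reproducing kernel property of Lemma~\ref{lem:sets-reproducing-kernel}, applied with its sort $Y$ replaced by the species $H_+(Y)$, gives
\begin{multline*}
\Big\langle V(X),\, H\big(X\cdot H_+(Y)\big)\cdot P(Y)\Big\rangle_X
= \Big\langle V(X),\, H\big(X\cdot H_+(Y)\big)\Big\rangle_X\cdot P(Y)\\
= (V\circ H_+)(Y)\cdot P(Y).
\end{multline*}
The substitution is legitimate for the same reason as in the second proof of Lemma~\ref{lem:sets-reproducing-kernel}: one still has $V(X)\ast_X H\big(X\cdot H_+(Y)\big)=V\big(X\cdot H_+(Y)\big)$, whose orbits under $\fS$ in the first sort are $(V\circ H_+)(Y)$. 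Finally, restricting to the homogeneous component of degree $r$ and invoking Remark~\ref{rem:inflation}, which identifies $(\Inf_p^r V)\downarrow^{D_r}_{\fS_r}$ with $\langle V, D(p,r;p)\rangle_X$, yields the claim; since the whole argument stays within combinatorial species, the isomorphism is one of permutation representations whenever $V$ is a permutation representation.

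The step I expect to be the main obstacle is the first one: verifying that the combinatorial bijection of Corollary~\ref{cor:decomposition-Partition} is compatible with the $\fS_p\times\fS_r$-actions and matches the species $H\big(X\cdot H_+(Y)\big)\cdot P(Y)$ exactly — in particular that the through blocks correspond to the $H_+$-factor and the non-through blocks to the free $P(Y)$-factor, with no miscounting of singleton blocks (which, in contrast to $\overline{\Par}$, are permitted here). A secondary point needing a line of justification is the substitution of a composite species into the reproducing kernel lemma, handled exactly as in that lemma's second proof.
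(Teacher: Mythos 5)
Your proof follows the same route as the paper's: form the bivariate species $H\big(X\cdot H_+(Y)\big)\cdot P(Y)$, identify its homogeneous component of degree $p$ in $X$ and $r$ in $Y$ with $KD(p,r;p)$ via Corollary~\ref{cor:decomposition-Partition}, and then compute $\langle V, \cdot\rangle_X$ using the reproducing kernel Lemma~\ref{lem:sets-reproducing-kernel} with $H_+(Y)$ substituted for $Y$. Your additional remarks (that the singletons are among the non-through blocks and land in the $P(Y)$ factor, and that the substitution of $H_+(Y)$ into the kernel is legitimate for the same reason as in the second proof of that lemma) are correct and consistent with the paper's argument.
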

\begin{proof}
  Consider the bivariate combinatorial species
  \[
  H\circ\big(X\cdot H_+(Y)\big)\cdot P(Y),
  \]
  where $P=H\circ H_+$ is the species of set partitions.  It follows
  immediately from Corollary~\ref{cor:decomposition-Partition} that
  the restriction of this species to sets of sort $X$ of cardinality
  $p$ and sets of sort $Y$ of cardinality $r$ is isomorphic to the
  $\fS_r\times\fS_p$-bimodule $KD(r,p;p)$, regarded as a bivariate
  species.

  It remains to remark that we have
  \[
  \Big\langle V, H\big(X\cdot H_+(Y)\big)\cdot P(Y)\Big\rangle_X =
  (V\circ H_+)\cdot P
  \]
  as a consequence of the linearity of the scalar product and
  Lemma~\ref{lem:sets-reproducing-kernel}, where we substitute
  $H_+(Y)$ for $Y$.
\end{proof}

\subsection{Branching rules}
In this section we determine branching rules for the diagram
algebras, assuming their semisimplicity.  This explains the relevance
of vacillating tableaux.

In order to describe the branching rules we introduce intermediate
algebras, $D'_{r}\subset D_r\subset D'_{r+1}$ and describe the
branching rules for each inclusion separately. The algebra
$D'_{r+1}\subset D_{r+1}$ is the subalgebra with basis diagrams in
which the last points on the top and bottom row are in the same
block.  This gives algebras $A'_{p}\subset A_p\subset A'_{p+1}$
where the inclusion $A_p\subset A'_{p+1}$ is the identity map.

The inflation functor from $A'_{p}$-modules to
$D'_{r}$-modules is constructed using the bimodule which as a
subspace of $KD(p,r;p)$ has basis the set of diagrams in which
the last points on the top and bottom row are in the same block.
Denote this set by $D'(p,r;p)$. This inflation functor is denoted by
${}'\Inf_{p}^{r}$.

\begin{prop} For $r\ge p\ge
  0$ and $V$ any $A_p$-module, there is a short exact sequence
  \begin{equation*}
    0 \to {}'\Inf_{p}^{r}\left(V\downarrow^{A_{p}}_{A'_p}\right) %
    \to (\Inf_{p}^{r} V)\downarrow^{D_{r}}_{D'_r}\;%
    \to {}'\Inf_{p+1}^{r}\left(V\uparrow^{A'_{p+1}}_{A_p}\right)
    \to 0.
  \end{equation*}
\end{prop}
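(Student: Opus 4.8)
The plan is to follow the template of the proof of Proposition~\ref{prop:br}: realise the middle term as $V\otimes_{A_p}\bigl(KD(p,r;p)\downarrow^{D_r}_{D'_r}\bigr)$ and exhibit a short exact sequence of $A_p$--$D'_r$ bimodules
\[
0\to K\widetilde D\to KD(p,r;p)\downarrow^{D_r}_{D'_r}\to KD(p,r;p)/K\widetilde D\to 0,
\]
in which the sub is isomorphic to $K\fS_p\otimes_{K\fS_{p-1}}KD'(p,r;p)$ and the quotient is isomorphic to $KD'(p+1,r;p+1)$, with the right $D'_r$-actions taken in $\dc/\cJ(p-1)$ on the left two modules and in $\dc/\cJ(p)$ on the right one. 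Since $A_p=K\fS_p$ is semisimple, $V$ is projective, so $V\otimes_{A_p}(-)$ is exact; applying it and using the classical identity $(W\!\downarrow_{K\fS_{p-1}})\otimes_{K\fS_{p-1}}M\cong W\otimes_{K\fS_p}\bigl(K\fS_p\otimes_{K\fS_{p-1}}M\bigr)$ together with $A'_p=K\fS_{p-1}$ and $A'_{p+1}=A_p$ yields precisely the asserted sequence; the third term is ${}'\Inf_{p+1}^r(V)={}'\Inf_{p+1}^r\bigl(V\uparrow^{A'_{p+1}}_{A_p}\bigr)$ because $A_p\subset A'_{p+1}$ is the identity.

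The combinatorial heart is the remark that a diagram in $D(p,r;p)$ has propagating number equal to the number $p$ of points on its top edge, so each of its through blocks carries exactly one top point. Partition $D(p,r;p)=\widetilde D\sqcup\widehat D$ according to whether the last bottom point $r$ lies in a through block ($\widetilde D$) or in a block meeting only the bottom edge ($\widehat D$). Then $K\widetilde D$ is a left $A_p$-submodule trivially, and a right $D'_r$-submodule since a diagram $y\in D'_r$ has its last top and bottom points in one block, so right composition with $y$ can only enlarge the block of point $r$, keeping its (unique) top point; hence the quotient is supported on $\widehat D$, and $\widehat D$ itself is \emph{not} $D'_r$-stable, which is why one gets only a short exact sequence rather than a splitting as in the Brauer case. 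For the sub, the $\fS_p$-translates of $D'(p,r;p)$ (diagrams with top point $p$ joined to bottom point $r$) exhaust $\widetilde D$, and the induced map $\fS_p\times_{\fS_{p-1}}D'(p,r;p)\to\widetilde D$ is a bijection precisely because the block of point $r$ has a single top point; as relabelling top points commutes with composing along the bottom, this is an isomorphism of $A_p$--$D'_r$ bimodules. For the quotient, send $z\in\widehat D$ to the diagram $\widehat z\in D'(p+1,r;p+1)$ obtained by adjoining a new top point $p+1$ to the bottom-only block of point $r$; this is a bijection onto $D'(p+1,r;p+1)$, again because diagrams there have each through block carrying one top point, and it visibly intertwines the $\fS_p$-actions.

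The step I expect to be the main obstacle is checking that this last bijection is right $D'_r$-equivariant, because the two sides carry actions reduced modulo different ideals. Concretely, for $z\in\widehat D$ and $y\in D'_r$ one compares $\widehat z\circ y$, computed in $\dc/\cJ(p)$, with $(z\circ y)$ modulo $K\widetilde D$: adjoining the extra top point $p+1$ raises the propagating number of a resulting diagram by one exactly when point $r$ lands in a bottom-only block, and leaves it unchanged (hence $\le p<p+1$, so the term dies in $\dc/\cJ(p)$) exactly when point $r$ lands in a through block — which is precisely when that term already lies in $K\widetilde D$ and so dies in the quotient. Since adjoining $p+1$ on the top edge does not affect the loops created during composition, the two $\delta$-powers agree term by term, and the two right $D'_r$-module structures match. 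Granting this bookkeeping, the bimodule sequence above is exact, and tensoring with $V$ finishes the proof.
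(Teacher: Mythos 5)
Your proposal follows essentially the same route as the paper's own proof: decompose $D(p,r;p)$ according to whether the last bottom point lies in a through block, identify the sub-bimodule with the induction $KD'(p,r;p)\uparrow_{A'_p}^{A_p}$ and the quotient with $KD'(p+1,r;p+1)$ via adjoining a top point to the block of the last bottom point, then tensor with $V$ and use $A'_{p+1}=A_p$. The paper's exposition is terser but identical in plan; the extra bookkeeping you supply — the right $D'_r$-equivariance of the quotient map across the two ideal reductions, and the exactness of $V\otimes_{A_p}(-)$ from semisimplicity of $K\fS_p$ — correctly fills in steps the paper leaves implicit.
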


\begin{proof} Consider the $A_p$-$D'_r$ bimodule $KD(p,r;p)$.  The
  set of diagrams in $D(p,r;p)$ whose last point in the bottom row is
  in a through block generates a submodule isomorphic to
  $KD'(p,r;p)\uparrow^{A_p}_{A'_p}$.  

  The quotient of $KD(p,r;p)$ by this module is isomorphic to
  $KD'(p+1,r;p+1)$: the isomorphism is given on diagrams by adding a
  point to the top row and connecting it to the block containing the
  last point of the bottom row.  If the last point of the bottom row
  is in a through block the image under the isomorphism is zero
  because the propagating number of the resulting diagram is then
  strictly less than $p+1$.
 
  By the definition of inflation we have
  \begin{equation*}
    V \otimes_{A_p} KD'(p,r;p)\uparrow^{A_p}_{A'_p}\; \cong
    V \otimes_{A_p}A_p\otimes_{A'_p} KD'(p,r;p) \cong
    {}'\Inf_{p}^{r}\left(V\downarrow^{A_{p}}_{A'_p}\right)
  \end{equation*}
  and
  \begin{equation*}
    V \otimes_{A_p}KD'(p+1,r;p+1) \cong
    {}'\Inf_{p+1}^{r}\left(V\uparrow^{A'_{p+1}}_{A_p}\right).
  \end{equation*}
\end{proof}

\begin{cor}
  Assume that $D'_r$ is semisimple.  Then, for $r\ge p\ge 0$ and $V$
  any $A_p$-module,
  \begin{equation*}
    (\Inf_{p}^{r}V)\downarrow^{D_{r}}_{D'_r} \cong
    {}'\Inf_{p+1}^{r}\left(V\uparrow^{A'_{p+1}}_{A_p}\right) \oplus  %vacillate
    {}'\Inf_{p}^{r}\left(V\downarrow^{A_{p}}_{A'_p}\right)           %remove a box
  \end{equation*}
\end{cor}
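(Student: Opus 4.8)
The plan is to deduce the corollary from the short exact sequence established in the preceding proposition, using the standard fact that a short exact sequence of modules over a semisimple algebra splits.

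First I would record that $D'_r$ is a finite-dimensional unital $K$-algebra: it is the $K$-span of a finite set of diagrams, and it contains the identity diagram $\id_r$ of $D_r$, because the two last points of $\id_r$ lie in the common block $\{r,r'\}$. Hence, if $D'_r$ is semisimple, then every finite-dimensional $D'_r$-module is both projective and injective, so $\mathrm{Ext}^1_{D'_r}$ vanishes and every short exact sequence of $D'_r$-modules splits.

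Second, I would observe that all three terms of the sequence
\[
0 \to {}'\Inf_{p}^{r}\left(V\downarrow^{A_{p}}_{A'_p}\right)
\to (\Inf_{p}^{r} V)\downarrow^{D_{r}}_{D'_r}
\to {}'\Inf_{p+1}^{r}\left(V\uparrow^{A'_{p+1}}_{A_p}\right)
\to 0
\]
are indeed finite-dimensional $D'_r$-modules: the middle term is the restriction along the inclusion $D'_r\subseteq D_r$ of the $D_r$-module $\Inf_p^r V$, and each ${}'\Inf_q^r(-)$ is by its very construction a $D'_r$-module, built from the bimodule $KD'(q,r;q)$ exactly as in Section~\ref{sec:inflation}. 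Applying the splitting to this sequence yields
\[
(\Inf_{p}^{r}V)\downarrow^{D_{r}}_{D'_r} \cong
{}'\Inf_{p+1}^{r}\left(V\uparrow^{A'_{p+1}}_{A_p}\right) \oplus
{}'\Inf_{p}^{r}\left(V\downarrow^{A_{p}}_{A'_p}\right),
\]
which is the assertion of the corollary.

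There is essentially no obstacle here — all the substantive content sits in the preceding proposition, whose exact sequence records how a diagram in $D(p,r;p)$ is classified according to whether the last point on the bottom row lies in a through block or not. The only minor point to be careful about is to invoke semisimplicity for the correct module category, namely that of $D'_r$ itself, which is legitimate precisely because $D'_r$ is unital. As an alternative to the homological splitting one could also argue by a dimension count in the spirit of Proposition~\ref{prop:equivalence-D_r}, decomposing each of the three terms into irreducibles and comparing dimensions, but the splitting argument is the most transparent route.
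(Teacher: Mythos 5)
Your argument is correct and is exactly the intended one: the paper states the corollary immediately after the short exact sequence of $D'_r$-modules and gives no separate proof, leaving implicit the observation that over the semisimple unital algebra $D'_r$ every short exact sequence splits. Your additional remarks — that $\id_r\in D'_r$ since $r$ and $r'$ lie in the same block of the identity diagram, and that both outer terms are genuine $D'_r$-modules built from the bimodules $KD'(q,r;q)$ — correctly address the minor points one might otherwise worry about.
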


\begin{prop} For $r\ge p\ge 0$ and $V$ any $A'_{p+1}$-module,
 \begin{equation*}
   ({}'\Inf_{p+1}^{r+1}V)\downarrow^{D'_{r+1}}_{D_r} \cong
\Inf_{p+1}^{r}\left(V\uparrow^{A_{p+1}}_{A'_{p+1}}\right) \oplus %add a box
\Inf_{p}^{r}\left(V\downarrow^{A'_{p+1}}_{A_p}\right)            %vacillate
 \end{equation*}
\end{prop}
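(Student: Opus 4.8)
The plan is to follow the pattern of Proposition~\ref{prop:br} and of the preceding proposition, working with the $A'_{p+1}$-$D_r$-bimodule $M = KD'(p+1,r+1;p+1)$, where $D_r$ is viewed as a subalgebra of $D'_{r+1}$ via $e\mapsto e\otimes\id_1$ (adjoining a straight-through strand between the new top and bottom points). Restricting ${}'\Inf_{p+1}^{r+1}V = V\otimes_{A'_{p+1}}M$ to $D_r$ then comes down to analysing $M$ as an $A'_{p+1}$-$D_r$-bimodule. Note that every diagram in $D'(p+1,r+1;p+1)$ has exactly $p+1$ through blocks, each carrying exactly one of the $p+1$ points on the top edge, and that the through block $\beta$ containing the last top point also contains the last bottom point $r+1$. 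I would split $D'(p+1,r+1;p+1)$ into the diagrams for which $\beta$ meets $\{1,\dots,r\}$ on the bottom edge (type B) and those for which it does not (type A), so that $M = M_A\oplus M_B$ as vector spaces with $M_A = K(\text{type A})$.

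First I would check that $M_A$ is a sub-bimodule: the left $A'_{p+1}\cong\fS_p$-action only permutes the first $p$ top points, and composing a type-A diagram with $e\otimes\id_1$ turns $\beta$ into the rigid strand joining the last top and bottom points, so $\beta$ cannot acquire a bottom point among $1,\dots,r$. Deleting that rigid strand is then a bijection of $M_A$ onto $KD(p,r;p)$, which I would verify is an isomorphism of $A'_{p+1}$-$D_r$-bimodules once the left $A'_{p+1}$-action on $KD(p,r;p)$ is read off through the identification $A_p = A'_{p+1}$. Applying $V\otimes_{A'_{p+1}}(-)$ and unwinding the definition of inflation identifies $V\otimes_{A'_{p+1}}M_A$ with $\Inf_p^r\big(V\!\downarrow^{A'_{p+1}}_{A_p}\big)$.

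For the quotient $M/M_A$, whose basis is the type-B diagrams, I would use the map that deletes the last bottom point $r+1$ and regards the outcome as an element of $D(p+1,r;p+1)$; this is a bijection onto $D(p+1,r;p+1)$, the inverse re-attaching a bottom point to the block of the last top point. One must check that it intertwines the right $D_r$-actions, i.e.\ that composition with $e\otimes\id_1$ commutes with deleting the straight-through $(r+1)$-strand, and that it carries the left $A'_{p+1}$-action to the restriction along $A'_{p+1}\subset A_{p+1}$ of the natural left $A_{p+1}$-action on $KD(p+1,r;p+1)$. The two sources of vanishing then agree: a composite on the type-B side that falls into type A is exactly one whose image in $D(p+1,r;p+1)$ has propagating number below $p+1$ and hence is zero in that subquotient. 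Combining this with associativity of the tensor product and $V\!\uparrow^{A_{p+1}}_{A'_{p+1}} = V\otimes_{A'_{p+1}}A_{p+1}$, and using that $V\otimes_{A'_{p+1}}(-)$ is exact because $A'_{p+1} = K\fS_p$ is semisimple, one obtains the short exact sequence
\[
0\to \Inf_p^r\big(V\!\downarrow^{A'_{p+1}}_{A_p}\big)\to ({}'\Inf_{p+1}^{r+1}V)\!\downarrow^{D'_{r+1}}_{D_r}\to \Inf_{p+1}^r\big(V\!\uparrow^{A_{p+1}}_{A'_{p+1}}\big)\to 0 ,
\]
which splits by the standing assumption that $D_r$ is semisimple, giving the asserted direct sum.

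I expect the main obstacle to be this middle step: proving that deleting the last bottom point is $D_r$-equivariant on $M/M_A$, since composition in the diagram category and deletion of a point do not obviously commute when the gluing produces loops or lowers the propagating number. One has to follow carefully how the block of the last top point evolves under composition with $e\otimes\id_1$ and confirm that the loop counts and propagating numbers on the two sides match. Everything else is the routine dictionary already used in the preceding proofs: Corollary~\ref{cor:decomposition-Partition} for the decomposition of partition diagrams, the tensor-product description of induction, and Lemma~\ref{lem:sets-reproducing-kernel}-style manipulations of the scalar product.
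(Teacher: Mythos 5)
Your proposal matches the paper's proof: it uses the same decomposition of $KD'(p+1,r+1;p+1)$ into the diagrams where $\{\text{last top point},\, r+1\}$ is a block on its own (your type A), mapped to $KD(p,r;p)$ by deleting the rigid strand, and the remaining diagrams, mapped to $KD(p+1,r;p+1)$ by deleting the last bottom point. You are in fact somewhat more careful than the paper in treating the type-B piece as a quotient rather than a sub-bimodule (composition can collapse type B into type A, with the resulting diagram having propagating number below $p+1$ precisely when the image in $KD(p+1,r;p+1)$ vanishes), and then splitting the short exact sequence using the standing semisimplicity assumption.
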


\begin{proof} 
  Consider the $A'_{p+1}$-$D_r$ bimodule
  $KD'(p+1,r+1;p+1)\downarrow^{D'_{r+1}}_{D_{r}}$.  The set of
  diagrams in $D'(p+1,r+1;p+1)$ whose last points in the top and in
  the bottom row are a block generate a submodule isomorphic to
  $KD(p,r;p)$ where the isomorphism is given by removing the last
  points in the top and bottom row.

  The remaining diagrams generate a submodule isomorphic to
  $KD(p+1,r;p+1)\downarrow^{A_{p+1}}_{A'_{p+1}}$ where the
  isomorphism is given by removing the last point in the bottom row.
  Note that the block containing this point by construction contains
  another in the bottom row, so the resulting diagram has indeed
  propagating number $p+1$.

  Finally we have
  \begin{equation*}
    V\otimes_{A'_{p+1}}KD(p,r;p)\cong
    \Inf_{p}^{r}\left(V\downarrow^{A'_{p+1}}_{A_p}\right)
  \end{equation*}
  and
  \begin{equation*}
    V\otimes_{A'_{p+1}}KD(p+1,r;p+1)\cong
    \Inf_{p+1}^{r}\left(V\uparrow^{A_{p+1}}_{A'_{p+1}}\right).
  \end{equation*}
\end{proof}

\begin{cor}\label{cor:Partition-branching-rules} 
  Let $\lambda\vdash p$ and put
  \begin{equation*}
    U(r,\lambda)=\Inf_p^r\left(S^\lambda\right) \text{ and }
    U'(r,\lambda)={}'\Inf_{p+1}^r\left(S^\lambda\right).
  \end{equation*}
  Then the branching rules are given by
  \begin{align*}
    U(r, \lambda)\downarrow^{D_r}_{D'_r}&\cong%
    \bigoplus_{\substack{\mu = \lambda-\square\\%
        \text{or }\mu = \lambda}}%
    U'(r,\mu)\qquad\text{and}\\
    U'(r+1, \lambda)\downarrow^{D'_{r+1}}_{D_r}&\cong%
    \bigoplus_{\substack{\mu = \lambda+\square\\%
        \text{or }\mu = \lambda}}%
    U(r,\mu).
  \end{align*}
\end{cor}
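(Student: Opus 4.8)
The plan is to deduce both statements by specialising the two preceding propositions (and the corollary between them) to $V = S^\lambda$, exactly as Corollary~\ref{cor:Brauer-branching-rules} was deduced from Proposition~\ref{prop:br} in the Brauer case. The first thing I would make explicit is the nature of the three small algebras $A'_p \subset A_p \subset A'_{p+1}$. Here $A_p = K\fS_p$: its diagrams $D(p,p;p)$ must be permutation diagrams, since having $p$ through blocks among $2p$ boundary points forces every through block to contain exactly one point on each side and leaves no room for a non-through block. Consequently $A'_p$, the subalgebra of $K\fS_p$ of permutations fixing the last point, is $K\fS_{p-1}$ sitting inside $K\fS_p$ in the standard way, and the inclusion $A_p \subset A'_{p+1}$ is literally the identity $K\fS_p = K\fS_p$; in particular induction and restriction along it are trivial.

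Granting this, I would first put $V = S^\lambda$ with $\lambda\vdash p$ into the corollary following the first proposition, whose right-hand side is ${}'\Inf_{p+1}^r\bigl(S^\lambda\uparrow^{A'_{p+1}}_{A_p}\bigr) \oplus {}'\Inf_p^r\bigl(S^\lambda\downarrow^{A_p}_{A'_p}\bigr)$. The first summand is ${}'\Inf_{p+1}^r(S^\lambda) = U'(r,\lambda)$ because that induction does nothing; the second is $\bigoplus_{\mu=\lambda-\square}{}'\Inf_p^r(S^\mu)$ by the classical branching rule $S^\lambda\downarrow^{\fS_p}_{\fS_{p-1}} \cong \bigoplus_{\mu=\lambda-\square}S^\mu$, and for $\mu\vdash p-1$ one has ${}'\Inf_p^r(S^\mu) = U'(r,\mu)$. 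Collecting terms yields the first displayed isomorphism. Symmetrically, putting $V = S^\lambda$ with $\lambda\vdash p$ into the second proposition gives $({}'\Inf_{p+1}^{r+1}S^\lambda)\downarrow^{D'_{r+1}}_{D_r} \cong \Inf_{p+1}^r\bigl(S^\lambda\uparrow^{A_{p+1}}_{A'_{p+1}}\bigr) \oplus \Inf_p^r\bigl(S^\lambda\downarrow^{A'_{p+1}}_{A_p}\bigr)$; the second term is just $\Inf_p^r(S^\lambda) = U(r,\lambda)$, while $S^\lambda\uparrow^{\fS_{p+1}}_{\fS_p} \cong \bigoplus_{\nu=\lambda+\square}S^\nu$ and $\Inf_{p+1}^r(S^\nu) = U(r,\nu)$ for $\nu\vdash p+1$, which gives the second displayed isomorphism.

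To present these as branching rules for the irreducibles, I would finally invoke Proposition~\ref{prop:equivalence-D_r} (using the standing semisimplicity hypothesis): it makes $\{U(r,\lambda)\}$ and $\{U'(r,\lambda)\}$ complete sets of inequivalent irreducibles for $D_r$ and $D'_r$, so the two decompositions above are into simples. The main obstacle is not any hard computation but the bookkeeping around the three algebras $A'_p\subset A_p\subset A'_{p+1}$, and in particular keeping straight that one of the two inclusions is an equality so that exactly one induction and one restriction in each displayed formula collapse; once that is set up, everything reduces to substitution into already-proved results together with the Pieri-type branching rule for symmetric groups.
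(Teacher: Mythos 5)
Your proof is correct and follows essentially the same route the paper intends (the paper omits the proof, citing analogy with Corollary~\ref{cor:Brauer-branching-rules}): specialise the two preceding propositions (and the intervening corollary) to $V=S^\lambda$, identify the algebras $A'_p\cong K\fS_{p-1}$, $A_p\cong K\fS_p$, $A'_{p+1}\cong K\fS_p$ with the inclusion $A_p\subset A'_{p+1}$ being an equality, apply the symmetric-group branching rule to the one nontrivial restriction and the one nontrivial induction, and use Proposition~\ref{prop:equivalence-D_r} to conclude the summands are the distinct irreducibles of $D'_r$ respectively $D_r$. Your step of making the three small algebras explicit is a helpful unpacking of what the paper states only in passing, and your direct substitution achieves the same thing as the Hom-space computation used in the Brauer-case proof it references.
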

\begin{proof}
  The proof is completely analogous to the proof of
  Proposition~\ref{cor:Brauer-branching-rules} and therefore omitted.
\end{proof}
\subsection{Fundamental theorems}
\label{sec:fundamental-theorems-Partitions}
Let $K$ be a field of characteristic zero and let $V$ be the defining
permutation representation of the symmetric group $\fS_n$.

Let $\dc_{\fS_n}$ be the diagram category corresponding to
$\cb_{\Par}$ with $\delta=n$ and let $\T_{\fS_n}$ be the category of
invariant tensors.  There is a functor from $\dc_{\fS_n}$ to
$\T_{\fS_n}$ that allows us to study the representations of
$\otimes^r V$ via diagrams and vice versa.  Let $\{v_1,\dots,v_n\}$
be the basis of $V$ as a permutation representation.

\begin{defn}[\protect{\cite{MR1317365},\cite[Equation~3.2]{MR2143201}, \cite[Chapter II
    A \S3]{MR1488158}}]
  Let $\ev_{\fS_n}\colon\dc_{\fS_n}\rightarrow \T_{\fS_n}$ be the
  symmetric and pivotal monoidal functor $\dc_{\fS_n}\to\T_{\fS_n}$.

  Explicitly, for a diagram $x$ let
  \[
  (x)^{i_1,\dots,i_r}_{i_{r+1},\dots,i_{r+s}}=%
  \begin{cases}
    1 & \text{if $i_k=i_\ell$ whenever $k$ and $\ell$}\\
    & \text{are in the same block of $x$}\\
    0 &\text{otherwise.}
  \end{cases}
  \]
  Then $\ev_{\fS_n}$ sends the object $r\in\bN$ to $V^{\otimes r}$
  and
  \begin{align*}
    \ev_{\fS_n}(x) = \Big(v_{i_1}\otimes\dots\otimes v_{i_r}
    \mapsto\sum_{1\leq i_{r+1},\dots,i_{r+s}\leq n}%
    (x)^{i_1,\dots,i_r}_{i_{r+1},\dots,i_{r+s}}%
    v_{i_{r+1}}\otimes\dots\otimes v_{i_{r+s}}\Big).
  \end{align*}

Since $\ev_{\fS_n}$ is a monoidal functor it is determined by its values
on the generators. These are:
  \begin{align*}
    &\ev_{\fS(n)}\left(
      \begin{tikzpicture}[scale=0.6, baseline={(0,0.2)}, line width = 2pt]
        \fill[color=blue!20] (0,0) rectangle  (2,1);
        \draw (0.5,0) .. controls (0.5,0.5) and (1.5,0.5) .. (1.5,1);
        \draw (1.5,0) .. controls (1.5,0.5) and (0.5,0.5) .. (0.5,1);
      \end{tikzpicture}
    \right) = u\otimes v\mapsto v\otimes u\\
    &\ev_{\fS(n)}\left(
      \begin{tikzpicture}[scale=0.6, baseline={(0,0.2)},line width = 2pt]
        \fill[color=blue!20] (0,0) rectangle  (1,1);
        \fill (0.4,1) -- (0.6,1) arc (0:-180:0.1);
      \end{tikzpicture}
    \right) = v_i \mapsto 1\\
    &\ev_{\fS(n)}\left(
      \begin{tikzpicture}[scale=0.6, baseline={(0,0.2)}, line width = 2pt]
        \fill[color=blue!20] (0,0) rectangle  (1,1);
        \draw (0.25,0) .. controls (0.25,0.5) and (0.5,0.5) .. (0.5,1);
        \draw (0.75,0) .. controls (0.75,0.5) and (0.5,0.5) .. (0.5,1);
        \draw (0.75,0) arc (0:180:0.25);
      \end{tikzpicture}
    \right) = v_i \mapsto v_i\otimes v_i,\\
    &\ev_{\fS(n)}\left(
      \begin{tikzpicture}[scale=0.6, baseline={(0,0.2)}, line width = 2pt]
        \fill[color=blue!20] (0,0) rectangle  (1,1);
        \draw (0.5,0) .. controls (0.5,0.5) and (0.75,0.5) .. (0.75,1);
        \draw (0.5,0) .. controls (0.5,0.5) and (0.25,0.5) .. (0.25,1);
        \draw (0.75,1) arc (1:-180:0.25);
      \end{tikzpicture}
    \right) = v_i\otimes v_j \mapsto \delta_{i,j} v_i,\\
    &\ev_{\fS(n)}\left(
      \begin{tikzpicture}[scale=0.6, baseline={(0,0.2)}, line width = 2pt]
        \fill[color=blue!20] (0,0) rectangle  (1,1);
        \fill (0.4,0) -- (0.6,0) arc (0:180:0.1);
      \end{tikzpicture}
    \right) = 1 \mapsto \sum_j v_j.
  \end{align*}

\end{defn}
The first fundamental theorem for the symmetric group can be stated
as follows:
\begin{thm}[\protect{\cite[Theorem~3.6]{MR2143201}}]
  For all $n>0$ the functor $\ev_{\fS_n}\colon\dc_{\fS_n}\rightarrow
  \T_{\fS_n}$ is full.
\end{thm}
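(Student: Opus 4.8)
The plan is to reduce the statement to the case of invariant tensors and then produce an explicit unitriangular change of basis. Since $\ev_{\fS_n}$ is a symmetric pivotal monoidal functor and both categories are pivotal, I would first bend all strands to the top: this identifies $\Hom_{\dc_{\fS_n}}(r,s)$ with $\Hom_{\dc_{\fS_n}}(0,r+s)$ and $\Hom_{\T_{\fS_n}}(r,s)=\Hom_{\fS_n}(\otimes^rV,\otimes^sV)$ with $\Hom_{\fS_n}(K,\otimes^{r+s}V)=(\otimes^{r+s}V)^{\fS_n}$, compatibly with $\ev_{\fS_n}$ -- the analogue of Lemma~\ref{lem:Brauer-iso}, using that the form $\langle v_i,v_j\rangle=\delta_{i,j}$ is symmetric so that $V\cong V^\ast$. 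Hence it is enough to show that for every $m\ge 0$ the invariant tensors $\ev_{\fS_n}(x)$, $x\in D_{\Par}(0,m)$, span $(\otimes^mV)^{\fS_n}$.

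Next I would write down a basis of $(\otimes^mV)^{\fS_n}$. The space $\otimes^mV$ has basis $v_{i_1}\otimes\dots\otimes v_{i_m}$ indexed by maps $f\colon[m]\to[n]$, and $\fS_n$ permutes these basis vectors; two of them lie in the same orbit exactly when the partitions of $[m]$ into level sets of $f$ agree. Writing $\ker f$ for the set partition of $[m]$ whose blocks are the fibres of $f$, the orbit sums
\[
  e_\pi=\sum_{f\colon[m]\to[n],\ \ker f=\pi} v_{f(1)}\otimes\dots\otimes v_{f(m)}
\]
then form a basis of $(\otimes^mV)^{\fS_n}$, with $\pi$ running over the set partitions of $[m]$ having \emph{at most $n$ blocks} (there is no such $f$ otherwise).

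Finally I would compare with the definition of $\ev_{\fS_n}$. For a set partition diagram $x\in D_{\Par}(0,m)$ the coefficient $(x)_{i_1,\dots,i_m}$ is $1$ precisely when $f$ is constant on the blocks of $x$, i.e.\ when $x$ refines $\ker f$; grouping the terms according to $\ker f$ gives $\ev_{\fS_n}(x)=\sum_{\pi\ge x}e_\pi$, the sum over all $\pi$ coarser than $x$ (automatically with at most $n$ blocks). Restricting to those $x$ with at most $n$ blocks and ordering the relevant set partitions by any linear extension of the refinement order, the matrix expressing the $\ev_{\fS_n}(x)$ in the basis $\{e_\pi\}$ reads $\ev_{\fS_n}(x)=e_x+\sum_{\pi>x}e_\pi$ and is therefore unitriangular, hence invertible; so these vectors alone already span $(\otimes^mV)^{\fS_n}$, which proves fullness. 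The argument is mostly bookkeeping; the only delicate points are the compatibility of the pivotal structures in the reduction step (which is where self-duality of $V$ via the symmetric form enters) and the observation that $e_\pi$ exists only for $\pi$ with at most $n$ parts -- this is exactly what makes the restricted transition matrix square and unitriangular, and why the result holds for every $n>0$ with no further hypothesis on $n$.
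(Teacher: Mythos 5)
Your proof is correct, and since the paper states this theorem with only a citation to Halverson--Ram \cite[Theorem~3.6]{MR2143201} rather than giving its own argument, there is no in-paper proof to compare against. Your reduction to invariant tensors via the pivotal structure and self-duality of $V$ is sound, and the core computation is right: $\ev_{\fS_n}(x)=\sum_{\pi}e_\pi$ where the sum runs over partitions $\pi$ coarser than $x$, the orbit sums $e_\pi$ vanish for $\pi$ with more than $n$ blocks, and the resulting transition matrix indexed by partitions of $[m]$ with at most $n$ blocks is square and unitriangular (with respect to any linear extension of the refinement order), hence invertible.

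Two remarks. First, your argument actually proves slightly more than fullness: the unitriangularity shows that the images $\ev_{\fS_n}(x)$ for $x$ a set partition with at most $n$ blocks form a \emph{basis} of the invariant tensors, which is precisely the later theorem of the paper asserting that partitions with fewer than $n+1$ blocks form a basis of $\Hom_{\dc_{\fS(n)}/\Par^{(n)}}(0,r)$; so your computation subsumes that result as well and gives a self-contained path to both the first and second fundamental theorems for $\fS_n$. Second, the reduction step is genuinely needed only to handle arbitrary $(r,s)$; for the purpose of spanning you could alternatively argue directly on $\Hom_{\fS_n}(\otimes^rV,\otimes^sV)$ by decomposing each diagram's matrix according to the joint level-set partition of the multi-index $(i_1,\dots,i_{r+s})$, but the bending argument you give is cleaner and matches the analogous Lemma~\ref{lem:Brauer-iso} used in the Brauer case.
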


A second fundamental theorem is also available.  For any $d\in
D(r,r)$ define an element $x_d\in D_r$ via
\[
d = \sum_{d'\leq d} x_{d'},
\]
where $d'\leq d$ means that every block of the partition $d$ is
contained in a block of the partition $d'$.
\begin{defn} 
  For $r,s\ge 0$, the subspace $\Par^{(n)}(r,s)$ in the ideal
  $\Par^{(n)}$ of $\dc_{\fS(n)}$ is spanned by the set
  \[
  \Par^{(n)}(r,s) = \{ x_d: \text{$d$ has more than $n$ blocks} \}.
  \]
\end{defn}

\begin{defn}
  Let $\bev_{\fS(n)}\colon\dc_{\fS(n)}/\Par^{(n)}\rightarrow
  \T_{\fS(n)}$ be the functor that factors $\ev_{\fS(n)}$ through the
  quotient.
\end{defn}

\begin{thm}[\protect{\cite[Theorem~3.6]{MR2143201}}]
    The functor
    $\bev_{\fS(n)}\colon\dc_{\fS(n)}/\Par^{(n)}\rightarrow
    \T_{\fS(n)}$ is an isomorphism of categories.
\end{thm}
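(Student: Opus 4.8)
The plan is to mimic, almost verbatim, the argument used above for the second fundamental theorem of the symplectic group. The functor $\bev_{\fS(n)}$ is bijective on objects and, being a factorization of the full functor $\ev_{\fS(n)}$ through a quotient functor that is itself surjective on Hom-spaces, is full; it is well defined because $\ev_{\fS(n)}(x_d)=0$ whenever $d$ has more than $n$ blocks. To see this last point I would unfold the defining relation $d=\sum_{d'\le d}x_{d'}$ by M\"obius inversion on the partition lattice: this identifies $\ev_{\fS(n)}(x_d)$ with the \emph{strict} tensor supported on exactly those basis vectors $v_{i_1}\otimes\dots\otimes v_{i_r}\mapsto v_{i_{r+1}}\otimes\dots\otimes v_{i_{r+s}}$ for which $i_k=i_\ell$ holds \emph{precisely} when $k$ and $\ell$ lie in a common block of $d$; if $d$ has more than $n$ blocks, no index assignment valued in $[n]$ can meet this condition, so the tensor vanishes. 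It then remains to prove
\[
\dim\Hom_{\dc_{\fS(n)}/\Par^{(n)}}(r,s)\le\dim\Hom_{\T_{\fS(n)}}(r,s),
\]
since fullness together with equality of these finite dimensions forces an isomorphism of categories.

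First I would reduce to the invariant tensors: as in the Brauer case, the pivotal structure on both categories yields isomorphisms $\Hom(r,s)\cong\Hom(0,r+s)$, and on the representation side this additionally uses that the permutation module $V$ is self-dual, so that $\Hom_{\T_{\fS(n)}}(r,s)\cong(\otimes^{r+s}V)^{\fS_n}$. It therefore suffices to treat the case $s=0$, and in fact to show the two dimensions are \emph{equal}.

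Next I would identify both sides with a count of set partitions. On the diagram side, the matrix expressing the $x_d$ in terms of the diagrams $d$ is unitriangular for the coarsening order on $D_{\Par}(0,r)$, so $\{x_d:d\in D_{\Par}(0,r)\}$ is again a basis of $\Hom_{\dc_{\fS(n)}}(0,r)$; since $\Par^{(n)}(0,r)$ is spanned by those $x_d$ with $d$ having more than $n$ blocks, the images of the remaining $x_d$ form a basis of $\Hom_{\dc_{\fS(n)}/\Par^{(n)}}(0,r)$, whose dimension is thus the number of set partitions of $[r]$ into at most $n$ blocks. On the representation side, $\fS_n$ acts on the basis $\{v_{i_1}\otimes\dots\otimes v_{i_r}\}$ of $\otimes^rV$ by permuting the values $i_j$; two basis vectors lie in one orbit exactly when the partitions of $[r]$ recording the coincidences among their indices agree, and the corresponding orbit sums form a basis of $(\otimes^rV)^{\fS_n}$ indexed again by set partitions of $[r]$ into at most $n$ blocks. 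The two counts match, which finishes the argument.

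I expect the genuinely substantive points — and where a full write-up would concentrate its effort — to be the two small combinatorial lemmas invoked above: that $d\mapsto x_d$ is a unitriangular change of basis on the partition lattice (a direct M\"obius-inversion computation), and that $\ev_{\fS(n)}(x_d)$ is precisely the ``distinct values on distinct blocks'' tensor (an inclusion--exclusion over coarsenings of $d$). Neither is hard, but both must be stated with care; everything else is the pivotal-structure bookkeeping and the orbit count, strictly parallel to the Brauer treatment.
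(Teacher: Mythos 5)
The paper does not actually prove this theorem: it is stated as a citation to Halverson--Ram \cite[Theorem~3.6]{MR2143201}, and immediately afterwards the authors remark that ``it should be possible to derive a second fundamental theorem also using the methods that succeeded in the case of the Brauer category,'' offering only a conjectural recursion for the rank-one central idempotents $E(r)$ in that direction. So there is no in-paper proof to compare against; your argument is a self-contained one that the paper declines to give.

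Your proof is correct and, moreover, closely parallels the Brauer treatment in \S\ref{sec:fundamental-theorems-Brauer} (fullness from the FFT, pivotal reduction $\Hom(r,s)\cong\Hom(0,r+s)$, then a dimension comparison). The partition case is in fact \emph{simpler} than the Brauer case: there both $\dim\Hom_{\dc/\Pf^{(n)}}(0,r)$ and $\dim\Hom_{\T}(0,r)$ needed nontrivial identifications ($(n+1)$-noncrossing diagrams via Theorem~\ref{thm:Brauer-basis}, oscillating tableaux via Lemma~\ref{lem:Brauer-basis-oscillating-tableaux}) matched through Sundaram's bijection (Lemma~\ref{lem:Brauer-Sundaram}); here both sides are \emph{literally} set partitions of $[r]$ into at most $n$ blocks, once you observe that $\{x_d\}$ is a unitriangular basis change and that $\fS_n$-orbits on $[n]^r$ are indexed by level-set partitions. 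Your M\"obius-inversion identification of $\ev_{\fS(n)}(x_d)$ with the ``strict'' indicator tensor is the right key lemma, and it gives well-definedness of $\bev_{\fS(n)}$ for free. The one point that both you and the paper leave implicit is that $\Par^{(n)}$ is genuinely a two-sided pivotal-symmetric ideal (stable under composition and tensor on both sides), which is what makes the quotient category exist; this needs either a short check from the multiplicativity properties of the $x_d$, or should be credited to the cited source together with the theorem. With that caveat flagged, your argument is a clean and arguably preferable replacement for the bare citation, and it also yields the subsequent basis theorem (``partitions with at most $n$ blocks form a basis of $\Hom_{\dc_{\fS(n)}/\Par^{(n)}}(0,r)$'') as a byproduct.
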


\begin{thm}
  The set of partitions with less than $n+1$ blocks form a basis of
  $\Hom_{\dc_{\fS(n)}/\Par^{(n)}}(0,r)$.
\end{thm}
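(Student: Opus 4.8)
The plan is to imitate the proof of Theorem~\ref{thm:Brauer-basis}: exhibit a terminating rewrite system on $D(0,r)$ that reduces every set partition to a $K$-linear combination of set partitions with at most $n$ blocks, and then check that the latter cannot be reduced further. The engine is M\"obius inversion in the partition lattice. The defining relation $d=\sum_{d'\le d}x_{d'}$ sums $x$ over all set partitions $d'$ coarser than or equal to $d$ (in this paper's convention, ``$d'\le d$'' means every block of $d$ lies inside a block of $d'$), so inversion gives $x_d=\sum_{d'\le d}\mu(d',d)\,d'=d+\big(\text{a combination of set partitions strictly coarser than }d\big)$, with $\mu$ the M\"obius function of the lattice. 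A strictly coarser set partition has strictly fewer blocks. Hence, whenever $d$ has more than $n$ blocks, we have $x_d\in\Par^{(n)}(0,r)$, so the relation $x_d\equiv0$ in $\dc_{\fS(n)}/\Par^{(n)}$ is precisely a rule rewriting $d$ as a combination of set partitions with fewer blocks. Iterating, and using that the number of blocks strictly decreases at each step while set partitions with at most $n$ blocks are never rewritten, the process terminates and produces a spanning set of $\Hom_{\dc_{\fS(n)}/\Par^{(n)}}(0,r)$ consisting of the set partitions with at most $n$ blocks.

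To conclude that this spanning set is in fact a basis one can argue in two ways. The quick way is to invoke the second fundamental theorem for $\fS_n$ (the isomorphism of categories $\bev_{\fS(n)}$), which identifies $\dim\Hom_{\dc_{\fS(n)}/\Par^{(n)}}(0,r)$ with $\dim\Hom_{\T_{\fS(n)}}(0,r)$, i.e.\ with the dimension of the $\fS_n$-invariants in $\otimes^rV$; the latter equals the number of $\fS_n$-orbits on $[n]^r$, which is exactly the number of set partitions of $[r]$ into at most $n$ blocks. A spanning set of that cardinality is a basis. The self-contained way is to use the same M\"obius-inversion identity to see that, ordering $D(0,r)$ by decreasing number of blocks, the transition matrix between $\{x_d:d\in D(0,r)\}$ and $\{d:d\in D(0,r)\}$ is unitriangular; hence $\{x_d\}$ is a basis of $\Hom_{\dc_{\fS(n)}}(0,r)$, the $x_d$ with more than $n$ blocks span $\Par^{(n)}(0,r)$ by definition, and the remaining $x_d$ span a complement $W$. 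Since the set of set partitions with at most $n$ blocks is closed under passing to a coarser partition, the restricted transition matrix is again unitriangular, so those set partitions form a basis of $W$ and therefore project to a basis of the quotient.

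I expect the only delicate point to be bookkeeping with the partial order --- keeping straight that in this paper ``$d'\le d$'' means $d'$ is the \emph{coarser} partition, so that ``$d'\le d$, $d'\ne d$'' really does imply ``fewer blocks'' --- together with using, as the definition of $\Par^{(n)}$ asserts, that $\Par^{(n)}(0,r)$ is exactly the span of the $x_d$ with $d\in D(0,r)$ having more than $n$ blocks. Granting those, both routes are purely formal and very short, exactly as in the Brauer case.
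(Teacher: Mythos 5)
Your proof is correct, and it is worth noting that the paper itself states this theorem without proof (the preceding second--fundamental--theorem statement carries a citation, but this basis theorem does not), so you are genuinely filling a gap rather than reproducing an argument. You read the partial order correctly: in this paper $d'\le d$ means $d'$ is the \emph{coarser} partition, so M\"obius inversion gives $x_d=d+(\text{strictly coarser terms})$ and the block count strictly decreases along the rewrite, exactly as you say. Your first route --- spanning by rewriting and independence by the dimension count $\dim(\otimes^r V)^{\fS_n}=\#\{\text{set partitions of $[r]$ into at most $n$ blocks}\}$ --- is the literal analogue of the paper's proof of Theorem~\ref{thm:Brauer-basis}, which also proves spanning via rewriting and obtains independence from a dimension count. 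Your second, self-contained route is actually the cleaner one and is \emph{not} available in the Brauer case: since $\{x_d\}$ is a genuine basis of $\Hom_{\dc_{\fS_n}}(0,r)$ by unitriangularity, the ideal piece $\Par^{(n)}(0,r)$ has an explicit basis and the whole claim follows from two triangularity observations, with no appeal to the first or second fundamental theorem. In the Brauer case the diagrammatic Pfaffians $\Pf(f)$ do \emph{not} form a basis of the diagram space (they are linearly dependent), which is precisely why that proof had to route independence through the bijection with oscillating tableaux; so the extra ease here is a real feature of the partition lattice, and your observation that both routes are ``purely formal and very short, exactly as in the Brauer case'' slightly undersells the fact that the second route has no Brauer analogue.
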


\begin{cor}
  The diagram algebra $D_r = \Hom_{D_{\fS_n}}(r,r)$ is semisimple for
  $n\geq 2r$.
\end{cor}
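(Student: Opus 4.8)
The plan is to imitate the corresponding result in the Brauer case and reduce the statement to the second fundamental theorem just established. First I would observe that every diagram $d\in D_{\Par}(r,r)$ is a set partition of the $2r$-element set $[r]\amalg[r]$, so it has at most $2r$ blocks; hence, as soon as $n\ge 2r$, there is no diagram in $D(r,r)$ with more than $n$ blocks, and therefore $\Par^{(n)}(r,r)=\{0\}$ by definition of the ideal $\Par^{(n)}$. Consequently the quotient functor $\dc_{\fS_n}\to\dc_{\fS_n}/\Par^{(n)}$ is the identity on the hom-space $\Hom(r,r)$, and combining this with the theorem that $\bev_{\fS_n}\colon\dc_{\fS_n}/\Par^{(n)}\to\T_{\fS_n}$ is an isomorphism of categories, I would conclude that the evaluation map
\[
\ev_{\fS_n}\colon D_r=\Hom_{\dc_{\fS_n}}(r,r)\longrightarrow \Hom_{\T_{\fS_n}}(r,r)=\End_{\fS_n}(\otimes^r V)
\]
is an isomorphism of $K$-algebras.

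Next I would invoke standard semisimple-algebra theory: since $K$ has characteristic zero, the group algebra $K\fS_n$ is semisimple, so the finite-dimensional module $\otimes^r V$ is a direct sum of simple modules; therefore its endomorphism algebra $\End_{\fS_n}(\otimes^r V)$ is a finite direct sum of matrix algebras over division rings, in particular semisimple. Transporting semisimplicity along the algebra isomorphism above then shows that $D_r$ is semisimple.

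I do not expect a genuine obstacle here: each ingredient is either the second fundamental theorem proved above or a routine fact about endomorphism algebras of semisimple modules. The only point that requires (minor) care is checking that the numerical hypothesis $n\ge 2r$ is precisely what forces $\Par^{(n)}(r,r)$ to vanish, and this is exactly the block count in the first paragraph. An alternative would be to deduce semisimplicity from Proposition~\ref{prop:equivalence-D_r}, but that route would require separately verifying Assumption~\ref{ass:morita} together with the semisimplicity of the algebras $A_p$, so the argument via the fundamental theorems is the shorter one.
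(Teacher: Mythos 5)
Your proof is correct and takes essentially the same approach the paper takes (the paper leaves this corollary without an explicit proof, but the argument is exactly parallel to the proof it does give for the analogous Brauer corollary in Section~\ref{sec:fundamental-theorems-Brauer}): a set partition of $[r]\amalg[r]$ has at most $2r$ blocks, so $\Par^{(n)}(r,r)=0$ for $n\ge 2r$, hence $\ev_{\fS_n}$ restricts to an algebra isomorphism $D_r\cong\End_{\fS_n}(\otimes^rV)$, and the latter is semisimple by Maschke and Artin--Wedderburn. Your remark that one could instead go through Proposition~\ref{prop:equivalence-D_r} is accurate, and you correctly note that the fundamental-theorem route is shorter since it avoids re-verifying the Morita assumptions.
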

We note that it should be possible to derive a second fundamental
theorem also using the methods that succeeded in the case of the
Brauer category.  To this end we have the following conjecture.

For $r\ge 0$, let $\rho$ be the one dimensional representation of $D_{r}$
which on diagrams is given by
\begin{equation*}
  \rho(x)=\begin{cases}
    x &\text{if $\pr(x)=r$}\\
    0 &\text{if $\pr(x)<r$}
  \end{cases}
\end{equation*}
Note that $\pr(x)=r$ if and only if $x$ is a permutation.

The element $E(r)$ is determined, up to scale by the properties
\begin{equation*}
 xE(r)=\rho(x)E(r)=E(r)x
\end{equation*}
The scale is determined by the property that $E(r)$ is idempotent.
Then $E(r)$ is a rank one central idempotent. This central idempotent can be
constructed using \cite{MR1700480}. It is also, up to scale, the element 
$x_d$ associated to the element $d\in D(r,r)$ consisting of $2r$ singletons.

Similarly, the restriction of $\rho$ to $D'_r$ is a one dimensional representation
and we have a corresponding rank one central idempotent, $E'(r)$.

\begin{defn} The generators $h_i$, $s_i$ and $p_i$ are: 
  \begin{equation*}
    h_i = \raisebox{-1.25cm}{
    \begin{tikzpicture}[line width = 2pt]
      \fill[color=blue!20] (0,0) rectangle  (2.5,1.5);
      \draw (0.5,0) node[anchor=north] {$i-1$} -- (0.5,1.5) {};
      \draw (1,0) arc (180:0:0.25);
      \draw (1,1.5) arc (180:360:0.25);
      \draw (1,0) -- (1,1.5);
      \draw (1.5,0) -- (1.5,1.5);
      \draw (2,0) node[anchor=north] {$n-i-1$} -- (2,1.5) {};
    \end{tikzpicture}}
  \quad
    s_i = \raisebox{-1.25cm}{
    \begin{tikzpicture}[line width = 2pt]
      \fill[color=blue!20] (0,0) rectangle  (2.5,1.5);
      \draw (0.5,0) node[anchor=north] {$i-1$} -- (0.5,1.5) {};
      \draw (1,0) .. controls (1,0.75) and (1.5,0.75) .. (1.5,1.5);
      \draw (1.5,0) .. controls (1.5,0.75) and (1,0.75) .. (1,1.5);
      \draw (2,0) node[anchor=north] {$n-i-1$} -- (2,1.5) {};
    \end{tikzpicture}}
  \quad
    p_i = \raisebox{-1.25cm}{
    \begin{tikzpicture}[line width = 2pt]
      \fill[color=blue!20] (0,0) rectangle  (2,1.5);
      \draw (0.5,0) node[anchor=north] {$i-1$} -- (0.5,1.5) {};
      %\draw (2,0) .. controls (2,0.75) and (3,0.75) .. (3,1.5);
      %\draw (3,0) .. controls (3,0.75) and (2,0.75) .. (2,1.5);
      \fill (0.9,1.5) -- (1.1,1.5) arc (0:-180:0.1);
      \fill (0.9,0) -- (1.1,0) arc (0:180:0.1);
      \draw (1.5,0) node[anchor=north] {$n-i$} -- (1.5,1.5) {};
    \end{tikzpicture}}
  \end{equation*}
\end{defn}

\begin{conj} The idempotents $E(r)\in D_r$ and $E'(r)\in D'_r$ are
constructed recursively by $E'(1)=1$ and
\begin{align*}
 E(r) &= \frac1r E'(r)\left[ 1+(r-1)s_{r-1} -\frac1{\delta-2r+2}p_r \right] E'(r) \\
 E'(r+1) &= E(r)\left[ 1 - r\left(\frac{\delta-2r+2}{\delta-2r+1}\right)h_r \right] E(r)
\end{align*}
\end{conj}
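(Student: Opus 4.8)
The plan is to prove the conjecture by induction on $r$, exploiting the characterisation recorded just before the statement: $E(r)$ is determined up to a scalar by the property $x\,E(r)=\rho(x)\,E(r)=E(r)\,x$ for all $x\in D_r$, and the scalar is pinned down by idempotency (equivalently, by $\rho(E(r))=1$); the same holds for $E'(r)\in D'_r$ with $\rho$ replaced by its restriction to $D'_r$. So for each of the two recursive formulas it suffices to show two things about the right-hand side, which I will call $\tilde E$: first, that $x\tilde E=\rho(x)\tilde E=\tilde E x$ for all $x$, so that $\tilde E$ is a scalar multiple of the genuine central idempotent; and second, that this scalar equals $1$. The base case $E'(1)=1\in D'_1=K$ is immediate, and the two recursions are then run in tandem, alternating between $D'_r$, $D_r$, $D'_{r+1}$, so that at each stage the object fed into the formula is already known to be the correct idempotent.

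For the first point, I would reduce the verification of $x\tilde E=\rho(x)\tilde E=\tilde E x$ to a generating set of $D_r$ (respectively $D'_{r+1}$): the permutations $s_1,\dots,s_{r-1}$ together with the propagating-number-lowering generators $h_i$ and $p_i$. In the formula $E(r)=\tfrac1r\,E'(r)\bigl[\,1+(r-1)s_{r-1}-\tfrac{1}{\delta-2r+2}\,p_r\,\bigr]\,E'(r)$ the flanking copies of $E'(r)$ already absorb $\fS_{r-1}$ and annihilate all $h_i,p_i$ with $i\le r-2$ by the inductive hypothesis, so for those $x$ the required identity is automatic. The only relations that must be checked by hand are the effect of $s_{r-1}$, of $h_{r-1}$ and of $p_r$ on the bracketed \lq\lq $R$-matrix-like\rq\rq\ element after sandwiching by $E'(r)$; these are finitely many identities in the partition algebra, and, exactly as for the Yang--Baxter relation in Proposition~\ref{prop:yb}, they can be verified either from a presentation by generators and relations or inside a faithful representation $\otimes^r V$ with $n=\delta$ large. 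The analogous statement for $E'(r+1)=E(r)\bigl[\,1-r\bigl(\tfrac{\delta-2r+2}{\delta-2r+1}\bigr)h_r\,\bigr]\,E(r)$ uses that $E(r)$ absorbs $\fS_r$ and annihilates $h_i,p_i$ for $i\le r-1$, leaving only the interaction of $h_r$ (which plays the role of a Jones-type projection for the inclusion $D_r\subset D'_{r+1}$, up to normalisation) with the chain; this mirrors the argument that $E^\delta(n+1)$ satisfies \eqref{eq:ch2} in the Brauer case.

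For the second point I would compute $\rho(\tilde E)$ using the trace-like identities for the diagrammatic Markov trace on the partition algebra, the analogues of the three displayed identities in the proof of Proposition~\ref{prop:ker}. Pushing $s_{r-1}$, $p_r$ and $h_r$ through these identities reduces $\rho(\tilde E)$ to $\rho$ of the inductively constructed idempotent times an explicit rational function of $\delta$, and the prefactors $\tfrac1r$, $1-r\bigl(\tfrac{\delta-2r+2}{\delta-2r+1}\bigr)$ and the coefficient $-\tfrac{1}{\delta-2r+2}$ are precisely those making that rational function equal to $1$. This computation also makes the scope of the conjecture explicit: the formulas make sense and are correct whenever none of $\delta-2k+1$ or $\delta-2k+2$ vanishes for $k\le r$ — in particular for $\delta=n\ge 2r$, the range in which $D_r$ is known to be semisimple — and the conjecture should presumably be stated with this proviso.

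The main obstacle is the first point: the partition algebra has strictly more generators than the Brauer algebra, and the straightening relations among $E'(r)$, $s_{r-1}$, $p_r$, $h_r$ (and among $E(r)$ and $h_r$) are correspondingly more delicate, so extracting the exact coefficients $-\tfrac{1}{\delta-2r+2}$ and $-r\bigl(\tfrac{\delta-2r+2}{\delta-2r+1}\bigr)$ — not merely the shape of the formulas — is where the real work lies. A secondary subtlety is to confirm that the bracketed factors are honest elements, i.e.\ that the denominators do not cancel something needed, and that the two recursions are mutually consistent; the latter is automatic once the two points above are established, since both sides are then the unique rank-one idempotent with the prescribed character.
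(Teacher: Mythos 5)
This statement is a \emph{conjecture} in the paper, not a theorem: the authors do not supply a proof, so there is no argument of theirs to compare against. Your proposal is therefore better judged on whether it would, if executed, actually prove the statement.

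Your strategy is the right one and matches the template the paper uses elsewhere (for the Brauer category): characterise $E(r)$ and $E'(r)$ up to scalar by the $\rho$-absorption property, show the right-hand side of each recursion satisfies that property by checking only the relevant generators (since the flanking idempotents absorb all the lower-index ones by induction), and then fix the scalar via a diagrammatic trace computation analogous to the one in the proof of Proposition~\ref{prop:ker}. This is coherent and consistent with the discussion just before the conjecture, where the paper recalls that $E(r)$ is determined up to scale by $xE(r)=\rho(x)E(r)=E(r)x$.

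However, the proposal is a plan, not a proof, and the gap is not minor. The entire nontrivial content of the conjecture lies in the \emph{specific} coefficients $-\tfrac{1}{\delta-2r+2}$ and $-r\,\tfrac{\delta-2r+2}{\delta-2r+1}$ and the prefactor $\tfrac1r$; the general shape of the right-hand sides (a Jones--Wenzl-style recursion sandwiching a local correction between copies of the previous idempotent) is easy to guess and is not what the conjecture asserts. You explicitly defer the two decisive steps --- verifying $x\tilde E=\rho(x)\tilde E=\tilde E x$ for $x\in\{s_{r-1},h_{r-1},p_r\}$ (resp.\ $h_r$) and the trace computation that pins the normalisation --- with the phrase that they ``can be verified by direct calculation.'' Those calculations are exactly where the partition algebra is harder than the Brauer algebra: the extra generators $h_i,p_i$ interact with $s_{r-1}$ in ways with no Brauer analogue, and the two recursions pass between the nested chain $D'_r\subset D_r\subset D'_{r+1}$, so the absorption/annihilation relations for $E'(r)$ and for $E(r)$ are genuinely different and must each be established. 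You are honest that ``extracting the exact coefficients... is where the real work lies,'' and that admission is the gap: without those computations the proposal does not establish the conjecture, it only reduces it to a finite (but nontrivial and unperformed) verification. A secondary caveat you raise is also real and should be part of a correct statement: the denominators $\delta-2k+1$ and $\delta-2k+2$ must be nonzero for all $k\le r$, and the conjecture as written does not make that hypothesis explicit.
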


\begin{ex}
 \begin{align*}
  E(1) &= (1-\frac1\delta p_1) \\
  E'(2) &= E(1)\left( 1 - \frac{\delta}{\delta-1} h_1 \right) E(1) \\
  E(2) &= \frac12 E'(2)\left( 1+s_1-\frac1{\delta-2}p_2 \right) E'(2) \\
  E'(3) &= E(2)\left( 1 - 2\frac{\delta-2}{\delta-3} h_2 \right) E(2) \\
  E(3) &= \frac13 E'(3)\left( 1+2s_2-\frac1{\delta-4}p_3 \right) E'(3)
 \end{align*}
\end{ex}

\begin{conj}
  The ideal $\Par^{(n)}$ of $\dc_{\fS(n)}$ is the pivotal symmetric ideal
  generated by $E(n+1)$.
\end{conj}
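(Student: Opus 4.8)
The plan is to prove the two inclusions $\langle E(n+1)\rangle\subseteq\Par^{(n)}$ and $\Par^{(n)}\subseteq\langle E(n+1)\rangle$, where $\langle E(n+1)\rangle$ denotes the pivotal symmetric ideal of $\dc_{\fS(n)}$ generated by $E(n+1)$. The first inclusion is immediate: by the second fundamental theorem $\Par^{(n)}$ is the kernel of the symmetric pivotal monoidal functor $\ev_{\fS(n)}$, hence is itself a symmetric pivotal ideal, and it contains $E(n+1)$ because $E(n+1)$ is, up to a scalar, the orbit basis element $x_{d_0}$ of the all-singleton diagram $d_0\in D(n+1,n+1)$, which has $2(n+1)>n$ blocks.

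For the reverse inclusion it suffices to show $x_d\in\langle E(n+1)\rangle$ for every partition diagram $d$ with more than $n$ blocks, since these elements span $\Par^{(n)}$. Using the pivotal duality to bend all boundary points of $d$ to one side — an operation that induces an isomorphism of the underlying partition posets, hence carries orbit basis elements to orbit basis elements and commutes with membership in $\langle E(n+1)\rangle$ — one reduces to the case $d\in D(0,N)$. Let $\ell$ be the number of blocks of $d$, write $a_k\in D(0,k)$ for the all-singleton diagram on $k$ points, and let $\beta_d\in D(\ell,N)$ be the \emph{merge} diagram whose $i$-th block is the $i$-th top point together with the $i$-th block of $d$. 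The first key step is the factorisation
\[
  x_d \;=\; x_{a_\ell}\circ\beta_d .
\]
This is a Möbius computation in the partition lattice: writing $x_{a_\ell}=\sum_e\mu(e)\,e$ over set partitions $e$ of $[\ell]$, the composite $e\circ\beta_d$ is the coarsening of $d$ obtained by fusing the blocks that $e$ identifies; re-expanding the $e\circ\beta_d$ in the orbit basis and using that $\sum_{e\le\sigma}\mu(\widehat 0,e)$ vanishes unless $\sigma=\widehat 0$ collapses the whole sum to $x_d$. Since $\langle E(n+1)\rangle$ is an ideal, it therefore suffices to prove $x_{a_k}\in\langle E(n+1)\rangle$ for all $k\ge n+1$.

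This I would prove by induction on $k$. For the base case $k=n+1$: let $\eta\colon 0\to1$ be the singleton-creating generator of the partition category; capping the $n+1$ top points of $x_{d_0}\in D_{n+1}$ (the direct form of $E(n+1)$) with $\eta^{\otimes(n+1)}$ produces an element of $\Hom_{\dc_{\fS(n)}}(0,n+1)\cap\langle E(n+1)\rangle$. By the first inclusion it lies in $\Par^{(n)}(0,n+1)=K\,x_{a_{n+1}}$, and a direct computation with Möbius functions of the partition lattice shows it equals a nonzero multiple of $x_{a_{n+1}}$ (the scalar comes out to $(-1)^{n-1}(n+1)!$), so $x_{a_{n+1}}\in\langle E(n+1)\rangle$. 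For the inductive step, suppose $x_{a_j}\in\langle E(n+1)\rangle$ for $n+1\le j<k$; by the factorisation this gives $x_{d'}\in\langle E(n+1)\rangle$ for every diagram $d'$ with $n<\ell(d')<k$. Let $y_k\in\Hom_{\dc_{\fS(n)}}(0,k)$ be obtained from $x_{a_{n+1}}$ by adjoining $k-n-1$ extra strands and capping each of them with $\eta$; then $y_k\in\langle E(n+1)\rangle$. Expanding $y_k$ in the orbit basis, the coefficient of $x_{a_k}$ is $1$ — only the diagram $a_k$ contributes to it — and since $y_k\in\Par^{(n)}$ all the other summands are $x_{d'}$ with $n<\ell(d')<k$. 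Hence $x_{a_k}=y_k-\sum_{d'}c_{d'}x_{d'}\in\langle E(n+1)\rangle$, which closes the induction and completes the proof.

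The step I expect to be hardest is the base case: extracting from $\langle E(n+1)\rangle$ a nonzero element of the one-dimensional space $\Par^{(n)}(0,n+1)$. Most natural ways of pushing $E(n+1)$ down into a hom-space $\Hom(0,m)$ by capping or merging legs produce elements whose leading orbit-basis coefficient is a falling factorial that vanishes at $\delta=n$, so one must cap exactly $n+1$ of the $2(n+1)$ legs and then check that the surviving scalar is nonzero — the single genuinely arithmetic input. A secondary point to be careful about is the clean behaviour of the orbit basis under the merge maps $\beta_d$ and under bending, which drives the reduction to the diagrams $a_k$.
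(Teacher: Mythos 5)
The paper states this result as a conjecture and gives no proof, so there is no argument in the text to compare against; I therefore assess your proposal on its own terms, and I believe it is correct. For the first inclusion you correctly use the cited second fundamental theorem to see that $\Par^{(n)}=\ker\ev_{\fS(n)}$ is a pivotal symmetric ideal, together with the paper's assertion that $E(n+1)$ is, up to nonzero scale, the orbit-basis element of the all-singleton diagram $d_0\in D(n+1,n+1)$. For the reverse inclusion, the factorisation $x_d=x_{a_\ell}\circ\beta_d$ is correct: the map $e\mapsto e\circ\beta_d$ is a poset isomorphism from $\Pi_\ell$ onto the interval of coarsenings of $d$, so the M\"obius sums telescope exactly as you describe; pivotal bending does carry orbit-basis elements to orbit-basis elements, being just a relabelling; and the induction on the number of blocks is set up correctly. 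I checked the scalar for $n=0,1,2$: the coefficient of the all-singleton diagram in $\eta^{\otimes(n+1)}\cdot x_{d_0}$ comes out as $\prod_{j=n+1}^{2n+1}(\delta-j)$, whose value at $\delta=n$ is $(-1)^{n+1}(n+1)!=(-1)^{n-1}(n+1)!\neq 0$, confirming your claim; a clean proof of the falling-factorial formula for general $n$ would be worth including in a write-up. One caveat to record: unlike the Brauer case, where the analogous fact about $\Pf^{(n)}$ is proved in the paper on the way to the second fundamental theorem, your argument uses the second fundamental theorem for $\fS_n$ as an input, both in the first inclusion and to identify $\Par^{(n)}(0,k)$ with the span of orbit-basis elements having more than $n$ blocks. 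So you settle the conjecture, but not by a route that independently recovers the partition-category second fundamental theorem, which is what the remark preceding the conjecture was hoping for.
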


\subsection{Frobenius characters for tensor algebras}
\label{sec:Partition-tensor-algebra}
\begin{thm}\label{thm:Partition-tensor-algebra}
  Let $\mu$ be a partition of $n$ and let $S^\mu$ be the irreducible
  representation of $\fS_n$ corresponding to $\mu$.  Then for $n\geq
  2r$ there is a natural isomorphism
  \begin{equation*}
    \Hom_{\fS_n} (S^\mu,\otimes^rV) \cong
    \Inf_p^r\left(S^{\tilde\mu}\right)
  \end{equation*}
  of $D_r$-modules, where $\tilde\mu$ is the partition $\mu$ with the
  first part removed.
\end{thm}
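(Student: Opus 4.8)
The plan is to follow the proof of Theorem~\ref{thm:Brauer-tensor-algebra} essentially verbatim, with the branching rule for $\Sp(2n)$ replaced by the branching rule for the permutation module and the one-step rule of Corollary~\ref{cor:Brauer-branching-rules} replaced by the two half-step rules of Corollary~\ref{cor:Partition-branching-rules}. Throughout, $n\ge 2r$, so that $D'_r$, $D_r$ and $D_{r+1}$ are semisimple and Proposition~\ref{prop:equivalence-D_r} is available; I would write $[\nu]=(n-|\nu|,\nu_1,\nu_2,\dots)$ for the partition of $n$ obtained by prepending a long first row to $\nu$, which is genuinely a partition because $n-|\nu|\ge|\nu|\ge\nu_1$ whenever $|\nu|\le r$. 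First I would record the classical input: since $V\cong M^{(n-1,1)}=\mathrm{Ind}_{\fS_{n-1}}^{\fS_n}\mathbf 1$ and $V$ is self-dual, the projection formula gives $S^\mu\otimes V\cong\mathrm{Ind}_{\fS_{n-1}}^{\fS_n}\mathrm{Res}_{\fS_{n-1}}^{\fS_n}S^\mu$, hence
\[
\Hom_{\fS_n}\big(S^\mu,\otimes^{r+1}V\big)\cong\Hom_{\fS_{n-1}}\big(\mathrm{Res}_{\fS_{n-1}}S^\mu,\,\mathrm{Res}_{\fS_{n-1}}\otimes^r V\big),
\]
which exhibits the passage from $\otimes^rV$ to $\otimes^{r+1}V$ as two half-steps, restriction to $\fS_{n-1}$ and induction back; these correspond to the chain $D'_r\subset D_r\subset D'_{r+1}$, the intermediate $\fS_{n-1}$-modules playing the role of the intermediate $D'$-modules. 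Passing to $[\cdot]$, the combinatorics agree as well: since $[\mu]$ has a strictly longest first row for every $\mu$ indexing a nonzero summand of $\otimes^rV$ (this is where $n\ge 2r$ is needed; the unique degenerate case $n=2r$, $\mu=(r,r)$ is dealt with by hand), removing or adding a box in the first row of $[\nu]$ leaves $\nu$ fixed, while removing (respectively adding) a box in a lower row sends $\nu\mapsto\nu-\square$ (respectively $\nu+\square$) --- exactly the effect of the two half-step rules of Corollary~\ref{cor:Partition-branching-rules}.

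With these ingredients, the argument of Theorem~\ref{thm:Brauer-tensor-algebra} goes through. I would set $U(r,\mu)=\Hom_{\fS_n}(S^\mu,\otimes^rV)$, a $D_r$-module, so that $\otimes^rV\cong\bigoplus_\mu S^\mu\otimes U(r,\mu)$, and dually $\tilde U(r,\nu)=\Inf_{|\nu|}^r(S^\nu)$ and $\tilde S(r,\nu)=\Hom_{D_r}(\tilde U(r,\nu),\otimes^rV)$, so that $\otimes^rV\cong\bigoplus_\nu\tilde S(r,\nu)\otimes\tilde U(r,\nu)$; by Proposition~\ref{prop:equivalence-D_r} together with the double-centraliser property the $\tilde S(r,\nu)$ are pairwise distinct irreducible $\fS_n$-modules, hence are matched bijectively with the $S^\mu$, and the content of the theorem is that this bijection is $\nu\leftrightarrow[\nu]$. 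I would prove $U(r,\mu)\cong\tilde U(r,\tilde\mu)$ and $S^\mu\cong\tilde S(r,\tilde\mu)$ by induction on $r$, checking the cases $r\le 2$ directly. In the inductive step one evaluates $\Hom_{D_r}\big(-,\otimes^{r+1}V\downarrow^{D_{r+1}}_{D_r}\big)$ in two ways --- via the classical branching rule above and via Corollary~\ref{cor:Partition-branching-rules} --- and feeds in the inductive hypothesis at level $r$; comparing the resulting decompositions over all $\mu$ of a fixed size and using that a partition $\lambda$ with $|\lambda|>2$ is determined by the multiset of the partitions it covers, one isolates $\tilde S(r+1,\lambda)\cong S^{[\lambda]}$ for $|\lambda|=r+1$, after which the two bimodule decompositions of $\otimes^{r+1}V$ force $U(r+1,\lambda)\cong\tilde U(r+1,\lambda)$ as well.

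The step I expect to be the main obstacle, exactly as in the Brauer case, is this extraction of the individual summands indexed by partitions of size $r+1$ from the branching identities. It should be somewhat more delicate here because the Bratteli diagram of the partition-algebra tower carries \emph{lazy} edges --- one may remain at the same partition --- in addition to the $\pm\square$ edges, so the incidence argument must be re-examined to confirm that each new module is still pinned down, and one must thread the isomorphisms for partitions of size $\le r$ through to level $r+1$. A second, more bookkeeping-heavy point is to verify that the intermediate modules produced by $\mathrm{Res}_{\fS_{n-1}}$ really correspond under inflation to $D'$-modules; this is where the self-duality of $V$ and the hypothesis $n\ge 2r$ enter, the latter being needed --- beyond the mere fullness of $\ev_{\fS_n}$, which holds for all $n>0$ --- to guarantee semisimplicity at level $r+1$, so that Proposition~\ref{prop:equivalence-D_r} and the double-centraliser property apply there too.
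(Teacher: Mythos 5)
Your proposal takes essentially the same route as the paper: compare the classical branching rule for $S^\mu\otimes V$ with Corollary~\ref{cor:Partition-branching-rules}, set up the two bimodule decompositions of $\otimes^rV$, and run an induction on $r$ with $r\le 2$ as the base case, using a combinatorial uniqueness property of the branching multigraph to pin down the summands at level $r+1$. The one substantive slip is the combinatorial lemma you invoke: you state the Brauer-case fact (``$\lambda$ with $|\lambda|>2$ is determined by the multiset of partitions it covers''), which is \emph{not} the right input here precisely because of the lazy edges you yourself flag afterwards. The correct statement --- and the one the paper actually uses, with a citation to the literature --- is that $\lambda$ is determined by the multiset of partitions obtained by first removing a cell from $\lambda$ and then adding a cell back (so $\lambda$ itself occurs with multiplicity equal to its number of removable cells, which is how the lazy edges get accounted for). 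You correctly identify this as the delicate point and observe that the incidence argument must be re-examined, but the proposal leaves it unresolved rather than supplying the corrected lemma; filling in that one fact makes your argument coincide with the paper's.
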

\begin{proof}
  The proof is very similar to the proof of
  Theorem~\ref{thm:Brauer-tensor-algebra}.  Recall that $V\cong
  S^{(n)} \oplus S^{(n-1,1)}$ and the classical branching rule is
  \[
  S^\mu \otimes V \cong%
  \bigoplus_{\nu = \mu-\square}\bigoplus_{\lambda = \nu+\square}
  S^\lambda.
  \]

  Let $U(r, \mu) = \Hom_{\fS_n}\big(S^\mu,\otimes^rV\big)$, that is,
  \begin{equation}
    \label{eq:Partition-decomp1}
    \otimes^rV \cong \bigoplus_{\mu} S^\mu \otimes U(r,\mu),    
  \end{equation}
  let $\tilde U(r,\mu) = \Inf_p^r\left(S^{\tilde\mu}\right)$ (where
  $\tilde\mu\vdash p$ and let $\tilde W(r, \mu) =
  \Hom_{D_r}\big(\tilde U(r,\mu),\otimes^rV\big)$, that is,
  \begin{equation}
    \label{eq:Partition-decomp2}
    \otimes^rV \cong \bigoplus_{\mu} \tilde W(r, \mu)\otimes \tilde
    U(r,\mu).    
  \end{equation}
  We then find, using the classical branching rule and
  Corollary~\ref{cor:Partition-branching-rules} respectively,
  \begin{align*}
    \Hom_{D_r}\big(U(r,\mu),\otimes^{r+1}V
    \downarrow^{D_{r+1}}_{D_r}\big) %
    &\cong%
    \bigoplus_{\nu = \mu-\square}\bigoplus_{\lambda = \nu+\square}
    S^\lambda%
    \qquad\text{and}\\
    \Hom_{D_r}\big(\tilde U(r,\mu),\otimes^{r+1}V
    \downarrow^{D_{r+1}}_{D_r}\big) %
    &\cong%
    \bigoplus_{\nu = \mu-\square}\bigoplus_{\lambda = \nu+\square}
    \tilde W(r+1, \lambda).
  \end{align*}
  The proof now proceeds exactly as the proof of
  Theorem~\ref{thm:Brauer-tensor-algebra}: for $r\leq 2$ we have to
  check directly that $U(r,\mu) \cong \tilde U(r,\mu)$ and $S^\mu
  \cong \tilde W(r, \mu)$, whereas for $r>2$ we use induction.  To
  this end, note that for a given $\lambda$ the multiset of
  partitions obtained by first removing a cell from $\lambda$ and
  then adding a cell uniquely determines $\lambda$, see for example
  \cite[Corollary~4.2]{MR941434}.
\end{proof}

\subsection{Cyclic sieving phenomenon}
\label{sec:Partition-CSP}

As in Section~\ref{sec:Brauer-CSP} we will now exhibit an instance of
the cyclic sieving phenomenon.  Let $X$ be the set of set
partitions $\{1,\dots,r\}$ into at most $n$ blocks, and let
$\rho:X\to X$ be the rotation map, that is, $\rho$ acts on
$\{1,\dots,r\}$ as $\rho(i)=i\pmod r+1$.

In contrast to the situation with $(n+1)$-noncrossing perfect
matchings, the set partitions into at most $n$ blocks can be
understood as a combinatorial species.

Let $H_n=\sum_{k=0}^n h_k$ be the species of sets of cardinality
at most $n$ and let $(H_n \circ H_+)_r$ be the homogeneous part of degree $r$.
\begin{thm}\label{thm:partitions_csp} Let $X=X(r,n)$ be the set of set partitions on
  $\{1,\dots,r\}$ into at most $n$ blocks and let $\rho$ be the
  rotation map acting on $X$.  Let
  \begin{equation*}
    P(q) = \fd \ch (H_n\circ H_+)_r,
  \end{equation*}
Then the triple $\big(X, \rho, P(q)\big)$ exhibits the cyclic sieving phenomenon.
\end{thm}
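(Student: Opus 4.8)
The plan is to follow the template of Section~\ref{sec:Brauer-CSP}: exhibit $X$ as a basis of a space of invariant tensors that carries an $\fS_r$-action for which the long cycle acts by rotation, and then quote Corollary~\ref{cor:spr}. No semisimplicity of $D_r$ is needed, so the argument works for all $n>0$.

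First I would recall that the second fundamental theorem for the symmetric group identifies $\Hom_{\dc_{\fS(n)}/\Par^{(n)}}(0,r)$ with $\Hom_{\T_{\fS(n)}}(0,r)$ via $\bev_{\fS(n)}$, and that, by the basis theorem above, the images of the diagrams in $D(0,r)$ that are set partitions of $[r]$ into at most $n$ blocks form a basis; this set is precisely $X=X(r,n)$. Since $\fS_r$ acts on $\Hom_{\dc_{\fS(n)}}(0,r)$ by relabelling the $r$ endpoints, it permutes these diagrams, so $\Hom_{\T_{\fS(n)}}(0,r)$ is the permutation representation of $\fS_r$ on $X$. As a species this is the homogeneous component of degree $r$ of $H_n\circ H_+$, because a set of at most $n$ nonempty blocks is exactly a set partition into at most $n$ blocks; hence $\ch\Hom_{\T_{\fS(n)}}(0,r)=\ch(H_n\circ H_+)_r$.

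Next I would show that the long cycle of $\fS_r$ acts on $\Hom_{\T_{\fS(n)}}(0,r)$ by rotation. The functor $\ev_{\fS(n)}$ is symmetric and pivotal, and $V$, equipped with its standard symmetric self-pairing $v_i\otimes v_j\mapsto\delta_{i,j}$, satisfies the string-diagram identities of Figure~\ref{fig:con}; these reduce to a routine check on the generators listed in the definition of $\ev_{\fS(n)}$. Therefore Proposition~\ref{lem:rot} applies and shows that the long cycle acts as the categorical rotation map; transported through $\bev_{\fS(n)}$ this is the diagrammatic rotation, which cyclically relabels the $r$ endpoints and so coincides (up to the choice of generator of the cyclic group, which is immaterial for the cyclic sieving phenomenon since $\fix(c^k)=\fix(c^{-k})$) with $\rho$. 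That $\rho$ preserves $X$ is in any case immediate, as rotating a set partition of $[r]$ yields one with the same number of blocks.

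Finally I would invoke Corollary~\ref{cor:spr} with $U=\Hom_{\T_{\fS(n)}}(0,r)$, basis $X$, long cycle $c=\rho$ and $P=\fd_r(\ch U)=\fd\,\ch(H_n\circ H_+)_r$, concluding that $(X,\rho,P)$ exhibits the cyclic sieving phenomenon. The only genuinely new input, and the step I expect to demand the most care, is the verification of the hypotheses of Proposition~\ref{lem:rot} for the permutation representation, i.e.\ the diagrammatic identities of Figure~\ref{fig:con} for $V$, ensuring that the $\fS_r$-action really is rotation; everything else is an assembly of the second fundamental theorem, the species identification $H_n\circ H_+$, and the general machinery of Section~\ref{section:csp}.
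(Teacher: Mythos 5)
Your proof is correct and follows exactly the approach the paper takes in the analogous Brauer case (Section~\ref{sec:Brauer-CSP}), which the paper invokes implicitly when it states this theorem without a separate proof: identify $X$ as the diagrammatic basis of $\Hom_{\dc_{\fS(n)}/\Par^{(n)}}(0,r)\cong\Hom_{\T_{\fS(n)}}(0,r)$, observe this is the permutation representation whose species is $(H_n\circ H_+)_r$, apply Proposition~\ref{lem:rot} to equate the long cycle with rotation, and conclude via Corollary~\ref{cor:spr}. The only cosmetic remark is that the conditions of Figure~\ref{fig:con} are most naturally verified once and for all by isotopy of strands inside the diagram category $\dc_{\fS(n)}$, with the symmetric pivotal functor $\ev_{\fS(n)}$ then transporting them to $\T_{\fS(n)}$, rather than rechecking them on $V$ through the explicit formulas on generators, though both routes are equally valid.
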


For completeness, we remark that as in the case of the symplectic
group, the Frobenius characters of the isotypical components are known:
\begin{thm}[\protect{\cite[Theorem 5.1]{MR1272068} and \cite[Exercise
    7.74]{MR1676282}}]
  For any $\mu\vdash n$,
  \begin{equation*}
    \sum_{r\ge 0}\ch \Hom_{\fS_n} (S^\mu,\otimes^rV) = s_\mu\circ H.
  \end{equation*}
\end{thm}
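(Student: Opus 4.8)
The plan is to realise $\bigoplus_{r\ge 0}\otimes^r V$, as a bimodule over $\fS_n$ (acting on the codomain indices) and over $\fS_r$ (acting on the tensor factors), as a single bivariate species, and then to read off its $S^\mu$-isotypic part using the reproducing-kernel identity of Lemma~\ref{lem:sets-reproducing-kernel}. First I would identify $\otimes^r V$ with the permutation module $K[\mathrm{Fun}([r],[n])]$ on functions $[r]\to[n]$: the basis vector $v_{i_1}\otimes\dots\otimes v_{i_r}$ corresponds to $k\mapsto i_k$, the Coxeter action of $\fS_r$ becomes precomposition, and the diagonal action of $\fS_n$ becomes postcomposition. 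Regarding the domain $[r]$ as sort $X$ and the codomain $[n]$ as sort $Y$, the bivariate species $F$ of functions then equals $\otimes^r V$ in $X$-degree $r$ and $Y$-degree $n$. The key step is the species identity
\[
F = H\big(Y\cdot H(X)\big),
\]
which I would verify from the definition of partitional composition: in $H\circ\big(Y\cdot H(X)\big)$ each block of the underlying set partition of $U_X\amalg U_Y$ carries exactly one $Y$-label together with an arbitrary set of $X$-labels, so the blocks are indexed by $U_Y$ and record precisely the fibres of a function $U_X\to U_Y$.

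Granting this, the rest is immediate. Since $K$ has characteristic zero, $S^\mu$ is self-dual, so $\Hom_{\fS_n}(S^\mu,\otimes^r V)\cong S^\mu\otimes_{K\fS_n}\otimes^r V$; by the representation-theoretic meaning of the scalar product, summing over $r$,
\[
\sum_{r\ge 0}\ch\Hom_{\fS_n}(S^\mu,\otimes^r V)
= \langle s_\mu(Y),\,F\rangle_Y
= \langle s_\mu(Y),\,H(Y\cdot H(X))\rangle_Y,
\]
the pairing being taken in the sort $Y$, which automatically picks out the part of $F$ homogeneous of degree $n$ in $Y$ and leaves a symmetric function in $X$. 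Applying Lemma~\ref{lem:sets-reproducing-kernel} with the reproducing kernel $H(Y\cdot Z)$ specialised to $Z=H(X)$ yields $\langle s_\mu(Y),H(Y\cdot H(X))\rangle_Y = s_\mu(H(X)) = s_\mu\circ H$, which is the claim.

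I expect the only real work to be the species identity $F=H(Y\cdot H(X))$, together with the routine check that the scalar product computes $S^\mu\otimes_{K\fS_n}(-)$ slice by slice (the finiteness required for $\langle\ ,\ \rangle_Y$ holds because the degree-$n$-in-$Y$, degree-$r$-in-$X$ component has dimension $n^r$); once these are in hand the reproducing-kernel lemma does everything else. A more pedestrian alternative, avoiding species, would compute directly that the $\fS_r$-character of $\otimes^r V$ with a fixed $g\in\fS_n$ acting factors over the cycles of a permutation $\tau\in\fS_r$, a $k$-cycle contributing $\fix(g^k)$, whence $\sum_{r\ge 0}\ch(\otimes^r V,g)=\prod_{c}\big(H\circ p_{|c|}\big)=p_{\lambda(g)}\circ H$ (the product over the cycles $c$ of $g$); averaging $\tfrac1{n!}\sum_{g\in\fS_n}\chi^\mu(g)\,p_{\lambda(g)}\circ H$ and using $s_\mu=\sum_{\rho\vdash n}z_\rho^{-1}\chi^\mu(\rho)\,p_\rho$ together with linearity of plethysm in the outer argument gives $s_\mu\circ H$ again --- the identity $p_k\circ H=H\circ p_k$ being what makes the reindexing over cycles valid.
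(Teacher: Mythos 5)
The paper does not prove this theorem; it states it with citations to \cite[Theorem 5.1]{MR1272068} and \cite[Exercise 7.74]{MR1676282}, so there is no argument in the text to compare against. Your proof is correct, and both of your variants work. The species-theoretic version fits the paper's toolkit well: identifying $\otimes^rV$ with the permutation module on functions $[r]\to[n]$, the key step is the isomorphism of bivariate species $F \cong H\bigl(Y\cdot H(X)\bigr)$, where $F[U_X,U_Y]$ is the set of functions $U_X\to U_Y$. This is indeed a natural bijection: in the partitional composition, each block of the partition of $U_X\amalg U_Y$ consists of exactly one $Y$-label together with an arbitrary set of $X$-labels, so the blocks are indexed by $U_Y$ and encode precisely the fibres of a function. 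Combined with the Frobenius-character interpretation of the scalar product, which gives $\Hom_{\fS_n}(S^\mu,-)$ degree by degree via self-duality of $S^\mu$, and with Lemma~\ref{lem:sets-reproducing-kernel} applied to the reproducing kernel $H(Y\cdot Z)$ at $Z = H(X)$, one obtains $s_\mu\circ H$. Your elementary alternative --- computing the bi-character $\tr_{\otimes^rV}(g,\tau)=\prod_c\fix\bigl(g^{|c|}\bigr)$ over cycles $c$ of $\tau$, summing to get $\sum_r \ch(\otimes^rV,g)=\exp\bigl(\sum_k\fix(g^k)p_k/k\bigr)=p_{\lambda(g)}\circ H$ using $p_m\circ H = H\circ p_m$, and finally averaging against $\chi^\mu$ by linearity of plethysm in the outer argument --- is essentially the proof one finds in the cited references.
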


As in Section~\ref{sec:Brauer-CSP}, we can generalise
Theorem~\ref{thm:partitions_csp}.  Let $X = X(r,n,k)$ be the set of
multiset partitions of $\{1,\dots,1,2,\dots,2,\dots,r,\dots,r\}$,
where each label occurs $k$ times, into at most $n$ blocks.  Let
$\rho$ be the rotation action, that is, $\rho$ acts on the labels as
$\rho(i)=i\pmod{r}+1$.  The symmetric group $\fS_r$ acts on this set
by permuting the labels.  The corresponding Frobenius character can
be obtained using \cite[Theorem 1]{6111} or alternatively with the
calculus of species and equals
\[
\big\langle h_r\big(X\cdot h_k(Y)\big), %
(H_n\circ H_+)_{kr}(Y) \big\rangle_Y.
\]
Let $P=P(r,n,k)$ be the fake degree of this symmetric function.
Then $(X,\rho,P)$ exhibits the cyclic sieving phenomenon.

A more interesting generalisation of Theorem~\ref{thm:partitions_csp}
is obtained by considering the exterior powers of the defining
representation of the symmetric group.  In this case, $X=X(r,n,k)$ is
the set of multiset partitions of
$\{1,\dots,1,2,\dots,2,\dots,r,\dots,r\}$, each label occurring $k$
times, into at most $n$ blocks, with the additional requirement that
the blocks are proper sets and blocks of odd cardinality have
multiplicity one.  The corresponding Frobenius character can be
obtained using \cite[Theorem 1]{6111}
\[
\big\langle h_r\big(X\cdot e_k(Y)\big), %
(H_n\circ H_+)_{kr}(Y) \big\rangle_Y.
\]
When $P=P(r,n,k)$ is the fake degree of this symmetric function and
$\rho$ is the rotation action as above $(X,\rho,P)$ exhibits the
cyclic sieving phenomenon.

However, in general this Frobenius character does not come from a
permutation representation.  For example, for $n = 3$, $k = 2$, $r =
3$ it is $2/3p_{1, 1, 1} + 1/3p_3$.

\section{The directed Brauer category}
In this section we discuss the invariant theory of the adjoint representation
of the general linear groups using the combinatorics of directed perfect matchings,
or bijections, and the directed Brauer category; alternatively, the
combinatorics of walled Brauer diagrams and the walled Brauer
category. This invariant theory was developed in
\cite{MR768993}, \cite{MR792707}, \cite{MR899903}, \cite{MR1280591}.
The use of diagrams in this context emerged from the skein relation approach
to the HOMFLYPT knot polynomial.

\subsection{Diagram category}

For $r,s\ge 0$, let $D_{\vec\Br}(r,s)$ be the set of directed perfect
matchings of $[r]\amalg [s]$, that is, the set of partitions of
$[r]\amalg [s]$ into ordered pairs.  In terms of diagrams an element
of $D_{\vec\Br}(r,s)$ is a Brauer diagram in $D_{\Br}(r,s)$ together
with an orientation of each strand.

The directed cobordism category $\cb_{\vec\Br}$ has as objects the
set of words in the alphabet $\{+,-\}$, that is, $\bN\ast\bN$.  The set
of morphisms is $\bN\times\bN\times D_{\vec\Br}(r,s)$, since there
are two kinds of oriented loops. For an element $x\in
\bN\times\bN\times D_{\vec\Br}(r,s)$ the domain of $x$ is the word
$u_1,\dotsc,u_r$ with
\begin{equation*}
  u_i= \begin{cases}
    + &\text{if $i$ is the initial point of a pair}\\
    - &\text{if $i$ is the final point of a pair}\end{cases}
\end{equation*}
and the codomain of $x$ is the word $v_1,\dotsc,v_s$ with
\begin{equation*}
  v_i = \begin{cases}
    + &\text{if $i$ is the final point of a pair}\\
    - &\text{if $i$ is the initial point of a pair.}\end{cases}
\end{equation*}
Composition is defined in the same way as for the Brauer category.

The topological interpretation of this category is that it is
equivalent to the cobordism category whose objects are oriented $0$-manifolds
and whose morphisms are oriented $1$-manifolds.

We denote with $\dc_{\vec\Br(\delta)}$ the corresponding diagram
category. Here both oriented loops give a factor of $\delta$.

Then $\dc_{\vec\Br(\delta)}$ is generated as a monoidal category by
the diagrams obtained from \eqref{eq:perm} and \eqref{eq:cupcap}
by putting orientations on the strands.

There are now six types of strand since each type of strand in
\eqref{eq:type} can be directed in two possible ways.  In particular
there are two types of through strand.  The two types of through
strand determine two propagating numbers each of which satisfies the
conditions in Lemma~\ref{en:pr}.

We will use boldface letters to denote pairs of numbers. This means
that objects will typically be denoted by $\mathbf r = (r_1,r_2)$ and
propagating numbers by $\mathbf p = (p_1,p_2)$.

Any object of the directed Brauer category is isomorphic to $+^n -^m$
for a unique pair $(n,m)$ with $n,m\in\bN$.  The full subcategory on
these objects is called the walled Brauer category.  By construction,
the inclusion of the walled Brauer category in the directed Brauer
category is fully faithful and essentially surjective and hence is an
equivalence.

For any $\mathbf p$, the algebra $A_{\mathbf p}$ is $K\fS_{\mathbf p}=K(\fS_{p_1}\times\fS_{p_2})$.
Also for any $\mathbf r$ we have an inclusion $K\fS_{\mathbf r}\subseteq D_{\mathbf r}$.

The directed Brauer category has a decomposition as a direct sum of subcategories.
The components are indexed by $\bZ$. A sign sequence with $r$ plus
signs and $s$ minus signs
is an object of block $k$ where $k=r-s$. The zero component is a symmetric monoidal
subcategory. The intersection of the zero component with the walled Brauer category
has objects $(r,r)$. Another subcategory is the full subcategory with objects
$(+,-)^r$ for $r\ge 0$. These two inclusions are both fully faithful and essentially
surjective and so are equivalences. In fact both subcategories are isomorphic.
This category is denoted $\dc_A$.
This a diagram category in the sense of section \ref{section:dc}.
An alternative description of this as a diagram category is given in
\cite{hexagons}.

Since $\dc_A$ is a symmetric monoidal category we have inclusions
$\fS_r\rightarrow D_{(r,r)}$. These inclusions are given by the
composition of the diagonal map $\fS_r \rightarrow \fS_r\times\fS_r$
with the inclusions above $\fS_r\times\fS_r \rightarrow D_{r,r}$.

Although we do not make use of this in this paper we remark that the
object $[1]$ in the category $\dc_A$ is a symmetric Frobenius
algebra.
\subsection{Frobenius characters for diagram
  algebras}\label{sec:mixed-stable}

In this section we employ the results of Section~\ref{sec:inflation},
adapted in the obvious way, to compute the Frobenius characters of
the restriction of modules of the diagram algebras $D_{\mathbf r}$ to
$K(\fS_{r_1}\times\fS_{r_2})$-modules.

Let us first provide a decomposition for the diagrams in $D(\mathbf
r,\mathbf s;\mathbf p)$.

\begin{lemma} Any walled Brauer diagram in $D(\mathbf r,\mathbf
  s;\mathbf p)$ has a unique decomposition as
  \begin{equation*}
    \begin{tikzpicture}[line width = 2pt]
      \fill[color=blue!20] (0,0) rectangle  (6,5);
      \draw (1,5) node[anchor=south] {$r_1$} -- (1,4);
      \draw (0.75,4) -- (0.75,3) node[anchor=east] {$p_1$} -- (0.75,1);
      \draw (1,1) -- (1,0) node[anchor=north] {$s_1$};
      \draw[dashed] (3,5.5) -- (3,-0.5);
      \draw (5,5) node[anchor=south] {$r_2$} -- (5,4);
      \draw (5.25,4) -- (5.25,3)  node[anchor=west] {$p_2$} -- (5.25,1);
      \draw (5,1) -- (5,0) node[anchor=north] {$s_2$};

      \draw (1.25,1) .. controls (1.25,2) and (4.75,2) .. (4.75,1);
      \draw (1.25,4) .. controls (1.25,3) and (4.75,3) .. (4.75,4);

      \draw[line width = 1pt] (0.5,2.25) rectangle (1,2.75);
      \fill[color = black!60] (0.5,2.25) rectangle (1,2.75);
      \draw[line width = 1pt] (5,2.25) rectangle (5.5,2.75);
      \fill[color = black!60] (5,2.25) rectangle (5.5,2.75);

      \draw[line width = 1pt] (2.75,3.00) rectangle (3.25,3.5);
      \fill[color = black!60] (2.75,3.00) rectangle (3.25,3.5);
      \draw[line width = 1pt] (2.75,1.5) rectangle (3.25,2);
      \fill[color = black!60] (2.75,1.5) rectangle (3.25,2);

      \draw[line width = 1pt] (0.5,4.0) rectangle (1.5,4.5);
      \fill[color = black!10] (0.5,4.0) rectangle (1.5,4.5);
      \draw[line width = 1pt] (0.5,0.5) rectangle (1.5,1.0);
      \fill[color = black!10] (0.5,0.5) rectangle (1.5,1.0);
      \draw[line width = 1pt] (4.5,4.0) rectangle (5.5,4.5);
      \fill[color = black!10] (4.5,4.0) rectangle (5.5,4.5);
      \draw[line width = 1pt] (4.5,0.5) rectangle (5.5,1.0);
      \fill[color = black!10] (4.5,0.5) rectangle (5.5,1.0);

    \end{tikzpicture}
  \end{equation*}
  The four pale grey rectangles are shuffles and the four dark grey
  squares are permutations.
\end{lemma}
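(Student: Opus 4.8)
The plan is to imitate, essentially verbatim, the proof of the analogous decomposition lemma for the Brauer category, with the wall doing most of the bookkeeping.

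First I would observe that in the walled arrangement $+^{r_1}-^{r_2}$ on the top and $+^{s_1}-^{s_2}$ on the bottom the six types of strand of the directed Brauer category degenerate to exactly four: through strands contained in the left part of the wall, through strands contained in the right part, turn-back strands joining two top points, and turn-back strands joining two bottom points. Moreover the convention that every strand runs from an initial to a final point forces a unique orientation on each of these four families, so the orientations carry no information beyond the underlying matching. Counting endpoints on each side of the wall --- every top-left point is an endpoint either of a left-through strand or of a top turn-back, and so on --- immediately yields $r_1 - p_1 = r_2 - p_2$ and $s_1 - p_1 = s_2 - p_2$, these common values being the numbers of top, respectively bottom, turn-backs.

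Next I would read off the data of the decomposition exactly as in the Brauer case. Listing the left through strands in the increasing order of their top-left endpoints and of their bottom-left endpoints gives a permutation in $\fS_{p_1}$, and the right through strands give one in $\fS_{p_2}$; these are two of the dark grey squares. Listing the top turn-backs in the increasing order of their endpoints on each side of the wall gives a bijection from the $r_1-p_1$ non-through top-left points onto the $r_2-p_2 = r_1-p_1$ non-through top-right points, hence a permutation in $\fS_{r_1-p_1}$, and likewise the bottom turn-backs give a permutation in $\fS_{s_1-p_1}$; these are the other two dark grey squares. Finally, on each of the four boundary quarters, recording which points are through-strand endpoints and which are turn-back endpoints, as an order-preserving interleaving, produces the four pale grey shuffles, of types $(p_1,r_1-p_1)$, $(p_2,r_2-p_2)$, $(p_1,s_1-p_1)$ and $(p_2,s_2-p_2)$.

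It then remains only to check that this assignment is a bijection: each datum is manifestly determined by the diagram, and conversely stacking the four shuffles, the two through-permutations and the two turn-back permutations in the pattern drawn reconstructs a unique walled Brauer diagram of type $\mathbf p$. I do not anticipate a genuine obstacle; the only points deserving a moment's care are the equalities $r_1-p_1 = r_2-p_2$ and $s_1-p_1 = s_2-p_2$ and the redundancy of the orientation data, both disposed of in the first step.
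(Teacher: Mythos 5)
Your proof is correct and amounts to a careful spelling-out of the argument that the paper leaves entirely implicit: the paper states this lemma without proof, in parallel to the Brauer-category version, where it gives only the one-line argument ``take the strands of each type; the propagating strands give the permutation and the other two types give the two perfect matchings; then the shuffles arrange the boundary points.'' Your proposal is the natural walled analogue of that proof, with the additional bookkeeping about the four degenerate strand types, the forced orientations, and the equalities $r_1-p_1=r_2-p_2$ and $s_1-p_1=s_2-p_2$ that the wall structure demands. So it is the same approach, just written out in full.
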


\begin{defn}
  For $r_1 \geq p_1\geq 0$ and $r_2 \geq p_2\geq 0$ let $\bar
  D(\mathbf p,\mathbf r;\mathbf p)$ be the set of walled Brauer
  diagrams of the form
  \begin{equation*}
    \begin{tikzpicture}[line width = 2pt]
      \fill[color=blue!20] (0,0) rectangle  (6,3);
      \draw (0.75,3) node[anchor=south] {$p_1$} -- (0.75,2.5) -- (0.75,1.5) -- (0.75,1);
      \draw (1,1) -- (1,0) node[anchor=north] {$r_1$};
      \draw[dashed] (3,3.5) -- (3,-0.5);
      \draw (5.25,3) node[anchor=south] {$p_2$} -- (5.25,2.5) -- (5.25,1.5) -- (5.25,1);
      \draw (5,1) -- (5,0) node[anchor=north] {$r_2$};

      \draw (1.25,1) .. controls (1.25,2) and (4.75,2) .. (4.75,1);

      \draw[line width = 1pt] (2.75,1.5) rectangle (3.25,2);
      \fill[color = black!60] (2.75,1.5) rectangle (3.25,2);

      \draw[line width = 1pt] (0.5,0.5) rectangle (1.5,1.0);
      \fill[color = black!10] (0.5,0.5) rectangle (1.5,1.0);
      \draw[line width = 1pt] (4.5,0.5) rectangle (5.5,1.0);
      \fill[color = black!10] (4.5,0.5) rectangle (5.5,1.0);
    \end{tikzpicture}
  \end{equation*}
  and define $\bar D(\mathbf r,\mathbf p;\mathbf p)$ in an analogous
  way.
\end{defn}

\begin{cor}\label{cor:decomposition-Permutation}
  For $r_1, s_1 \geq p_1\geq 0$ and $r_2,s_2 \geq p_2\geq 0$ the
  composition map
  \[
  \circ: \bar D(\mathbf r,\mathbf p;\mathbf p) \times D(\mathbf
  p,\mathbf p; \mathbf p) \times \bar D(\mathbf p,\mathbf s;\mathbf
  p)\to D(\mathbf r,\mathbf s;\mathbf p)
  \]
  is a bijection.
\end{cor}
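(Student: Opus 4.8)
As with Corollary~\ref{cor:decomposition-Brauer} and Corollary~\ref{cor:decomposition-Partition}, the plan is to read this off directly from the decomposition lemma stated just above: the corollary is essentially a restatement of that lemma, with the middle permutation layer of the canonical form isolated as $D(\mathbf p,\mathbf p;\mathbf p)$.

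First I would describe the inverse map. Given $x\in D(\mathbf r,\mathbf s;\mathbf p)$, put it in the canonical form provided by the decomposition lemma and make two horizontal cuts, each of width $\mathbf p=(p_1,p_2)$: one just below the upper pair of shuffles and the top arc-permutation, and one just above the bottom arc-permutation and lower shuffles. The portion above the upper cut is, by construction, an element $a\in\bar D(\mathbf r,\mathbf p;\mathbf p)$; the slice between the cuts is a pair of permutations of the through strands, i.e.\ an element $b\in D(\mathbf p,\mathbf p;\mathbf p)=K(\fS_{p_1}\times\fS_{p_2})$; and the portion below the lower cut is an element $c\in\bar D(\mathbf p,\mathbf s;\mathbf p)$. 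Uniqueness in the decomposition lemma says that $x\mapsto(a,b,c)$ is well defined and inverts $(a,b,c)\mapsto a\circ b\circ c$ on its image.

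It then remains to see that $a\circ b\circ c$ actually lands in $D(\mathbf r,\mathbf s;\mathbf p)$, i.e.\ that the composition does not change the propagating number and introduces no loops. Since every one of the $p_1+p_2$ bottom points of $a$ lies on a through strand of $a$, and every one of the $p_1+p_2$ top points of $c$ lies on a through strand of $c$, stacking $a$, $b$, $c$ matches through-endpoints to through-endpoints at both interfaces: no closed component is formed, and the left (resp.\ right) through strands of $a$, $b$ and $c$ concatenate into exactly $p_1$ (resp.\ $p_2$) through strands of the composite. Hence the composite has propagating number $\geq\mathbf p$; it has propagating number $\leq\mathbf p$ because $\pr(a)=\mathbf p$ and the propagating number is non-increasing under composition (the componentwise version of Lemma~\ref{en:pr}(2)). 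So the propagating number is exactly $\mathbf p$, and the two maps are mutually inverse bijections.

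The only point requiring care — and really the only place where the directed/walled structure intervenes beyond the undirected Brauer case — is bookkeeping: one must trust the decomposition lemma that the two additional arc-permutation squares correctly encode the matching of the $r_1-p_1$ top arc-endpoints left of the wall with the $r_2-p_2$ top arc-endpoints right of the wall (and likewise on the bottom), together with the orientations of the strands, and that the wall constraints (through strands must not cross the wall, top/bottom arcs must) are respected throughout the gluing $a\circ b\circ c$. Modulo the decomposition lemma, which I am assuming, the corollary is immediate.
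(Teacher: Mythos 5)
Your proposal is correct and matches the paper's approach exactly: the paper states the corollary with no separate proof, treating it as an immediate consequence of the uniqueness of the decomposition in the preceding lemma, which is precisely what you spell out. The only slip is a typo: you write $D(\mathbf p,\mathbf p;\mathbf p)=K(\fS_{p_1}\times\fS_{p_2})$, but $D(\mathbf p,\mathbf p;\mathbf p)$ is the \emph{set} of diagrams (a pair of permutations, identified with $\fS_{p_1}\times\fS_{p_2}$ as a set), not the group algebra.
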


Let us now proceed in obvious analogy to Section~\ref{sec:inflation}.
Let $E = D_{\mathbf r}/\big(\cJ_{\mathbf r}(p_1-1,p_2)\oplus
\cJ_{\mathbf r}(p_1,p_2-1)\big)$.  Note that for any diagram in
$D_{\mathbf r}$ with propagating numbers $\mathbf p$ we have $r_1-p_1
= r_2-p_2$.  Therefore, the element of $D_{\mathbf r}$ corresponding
to the diagram
\begin{equation}
  \begin{tikzpicture}[line width = 2pt]
    \fill[color=blue!20] (0,0) rectangle  (6,1.5);
    \draw[dashed] (3,-0.25) -- (3,1.75);
    \draw (1,0) node[anchor=north] {$p_1$} -- (1,1.5) {};
    \draw (5,0) node[anchor=north] {$p_2$} -- (5,1.5) {};
    \draw (2,1.5) .. controls (2,1) and (4,1) .. (4,1.5); 
    \draw (2,0) node[anchor=north] {$r_1-p_1$} .. controls (2,0.5) and (4,0.5) .. (4,0);
    \draw (6,0) node[anchor=west] {,};
  \end{tikzpicture}
\end{equation}
multiplied with $\delta^{-(r_1-p_1)}$ is an idempotent, $e_{\mathbf
  p}$.  Then, analogous to Theorem~\ref{thm:strict}, we have a strict
Morita context and inflation is an equivalence from $K\fS_{p_1}\times
\fS_{p_2}$-modules to {\hu} $\cJ_{\mathbf r}(\mathbf
p)/\big(\cJ_{\mathbf r}(p_1-1,p_2)\oplus \cJ_{\mathbf
  r}(p_1,p_2-1)\big)$-modules.

The following are the main theorems of this section.

Theorem~\ref{thm:dirdg} is implicit in the results of \cite{MR792707}
and the corresponding character formula is \cite[Corollary
7.24]{MR1405593}. 
\begin{thm}\label{thm:dirdg}
  Let $V=V_1\otimes V_2$ be an $\fS_{p_1}\times\fS_{p_2}$-module.
  Then the $\fS_{r_1}\times\fS_{r_2}$-module $\big(\Inf^{\mathbf
    r}_{\mathbf p} V\big)\downarrow^{D_{\mathbf
      r}}_{\fS_{r_1}\times\fS_{r_2}}$ is isomorphic to
  \begin{equation}
    \label{eq:dirdg}
    \bigoplus_{\lambda\vdash k} V_1 \cdot S^\lambda
    \otimes V_2 \cdot S^\lambda,
  \end{equation}
  where we denote $r_1-p_1 = r_2-p_2$ by $k$.
\end{thm}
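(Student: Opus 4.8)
The plan is to mimic the structure of the proofs of the analogous statements in the Brauer and partition cases (the second proof of Theorem in Section~\ref{sec:Brauer-stable} and the main theorem of Section~\ref{sec:Partition-stable}), namely to realise the bimodule $KD(\mathbf p,\mathbf r;\mathbf p)$ as an explicit bivariate (in fact four-variate) combinatorial species, and then to compute the relevant scalar product using Lemma~\ref{lem:sets-reproducing-kernel}. First I would unwind the decomposition furnished by Corollary~\ref{cor:decomposition-Permutation} (upon setting the source object to $\mathbf p$ and the target to $\mathbf r$). A diagram in $D(\mathbf p,\mathbf r;\mathbf p)$ decomposes into: a bijection matching the $p_1$ points of sort $X_1$ with a $p_1$-subset of the $r_1$ points of sort $Y_1$; a bijection matching the $p_2$ points of sort $X_2$ with a $p_2$-subset of the $r_2$ points of sort $Y_2$; and a directed perfect matching (i.e.\ a bijection) between the remaining $r_1-p_1$ points of sort $Y_1$ and the remaining $r_2-p_2$ points of sort $Y_2$. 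The "loop" strands of both orientations are absorbed into the quotient as in the definition of $e_{\mathbf p}$, so they do not appear here.

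Thus the bimodule $KD(\mathbf p,\mathbf r;\mathbf p)$, regarded as a species in the sorts $X_1,X_2$ (source) and $Y_1,Y_2$ (target), is isomorphic to
\[
H\bigl(X_1\cdot Y_1 + X_2\cdot Y_2\bigr)\cdot H\bigl(Y_1\cdot Y_2\bigr),
\]
where the first factor encodes the two matchings absorbing the source points into $Y_1$ resp.\ $Y_2$ (this is exactly the bivariate reproducing kernel of Lemma~\ref{lem:sets-reproducing-kernel}), and the factor $H(Y_1\cdot Y_2)$ is the species of bijections between the leftover $Y_1$-points and the leftover $Y_2$-points. Next I would compute, using Remark~\ref{rem:inflation} (adapted to two sorts) and bilinearity of the scalar product together with the second form of Lemma~\ref{lem:sets-reproducing-kernel},
\[
\bigl\langle V_1(X_1)\cdot V_2(X_2),\ H(X_1\cdot Y_1 + X_2\cdot Y_2)\cdot H(Y_1\cdot Y_2)\bigr\rangle_{X_1,X_2}
= \bigl(V_1(Y_1)\cdot V_2(Y_2)\bigr)\cdot H(Y_1\cdot Y_2).
\]
Finally, restricting to the homogeneous component where $Y_1$ has degree $r_1$ and $Y_2$ has degree $r_2$ forces the bijection factor $H(Y_1\cdot Y_2)$ to contribute in degree $k=r_1-p_1=r_2-p_2$ in each sort, and the Cauchy identity $H(Y_1\cdot Y_2)=\sum_\lambda s_\lambda(Y_1)\,s_\lambda(Y_2)$ restricted to degree $k$ gives $\sum_{\lambda\vdash k} s_\lambda(Y_1)\,s_\lambda(Y_2)$. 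Since multiplying a species by $s_\lambda$ is inducing from the corresponding Young subgroup and tensoring with $S^\lambda$, the product $V_i(Y_i)\cdot s_\lambda(Y_i)$ is exactly the induction product $V_i\cdot S^\lambda$, so the bidegree-$(r_1,r_2)$ part is $\bigoplus_{\lambda\vdash k} (V_1\cdot S^\lambda)\otimes(V_2\cdot S^\lambda)$, which is the claim in~\eqref{eq:dirdg}.

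The main obstacle I anticipate is bookkeeping rather than anything conceptual: one must be careful that the "leftover" parts of the $Y_1$- and $Y_2$-labels are precisely the complements of the images of the two source matchings (so that the species really factors as a product with disjoint underlying sets of $Y$-labels), and that the $\fS_{r_1}\times\fS_{r_2}$-action one reads off from the species genuinely matches the restriction of the $D_{\mathbf r}$-action on $\Inf^{\mathbf r}_{\mathbf p}V$ to $\fS_{r_1}\times\fS_{r_2}$ — i.e.\ that Remark~\ref{rem:inflation} applies verbatim in the two-sorted setting with $A_{\mathbf p}=K\fS_{p_1}\times K\fS_{p_2}$. Checking that the idempotent $e_{\mathbf p}$ correctly strips off both kinds of oriented loops and leaves exactly the bijection factor $H(Y_1\cdot Y_2)$ (and not, say, a signed or twisted version) is the one place where the directedness of the strands could in principle cause a sign or orientation subtlety; I expect it does not, because both oriented loops are assigned the same value $\delta$, but this is the step that warrants an explicit verification. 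As in the Brauer case I would also note that when $V_1,V_2$ are permutation representations, the whole argument is an isomorphism of permutation representations, since every step is manifestly combinatorial.
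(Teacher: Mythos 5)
Your argument is correct, but it is not the route the paper itself takes for Theorem~\ref{thm:dirdg}.  The paper proves that theorem directly at the bimodule level: it observes the natural isomorphism of $K\fS_{\mathbf p}$--$K\fS_{\mathbf r}$ bimodules
\[
D(\mathbf r,\mathbf p;\mathbf p)\;\cong\;K\fS_{\mathbf p}\otimes_{K\fS_k}K\fS_{\mathbf r},
\]
and then applies the Wedderburn decomposition of $K\fS_k$ (over a field of characteristic zero) to obtain $\bigoplus_{\lambda\vdash k}K\fS_{p_1}\cdot S^\lambda\otimes K\fS_{p_2}\cdot S^\lambda$.  What you have written is, in effect, the paper's proof of the companion Theorem~\ref{thm:dirdg-species}: you realise $KD(\mathbf p,\mathbf r;\mathbf p)$ as the four-sort species $H(X_1\cdot Y_1+X_2\cdot Y_2+Y_1\cdot Y_2)$ (your $H(X_1\cdot Y_1+X_2\cdot Y_2)\cdot H(Y_1\cdot Y_2)$ is the same thing, since $H(A+B)=H(A)\cdot H(B)$), apply Lemma~\ref{lem:sets-reproducing-kernel} to get $V(Y_1,Y_2)\cdot H(Y_1\cdot Y_2)$, and then — this is the extra step not present in the species version — expand $H(Y_1\cdot Y_2)$ in degree $(k,k)$ via the Cauchy identity.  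The paper itself remarks after Theorem~\ref{thm:dirdg-species} that the two theorems are ``essentially the same result'' and that the passage between them is exactly the Cauchy identity, so your route is a legitimate alternative.  What each approach buys: the paper's bimodule argument is shorter and stays entirely inside representation theory, while your species route tracks more explicitly the combinatorial content of the diagram decomposition and makes the permutation-representation refinement (when $V$ is a permutation module) visible up to the point where the Cauchy identity is invoked (after which one only has an isomorphism of tensorial species).  Your worry about a possible sign from the orientations is unfounded, as you suspected: both oriented loop types carry the same parameter $\delta$, and the caps in $D(\mathbf p,\mathbf r;\mathbf p)$ are simply bijections between the leftover $Y_1$- and $Y_2$-labels, with no twist.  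One small remark: there are no literal ``loop strands'' to strip from $D(\mathbf p,\mathbf r;\mathbf p)$ — the idempotent $e_{\mathbf p}$ enters in the definition of inflation, not in the description of this hom-set — but this does not affect the argument.
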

\begin{rem}
  Using the notation for species, formula~\eqref{eq:dirdg} can be
  rewritten as
  \[
  \sum_{\lambda\vdash k} V_1(Y_1)\cdot S^\lambda(Y_1)\cdot
  V_2(Y_2)\cdot S^\lambda(Y_2).
  \]
\end{rem}
\begin{proof}
  Assume $K$ is a field of characteristic zero.  There is a natural
  isomorphism of $K\fS_{\mathbf p}$-$K\fS_{\mathbf r}$ bimodules
  \begin{equation*}
    D(\mathbf r,\mathbf p;\mathbf p)\cong K\fS_{\mathbf p} \otimes_{K\fS_k} K\fS_{\mathbf r}
  \end{equation*}
  This is then isomorphic to
  \begin{equation*}
    \bigoplus_{\lambda\vdash k} K\fS_{p_1} \cdot S^\lambda
    \otimes K\fS_{p_2} \cdot S^\lambda.
  \end{equation*}
\end{proof}

The version using combinatorial species looks quite different:
\begin{thm}\label{thm:dirdg-species}
  For an arbitrary $\fS_{p_1}\times\fS_{p_2}$-module $V(Y_1, Y_2)$,
  the $\fS_{r_1}\times\fS_{r_2}$-module $\big(\Inf^{\mathbf
    r}_{\mathbf p} V\big)\downarrow^{D_{\mathbf
      r}}_{\fS_{r_1}\times\fS_{r_2}}$ is isomorphic, as a species,
to the homogeneous part of degree $r_1$ in sort $Y_1$ and $r_2$ in sort
  $Y_2$ in the bivariate species
  \[
  V(Y_1, Y_2)\cdot H(Y_1\cdot Y_2).
  \]

  When $V(Y_1, Y_2)$ is a permutation representation, the isomorphism
  is an isomorphism of permutation representations.
\end{thm}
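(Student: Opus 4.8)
The plan is to mimic the second proof of the Brauer analogue in Section~\ref{sec:Brauer-stable}, replacing the single pair of sorts $(X,Y)$ by the two pairs $(X_1,Y_1)$, $(X_2,Y_2)$ and the species of perfect matchings by a species of bijections across the wall. First I would introduce the four-sorted combinatorial species
\[
H(X_1\cdot Y_1)\cdot H(X_2\cdot Y_2)\cdot H(Y_1\cdot Y_2),
\]
which bijects the labels of sort $X_1$ with a subset of the labels of sort $Y_1$, the labels of sort $X_2$ with a subset of the labels of sort $Y_2$, and puts a bijection between the two sets of remaining $Y_1$- and $Y_2$-labels. The key claim is that, restricted to sets of sort $X_1$ of cardinality $p_1$, $X_2$ of cardinality $p_2$, $Y_1$ of cardinality $r_1$ and $Y_2$ of cardinality $r_2$, this is isomorphic, as a four-sorted species, to the $\fS_{p_1}\times\fS_{p_2}\times\fS_{r_1}\times\fS_{r_2}$-set $D(\mathbf p,\mathbf r;\mathbf p)$. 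This is a direct reading of Corollary~\ref{cor:decomposition-Permutation} with the first two indices equal to $\mathbf p$ and the third equal to $\mathbf r$: the $p_1$ through strands to the left of the wall together with the left shuffle and the left permutation box form a bijection from the $X_1$-labels to a subset of the $Y_1$-labels, symmetrically on the right, and the $r_1-p_1=r_2-p_2$ horizontal strands at the bottom — which by the walled condition must cross the wall — together with the central permutation box form a bijection between the two sets of leftover boundary points.

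Next I would evaluate the scalar product $\langle V(X_1,X_2),\,H(X_1\cdot Y_1)\cdot H(X_2\cdot Y_2)\cdot H(Y_1\cdot Y_2)\rangle_{X_1,X_2}$. Since the factor $H(Y_1\cdot Y_2)$ does not involve the sorts $X_1,X_2$, by linearity of the scalar product it may be pulled out; then $H(X_1\cdot Y_1)\cdot H(X_2\cdot Y_2)=H(X_1\cdot Y_1+X_2\cdot Y_2)$ and the bivariate form of Lemma~\ref{lem:sets-reproducing-kernel} give the value $V(Y_1,Y_2)\cdot H(Y_1\cdot Y_2)$. Restricting to the homogeneous component of degree $r_1$ in $Y_1$ and $r_2$ in $Y_2$ and invoking the evident two-sorted version of Remark~\ref{rem:inflation}, namely $(\Inf^{\mathbf r}_{\mathbf p}V)\downarrow^{D_{\mathbf r}}_{\fS_{r_1}\times\fS_{r_2}}=\langle V,\,D(\mathbf p,\mathbf r;\mathbf p)\rangle_{X_1,X_2}$, then yields the asserted isomorphism. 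The last sentence of the theorem comes for free: every step uses only the operations $+$, $\cdot$ and the scalar product on combinatorial species, all of which preserve combinatoriality, so a permutation representation $V$ produces an isomorphism of permutation representations.

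I expect the only genuine obstacle to be the diagrammatic bookkeeping in the first paragraph. One must confirm that in the zero component $\dc_A$, whose objects all have the form $(+-)^r$, the orientation of every strand is forced by its endpoints, so that the underlying walled matching structure is faithfully encoded by $H(X_1\cdot Y_1)\cdot H(X_2\cdot Y_2)\cdot H(Y_1\cdot Y_2)$; in particular one uses that the $X$-side has propagating number exactly $\mathbf p$, so that all $p_1+p_2$ of its points are endpoints of through strands and no horizontal strand is attached there. As a sanity check, expanding $H(Y_1\cdot Y_2)=\sum_{\lambda}s_\lambda(Y_1)s_\lambda(Y_2)$ by the Cauchy identity recovers formula~\eqref{eq:dirdg}, which also reconciles this species description with the representation-theoretic one of Theorem~\ref{thm:dirdg}.
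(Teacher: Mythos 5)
Your proof is correct and follows essentially the same route as the paper: the paper introduces the four-sorted species $H(X_1\cdot Y_1 + X_2\cdot Y_2 + Y_1\cdot Y_2)$, which equals your factored form $H(X_1\cdot Y_1)\cdot H(X_2\cdot Y_2)\cdot H(Y_1\cdot Y_2)$ by the exponential property of $H$, identifies it with $KD(\mathbf p,\mathbf r;\mathbf p)$ via Corollary~\ref{cor:decomposition-Permutation}, and then applies linearity of the scalar product together with Lemma~\ref{lem:sets-reproducing-kernel} and Remark~\ref{rem:inflation} exactly as you do. The additional diagrammatic sanity-checking and the remark recovering formula~\eqref{eq:dirdg} via the Cauchy identity are welcome but are not part of the paper's argument.
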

\begin{proof}
  Consider the combinatorial species in four variables
  \[
  H\big(X_1\cdot Y_1 + X_2\cdot Y_2 + Y_1\cdot Y_2\big).
  \]
  It follows immediately from
  Corollary~\ref{cor:decomposition-Permutation} that the restriction
  of this species to sets of sort $X_i$ of cardinality $p_i$ and sets
  of sort $Y_i$ of cardinality $r_i$ is isomorphic to the
  $\fS_{p_1}\times\fS_{p_2}\times\fS_{r_1}\times\fS_{r_2}$-module
  $KD(\mathbf p, \mathbf r; \mathbf p)$, regarded as a species.
  
  As a consequence of the linearity of the scalar product and
  Lemma~\ref{lem:sets-reproducing-kernel} we have
  \[
  \big\langle V(X_1,X_2), H(X_1\cdot Y_1 + X_2\cdot Y_2 %
  + Y_1\cdot Y_2)\big\rangle_{X_1, X_2} %
  = V(Y_1,Y_2) \cdot H(Y_1\cdot Y_2),
  \]
  which implies the claim by remark~\ref{rem:inflation}.
\end{proof}

Theorems~\ref{thm:dirdg} and \ref{thm:dirdg-species} are essentially
the same result. The character identity that arises from comparing
the results is the Cauchy identity.

Let us now specialise to the diagram algebra $D_r$ in the category
$\dc_A$.  This is the diagram algebra $D_{r,r}$ in the category
$\dc_{\vec\Br(\delta)}$ with the diagonal action of the symmetric
group $\fS_r$.  We first record a direct consequence of
Theorem~\ref{thm:dirdg}.
\begin{thm}\label{thm:dirdg-diagonal}
  Let $V=V_1\otimes V_2$ be an $\fS_{p}\times\fS_{p}$-module.
  Then the $\fS_{r}$-module $\big(\Inf^{r}_{p} V\big)\downarrow^{D_{r}}_{\fS_{r}}$
 is isomorphic to
 \begin{equation}
   \label{eq:dirdg-diagonal}
   \bigoplus_{\lambda\vdash k} V_1 \cdot S^\lambda
   \ast V_2 \cdot S^\lambda.
 \end{equation}
\end{thm}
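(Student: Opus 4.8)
The plan is to obtain this directly from Theorem~\ref{thm:dirdg} by restricting along the diagonal embedding of $\fS_r$. First I would record that, by construction, the algebra $D_r$ of the category $\dc_A$ is literally the walled Brauer algebra $D_{\mathbf r}$ with $\mathbf r=(r,r)$ of $\dc_{\vec\Br(\delta)}$, and that the corresponding inflation functors agree: for $\mathbf p=(p,p)$ the module $\Inf^r_p V$ built inside $\dc_A$ is the very same $D_{\mathbf r}$-module $V\otimes_{K\fS_p\otimes K\fS_p}KD(\mathbf p,\mathbf r;\mathbf p)$ as $\Inf^{\mathbf r}_{\mathbf p}V$ built inside $\dc_{\vec\Br(\delta)}$. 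What differs is only which copy of a symmetric group is embedded: in $\dc_A$ the symmetric monoidal structure gives the inclusion $\fS_r\hookrightarrow D_r$ which, as recalled just before the theorem, is the composite of the diagonal map $\fS_r\to\fS_r\times\fS_r$ with the product inclusion $\fS_r\times\fS_r\hookrightarrow D_{\mathbf r}$.

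Next I would invoke Theorem~\ref{thm:dirdg} with $\mathbf r=(r,r)$ and $\mathbf p=(p,p)$, which yields an isomorphism of $\fS_r\times\fS_r$-modules
\[
  \big(\Inf^{\mathbf r}_{\mathbf p}V\big)\downarrow^{D_{\mathbf r}}_{\fS_r\times\fS_r}
  \;\cong\;\bigoplus_{\lambda\vdash k}\big(V_1\cdot S^\lambda\big)\otimes\big(V_2\cdot S^\lambda\big),
\]
where each $V_i\cdot S^\lambda$ is the induction product of the $\fS_p$-module $V_i$ with the $\fS_k$-module $S^\lambda$, hence an $\fS_r$-module since $k=r-p$, and $\otimes$ denotes the external product of two $\fS_r$-modules. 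By the previous paragraph, the $\fS_r$-module $(\Inf^r_p V)\downarrow^{D_r}_{\fS_r}$ is the further restriction of this module along the diagonal. Since restricting the external product of two $\fS_r$-modules along the diagonal is by definition their Kronecker product, $(A\otimes B)\downarrow^{\fS_r\times\fS_r}_{\fS_r}=A\ast B$, applying this summand by summand and summing over $\lambda\vdash k$ produces precisely formula~\eqref{eq:dirdg-diagonal}. Equivalently, one can pass to Frobenius characters, where this last step is the identity $\nabla\big(f(\mathbf x)\,g(\mathbf y)\big)=f\ast g$ for symmetric functions $f,g$ of equal degree, or one can deduce the statement from Theorem~\ref{thm:dirdg-species} by identifying the alphabets $Y_1$ and $Y_2$.

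I do not anticipate any real obstacle: all the content is already in Theorem~\ref{thm:dirdg}, and what remains is the bookkeeping identification of the diagonal inclusion of $\fS_r$ with the restriction of the product inclusion, together with the standard compatibility between external product, diagonal restriction and Kronecker product. The only point that warrants care is purely notational, namely to keep the two products apart: $\cdot$ is the induction (outer) product that builds the $\fS_r$-module $V_i\cdot S^\lambda$ out of an $\fS_p$-module and an $\fS_k$-module, while $\ast$ is the Kronecker (inner) product of the two resulting $\fS_r$-modules; with these conventions the displayed isomorphism is an honest isomorphism of $\fS_r$-modules.
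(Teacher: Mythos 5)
Your argument is correct and takes precisely the approach the paper has in mind: the paper presents Theorem~\ref{thm:dirdg-diagonal} as ``a direct consequence of Theorem~\ref{thm:dirdg}'' and gives no written proof, and your write-up supplies exactly the bookkeeping that is left implicit, namely that restricting along the diagonal embedding $\fS_r\hookrightarrow\fS_r\times\fS_r$ converts the external product $\otimes$ of $\fS_r$-modules into the Kronecker product $\ast$.
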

\begin{rem}
  Using the notation for species, formula~\eqref{eq:dirdg-diagonal}
  can be rewritten as
  \[
  \sum_{\lambda\vdash k} V_1\cdot S^\lambda\ast V_2\cdot S^\lambda.
  \]
\end{rem}

Taking the diagonal of the result of Theorem~\ref{thm:dirdg-species}
requires significantly more work.  The following theorem is derived
by means of symmetric functions by
Stanley~\cite[Theorem~6.2]{MR768993}, see also~\cite[Proposition 4.8,
Proposition 7.8 and Corollary 8.5]{MR899903}.

Recall that $L_+$ is the species of non-empty linear orders and
$S=H\circ C_+$ is the species of permutations.
\begin{thm}\label{thm:dirdg-diagonal-species}
  Let $V(Y_1,Y_2)$ be an arbitrary $\fS_{p}\times\fS_{p}$-module.
  Then the $\fS_{r}$-module $\big(\Inf^{r}_{p}
  V\big)\downarrow^{D_{r}}_{\fS_{r}}$ is isomorphic to
  \[
  S \cdot \big(\nabla V(Y_1,Y_2)\big)(L_+).
  \]
\end{thm}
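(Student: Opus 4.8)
The plan is to derive the statement from Theorem~\ref{thm:dirdg-species} together with a bijection between combinatorial species. Specialising Theorem~\ref{thm:dirdg-species} to $r_1=r_2=r$ and $p_1=p_2=p$, the bimodule $\big(\Inf^{r,r}_{p,p}V\big)\downarrow_{\fS_r\times\fS_r}$ is, as a bivariate species, the homogeneous component of bidegree $(r,r)$ of $V(Y_1,Y_2)\cdot H(Y_1\cdot Y_2)$ (a species concentrated on bidegrees $(p+k,p+k)$, $k\ge0$). Since the diagram algebra $D_r$ of $\dc_A$ is $D_{r,r}$ equipped with the diagonal embedding of $\fS_r$, the $\fS_r$-module $\big(\Inf^{r}_{p}V\big)\downarrow^{D_r}_{\fS_r}$ is this bivariate species with its $\fS_r\times\fS_r$-action restricted along the diagonal; that is exactly the degree-$r$ component of the diagonal species $\nabla\big(V(Y_1,Y_2)\cdot H(Y_1\cdot Y_2)\big)$, and correspondingly the Frobenius character of $\big(\Inf^{r}_{p}V\big)\downarrow^{D_r}_{\fS_r}$ is the diagonal of $\ch\big(V\cdot H(Y_1\cdot Y_2)\big)$. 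Hence it suffices to establish the species isomorphism
\[
\nabla\big(V(Y_1,Y_2)\cdot H(Y_1\cdot Y_2)\big)\;\cong\;S\cdot\big(\nabla V(Y_1,Y_2)\big)(L_+),
\]
which is not formal, since $\nabla$ does not commute with the product of species.

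Unwinding the definitions, and recalling that $H(Y_1\cdot Y_2)$ is the species of bijections between a set of sort $Y_1$ and a set of sort $Y_2$, a structure of the left-hand species on a finite set $U$ is a quadruple $(A_1,A_2,\phi,\beta)$ with $A_1,A_2\subseteq U$, $|A_1|=|A_2|=p$, $\phi\in V[A_1,A_2]$, and $\beta\colon U\setminus A_1\to U\setminus A_2$ a bijection, all carrying the diagonal $\fS_U$-action. The key point is that such a $\beta$ is nothing but a \emph{partial injection} of $U$ whose domain is $U\setminus A_1$ and whose range is $U\setminus A_2$; in its functional digraph each vertex has in- and out-degree at most one, so $U$ splits canonically as $U=C\sqcup Q$, where $C$ carries the union of the directed cycles — equivalently a permutation of $C$, an $S$-structure — and $Q$ decomposes into the directed paths, each a nonempty set carrying its path order (a point of $A_1\cap A_2$ forming a path of one point). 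Reading a path in the direction of $\beta$, its terminal vertex is the unique vertex of the path not in the domain, hence lies in $A_1$, while its initial vertex is the unique vertex not in the range, hence lies in $A_2$; conversely each point of $A_1$ terminates exactly one path and each point of $A_2$ initiates exactly one, so the set of paths is canonically identified both with $A_1$ and with $A_2$, and in particular has exactly $p$ elements.

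Transporting $\phi\in V[A_1,A_2]$ along these two identifications yields an element of $V[\{\text{paths}\},\{\text{paths}\}]=\nabla V[\{\text{paths}\}]$. Thus $Q$, together with its partition into paths, the path orders on the blocks, and this $\nabla V$-structure on the $p$-element set of blocks, is exactly a $(\nabla V)(L_+)$-structure — partitional composition with $\nabla V$ automatically forces the number of blocks to be $p$, since $\nabla V$ is concentrated in degree $p$. Together with the permutation of $C$ this assembles to an $S\cdot(\nabla V)(L_+)$-structure on $U$. The assignment is reversible: from the cycles and paths one reconstructs $\beta$, hence $A_1$ and $A_2$ as the sets of terminal and initial vertices, and then transports $\phi$ back. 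Every step commutes with relabellings of $U$, so the construction is a natural isomorphism of species; restricting to the homogeneous component of degree $r$ gives the claimed isomorphism of $\fS_r$-modules. When $V$ is a permutation representation all of this takes place at the level of underlying sets, so the isomorphism is one of permutation representations.

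The main obstacle is the bookkeeping in the last two paragraphs: one must check that the cycle/path decomposition and the identifications $A_1\cong\{\text{paths}\}\cong A_2$ are genuinely canonical — so that the resulting map of species is natural — and that it is these two identifications, rather than a single relabelling of $U$, that correctly convert $V[A_1,A_2]$ into a $\nabla V$-structure (this is what produces $\nabla V$ and not, say, an untwisted copy of $V$). As a consistency check one can instead verify the displayed species identity through Frobenius characters: expand $\ch V$ in the power-sum basis of $\Sym\otimes\Sym$, use $H(Y_1\cdot Y_2)=\sum_\gamma z_\gamma^{-1}p_\gamma(Y_1)p_\gamma(Y_2)$, apply the diagonal (only the terms whose two power-sum indices coincide survive, by cancellativity of multiset union), and resum over $\gamma$ by means of $\sum_{j\ge0}\binom{m+j}{m}t^{j}=(1-t)^{-(m+1)}$; this recovers the identity together with the evaluations $\ch S=\prod_{k\ge1}(1-p_k)^{-1}$ and $\ch L_+=p_1/(1-p_1)$, and is essentially Stanley's argument.
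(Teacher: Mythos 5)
Your proof is correct and takes essentially the same approach as the paper: both reduce via Theorem~\ref{thm:dirdg-species} to the species identity $\nabla\big(V\cdot H(Y_1\cdot Y_2)\big)\cong S\cdot(\nabla V)(L_+)$, and both establish it by decomposing the functional digraph of the bijection $\beta$ (the paper's $w'$) into its cycles, which yield the $S$-factor, and its directed paths, which yield the $(\nabla V)(L_+)$-factor, using the canonical identifications of the set of paths with $A_1$ (terminal vertices) and with $A_2$ (initial vertices) to transport $\phi\in V[A_1,A_2]$ to a $\nabla V$-structure on the blocks. Your phrasing in terms of a partial injection makes the canonicity slightly more transparent, but the underlying construction is identical to the paper's.
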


\begin{proof}
  Given Theorem~\ref{thm:dirdg-species} it remains to show that
  \[
  \nabla \big(V(Y_1,Y_2) \cdot H(Y_1\cdot Y_2)\big) %
  = S\cdot\big(\nabla V(Y_1,Y_2)\big)(L_+),
  \]
  where $L_+$ is the species of non-empty linear orders and $S$ is
  the species of permutations.

  Let $U$ be a finite set and consider an element $w$ of 
  \[
  V(Y_1,Y_2) \cdot H(Y_1\cdot Y_2)[U,U].
  \]
  By the definition of \lq$\cdot$\rq, this is an element of
  \[
  V[U_1, U_3]\times H(Y_1\cdot Y_2)[U_2,U_4],
  \]
  for some sets $U_1$--$U_4$ with $U = U_1\cup U_2 = U_3\cup U_4$ and
  $U_1\cap U_2 = U_3\cap U_4 = \emptyset$.  We can identify $w$ with
  an element of $S[V_1]\times\big(\nabla V(Y_1,Y_2)\big)(L_+)[V_2]$
  for two disjoint sets $V_1$ and $V_2$ with union $U$ as follows:

  Let $w'$ be the projection of $w$ to $H(Y_1\cdot Y_2)[U_2,U_4]$.
  Thus, $w'$ can be interpreted as a bijection between $U_2$ and
  $U_4$.  Let $V_1$ be the maximal subset of $U_2\cap U_4$ which is
  permuted by $w'$.  The restriction of $w'$ to $H(Y_1\cdot
  Y_2)[V_1,V_1]$ can then be identified with an element of $S[V_1]$.

  Let $V_2=U\setminus V_1$ and consider the set $\big(\nabla
  V(Y_1,Y_2)\big)(L_+)[V_2]$.  By the definition of the composition
  of species this set equals
  \[
  \nabla V(Y_1,Y_2)[\pi]\times L_+[\pi_1] \times\dots\times
  L_+[\pi_k]
  \]
  for some set partition $\pi=(\pi_1,\dots,\pi_k)$ of $V_2$.  We
  regard the elements of $U_1\cap U_3$ as singletons of $\pi$.  The
  remaining linearly ordered blocks of the set partition are obtained
  by choosing an element $a$ in $U_2\cap U_3$, and then repeatedly
  applying $w'$ until we obtain an element in $U_1\cap U_4$:
  \[
  a,w'(a),w'\big(w'(a)\big),\dots
  \]
  Thus, $U_1$ consists precisely of the final elements of the blocks
  (including singletons), while $U_3$ is the collection of initial
  elements of the blocks (including singletons).  Therefore, there is
  a natural bijection between $\nabla V(Y_1,Y_2)[\pi]$ and
  $V(Y_1,Y_2)[U_1,U_3]$, and a natural bijection between
  $S[V_1]\times L_+[\pi_1] \times\dots\times L_+[\pi_k]$ and
  bijections $w'$.
\end{proof}

Theorems~\ref{thm:dirdg-diagonal} and \ref{thm:dirdg-diagonal-species} are essentially
the same result. Hence comparing the answers gives the character
identity
\begin{equation*}
  \sum_\lambda (s_\alpha s_\lambda)\ast(s_\beta s_\lambda )
  =   \left(\prod_{k\geq 1} (1-p_k)^{-1}\right)\cdot \left( s_\alpha \ast s_\beta  \circ \frac{s_1}{1-s_1}\right).
\end{equation*}
where we have used
\begin{equation*}
 H\circ C_+ = \sum_\lambda p_\lambda = \left(\prod_{k\geq 1} (1-p_k)^{-1}\right)
\end{equation*}

\subsection{Branching rules}

For $r\ge 0$ we have an inclusion $D_{(r_1,r_2)}\rightarrow D_{(r_1+1,r_2)}$.
Here we study the decomposition of the irreducible $D_{(r_1,r_2+1)}$-modules
restricted to a $D_{(r_1,r_2)}$-module.

\begin{prop}[\protect{\cite[Theorem~3.16]{MR1405593}}]
  \label{prop:dir} For $\mathbf r\ge \mathbf p\ge 0$ and $V$ any
  $A_{\mathbf p}$-module,
  \begin{multline*}
    \big(\Inf_{\mathbf p}^{(r_1,r_2+1)}V\big)\downarrow^{D_{(r_1,r_2+1)}}_{D_{(r_1,r_2)}} \\
    \cong \Inf_{(p_1+1,p_2)}^{(r_1,r_2)}\left(V\uparrow_{A_{\mathbf
        p}}^{A_{(p_1+1,p_2)}}\right) \oplus
    \Inf_{(p_1,p_2-1)}^r\left(V\downarrow_{A_{(p_1,p_2-1)}}^{A_{\mathbf
        p}}\right).
 \end{multline*}
\end{prop}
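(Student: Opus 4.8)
The plan is to imitate the proof of Proposition~\ref{prop:br}, the only additional work being to keep track of which side of the wall each vertex lies on. Let $V$ be an $A_{\mathbf p}$-module. As in the Brauer case,
\[
  \bigl(\Inf_{\mathbf p}^{(r_1,r_2+1)}V\bigr)
  \downarrow^{D_{(r_1,r_2+1)}}_{D_{(r_1,r_2)}}
  \;\cong\; V\otimes_{A_{\mathbf p}}
  \Bigl(KD\bigl(\mathbf p,(r_1,r_2+1);\mathbf p\bigr)
  \downarrow^{D_{(r_1,r_2+1)}}_{D_{(r_1,r_2)}}\Bigr),
\]
and the core of the argument is a decomposition of the $A_{\mathbf p}$-$D_{(r_1,r_2)}$ bimodule on the right as a direct sum of two sub-bimodules. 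In a walled Brauer diagram $x\in D(\mathbf p,(r_1,r_2+1);\mathbf p)$ the newly added, rightmost bottom vertex $n$ lies to the right of the wall, so by the convention fixing the objects it is an initial endpoint; its partner is therefore a final vertex, and I would split the diagrams according to whether this partner is another bottom vertex or a top vertex.

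First I would treat the case in which $n$ is joined to another bottom vertex. A bottom-to-bottom strand crosses the wall, and since every bottom vertex right of the wall is initial, the partner of $n$ must be a bottom vertex left of the wall. Performing the move pictured in the proof of Proposition~\ref{prop:br} --- lifting this strand so that the endpoint $n$ is transported to the top --- turns the strand into a through strand; because orientations are respected the new top vertex is again an initial vertex and hence lies left of the wall, so the top object becomes $(p_1+1,p_2)$ and the diagram acquires propagating number $(p_1+1,p_2)$. As the distinguished new top vertex is not moved by $A_{\mathbf p}=K(\fS_{p_1}\times\fS_{p_2})$, this exhibits the first sub-bimodule as $KD\bigl((p_1+1,p_2),(r_1,r_2);(p_1+1,p_2)\bigr)\downarrow^{A_{(p_1+1,p_2)}}_{A_{\mathbf p}}$. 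In the second case $n$ is joined to a top vertex through a through strand, which stays on one side of the wall and so connects $n$ to one of the $p_2$ top vertices right of the wall; deleting it leaves a diagram with top object $(p_1,p_2-1)$ and propagating number $(p_1,p_2-1)$, and recording --- exactly as in the proof of Proposition~\ref{prop:br} --- how the remaining $p_2-1$ top vertices right of the wall are distributed around the slot of the deleted vertex exhibits the second sub-bimodule as $KD\bigl((p_1,p_2-1),(r_1,r_2);(p_1,p_2-1)\bigr)\uparrow^{A_{\mathbf p}}_{A_{(p_1,p_2-1)}}$, the induction being from $\fS_{p_1}\times\fS_{p_2-1}$ to $\fS_{p_1}\times\fS_{p_2}$. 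One checks, as in the Brauer case and using Corollary~\ref{cor:decomposition-Permutation} to recognise the pieces, that each of the two subsets of diagrams is stable under both the left $A_{\mathbf p}$-action and the right $D_{(r_1,r_2)}$-action, so that this is genuinely a direct sum of bimodules.

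It then remains to apply $V\otimes_{A_{\mathbf p}}-$ and rearrange tensor products by associativity, precisely as at the end of the proof of Proposition~\ref{prop:br}. For the first summand, using $V\uparrow_{A_{\mathbf p}}^{A_{(p_1+1,p_2)}}=V\otimes_{A_{\mathbf p}}A_{(p_1+1,p_2)}$ one gets $V\otimes_{A_{\mathbf p}}\bigl(A_{(p_1+1,p_2)}\otimes_{A_{(p_1+1,p_2)}}KD(\dots)\bigr)\cong\bigl(V\uparrow_{A_{\mathbf p}}^{A_{(p_1+1,p_2)}}\bigr)\otimes_{A_{(p_1+1,p_2)}}KD(\dots)=\Inf_{(p_1+1,p_2)}^{(r_1,r_2)}\bigl(V\uparrow_{A_{\mathbf p}}^{A_{(p_1+1,p_2)}}\bigr)$; for the second, $V\otimes_{A_{\mathbf p}}\bigl(A_{\mathbf p}\otimes_{A_{(p_1,p_2-1)}}KD(\dots)\bigr)\cong\bigl(V\downarrow_{A_{(p_1,p_2-1)}}^{A_{\mathbf p}}\bigr)\otimes_{A_{(p_1,p_2-1)}}KD(\dots)=\Inf_{(p_1,p_2-1)}^{(r_1,r_2)}\bigl(V\downarrow_{A_{(p_1,p_2-1)}}^{A_{\mathbf p}}\bigr)$. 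Adding the two contributions yields the stated isomorphism. The only genuinely delicate step is the diagrammatic bookkeeping of the middle paragraph: verifying that the two subsets of diagrams really are sub-bimodules, and that the ``lift'' and ``delete'' operations are bijective bimodule maps onto the stated pieces --- in particular the orientation argument showing that adding a vertex on the $r_2$ side raises $p_1$, not $p_2$, in the first summand. Everything else is formal and identical to the Brauer case.
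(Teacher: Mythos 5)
Your proof is correct and follows the same approach as the paper: decompose the restricted bimodule $KD(\mathbf p,(r_1,r_2+1);\mathbf p)$ according to whether the new rightmost bottom vertex is matched to a bottom or a top vertex, identify the two pieces by the lift and delete moves, and then tensor with $V$ and apply associativity. Your explicit orientation bookkeeping (showing that the lifted vertex lands to the left of the wall, hence raises $p_1$ rather than $p_2$) is spelled out in more detail than in the paper, but it is the same decomposition and the same argument.
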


\begin{proof}
  Let $V$ be a representation of $A_{\mathbf p}=K(\fS_{p_1}\times
  \fS_{p_2})$.  We decompose the $A_{\mathbf p}$-$D_{(r_1,r_2+1)}$
  bimodule $KD({\mathbf p},(r_1,r_2+1);{\mathbf p})$ restricted to
  $D_{\mathbf r}$ into a direct sum.  

  The set of diagrams in $D({\mathbf p},(r_1,r_2+1);{\mathbf p})$
  whose last point in the bottom row to the right of the wall is
  matched with a point in the bottom row (necessarily to the left of
  the wall) generates a submodule isomorphic to $KD((p_1+1,
  p_2),{\mathbf r};(p_1+1,
  p_2))\downarrow^{A_{(p_1+1,p_2)}}_{A_{\mathbf p}}$.  Informally,
  the isomorphism is given by moving the last point from the bottom
  row just to the left of the wall in the top row.

  The remaining diagrams in $D({\mathbf p},(r_1,r_2+1);{\mathbf p})$,
  whose last point in the bottom row is matched with a point in the
  top row generate a submodule isomorphic to $KD((p_1,
  p_2-1),{\mathbf r};(p_1,
  p_2-1))\uparrow^{A_{\mathbf p}}_{A_{(p_1,p_2-1)}}$.

  Tensoring with $V$ and applying the definition of induction we
  obtain
  \begin{multline*}
    V\otimes_{A_{\mathbf p}} KD((p_1+1,
    p_2),{\mathbf r};(p_1+1,
    p_2))\downarrow^{A_{(p_1+1,p_2)}}_{A_{\mathbf p}}\\
    \cong V\uparrow^{A_{(p_1+1,p_2)}}_{A_{\mathbf p}}%
    \otimes_{A_{(p_1+1,p_2)}} %
    KD((p_1+1, p_2),{\mathbf r};(p_1+1, p_2))
  \end{multline*}
  and
  \begin{multline*}
    V\otimes_{A_{\mathbf p}} KD((p_1, p_2-1),{\mathbf r};(p_1,
    p_2-1))\uparrow^{A_{\mathbf p}}_{A_{(p_1,p_2-1)}}\\
    \cong V\downarrow^{A_{\mathbf p}}_{A_{(p_1,p_2-1)}}%
    \otimes_{A_{(p_1,p_2-1)}} %
    KD((p_1, p_2-1),{\mathbf r};(p_1, p_2-1)).
  \end{multline*}
\end{proof}

% Now assume that for any $A_{\mathbf{p}}$-module $U$ and any
% $A_{\mathbf{q}}$-module $V$
% \begin{equation*}
%   \Hom_{D_{\mathbf{r}}}\left(
%     \Inf_{\mathbf{p}}^{\mathbf{r}}(V),%
%     \Inf_{\mathbf{q}}^{\mathbf{r}}(U)
%   \right) = 0\qquad\text{ if ${\mathbf{p}}\ne {\mathbf{q}}$}.
% \end{equation*}
% This is equivalent to semisimplicity of $D_{\mathbf{r}}$.

\begin{cor}\label{cor:directed-branching-rules}  
  For $\mathbf{r}\ge \mathbf{p}\ge 0$ and
  $\boldsymbol{\lambda}\vdash\mathbf{p}$ put $U(\boldsymbol{\lambda},
  \mathbf{r})=\Inf_p^r\left(S^{\lambda_1}\otimes
    S^{\lambda_2}\right)$.  Then the branching rules are given by
  \begin{equation*}
    U\big((r_1,r_2+1),
    \boldsymbol{\lambda}\big)\downarrow^{(r_1,r_2+1)}_{\mathbf r}%
    \cong 
    \bigoplus_{\substack{%
        \boldsymbol{\mu} = (\lambda_1+\square, \lambda_2)\\
        \text{ or } \boldsymbol{\lambda} = (\mu_1, \mu_2+\square)}}
    U(\mathbf{r}, \boldsymbol{\mu}).
  \end{equation*}
\end{cor}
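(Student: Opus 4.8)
The plan is to transport the argument of Corollary~\ref{cor:Brauer-branching-rules} to the directed setting, replacing the Brauer inflation functor and the single propagating number by their directed analogues and the two-component propagating number $\mathbf{p}=(p_1,p_2)$.

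First I would invoke the obvious adaptation of Proposition~\ref{prop:equivalence-D_r} to the categories $\dc_{\vec\Br(\delta)}$: assuming that the algebras $D_{\mathbf{r}}$ are semisimple, the modules $\{U(\mathbf{r},\boldsymbol{\lambda}) : \boldsymbol{\lambda}\vdash\mathbf{p},\ \mathbf{r}\ge\mathbf{p}\ge 0\}$ form a complete set of inequivalent irreducible $D_{\mathbf{r}}$-modules; this uses Corollary~\ref{cor:decomposition-Permutation} in the role played earlier by Assumption~\ref{ass:morita}. Hence it suffices to compute, for every irreducible $A_{\mathbf{q}}$-module $U$, the space
\[
\Hom_{D_{\mathbf{r}}}\Big(\big(\Inf_{\mathbf{p}}^{(r_1,r_2+1)}V\big)\downarrow^{D_{(r_1,r_2+1)}}_{D_{\mathbf{r}}},\ \Inf_{\mathbf{q}}^{\mathbf{r}}U\Big).
\]

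Next I would substitute the decomposition supplied by Proposition~\ref{prop:dir} for the restricted module and use that inflation is fully faithful (Theorem~\ref{thm:inflation-equivalence}), together with the vanishing of $\Hom$ between inflations coming from distinct propagating numbers, to obtain
\[
\Hom_{D_{\mathbf{r}}}\Big(\big(\Inf_{\mathbf{p}}^{(r_1,r_2+1)}V\big)\downarrow,\ \Inf_{\mathbf{q}}^{\mathbf{r}}U\Big)
\cong
\begin{cases}
\Hom_{A_{(p_1+1,p_2)}}\big(V\uparrow_{A_{\mathbf{p}}}^{A_{(p_1+1,p_2)}},U\big) & \text{if $\mathbf{q}=(p_1+1,p_2)$,}\\
\Hom_{A_{(p_1,p_2-1)}}\big(V\downarrow_{A_{(p_1,p_2-1)}}^{A_{\mathbf{p}}},U\big) & \text{if $\mathbf{q}=(p_1,p_2-1)$,}\\
0 & \text{otherwise.}
\end{cases}
\]
Then, taking $V=S^{\lambda_1}\otimes S^{\lambda_2}$ and letting $U$ range over the irreducibles $S^{\mu_1}\otimes S^{\mu_2}$ of $A_{\mathbf{q}}\cong K(\fS_{q_1}\times\fS_{q_2})$, the classical branching rules for the symmetric group give
\[
\big(S^{\lambda_1}\otimes S^{\lambda_2}\big)\uparrow_{\fS_{p_1}\times\fS_{p_2}}^{\fS_{p_1+1}\times\fS_{p_2}}\cong\bigoplus_{\mu_1=\lambda_1+\square} S^{\mu_1}\otimes S^{\lambda_2},\qquad
\big(S^{\lambda_1}\otimes S^{\lambda_2}\big)\downarrow_{\fS_{p_1}\times\fS_{p_2-1}}^{\fS_{p_1}\times\fS_{p_2}}\cong\bigoplus_{\mu_2=\lambda_2-\square} S^{\lambda_1}\otimes S^{\mu_2},
\]
and reading off multiplicities yields exactly the asserted sum: the first case contributes the terms $U(\mathbf{r},(\lambda_1+\square,\lambda_2))$ and the second the terms $U(\mathbf{r},(\lambda_1,\lambda_2-\square))$, which is the reindexing $\boldsymbol{\lambda}=(\mu_1,\mu_2+\square)$ appearing in the statement.

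I expect the only genuine work to be confirming that Proposition~\ref{prop:equivalence-D_r} carries over, i.e.\ the bookkeeping for the direct-sum ideal $\cJ_{\mathbf{r}}(p_1-1,p_2)\oplus\cJ_{\mathbf{r}}(p_1,p_2-1)$ and the role of the semisimplicity hypothesis; everything after that is formal. The point to watch is the asymmetry in Proposition~\ref{prop:dir}: adjoining a boundary point on the minus side of the wall either matches it across the wall to the bottom row, raising $p_1$, or matches it to the top row, lowering $p_2$, so these two possibilities must be tracked separately — unlike the Brauer case, where adding and removing a cell are symmetric moves on a single partition.
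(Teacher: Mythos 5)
Your proposal is correct and carries out exactly the argument the paper intends: the paper simply declares the proof ``completely analogous'' to that of Corollary~\ref{cor:Brauer-branching-rules}, and your write-up is precisely that analogue, replacing Proposition~\ref{prop:br} by Proposition~\ref{prop:dir} and $K\fS_p$ by $K(\fS_{p_1}\times\fS_{p_2})$, with the reindexing $\boldsymbol{\lambda}=(\mu_1,\mu_2+\square)\Leftrightarrow\boldsymbol{\mu}=(\lambda_1,\lambda_2-\square)$ handled correctly.
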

\begin{proof} 
  The proof is completely analogous to the proof of
  Proposition~\ref{cor:Brauer-branching-rules} and therefore omitted.
\end{proof}

\subsection{Fundamental theorems}
\label{sec:fundamental-theorems-GL}
Let $\T_{M(n)}$ be the category of mixed tensors, that is, the monoidal
category generated by the vector spaces $V$ and $V^\ast$, where $V$
is the defining representation of $\GL(n)$ and $V^\ast$ the dual
representation.

Let $\dc_{M(n)}$ be the diagram category $\dc_{\vec\Br(\delta)}$ with
$\delta=n$.

\begin{defn} For each $n>0$, there is a functor
  $\ev_{M(n)}\colon D_{M(n)}\rightarrow \T_{M(n)}$ uniquely determined
by the properties that
  $\ev_{M(n)}(+)=V$ and $\ev_{M(n)}(-)=V^\ast$
and that the functor is pivotal symmetric monoidal.
\end{defn}
This functor is also determined by the condition that it is monoidal and its
values on the monoidal generators. The monoidal generators are the diagrams
obtained from \eqref{eq:cupcap} and \eqref{eq:perm} by taking all
orientations.

Let $\{b_1,\dotsc,b_n\}$ be a basis of $V$ and $\{\bar b_1,\dots,\bar
b_n\}$ be the dual basis.  Then the values on the four diagrams
obtained by orienting the diagrams in \eqref{eq:cupcap} are
$\phi\otimes v\mapsto \langle \phi,v\rangle$ and $v\otimes
\phi\mapsto \langle v,\phi\rangle$ for $v\in V$ and $\phi\in V^*$
together with $1\mapsto \sum_i b_i \otimes \bar b_i$ and $1\mapsto
\sum_i \bar b_i\otimes b_i$.

This functor restricts to the walled Brauer category. This restriction is given on objects
by $(r,s)\mapsto (\otimes^rV)\otimes(\otimes^sV^*)$.

The first fundamental theorem for the mixed tensors can be stated as
follows: see \cite{MR2058514},
\begin{thm}\cite[Theorem (2.6A)]{MR1488158}
  \label{thm:FFT-SFT-mixed}
  For $n>0$, the evaluation functor $\ev_{M(n)}\colon\dc_{M(n)}\rightarrow
  \T_{M(n)}$ is full.
\end{thm}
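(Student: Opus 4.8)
The plan is to follow the template already used for the symplectic group in Section~\ref{sec:fundamental-theorems-Brauer}: reduce fullness to a statement about invariant tensors, and there invoke classical invariant theory, this time for $\GL(n)$.

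First I would use that $\dc_{M(n)}$ and $\T_{M(n)}$ are pivotal symmetric monoidal and that $\ev_{M(n)}$ respects this structure. As in Lemma~\ref{lem:Brauer-iso}, the evaluation and coevaluation morphisms provide, compatibly with $\ev_{M(n)}$, isomorphisms $\Hom_{\dc_{M(n)}}(u,w)\cong\Hom_{\dc_{M(n)}}(I,u^\ast\otimes w)$ and $\Hom_{\T_{M(n)}}(\ev_{M(n)}u,\ev_{M(n)}w)\cong\Hom_{\T_{M(n)}}(I,\ev_{M(n)}(u^\ast\otimes w))$. Hence it is enough to prove that $\ev_{M(n)}\colon\Hom_{\dc_{M(n)}}(I,z)\to\Hom_{\T_{M(n)}}(I,\ev_{M(n)}z)$ is surjective for every object $z$; that is, that every mixed invariant tensor is a linear combination of images of directed perfect matchings.

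Next I would observe that both sides vanish unless $z$ has equally many $+$'s and $-$'s: on the diagram side because a directed perfect matching always pairs an initial point with a final point, and on the tensor side because the scalar matrices $\lambda\cdot\id_V\in\GL(n)$ act on $\ev_{M(n)}z$ through $\lambda^{a-b}$ when $z$ has $a$ pluses and $b$ minuses, so over an infinite field there are no nonzero invariants when $a\neq b$. This is precisely the block decomposition of the directed Brauer category indexed by $\bZ$. Since $\ev_{M(n)}$ is symmetric monoidal and the symmetry isomorphisms lie in its image, permuting the letters of $z$ reduces us to the case $z=+^r-^r$. There $\Hom_{\T_{M(n)}}(I,\ev_{M(n)}z)=\bigl((\otimes^rV)\otimes(\otimes^rV^\ast)\bigr)^{\GL(n)}$, and the evaluation and coevaluation of $V$ identify this canonically with $\End_{\GL(n)}(\otimes^rV)$. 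Under this identification the $r!$ directed matchings spanning $\Hom_{\dc_{M(n)}}(I,+^r-^r)$ correspond exactly to the $r!$ endomorphisms of $\otimes^rV$ given by permutation of the tensor factors, so the assertion becomes the surjectivity of the natural map $K\fS_r\to\End_{\GL(n)}(\otimes^rV)$. This is classical Schur--Weyl duality, the first fundamental theorem for the ordinary action of $\GL(n)$ on $\otimes^rV$ (see~\cite{MR1488158}), which I would take as known input rather than reprove.

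The main obstacle is making the dictionary in the last step precise: one must verify, while tracking the wall and all the orientation conventions in the definition of $\ev_{M(n)}$ on the oriented cup, cap and crossing generators, that under the isomorphism $\bigl((\otimes^rV)\otimes(\otimes^rV^\ast)\bigr)^{\GL(n)}\cong\End_{\GL(n)}(\otimes^rV)$ the matchings genuinely go to the permutation endomorphisms. This is combinatorial bookkeeping rather than representation theory, and --- unlike the symplectic case --- it involves no signs, since here $V$ is an ordinary (not odd) vector space; but it is where care is needed. A minor additional point is to justify that it really suffices to treat the single object $z=+^r-^r$, which uses the block decomposition of $\dc_{M(n)}$ together with the equivalences between blocks recorded above.
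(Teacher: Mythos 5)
The paper states this theorem as a citation to Weyl~\cite[Theorem (2.6A)]{MR1488158} (also referencing~\cite{MR2058514}) and does not supply a proof of its own, so there is nothing in the text to compare against step by step. Your proposal is a correct and fairly standard way of deriving the categorical statement from classical Schur--Weyl duality, which is what Weyl's Theorem (2.6A) amounts to: the pivotal reduction to $\Hom(I,z)$ mirrors Lemma~\ref{lem:Brauer-iso}; the vanishing when the numbers of $+$'s and $-$'s differ is correctly argued from the action of scalar matrices; the reduction to $z=+^r-^r$ via symmetry isomorphisms (already images of diagrams) is legitimate; and under $\bigl((\otimes^rV)\otimes(\otimes^rV^\ast)\bigr)^{\GL(n)}\cong\End_{\GL(n)}(\otimes^rV)$ the directed matchings do land on the permutation endomorphisms of $\otimes^rV$, with no sign subtleties since $V$ is an ordinary (even) vector space here. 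You correctly flag that the only delicate step is tracking the orientation conventions under that identification; nothing in the rest of the argument would fail. In short, the paper treats this theorem as a black box from the literature, whereas you spell out the bridge from the classical statement (surjectivity of $K\fS_r\to\End_{\GL(n)}(\otimes^rV)$) to the categorical one, which is useful but was left implicit by the authors.
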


\begin{thm}[\protect{\cite{MR792707},\cite[Theorem 5.8]{MR1280591}}]
  Let $\ev = \ev_{M(n)}$ be the functor described above.  Then the
  map $\ev_{(r_1,r_2)}^{(s_1,s_2)}$ is injective for $2n\geq
  r_1+r_2+s_1+s_2$.
\end{thm}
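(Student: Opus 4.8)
The plan is to reduce the statement, exactly as in Lemma~\ref{lem:Brauer-iso}, to a statement about invariant tensors, and then to recognise the reduced map as the Schur--Weyl evaluation homomorphism for $\GL(n)$, whose kernel was already determined in Section~\ref{sec:perm}. Both $\dc_{M(n)}=\dc_{\vec\Br(n)}$ and $\T_{M(n)}$ are pivotal symmetric monoidal categories and $\ev_{M(n)}$ is a pivotal symmetric monoidal functor, so it is compatible with the bending isomorphisms $\Hom(x\otimes u,v)\cong\Hom(u,x^\ast\otimes v)$ coming from the duals, as well as with the symmetry. Bending the negative letters of the domain $+^{r_1}-^{r_2}$ across to the codomain (where each becomes a $+$), bending the negative letters of the codomain $+^{s_1}-^{s_2}$ across to the domain, and sorting the letters by means of the symmetry, one obtains a natural isomorphism
\[
\Hom_{\dc_{M(n)}}\big((r_1,r_2),(s_1,s_2)\big)\cong\Hom_{\dc_{M(n)}}\big(+^{\,r_1+s_2},+^{\,s_1+r_2}\big),
\]
together with the analogous isomorphism on the $\T_{M(n)}$ side, and these intertwine $\ev_{M(n)}$. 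This space vanishes unless $r_1+s_2=s_1+r_2$, in which case injectivity is trivial; so write $r$ for the common value, note that the hypothesis $2n\ge r_1+r_2+s_1+s_2=2r$ is exactly $r\le n$, and it remains to show that $\ev_{M(n)}$ is injective on $\Hom_{\dc_{M(n)}}(+^r,+^r)$ whenever $r\le n$.

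Next I would identify this with the permutation situation. In the directed Brauer category a diagram $+^r\to+^r$ can contain neither a cup nor a cap: the two endpoints of a strand are the initial and final point of a pair and hence have opposite sorts once every top point is required to be initial and every bottom point final, so such a strand would have to join two boundary points of equal sort. Therefore every strand is a through strand oriented from a top $+$ to a bottom $+$, i.e. the inclusion $K\fS_r\hookrightarrow\Hom_{\dc_{M(n)}}(+^r,+^r)$ (the restriction of the monoidal inclusion of the permutation category, with $\mathbf r=(r,0)$) is an equality. On the other side $\Hom_{\T_{M(n)}}(\otimes^rV,\otimes^rV)=\End_{\GL(n)}(\otimes^rV)$, and by the very definition of $\ev_{M(n)}$ (the crossing goes to the flip $u\otimes v\mapsto v\otimes u$) its restriction to $K\fS_r$ is the evaluation homomorphism $\ev_n\colon K\fS_r\to\End(\otimes^rV)$ of Section~\ref{sec:perm}. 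Thus the claim comes down to the injectivity of $\ev_n$ for $r\le n$.

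This last point is where the numerical hypothesis is genuinely used, and it is supplied by the second fundamental theorem for the permutation category recalled in Section~\ref{sec:perm}: the kernel of $\ev_n\colon K\fS_r\to\End_{\GL(V)}(\otimes^rV)$ is the two-sided ideal generated by the antisymmetriser $E(n+1)\in K\fS_{n+1}$, which meets $K\fS_r$ trivially as soon as $r<n+1$, so $\ev_n$ is injective in that range (indeed, combined with the fullness of $\ev_{M(n)}$ in Theorem~\ref{thm:FFT-SFT-mixed}, it is an isomorphism). Hence $\ev_{(r_1,r_2)}^{(s_1,s_2)}$ is injective whenever $2n\ge r_1+r_2+s_1+s_2$. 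I expect the only real obstacle to a fully rigorous write-up to be the bookkeeping in the first paragraph: one must check that the pivotal and symmetric structures on $\dc_{M(n)}$ are transported by $\ev_{M(n)}$ to those on $\T_{M(n)}$, so that the bending and sorting isomorphisms on the two sides really do intertwine the evaluation maps, and that under them the walled Brauer diagram basis is carried to the permutation basis; nothing representation-theoretic beyond Section~\ref{sec:perm} is needed.
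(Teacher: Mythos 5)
The paper does not prove this theorem; it simply cites \cite{MR792707} and \cite[Theorem 5.8]{MR1280591}, so there is no in-paper argument to compare against. Your proposal is correct and is essentially the standard argument: because $\ev_{M(n)}$ is a pivotal symmetric monoidal functor, it intertwines the bending isomorphisms $\Hom(x\otimes u,v)\cong\Hom(u,x^\ast\otimes v)$ and the sorting isomorphisms, so it suffices to prove injectivity on $\Hom_{\dc_{M(n)}}(+^r,+^r)$ with $r=r_1+s_2=s_1+r_2$ (the hom-space vanishing in the off-block case). Your observation that a directed Brauer diagram $+^r\to +^r$ can contain no cup or cap is exactly right (a cup or cap would pair two boundary points of the same sort, whereas the two ends of an oriented strand carry opposite sorts when all of $[r]$ on top is initial and all of $[r]$ on the bottom is final), which identifies that hom-space with $K\fS_r$. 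Under this identification $\ev_{M(n)}$ becomes the evaluation map $\ev_n\colon K\fS_r\to\End_{\GL(V)}(\otimes^r V)$ of Section~\ref{sec:perm}, whose kernel $\langle E(n+1)\rangle$ is zero when $r\le n$; and $2n\ge r_1+r_2+s_1+s_2=2r$ is precisely $r\le n$. One remark: you appeal to the second fundamental theorem for $\GL(n)$ from Section~\ref{sec:perm}, which the paper also states without proof; if one wants to be self-contained, the injectivity of $\ev_n$ for $r\le n$ can be seen directly by a dimension count, since $\dim\End_{\GL(V)}(\otimes^rV)=\sum_{\lambda\vdash r,\,\ell(\lambda)\le n}(f^\lambda)^2=r!$ when $n\ge r$. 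Apart from this, the only remaining work is the bookkeeping you already flag — checking that the cup/cap and crossing generators on both sides really correspond under $\ev_{M(n)}$ so that the bending and sorting isomorphisms commute with the evaluation maps — and that is routine.
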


The rank one central idempotents are constructed in \cite{MR3210415}.

\subsection{Frobenius characters for tensor algebras}
\label{sec:Mixed-tensor-algebra}

Recall that the irreducible rational representations of $\GL(n)$ are
indexed by \Dfn{staircases} of length $n$, that is, sequences
$\boldsymbol{\mu}$ of $n$ integers satisfying
$\mu_1\geq\dots\geq\mu_n$.

There is a staircase $[\alpha,\beta]_n$ for each pair of partitions
$[\alpha,\beta]$ with $n\ge |\alpha|+|\beta|$. This is given by
\begin{equation*}
 [\alpha,\beta]_n = \left(\sum_j \alpha_je_j\right) - \left(\sum_j\beta_je_{n-j+1}\right) 
\end{equation*}

\begin{thm}\label{thm:Mixed-tensor-algebra}
  Let $\boldsymbol{\mu}$ be a staircase of length $n$ and let
  $W(\boldsymbol{\mu})$ be the irreducible rational representation of
  $\GL(n)$ with highest weight $\boldsymbol{\mu}$.  Then for $n\geq
  r_1+r_2$ there is a natural isomorphism
  \begin{equation*}
    \Hom_{\GL(n)} (W(\boldsymbol{\mu}),\otimes^{r_1}V\otimes\otimes^{r_2}V^\ast)%
    \cong
    \Inf_{\mathbf p}^{\mathbf r}\left(S^{\boldsymbol{\mu}}\right)
  \end{equation*}
  of $D_{\mathbf r}$-modules.
\end{thm}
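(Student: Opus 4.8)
The plan is to mimic the proof of Theorem~\ref{thm:Brauer-tensor-algebra}, replacing the Brauer branching rule by the directed-Brauer branching rule of Corollary~\ref{cor:directed-branching-rules} and the symplectic rule $W(\mu)\otimes V\cong\bigoplus_{\lambda=\mu\pm\square}W(\lambda)$ by the classical $\GL(n)$ branching rules. The only genuinely new ingredient is that, since the right centraliser acts on a tensor product built from both $V$ and $V^\ast$, one must use the adjunction $\Hom_{\GL(n)}(W,X\otimes V^\ast)\cong\Hom_{\GL(n)}(W\otimes V,X)$, and its mirror with $V$ and $V^\ast$ exchanged, to translate a restriction along $D_{(r_1,r_2)}\hookrightarrow D_{(r_1,r_2+1)}$ into tensoring with $V$ on the $\GL(n)$ side.

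First I would record that for $n$ at least the total length in play the functor $\ev_{M(n)}$ is an isomorphism on the relevant Hom-spaces: fullness is Theorem~\ref{thm:FFT-SFT-mixed}, and injectivity holds by the second fundamental theorem, whose hypothesis $2n\ge r_1+r_2+s_1+s_2$ is met throughout the argument. Hence $D_{\mathbf r}$ is semisimple, $\otimes^{r_1}V\otimes\otimes^{r_2}V^\ast$ is a $\GL(n)\times D_{\mathbf r}$-bimodule realising a double-centraliser pair, and by Proposition~\ref{prop:equivalence-D_r} its irreducible $D_{\mathbf r}$-constituents are exactly the modules $\Inf_{\mathbf p}^{\mathbf r}(S^{\lambda_1}\otimes S^{\lambda_2})$ with $\boldsymbol\lambda\vdash\mathbf p\le\mathbf r$; what remains is to match the $D_{\mathbf r}$-label with the $\GL(n)$-summand. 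Writing $\boldsymbol\mu=(\mu_1,\mu_2)$, $\mathbf p=(\lvert\mu_1\rvert,\lvert\mu_2\rvert)$, and setting $U(\mathbf r,\boldsymbol\mu)=\Hom_{\GL(n)}\!\big(W([\mu_1,\mu_2]_n),\otimes^{r_1}V\otimes\otimes^{r_2}V^\ast\big)$, $\tilde U(\mathbf r,\boldsymbol\mu)=\Inf_{\mathbf p}^{\mathbf r}(S^{\mu_1}\otimes S^{\mu_2})$ and $\tilde W(\mathbf r,\boldsymbol\mu)=\Hom_{D_{\mathbf r}}\!\big(\tilde U(\mathbf r,\boldsymbol\mu),\otimes^{r_1}V\otimes\otimes^{r_2}V^\ast\big)$, I would restrict $\otimes^{r_1}V\otimes\otimes^{r_2+1}V^\ast$ from $D_{(r_1,r_2+1)}$ to $D_{(r_1,r_2)}$ and compute the multiplicity bimodules in two ways: once via the adjunction together with $W([\alpha,\beta]_n)\otimes V\cong\bigoplus_{\alpha'=\alpha+\square}W([\alpha',\beta]_n)\oplus\bigoplus_{\beta'=\beta-\square}W([\alpha,\beta']_n)$ (valid in the range at hand), and once via Corollary~\ref{cor:directed-branching-rules}. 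Matching the distinct irreducibles $U(\mathbf r,\boldsymbol\mu)\cong\tilde U(\mathbf r,\boldsymbol\mu)$ on the two sides and running this as an induction on $r_1+r_2$ --- base cases of small total size checked by hand, inductive step carried out in either coordinate and the two exchanged by $V\leftrightarrow V^\ast$ --- isolates the isomorphism $W([\lambda_1,\lambda_2]_n)\cong\tilde W((r_1,r_2+1),\boldsymbol\lambda)$ for each $\boldsymbol\lambda$ of the new total size, whence $U\cong\tilde U$ at the new level follows from the two bimodule decompositions, just as in the Brauer case.

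The hard part will be the combinatorial step isolating the \lq new\rq\ summands: one needs that a staircase, equivalently a pair of partitions $(\lambda_1,\lambda_2)$, of sufficiently large total size is recovered from the multiset of pairs obtained by deleting a cell from $\lambda_1$ (for the increment in the first coordinate), respectively adding a cell to $\lambda_2$ (for the increment in the second). Because the other component stays fixed across the relevant equations, this reduces to the one-partition reconstruction of \cite[Corollary~4.2]{MR941434} already used for the Brauer category, but only when the active partition has more than two cells; the remaining cases, where both $\lambda_1$ and $\lambda_2$ have at most two cells and hence $\lvert\lambda_1\rvert+\lvert\lambda_2\rvert\le 4$, must be absorbed into the base of the induction or dispatched by playing the two coordinate directions against each other. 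I expect the small base cases and the bookkeeping of the four-sort restriction to be the most tedious --- but entirely routine --- part.
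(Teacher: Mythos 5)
Your proposal is correct and follows essentially the same route as the paper: the paper's own proof merely records the two classical $\GL(n)$ branching rules (with $V$ and with $V^\ast$) and then says "the remainder of the proof is as in the other sections," which is exactly the inductive multiplicity-matching argument from Theorem~\ref{thm:Brauer-tensor-algebra} that you spell out. Your extra step via the adjunction $\Hom_{\GL(n)}(W,X\otimes V^\ast)\cong\Hom_{\GL(n)}(W\otimes V,X)$ is harmless but not actually needed --- one can apply the classical rule to $W(\boldsymbol\mu)\otimes V^\ast$ directly, as the paper does --- and your reduction of the reconstruction step to the one-partition lemma, one coordinate at a time, is precisely the point swept under "the remainder of the proof".
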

\begin{proof}
  The classical branching rules are
  \begin{align*}
    W(\boldsymbol{\mu}) \otimes V &\cong %
    \bigoplus_{\substack{\boldsymbol{\lambda} = (\mu_1-\square, \mu_2)\\
        \text{ or } \boldsymbol{\lambda} = (\mu_1, \mu_2+\square)}}
    W(\boldsymbol{\lambda})\\
    W(\boldsymbol{\mu}) \otimes V^\ast &\cong %
    \bigoplus_{\substack{\boldsymbol{\lambda} = (\mu_1, \mu_2+\square)\\
        \text{ or } \boldsymbol{\lambda} = (\mu_1+\square, \mu_2)}}
    W(\boldsymbol{\lambda}),
  \end{align*}
  the remainder of the proof is as in the other sections.
\end{proof}

\subsection{Cyclic sieving phenomenon}
\label{sec:Permutations-CSP}

In contrast to the results in Section~\ref{sec:Brauer-CSP} and
Section~\ref{sec:Partition-CSP} we can only exhibit an instance of
the cyclic sieving phenomenon for the case when $n\geq 2r$.

 Let $X=X(r)$ be the set of permutations of $\{1,\dots,r\}$ and
  let $\rho$ be the long cycle, that is, $\rho$ acts on
  $\{1,\dots,r\}$ as $\rho(i)=i\pmod r +1$. Then conjugation
by $\rho$ generates an action of the cyclic group of order $r$
on $X$.

Take a disc with $2r$ boundary points labelled clockwise by
${1,1',2,2',\dotsc ,r,r'}$. Let $X$ be the set of directed perfect
matchings directed from an unprimed label to a primed label;
equivalently, $X$ is a bijection from the set of unprimed boundary
points to the set of primed boundary points.
Let $\rho$ be the rotation map which moves each boundary point
clockwise two places.

Then the map which sends a permutation $\pi$ to the directed matching
\begin{equation*}
 \{ (1,\pi(1)'), (2,\pi(2)') , \dotsc , (r,\pi(r)') \}
\end{equation*}
is a bijection compatible with the two cyclic actions.

These both have an action of $\fS_r$ and the bijection is compatible with
these actions. The Frobenius character of this permutation representation is
\begin{equation*}
 \sum_{\lambda\vdash r} p_\lambda = \sum_\lambda s_\lambda\ast s_\lambda
\end{equation*}
where $p_\lambda$ is a power sum function.

\begin{thm} Let $X$ be the set of permutations of $\{1,\dots,r\}$ and
  let $\rho:X\to X$ be the rotation map, that is, $\rho$ acts on
  $\{1,\dots,r\}$ as $\rho(i)=i\pmod r+1$.  Let
  \begin{equation*}
    P(q) = \fd \ch S_r = \fd \sum_{\lambda\vdash r} p_\lambda,
  \end{equation*}
  where $S_r$ is the species of permutations of sets of cardinality
  $r$ and $p_\lambda$ are the power sum symmetric functions.  Then
  the triple $\big(X, \rho, P(q)\big)$ exhibits the cyclic sieving
  phenomenon.
\end{thm}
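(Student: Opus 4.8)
The plan is to apply Corollary~\ref{cor:spr} to the $\fS_r$-representation $U=\Hom_{\dc_A}(0,r)$, where $\dc_A$ is the symmetric monoidal diagram category generated by the object $[1]$ (corresponding to $V\otimes V^\ast$) introduced above. By definition $U$ is a free $K$-module whose standard basis is the set of directed perfect matchings of the $2r$ cyclically ordered boundary points $1,1',2,2',\dots,r,r'$, each arc running from an unprimed to a primed point; sending a permutation $\pi$ to the matching $\{(1,\pi(1)'),\dots,(r,\pi(r)')\}$ identifies this basis with $X$. Since the symmetric structure of $\dc_A$ is implemented by permutation diagrams, which create no loops when composed with an invariant tensor, $U$ is in fact a permutation representation of $\fS_r$ and $X$ is the permuted basis; so the hypotheses of Corollary~\ref{cor:spr} will be met once we check that the relevant cyclic action is the long cycle $c$.

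First I would pin down the two group actions. Relabelling the $r$ tensor factors by $\sigma\in\fS_r$ sends the point $i$ to $\sigma(i)$ and $i'$ to $\sigma(i)'$, hence carries the matching of $\pi$ to the matching of $\sigma\pi\sigma^{-1}$; thus $U$ is the species of permutations $S_r$ with the conjugation action, and in particular the long cycle $c$ preserves $X$. Next, $\dc_A$ is symmetric and pivotal, and the object $[1]$ satisfies the sliding identities of Figure~\ref{fig:con} (these are purely topological moves for oriented $1$-manifolds), so Proposition~\ref{lem:rot} applies: the action of $c$ on the invariant tensors coincides with rotation, which on diagrams is exactly the map $\rho$ of the statement — rotation of the $2r$ boundary points by two places, equivalently conjugation by the cycle $(1\,2\,\cdots\,r)$. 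Hence $(X,c)$ and $(X,\rho)$ are the same data, and it remains to compute $\ch U$.

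Because $U$ is a permutation representation, $\tr U[\sigma]$ is the number of matchings fixed by $\sigma$, i.e.\ the number of $\pi$ commuting with $\sigma$, which is the order $z_{\lambda(\sigma)}$ of the centraliser of $\sigma$ in $\fS_r$. Using that exactly $r!/z_\lambda$ permutations have cycle type $\lambda$, the definition of the Frobenius character gives
\[
\ch U=\frac1{r!}\sum_{\sigma\in\fS_r}\tr U[\sigma]\,p_{\lambda(\sigma)}
=\frac1{r!}\sum_{\sigma\in\fS_r}z_{\lambda(\sigma)}\,p_{\lambda(\sigma)}
=\sum_{\lambda\vdash r}p_\lambda=\ch S_r,
\]
and the identity $\sum_{\lambda}p_\lambda=\sum_{\lambda}s_\lambda\ast s_\lambda$ follows from column orthogonality of the irreducible characters of $\fS_r$. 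Setting $P=\fd_r(\ch U)=\fd\big(\sum_{\lambda\vdash r}p_\lambda\big)$ and invoking Corollary~\ref{cor:spr} then proves that $(X,\rho,P)$ exhibits the cyclic sieving phenomenon. For $n\ge 2r$ one may additionally note, via the fundamental theorems for mixed tensors (Theorem~\ref{thm:FFT-SFT-mixed} together with the injectivity of $\ev_{M(n)}$), that $U$ is the space of $\GL(n)$-invariants in $(V\otimes V^\ast)^{\otimes r}$, so the statement also describes a genuine invariant-theoretic object. The only step requiring real care is the verification that Proposition~\ref{lem:rot} applies — that the Figure~\ref{fig:con} conditions hold for $[1]\in\dc_A$ — since that is precisely what licenses identifying the combinatorial rotation $\rho$ with the long-cycle action needed by Corollary~\ref{cor:spr}.
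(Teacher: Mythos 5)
Your argument is correct and follows the same route as the paper: the paper likewise realises $X$ as the rotation-invariant basis of directed perfect matchings in $\Hom_{\dc_A}(0,r)$, observes that the $\fS_r$-action is conjugation so that the long cycle acts as rotation of the disc diagram, identifies the Frobenius character as $\sum_{\lambda\vdash r}p_\lambda$, and concludes via Corollary~\ref{cor:spr}. You spell out the centraliser/trace computation and the appeal to Proposition~\ref{lem:rot} more explicitly, but the strategy is identical.
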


When $n<2r$, we can still compute the Frobenius character of
$\Hom_{\T_{\GL(n)}}(0, r)$.  Note that it is clear that
$\Hom_{\T_{M(n)}}\big((0,0),\mathbf r\big)$ is zero if $r_1\neq r_2$.
\begin{thm}
  As a representation of $\fS_r\times\fS_r$,
  \[
  \Hom_{\T_{M(n)}}\big((0,0),(r,r)\big)\cong
  \bigoplus_{\substack{\lambda\vdash r\\l(\lambda)\leq n}}
  S^\lambda\otimes S^\lambda
  \]
\end{thm}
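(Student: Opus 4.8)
The plan is to reduce the statement to classical Schur--Weyl duality for $\GL(n)$. First I would unwind the definitions. The evaluation functor $\ev_{M(n)}$ carries the object $(0,0)$ to the monoidal unit $K$ and the object $(r,r)$ to $\otimes^r V\otimes\otimes^r V^\ast$, and by construction the morphisms of $\T_{M(n)}$ are the $\GL(n)$-equivariant linear maps; moreover the inclusion $K\fS_r\times K\fS_r\hookrightarrow D_{(r,r)}$ recorded at the start of this section makes the first $\fS_r$ permute the $r$ tensor slots of $V$ and the second permute the $r$ tensor slots of $V^\ast$. Hence, as $\fS_r\times\fS_r$-modules,
\[
\Hom_{\T_{M(n)}}\big((0,0),(r,r)\big)\;\cong\;\Hom_{\GL(n)}\big(K,\otimes^r V\otimes\otimes^r V^\ast\big)\;=\;\big(\otimes^r V\otimes\otimes^r V^\ast\big)^{\GL(n)}.
\]

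Next I would feed in the classical Schur--Weyl decomposition of $\otimes^r V$ as a $\GL(n)\times\fS_r$-module,
\[
\otimes^r V\;\cong\;\bigoplus_{\substack{\lambda\vdash r\\ \ell(\lambda)\le n}} W(\lambda)\otimes S^\lambda,
\]
with the $W(\lambda)$ pairwise non-isomorphic irreducible polynomial $\GL(n)$-modules (classical Schur--Weyl duality, see \cite{MR1488158}; this is the decomposition underlying Theorem~\ref{thm:Mixed-tensor-algebra}). Dualising and using that $S^\mu$ is self-dual over a field of characteristic zero gives $\otimes^r V^\ast\cong\bigoplus_{\mu} W(\mu)^\ast\otimes S^\mu$ as a $\GL(n)\times\fS_r$-module for the second copy of $\fS_r$. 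Tensoring these two decompositions and taking $\GL(n)$-invariants, the summand indexed by a pair $(\lambda,\mu)$ contributes $\Hom_{\GL(n)}\big(W(\mu),W(\lambda)\big)\otimes\big(S^\lambda\otimes S^\mu\big)$, which by Schur's lemma is $S^\lambda\otimes S^\lambda$ when $\lambda=\mu$ and $0$ otherwise. This yields exactly $\bigoplus_{\lambda\vdash r,\ \ell(\lambda)\le n} S^\lambda\otimes S^\lambda$, as claimed.

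The only delicate point — and the one I would spell out carefully rather than grind through — is the bookkeeping of the two symmetric-group actions under these identifications: one must check that passing from $\otimes^r V^\ast$ to $(\otimes^r V)^\ast$, applying Schur's lemma factor by factor, and invoking self-duality of $S^\mu$ introduces no unaccounted contragredient (``transpose'') twist on the $S^\lambda$-factors. Since $S^\lambda\cong(S^\lambda)^\ast$ as $\fS_r$-modules any such twist is harmless, but it is the spot where the argument could superficially look wrong, so it deserves an explicit line. As a consistency check, for $n\ge 2r$ the same answer drops out of the machinery of this section: applying Theorem~\ref{thm:dirdg} to $\Inf^{(r,r)}_{(0,0)}$ of the one-dimensional $\fS_0\times\fS_0$-module (so $k=r$ and $V_1\cdot S^\lambda=V_2\cdot S^\lambda=S^\lambda$) gives $\bigoplus_{\lambda\vdash r}S^\lambda\otimes S^\lambda$, which agrees with the general formula because $\ell(\lambda)\le r\le n$ automatically, and by Theorem~\ref{thm:Mixed-tensor-algebra} this inflation module is precisely $\Hom_{\T_{M(n)}}\big((0,0),(r,r)\big)$.
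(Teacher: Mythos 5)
Your proof is correct, and it reaches the same conclusion by an equivalent but differently packaged route. The paper's own proof is very terse: it identifies $\Hom_{\T_{M(n)}}\big((0,0),(r,r)\big)$, via the duality/adjunction isomorphism $(\otimes^r V\otimes\otimes^r V^\ast)^{\GL(n)}\cong\End_{\GL(n)}(\otimes^r V)$, with the quotient $K\fS_r/\langle E(n+1)\rangle$ using the second fundamental theorem for $\GL(n)$ recalled in \S\ref{sec:perm}, and then invokes the Artin--Wedderburn bimodule decomposition $K\fS_r\cong\bigoplus_{\lambda\vdash r}S^\lambda\otimes S^\lambda$ together with the fact that the quotient kills exactly the summands with $\ell(\lambda)>n$. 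You instead apply the classical Schur--Weyl decomposition of $\otimes^r V$ (and its dual) directly, take $\GL(n)$-invariants, and use Schur's lemma to match up the $W(\lambda)$-factors. These are two standard faces of the double-centraliser theorem; the paper's version has the advantage of staying entirely inside its own framework (the SFT it already proved in \S\ref{sec:perm}), while your version makes the appearance of the length restriction $\ell(\lambda)\le n$ more immediately visible from the Schur--Weyl decomposition rather than from the kernel of the evaluation map. Your parenthetical caution about the contragredient twist on the $S^\lambda$-factors is exactly the right thing to flag; the self-duality of $S^\lambda$ over a field of characteristic zero disposes of it, as you say. Your consistency check against Theorem~\ref{thm:dirdg} and Theorem~\ref{thm:Mixed-tensor-algebra} in the stable range is also correct.
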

\begin{proof} 
  This uses the results in \S \ref{sec:perm}.

  Consider $\Hom_{\T_{M(n)}}\big((0,0),(r,r)\big)$ as a
  $K\fS_r$-$K\fS_r$ bimodule. This bimodule is isomorphic to the
  quotient of the regular representation of $K\fS_r$.
  
  This quotient representation is isomorphic to the bimodule
  \begin{equation*}
    \bigoplus_{\substack{\lambda\vdash r\\l(\lambda)\leq n}} S^\lambda\otimes S^\lambda
  \end{equation*}
\end{proof}

\begin{thm}\label{thm:inv} 
  There is an isomorphism of $\fS_r$-modules
 \begin{equation*}
   U(r, [\emptyset,\emptyset])\cong \sum_{\substack{\lambda\vdash r\\ \ell(\lambda)\le n}}
   S^\lambda\ast S^\lambda
 \end{equation*}
\end{thm}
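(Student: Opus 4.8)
The plan is to read off Theorem~\ref{thm:inv} from the $\fS_r\times\fS_r$-statement proved in the preceding theorem, by restricting that module along the diagonal embedding $\Delta\colon\fS_r\to\fS_r\times\fS_r$.

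First I would unwind the notation. Since $[\emptyset,\emptyset]_n$ is the zero staircase, $W([\emptyset,\emptyset])$ is the trivial representation of $\GL(n)$, so $U(r,[\emptyset,\emptyset])$ is the space $\Hom_{\T_{M(n)}}\big((0,0),(r,r)\big)$ of $\GL(n)$-invariants in the $r$-th tensor power of the adjoint representation $V\otimes V^\ast$, equipped with the $\fS_r$-action coming from the diagonal inclusion $\fS_r\to D_{(r,r)}$ in $\dc_A$. By construction this inclusion is the composite of $\Delta\colon\fS_r\to\fS_r\times\fS_r$ with $\fS_r\times\fS_r\to D_{(r,r)}$, so the $\fS_r$-module $U(r,[\emptyset,\emptyset])$ is precisely the restriction along $\Delta$ of the $\fS_r\times\fS_r$-module $\Hom_{\T_{M(n)}}\big((0,0),(r,r)\big)$.

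Next I would invoke the preceding theorem, which gives an isomorphism of $\fS_r\times\fS_r$-modules $\Hom_{\T_{M(n)}}\big((0,0),(r,r)\big)\cong\bigoplus_{\lambda\vdash r,\ \ell(\lambda)\le n} S^\lambda\otimes S^\lambda$, where $S^\lambda\otimes S^\lambda$ is the external tensor product. The only remaining point is the elementary fact, which is exactly the Cartesian product of species specialised to $d=0$, that restricting an external tensor product $S^\lambda\otimes S^\mu$ of $\fS_r$-modules along $\Delta$ yields the internal Kronecker product, denoted $S^\lambda\ast S^\mu$ here (this works regardless of the precise side conventions, since $S^\lambda$ is self-dual). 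Applying it summand by summand gives
\[
U(r,[\emptyset,\emptyset])\cong\bigoplus_{\substack{\lambda\vdash r\\ \ell(\lambda)\le n}} S^\lambda\ast S^\lambda,
\]
which is the assertion.

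There is no real obstacle here: the substantive content is already packaged into the $\fS_r\times\fS_r$-theorem, and passing to the diagonal is formal. The only point that needs care is to confirm that the diagonal $\fS_r$ acts on the bimodule model $K\fS_r/\langle E(n+1)\rangle$ used in the proof of the preceding theorem by conjugation, so that on the block indexed by $\lambda$ one genuinely obtains the Kronecker square $S^\lambda\ast S^\lambda$; this is consistent with the conjugation action of $\rho$ described at the start of this subsection. As a sanity check, for $n\ge r$ the length restriction is vacuous and one recovers $\sum_{\lambda\vdash r}S^\lambda\ast S^\lambda$, with Frobenius character $\sum_{\lambda\vdash r}p_\lambda=\ch S_r$, in agreement with the bijective description of the adjoint invariant tensors as permutations given earlier in this section.
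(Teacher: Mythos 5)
Your proof is correct and is exactly what the paper intends: Theorem~\ref{thm:inv} is stated without a separate proof, as an immediate consequence of the preceding $\fS_r\times\fS_r$-theorem, obtained by restriction along the diagonal $\Delta\colon\fS_r\to\fS_r\times\fS_r$ (whose relevance the paper flags when it observes that the inclusion $\fS_r\hookrightarrow D_{(r,r)}$ factors through $\Delta$), turning each external square $S^\lambda\otimes S^\lambda$ into the Kronecker square $S^\lambda\ast S^\lambda$. Your sanity check via $\sum_{\lambda}p_\lambda = \ch S_r$ for $n\geq r$ matches the discussion of the conjugation action on permutations given just before the theorem.
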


There is a cyclic sieving phenomenon associated to each tensor power of the adjoint
representation in \cite{1512}. In order to have a combinatorial interpretation of these
cyclic sieving phenomena we require a basis of the invariant
tensors which is invariant under rotation. For $n=2$ such a basis can be constructed
using Temperley-Lieb diagrams, see \cite{MR1446615}. For $n=3$ such a basis is constructed in \cite{MR1403861}.
A basis invariant under rotation is constructed for all $n$ in
\cite{MR1227098} but this basis and the rotation map are not explicit.

\printbibliography
\end{document}